\newtheorem{thm}{Theorem}[section]
\newtheorem{cor}[thm]{Corollary}
\newtheorem{lem}[thm]{Lemma}
\newtheorem{prop}[thm]{Proposition}
\newtheorem{defn}[thm]{Definition}
\newtheorem{rem}[thm]{Remark}
\newtheorem{ex}[thm]{Example}
\newtheorem{thmdef}[thm]{Theorem and Definition}
\def\fA{\mathfrak{A}}
\def\rA{\mathrm{A}}
\def\fB{\mathfrak{B}}
\def\nC{\mathnormal{C}}
\def\cD{\mathcal{D}}
\def\det{\mathrm{det}}
\def\rF{\mathrm{F}}
\def\rG{\mathrm{G}}
\def\GL{\mathrm{GL}}
\def\Hom{\mathrm{Hom}}
\def\rH{\mathrm{H}}
\def\Ind{\mathrm{Ind}}
\def\ind{\mathrm{ind}}
\def\cK{\mathcal{K}}
\def\rL{\mathrm{L}}
\def\rM{\mathrm{M}}
\def\rN{\mathrm{N}}
\def\fo{\mathfrak{o}}
\def\rP{\mathrm{P}}
\def\rQ{\mathrm{Q}}
\def\res{\mathrm{res}}
\def\SL{\mathrm{SL}}
\def\rU{\mathrm{U}}
\def\rV{\mathrm{V}}
\def\rX{\mathrm{X}}
\def\rY{\mathrm{Y}}
\def\rZ{\mathrm{Z}}
\begin{document}





\hypersetup{							
pdfauthor = {Peiyi Cui},			
pdftitle = {representation mod l of SL_n},			
pdfkeywords = {Tag1, Tag2, Tag3, ...},	
}					
\title{$\ell$-modular representations of $p$-adic groups $\mathrm{SL_n}(F)$: maximal simple $k$-types}
\author{Peiyi Cui \footnote{peiyi.cui@uea.ac.uk, Institut de recherche math\'ematique de Rennes, Universit\'e de Rennes 1 Beaulieu, 35042 Rennes CEDEX, France; Current address: School of Mathematics, University of East Anglia, Norwich Research Park, Norwich NR4 7TJ, United Kingdom}}

\date{\vspace{-2ex}}
\maketitle

        \begin{abstract}
        Let $p$ be an arbitrary prime number and $k$ an algebraically closed field of characteristic $l\neq p$. We establish the theory of maximal simple $k$-types of Levi subgroups $\rM'$ of $\mathrm{SL}_n(F)$, where $F$ is a non-archimedean locally compact field of residual characteristic $p$. We prove the exhaustion property and the unicity property of weakly intertwining implying conjugacy for maximal simple $k$-types, extended maximal simple $k$-types and simple $k$-characters of $\rM'$.
        \end{abstract}

\tableofcontents

\section{Introduction}

\subsection{Backgrounds and motivations}
Let $F$ be a non-archimedean locally compact field whose residue field is of characteristic $p$, and $\textbf{A}$ a connected algebraic reductive group defined over $F$. We say that $\rA$ is a $p$-adic group, if $\rA$ is the group $\textbf{A}(F)$ of the $F$-rational points of $\textbf{A}$, and we endow it with the locally pro-finite topology through $F$. Let $k$ be an algebraically closed field of characteristic $\ell$ with $\ell\neq p$ and $\ell\neq 0$, $W(k)$ the ring of Witt vectors of $k$ and $\cK$ an algebraic closure of the fractional field of $W(k)$ (the characteristic of $\cK$ is equal to $0$). We call a $k$-representation of $\rA$ an $\ell$-modular representation. In this article, we consider the $\ell$-modular representations, and all representations are assumed to be smooth. We denote by $\mathrm{Rep}_k(\rA)$ the category of smooth $k$-representations of $\rA$.

The theory of $k$-representations has great similarity with the theory of $\cK$-representations, but also has important differences. For examples: a $k$-representation of a compact group need not to be semisimple in general; there exist cuspidal representations of $\rA$ which are not supercuspidal, et cetera. For these reasons, only part of the methods applied in the study of $\cK$-representations are expected to be generalised to $k$-representations, and the theory of types is one of them. The theory of types consists of two parts: maximal simple $k$-types and $\rA$-covers. The first part comes from a conjecture which has been formulated for a long time, and the aim is to construct irreducible cuspidal representations (see Definition \ref{cusp 1}) from a compact open subgroup of $\rA$. The second part is to study the cuspidal support as well as the supercuspidal support of irreducible representations. 

Now we focus on the first part. To summarise briefly, a \textbf{maximal simple $k$-type} is a pair $(K,\pi)$ consisting of an open compact subgroup $K$ of $\mathrm{A}$ and an irreducible $k$-representation $\pi$ of $K$, which are constructed under technical conditions. For each maximal simple $k$-type, it is associated to a family of pairs $(\mathbb{K},\Pi)$, where $\mathbb{K}$ is a subgroup of $\mathrm{A}$, which is open compact modulo the centre containing $K$, and $\Pi$ is an extension of $\pi$ to $\mathbb{K}$, such that the compact induction $\rho:=\ind_{\mathbb{K}}^{\mathrm{A}}\Pi$ is irreducible and cuspidal (not necessarily supercuspidal). We say that $(K,\pi)$ constructs $\rho$, and $(\mathbb{K},\Pi)$  is an \textbf{extended maximal simple type} of $\rho$. The set of maximal simple $k$-types is said to verify \textbf{the exhaustion property}, if each irreducible cuspidal $k$-representation can be constructed as above, and is said to verify \textbf{the unicity property}, if for each $\rho$ the maximal simple types that constructs $\rho$ is unique up to $\rA$-conjugation. 

The theory of maximal simple types was firstly studied for representations of characteristic zero. It has been established in \cite{BuKu} and \cite{BuKuIII} for $\mathrm{GL}_n(F)$, and in \cite{BuKuI}, \cite{BuKuII} and \cite{GoRo} for $\mathrm{SL}_n(F)$ and its Levi subgroups, and in \cite{KS} for $p$-adic classical groups. In 2001, a construction of essentially tame cuspidal representations of characteristic zero through Bruhat-Tits buildings for general connected reductive groups is given in \cite{Yu}. An equivalence of these two constructions while considering essentially tame cuspidal representations of $\mathrm{GL}_n(F)$ has been studied in \cite{Ma}. For $\ell$-modular representations, Vignéras suggested in her book \cite{V1} the method of reduction modulo $\ell$, which gives a description of maximal simple $\ell$-modular types of $\mathrm{GL}_n(F)$. In \cite{MS} the maximal simple $\ell$-modular types of the inner forms of $\mathrm{GL}_n(F)$ were constructed. In \cite{Fin} a construction of essentially tame cuspidal $\ell$-modular representations of $p$-adic connected reductive groups is given. In this article, we establish the theory of maximal simple $k$-types of Levi subgroups $\rM'$ of $\mathrm{SL}_n(F)$, which means we prove the exhaustion property as well as the unicity property. It is a generalisation of \cite{BuKuI}, \cite{BuKuII}, \cite{GoRo} and \cite{TA} to the $\ell$-modular setting.

The theory of types has been widely used in the study of representations of characteristic zero of $\rA$. One well-known result is the description of Bernstein centre of $\mathrm{Rep}_{\cK}(\rA)$. In the $\ell$-modular setting, it was applied to a wider range of topics: for examples, in \cite{AKMSS}, \cite{Se1} and \cite{CLH} on the distinguished problem by a Galois involution; in \cite{Helm} and \cite{SeSt2} on the establishment of blocks decomposition of $\mathrm{Rep}_{W(k)}(\mathrm{GL}_n(F))$ and $\mathrm{Rep}_k(\mathrm{GL}_m(D))$, where $\mathrm{GL}_m(D)$ denotes the inner forms of $\mathrm{GL}_n(F)$. In \cite{C} the author uses the results of this article to establish category decompositions of $\mathrm{Rep}_k(\rM')$. In particular, it is showed in \cite{C} that a block of $\mathrm{Rep}_k(\rM')$ is defined with respect to several inertially equivalent supercuspidal classes, which is different from the blocks of $\mathrm{Rep}_{\cK}(\rA)$ or the blocks of $\mathrm{Rep}_k(\mathrm{GL}_n(F))$. In the latter cases, the blocks are defined with respect to one inertially equivalent supercuspidal class.

\subsection{Main results}
Now we state the main results in this article with more details. We first recall the definition of cuspidal $k$-representations and supercuspidal $k$-representations.

\begin{defn}
\label{cusp 1}
Let $\rA$ be a $p$-adic group, and $\rL$ a Levi subgroup of $\rA$. Let $\pi$ be an irreducible $k$-representation of $\rA$, and $i_{\rL}^{\rA}$ be the normalised parabolic induction from $\rL$ to $\rA$. 
\begin{itemize}
\item We say that $\pi$ is \textbf{cuspidal}, if for any proper Levi subgroup $\rL$ and any irreducible $k$-representation $\rho$ of $\rL$, $\pi$ does not appear as a sub-representation or a quotient representation of $i_{\rL}^{\rA}\rho$;
\item We say that $\pi$ is \textbf{supercuspidal}, if for any proper Levi subgroup $\rL$ and any irreducible $k$-representation $\rho$ of $\rL$, $\pi$ does not appear as a sub-quotient representation of $i_{\rL}^{\rA}\rho$.
\end{itemize}
\end{defn}

From now on, we always denote $\mathrm{GL}_n(F)$ by $\rG$, and $\mathrm{SL}_n(F)$ by $\rG'$. Let $K$ be a closed subgroup of $\rG$, we always denote the intersection $K\cap\rG'$ by $K'$. In particular, let $\rM$ be a Levi subgroup of $\rG$, then $\rM'=\rM\cap\rG'$ is a Levi subgroup of $\rG'$, which gives a bijection between Levi subgroups of $\rG$ and $\rG'$. A basic idea of this article is by applying the restriction functor $\res_{\rM'}^{\rM}$. It has been firstly studied in \cite{TA} for $\cK$-representations of $\rM'$. A key observation is that, for any irreducible representation $\pi'$ of $\rM'$, there exists a $\pi$, irreducible of $\rM$, such that $\pi'$ appears as a direct component of the restriction $\pi\vert_{\rM'}$, which holds true for $\ell$-modular setting. With this relation in mind, in Section \ref{subsection 2.4.1} we construct a family of \textbf{maximal simple $k$-types} $(\tilde{J}_{\rM}',\tilde{\lambda}_{\rM}')$ of $\rM'$ from a maximal simple $k$-types $(J_{\rM},\lambda_{\rM})$ of $\rM$. In particular, let $\pi$ and $\pi'$ be irreducible and cuspidal of $\rM'$ and $\rM$ respectively, such that $\pi'\hookrightarrow \pi\vert_{\rM'}$. If $\pi$ is constructed from $(J_{\rM},\lambda_{\rM})$, then $\pi'$ is constructed from a $(\tilde{J}_{\rM}',\tilde{\lambda}_{\rM}')$. In Section \ref{subsection 2.4.2} we establish the theory of \textbf{extended maximal simple $k$-types} $(N_{\rM'}(\tilde{\lambda}_{\rM}'),\tau_{\rM'})$, which is to say that $N_{\rM'}(\tilde{\lambda}_{\rM}')$ is open compact modulo the centre and containing $\tilde{J}_{\rM}'$, and that $\tau_{\rM'}$ is an extension of $\tilde{\lambda}_{\rM}'$. 

We summerise Theorem \ref{Ltheorem 5}, Theorem \ref{thm a33}, and Definition \ref{Ldefinition 200} as below:
\begin{thmdef}[Exhaustion property]
We have:
\begin{itemize}
\item The pairs $(\tilde{J}_{\rM}',\tilde{\lambda}_{\rM}')$ are \textbf{maximal simple $k$-types} of $\rM'$.
\item The paris $(N_{\rM'}(\tilde{\lambda}_{\rM}'),\tau_{\rM'})$ are \textbf{extended maximal simple $k$-types} of $\rM'$, which means that $\tau_{\rM'}$ is an extension of $\tilde{\lambda}_{\rM'}$ and $\ind_{N_{\rM'}(\tilde{\lambda}_{\rM}')}^{\rM'}$ is irreducible and cuspidal.
\item The maximal simple $k$-types of $\rM'$ verifies the exhaustion property.
\end{itemize}
We always abbreviate $(J_{\rG},\lambda_{\rG})$ and $(\tilde{J}_{\rG}',\tilde{\lambda}_{\rG}')$ as $(J,\lambda)$ and $(\tilde{J}',\tilde{\lambda}')$ respectively.
\end{thmdef}

Since a $\cK$-representation of a compact subgroup is always semisimple, being intertwined coincides with being weakly intertwined. However, they are \textbf{not the same} for $\ell$-modular setting (see Definition \ref{prepdefn 01}). In this article, we prove the following Theorem of weakly intertwining implying conjugacy, which says that two maximal simple $k$-types of $\rM'$ are weakly intertwined if and only if they are conjugate and hence are intertwined. This theorem is essentially required in \cite{C} to establish blocks of $\mathrm{Rep}_k(\rM')$, and it implies the unicity property of simple $k$-characters (resp. maximal simple $k$-types) contained in an irreducible cuspidal $k$-representation (Theorem \ref{thm a40}).

\begin{thmdef}[Unicity property of weakly intertwining implying conjugacy, Theorem \ref{thm a30}]
\label{thmdefunicity}
Let $(\tilde{J}_i',\tilde{\lambda}_i')$ be maximal simple $k$-types of $\rM'$ for $i=1,2$.
\begin{itemize}
\item We say that $\tilde{\lambda}_1'$ is \textbf{weakly intertwined} with $\tilde{\lambda}_2'$, if there exists a $m\in\rM'$ such that $\tilde{\lambda}_1'$ appears as a sub-quotient of $\ind_{m(\tilde{J}_2')\cap\tilde{J}_1'}^{\tilde{J}_1'}\res_{m(\tilde{J}_2')\cap\tilde{J}_1'}^{m(\tilde{J}_2')}m(\tilde{\lambda}_2')$, where $m(\cdot)$ is the conjugation by $m$.
\item Suppose that $(\tilde{J}_1',\tilde{\lambda}_1')$ is weakly intertwined with $(\tilde{J}_2',\tilde{\lambda}_2')$, then they are conjugate in $\rM'$.
\end{itemize}
\end{thmdef}

\subsection{The structure of this article}
In this section, we introduce the structure of this article. We clarify the difference between characteristic zero setting and the $\ell$-modular setting, and introduce the specialities of $\rM'$ from the technical point of view. The basic references are \cite{BuKuI}, \cite{BuKuII}, \cite{GoRo} and \cite{TA}.

This article has two parts: In Section \ref{chapter 4}, we establish maximal simple $k$-types of $\rG'$ and $\rM'$, and prove the exhaustion property; in Section \ref{chapter 01} we prove the unicity properties. 

For the first part (Section \ref{chapter 4}), the fact that the derived group of $\rM'$ is non-trivial but is trivial of $\rG'$ causes important differences in the theory of maximal simple types of $\rM'$ and $\rG'$. For an example, the extended maximal simple $k$-types coincide with maximal simple $k$-types for $\rG'$, but it is not true for $\rM'$. Hence we deal with $\rM'$ separately in Section \ref{section 06}.

For both $\rM'$ and $\rG'$, the establishment of maximal simple $k$-types has significant differences comparing to that of general linear groups and classical groups. We start from the most crucial speciality. Let $(J,\lambda)$ be a maximal simple $k$-type of $\rG=\mathrm{GL}_n(F)$, and $J^1$ the pro-$p$ radical of $J$, then the quotient $J\slash J^1$ is a finite general linear group. By \cite{BuKu}, we have
\begin{equation}
\label{equation 999}
\lambda\cong\kappa\otimes\sigma,
\end{equation}
where $\sigma$ is inflated from a cuspidial $k$-representation of $J\slash J^1$, and $\kappa$ is defined from a simple character (see Section \ref{Notation}) which relates to the depth of this type. We call $\sigma$ \textbf{the depth zero part}, and $\kappa$ \textbf{the simple character part} of $\lambda$. A similar decomposition as in (\ref{equation 999}) holds for maximal simple $k$-types of $p$-adic classical groups (see \cite{KS}). Unfortunately, such decomposition does \textbf{not} exist in general for maximal simple $k$-types of $\rG'$, which causes the main difficulty in Section \ref{chapter 4}. We explain below in detail how this phenomenon influences the establishment of the theory of maximal simple $k$-types of $\rG'$.

For $\rG'$, the maximal simple $k$-types are established in Section \ref{section 08} and Section \ref{section 2.3}. In the first three sections \ref{section 02.2.1}, \ref{section 02.2.2} and \ref{section 02.2.3}, we start by studying the maximal simple $k$-types of $\rG$, which is a preparation of the rest of this section. To be more precise, Section \ref{section 02.2.1} is a generalisation of Appendix in \cite{BuKuII} to $\ell$-modular setting; Section \ref{section 02.2.3} consists of useful properties on weakly intertwining. We end the preparation part by an important decomposition of $\res_{J}^{\rG}\ind_{J}^{\rG}\lambda$ in Section \ref{section 02.2.3}, where $(J,\lambda)$ is a maximal simple $k$-type of $\rG$. 

We come back to $\rG'$ from Section \ref{section 02.2.4}, where we introduce the group of projective normaliser $\tilde{J}$, which was firstly defined in \cite{BuKuII} for characteristic zero setting. Section \ref{section 07} is a key part, of which the purpose is to show that $\ind_{\tilde{J}'}^{\rG'}\tilde{\lambda}'$ is irreducible. When $\ell=0$, the irreducibility is proved by showing that the intertwining set of $\tilde{\lambda}'$ in $\rG'$ is equal to $\tilde{J}'$. However this condition is \textbf{not} sufficient for $\ell$-modular setting, and a second condition has to be added into the criteria of irreducibility (see Lemma \ref{lem 19}), which was firstly introduced by Vign\'eras in \cite{V3}. For $\rG$ and classical groups, this second condition is verified through Equation (\ref{equation 999}), by showing that the $\kappa\vert_{J^1}$-coinvariant of $\ind_{J}^{\rG}\lambda$ is semisimple. However, since Equation (\ref{equation 999}) does not exist in general for $\rG'$, it requires more technique to verify this condition: In Section \ref{section 07}, we pass the problem from $\rG'$ to $\rG$, and verify the second condition by using the decomposition of $\res_{J}^{\rG}\ind_{J}^{\rG}\lambda$ in Section \ref{section 02.2.3}. We complete the establishment of maximal simple $k$-types of $\rG'$ in Definition \ref{defn 22}, Section \ref{section 2.3}. 

In Section \ref{section 06}, we establish the maximal simple $k$-types of $\rM'$, where some results of $\rG'$ are applied. We emphasis that extended maximal simple $k$-types \textbf{only appear} when $\rM'\neq\rG'$ as explained above. We introduce this new object in Theorem \ref{Ltheorem 5} and Definition \ref{Ldefinition 200}. By technical reason, we accomplish the establishment by showing an extended maximal simple $k$-type is an extension of a maximal simple $k$-type at the end of this article (see Theorem \ref{thm a33}).

Now we arrive at the second part. In Section \ref{chapter 01}, the main purpose is proving the unicity property of weakly intertwining implying conjugacy (Theorem \ref{thm a30}). We start by establishing the $\ell$-modular Clifford theory for $\pi\vert_{\rM'}$ (Section \ref{section 03.1} and Section \ref{section 003.2}), of which the statement is different from the characteristic zero setting. Then we deduce a decomposition of $\tilde{J}_{\rM}'$ (Theorem \ref{thm a26}), which is the key of the proof of Theorem \ref{thm a30}. In Section \ref{section intconj}, except for the unicity property of weakly intertwining implying conjugacy, we also show the unicity property of the maximal simple $k$-types (resp. the simple $k$-characters) which are contained in a fixed irreducible cuspidal $k$-representation (Theorem \ref{thm a40}). Finally in Section \ref{chapter 02}, we accomplish the establishment of extended maximal simple $k$-types, which ends this article.

It is worth noticing that we obtain an equation in Theorem \ref{thm a20}, which is an intermediate step to prove Theorem \ref{thm a30}. It is a generalisation of an equation of the length of $\pi\vert_{\rM'}$ for irreducible cuspidal $\pi$ in the characteristic zero setting. However, in the $\ell$-modular setting, the equation in Theorem \ref{thm a20} describes the $\ell$-prime part of the length of $\pi\vert_{\rM'}$. This equation has a separate interest. In $\ell$-modular setting, it is hard to compute the length of a parabolically induced representation. In Example \ref{ex a37}, we give a computation of this problem of $\mathrm{SL}_2(F)$, which is an application of this equation and answers an open question at the end of \cite{Dat}. On the other hand, the length of $\pi\vert_{\rM'}$ is equal to the size of $L$-packet containing $\pi'$ when $\ell=0$, and in \cite{CLH} it is proved to be true in the $\ell$-modular setting when $n=2$ by applying this equation, which shows potential use in the further study of $L$-packets of $\rM'$.

\section*{Acknowledgement}
The first part of this article is based on my PhD thesis. I would like to thank my advisors Anne-Marie Aubert for her patience and her suggestions of the project, and Michel Gros for his support during the writing of my thesis. I thank Vincent S\'echerre for his helpful suggestions of improvements for the early version of this article. I thank Guy Henniart and Marie-France Vign\'eras for their interests in the property of weakly intertwining implying conjugacy. I thank Guy Henniart for his suggestion on considering simple $k$-characters of $\rM'$. I thank Alberto Mínguez for his interest on Example \ref{ex a37}.

\section{Maximal simple $k$-types}
\label{chapter 4}

\subsection{Notation}
\label{Notation}
We start by recalling some basic terminology and notations of theory of types given by Bushnell and Kutzko in \cite{BuKu}.

\begin{itemize}
\item[] $F=$ a non-archimedean locally compact field, whose residue field is characteristic $p$;
\item[] $\mathfrak{o}_F=$ the ring of integers of $F$;
\item[] $\mathfrak{p}_F=$ the maximal ideal of $\mathfrak{o}_F$;
\item[] $\varpi_F=$ a uniformizer of $\mathfrak{p}_F$;
\item[] $k=$ an algebraically closed field with characteristic $\ell\neq p$;
\item[] $W(k)=$ the ring of Witt vectors of $k$, and $\mathcal{K}=$ an algebraically closed field of the fractional field of $W(k)$.
\item[] $\ind=$ the compact induction functor; $\res=$ the restriction functor.
\item[] $\mathrm{Rep}_k(\rA)=$ the category of smooth $k$-representations of $\rA$, where $\rA$ is a $p$-adic group.
\end{itemize}

In this article, we denote by $\rG$ the group $\mathrm{GL}_n(F)$, and $\rG'$ the group $\mathrm{SL}_n(\rF)$, unless otherwise specified. Let $K$ be a subgroup of $\rG$, we always denote by $K'$ the intersection of $K$ and $\rG'$.

Let $V$ be an $F$-vector space with dimension $n$. Let $A$ be the $F$-algebra $\mathrm{End}_{F}(V)\cong\mathrm{M}_n(F)$. Let $\mathcal{L}=\{L_i\}_{i\in\mathbb{Z}}$ be an $\mathfrak{o}_F$-lattice chain in $V$ such that for any $i\in\mathbb{Z}$ and any $x\in F$, $xL_i\in\mathcal{L}$. For any integer $m\in\mathbb{Z}$, we set
$$\mathfrak{P}^{m}=\mathrm{End}_{\mathfrak{o}_F}^m(\mathcal{L})=\{x\in A;xL_i\subset L_{i=m},i\in\mathbb{Z}\}.$$
A hereditary order $\fA$ is an $\mathfrak{o}_F$-lattice as well as a subring of $A$ of the form $\mathrm{End}_{\mathfrak{o}_F}^0(\mathcal{L})$ for an $\mathfrak{o}_F$-lattice chain of $V$ as above. We set 
$$\rU(\fA)=\fA^{\times},$$
which is a subgroup of $A^{\times}\cong\mathrm{GL}_n(F)$, and write
$$\rU^m(\fA)=1+\mathfrak{P}^m,m\neq 1,$$
which are pro-$p$ subgroups of $A^{\times}$. Furthermore, we have
$$\bigcup_{m\in\mathbb{Z}}\mathfrak{P}^m=A,$$
and we set
$$\nu_{\fA}(x)=\mathrm{max}\{m\in\mathbb{Z};x\in\mathfrak{P}^m\},$$
for any $x\in A$.

Let $E\slash F$ be a field extension of $F$ such that there is an inclusion $E\hookrightarrow A$. We fix such an inclusion and view $V$ as an $E$-vector space, and write $B=\mathrm{End}_{E}(V)$, then we have $B\subset A$. Assume that $E^{\times}$ normalises $\fA$, we denote by $\mathfrak{B}=B\cap\mathfrak{A}$.

A stratum in $A$ is a $4$-tuple $[\fA,n,r,\beta]$ consisting of a hereditary order $\fA$, integers $n>r$, and an element $\beta\in A$ such that $\nu_{\fA}(\beta)\geq -n$ as defined in \cite[\S 1.5.1]{BuKu}. A stratum $[\fA,n,r,\beta]$ is simple, if it is satisfied with the conditions in \cite[Definition 1.5.5]{BuKu}, in particular it is required that $E=F[\beta]$ is a field. Given a simple stratum $[\fA,n,0,\beta]$, the subgroups $H^1(\beta)\subset J^1(\beta)\subset J(\beta)$ of $\rU(\fA)$ are defined in \cite[\S 3.1]{BuKu}, with the property that the quotient $J(\beta)\slash J^{1}(\beta)$ is isomorphic to $\prod^{e}\mathrm{GL}_{f}(k_E)$, where $f,e$ are integers and $k_E$ is the residue field of the field $E=F[\beta]$ inside $A$. We abbreviate them as $H^1,J^1$ and $J$ without causing ambiguity. 

A $k$-character of $H^1$ is a \textbf{simple character} if it verifies the conditions in \cite[Definition 3.2.3]{BuKu}, and we denote by $C_{k}(\fA,0,\beta)$ the set of simple $k$-characters of $H^1$. Let $\theta$ be a simple $k$-character of $H^1$, there is an unique irreducible $k$-representation $\eta(\theta)$ containing $\theta$ of $J^1$ (\cite[\S 5.1.1]{BuKu}), we call it the \textbf{Heisenberg representation} of $\theta$ and we abbreviate it as $\eta$ without causing ambiguity. Furthermore, there are irreducible $k$-representations of $J$, which are extensions of $\eta(\theta)$ and their intertwining sets contain $B^{\times}$. We call such an irreducible $k$-representation a \textbf{$\beta$-extension} of $\eta(\theta)$, as defined in \cite[Definition 5.2.1]{BuKu} (for complex representations) and \cite[\S 2.4]{MS} (for modulo $\ell$ representations).

The definition of simple $k$-types and maximal simple $k$-types can be found in \cite[Definition 5.5.10]{BuKu} in the case of complex representations and in \cite[Definition 2.9]{MS} in the case of modulo $\ell$ representations. We give a brief summary. A simple $k$-type of $\rG$ is a pair $(J,\lambda)$, where $J$ is a compact subgroup of $\rG$ defined from a simple stratum $[\rA,n,0,\beta]$, and $\lambda$ is of the form $\kappa\otimes\sigma$, where $\kappa$ is a $\beta$-extension of an irreducible $k$-representation $\eta(\theta)$ for a simple $k$-character $\theta$ in $C_{k}(\fA,0,\beta)$, and $\sigma$ is the inflation of $\otimes^{e}\pi_0$ from $J\slash J^1$ to $J$, where $\pi_0$ is an irreducible cuspidal $k$-representation of $\mathrm{GL}_f(k_E)$. A pair $(J,\lambda)$ is a maximal-simple $k$-type of $\rG$, if it is a simple $k$-type, and $e=1$.

For a Levi subgroup $\rM$ of $\rG$, a maximal simple $k$-type of $\rM$ is isomorphic to a product of maximal simple $k$-types of general linear groups, and the same for simple $k$-characters.

\subsection{Construction of cuspidal $k$-representations of $\rG'$}
\label{section 08}

In this section, we want to prove that for any $k$-irreducible cuspidal representation $\pi'$ of $\rG'$, there exists an open compact subgroup $\tilde{J}'$ of $\rG'$ and an irreducible representation $\tilde{\lambda}'$ of $\tilde{J}'$ such that $\pi'$ is isomorphic to $\ind_{\tilde{J}'}^{\rG'}\tilde{\lambda}'$ (\ref{thm 16}, \ref{cor 21} and \ref{defn 22}).

\subsubsection{Types $(J,\lambda\otimes\chi\circ\det)$}
\label{section 02.2.1}
In this section, we assume that $\ell\neq 0$.
 Let $(J,\lambda)$ be a maximal simple $k$-type of $\rG$ and $\chi$ be a $k$-quasicharacter of $F^{\times}$. We prove that the pair $(J,\lambda\otimes\chi\circ\det)$ is also a maximal simple $k$-type of $\rG$, which will be used in the proof of Proposition \ref{prop 1}. This has been proved in appendix of \cite{BuKuII} in the case of characteristic $0$ by using the following two lemmas, of which we give a proof for the case of characteristic $\ell$.

Let $\mu_{p^{\infty}}$ be the group of roots of unities of powers of $p$. We fix an injective morphism from $\mu_{p^{\infty}}$ to $\cK^{\times}$, which gives an injective morphism from $\mu_{p^{\infty}}$ to $k^{\times}$ by applying projection from $W(k)^{\times}$ to $k^{\times}$. Now we consider the Teichmuller character $\iota_{p,k}$ from $k^{\times}$ to $\cK^{\times}$. Let $K$ be a pro-$p$ group and $\theta_{k}$ be a smooth $k$-character of $K$, then composing with $\iota_{p,k}$ gives a mapping from the set of $k$-characters and $\cK$-characters of $K$. Conversely, a $\cK$-character $\theta_{\cK}$ of $K$ takes value in $W(k)^{\times}$, and by applying projection it gives a $k$-character of $K$, which we call the reduction modulo $\ell$ of $\theta_{\cK}$. We have:

\begin{lem}
\label{lem 3}
Let $K$ be a pro-$p$ group, the composition with $\iota_{p,k}$ gives a bijection between the set of smooth $k$-characters and the set of smooth $\cK$-characters of $K$, of which the inverse is reduction modulo $\ell$. Let $\theta_{k}$ be a smooth $k$-character of $K$, we call $\theta_{\cK}=\iota_{p,k}\circ\theta_{k}$ the lifting of $\theta_{k}$.
\end{lem}

Let $[\fA,n,0,\beta]$ be a simple stratum, as defined in \cite[\S 2.2]{MS}, the set of simple characters $C_{k}(\fA,0,\beta)$ consists of the reduction modulo $\ell$ of simple $\cK$-characters of $H^1(\beta)$.

From now on we fix a continuous additive character $\psi_F$ from $F$ to $\cK^{\times}$, which is trivial on $\mathfrak{p}_{F}$ but non-trivial on $\mathfrak{o}_F$. Recall the equivalence (depending on the choice of $\psi_F$)
$$(\mathrm{U}^{[\frac{1}{2}n]+1}(\fA)/ \mathrm{U}^{n+1}(\fA))^{\wedge}\cong \mathfrak{P}^{-n}/\mathfrak{P}^{-([\frac{1}{2}n]+1)},$$
where $(\mathrm{U}^{[\frac{1}{2}n]+1}(\fA)/ \mathrm{U}^{n+1}(\fA))^{\wedge}$ denote the Pontrjagin dual. Let $\beta\in\mathfrak{P}^{-n}/\mathfrak{P}^{-([\frac{1}{2}n]+1)}$, we denote by $\psi_{\beta,\cK}$ the character on $\mathrm{U}^{[\frac{1}{2}n]+1}(\fA)/ \mathrm{U}^{n+1}(\fA)$ given through the equivalence above (or consult \cite[\S 1.6.6]{BuKu} for an explicit definition). Let $\psi_{\beta,k}$ denote the reduction modulo $\ell$ of $\psi_{\beta,\cK}$.



\begin{lem}
\label{lem 2}
Let $[\mathfrak{A},n,0,\beta]$ be a simple stratum in $\mathrm{A}$ with $\beta\notin \mathrm{F}$, $n\geq1$. Let $c\in\mathrm{F}^{\times}$, and $n_0=-\mathnormal{v}_{\fA}(c)$, $n_1=-\mathnormal{v}_{\fA}(\beta+c)$, \begin{enumerate}
\item The stratum $[\mathfrak{A},n_1,0,\beta+c]$ is a simple stratum in $\rA$, and we have $\mathfrak{H}(\beta+c, \fA)=\mathfrak{H}(\beta,\fA)$ and $\mathfrak{J}(\beta+c,\fA)=\mathfrak{J}(\beta,\fA)$.
\item Let $\chi_{\cK}$ be a $\cK$-quasicharacter of $\mathrm{F}^{\times}$ such that $\chi_{\cK}\circ\mathrm{det}$ is equivalent to $\psi_{c,\cK}$ on $\mathrm{U}^{[\frac{1}{n_0}]+1}(\fA)$. Then we have an equivalence of simple $\cK$-characters:
$$\mathnormal{C}_{\cK}(\fA,0,\beta+c)=\mathnormal{C}_{\cK}(\fA,0,\beta)\otimes\chi_{\cK}\circ\mathrm{det}.$$
\item Let $\chi_{k}$ be a $k$-quasicharacter of $F^{\times}$ such that $\chi_k\circ\det$ is equivalent to $\psi_{c,k}$ on $\mathrm{U}^{[\frac{1}{n_0}]+1}(\fA)$. Then we have an equivalence of simple $k$-characters:
$$\mathnormal{C}_{k}(\fA,0,\beta+c)=\mathnormal{C}_{k}(\fA,0,\beta)\otimes\chi_{k}\circ\mathrm{det}.$$
\end{enumerate}
\end{lem}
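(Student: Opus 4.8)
The plan is to reduce part (3) to part (2) via the reduction-modulo-$\ell$ dictionary established in Lemma~\ref{lem 3}, after first disposing of part (1) by invoking the relevant structural results from \cite{BuKu}. For part (1), the key observation is that $c\in F^\times$ lies in the centre of $A$, so adding $c$ to $\beta$ does not change the $F$-algebra $E=F[\beta]=F[\beta+c]$, hence does not change $B=\mathrm{End}_E(V)$, $\mathfrak{B}=B\cap\fA$, nor the associated filtrations. The simplicity of $[\fA,n_1,0,\beta+c]$ and the equalities $\mathfrak{H}(\beta+c,\fA)=\mathfrak{H}(\beta,\fA)$, $\mathfrak{J}(\beta+c,\fA)=\mathfrak{J}(\beta,\fA)$ are then precisely the content of \cite[(3.1.9), (3.2.1)]{BuKu} (the ``translation by central elements'' phenomenon), which we simply cite.

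For part (2), this is the statement proved in the appendix of \cite{BuKuII} for complex representations, so in principle it is already available; but since we wish to keep the exposition self-contained, I would recall its proof structure: one checks on the generators of $\mathfrak{H}(\beta,\fA)$ how the defining formula for simple characters (\cite[Definition 3.2.3]{BuKu}) transforms when $\beta$ is replaced by $\beta+c$. On the first piece $\rU^{[n_0/2]+1}(\fA)$ the defining character is literally $\psi_{\beta+c}=\psi_\beta\cdot\psi_c$, and the hypothesis that $\chi_\bC\circ\det$ agrees with $\psi_{c,\bC}$ there forces the twist by $\chi_\bC\circ\det$ to account exactly for the discrepancy; on the deeper pieces one proceeds by the standard downward induction on the simple stratum, using that $\det$ restricted to the relevant unipotent-type subgroups is compatible with the transfer maps. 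The upshot is the asserted equality $C_\bC(\fA,0,\beta+c)=C_\bC(\fA,0,\beta)\otimes\chi_\bC\circ\det$ of sets of characters of $H^1=H^1(\beta)=H^1(\beta+c)$.

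Part (3) then follows formally. By Lemma~\ref{lem 3}, the map $\theta_\bC\mapsto\iota_{p,k}\circ\theta_\bC$ is a bijection from smooth $\bC$-characters of the pro-$p$ group $H^1$ to smooth $k$-characters of $H^1$, and by the remark following Lemma~\ref{lem 3} it restricts to a bijection $C_\bC(\fA,0,\beta)\xrightarrow{\sim}C_k(\fA,0,\beta)$ and similarly for $\beta+c$. The reduction map is plainly compatible with tensoring by a character: $\iota_{p,k}\circ(\theta_\bC\otimes\chi_\bC\circ\det)=(\iota_{p,k}\circ\theta_\bC)\otimes(\iota_{p,k}\circ(\chi_\bC\circ\det))$, where on the right $\iota_{p,k}\circ(\chi_\bC\circ\det)=\chi_k\circ\det$ once we lift $\chi_k$ to a $\bC$-quasicharacter $\chi_\bC$ of $F^\times$ (note $\chi_k$ need not itself have $p$-power order, but its restriction to any pro-$p$ subgroup of $F^\times$ does, which is all that matters after composing with $\det$ and restricting to $\rU^{[1/n_0]+1}(\fA)$; and one checks the lift can be chosen so that $\chi_\bC\circ\det$ agrees with $\psi_{c,\bC}$ on $\rU^{[n_0/2]+1}(\fA)$ since $\psi_{c,k}$ is by definition the reduction of $\psi_{c,\bC}$). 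Applying the bijections to the equality in part (2) and using this compatibility yields part (3).

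The main obstacle, and the only point requiring genuine care, is the bookkeeping in the previous paragraph about \emph{which} pro-$p$ subgroup the characters $\chi$ and $\psi_c$ are being compared on, and ensuring that a lift $\chi_\bC$ of $\chi_k$ can be chosen so that the hypothesis of part (2) holds whenever the hypothesis of part (3) does. Concretely: the hypothesis in (3) constrains $\chi_k\circ\det$ only on $\rU^{[1/n_0]+1}(\fA)$, a pro-$p$ group on which every $k$-character is the reduction of a unique $\bC$-character; extending this local datum to a global $\bC$-quasicharacter of $F^\times$ compatible with a chosen lift of $\chi_k$ is a routine application of the divisibility of $\bC^\times$ together with Lemma~\ref{lem 3}, but it must be spelled out. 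Everything else is a formal transport of structure along $\iota_{p,k}$.
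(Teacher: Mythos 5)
Your proposal is correct and follows essentially the same route as the paper's own proof: parts (1) and (2) are cited from the appendix of \cite{BuKuII}, and part (3) is deduced from part (2) by lifting $\chi_k$ to a $\bC$-quasicharacter of $F^{\times}$, noting via Lemma~\ref{lem 3} that the lift is forced to agree with $\psi_{c,\bC}$ on the relevant pro-$p$ subgroup because the reduction map is a bijection there, then applying reduction modulo~$\ell$ to the equality of part (2). Your treatment of the bookkeeping around the choice of lift (distinguishing the constraint on the pro-$p$ subgroup from the extension to all of $F^{\times}$) is in fact slightly more careful than the paper's, which lifts $\chi_k\vert_{\mathfrak{o}_F^{\times}}$ somewhat informally even though $\mathfrak{o}_F^{\times}$ is not pro-$p$; but the underlying idea is identical.
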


\begin{proof}
The first two assertions are proved in Lemma of \cite[appendix]{BuKuII}, so we only need to prove the last assertion. Let $\chi_{0,\cK}$ be the lifting of $\chi_{k}\vert_{\mathfrak{o}_F^{\times}}$ to $\cK$, then $\chi_{0,\cK}$ can be extended to $F^{\times}$ and we denote also by $\chi_{0,\cK}$ an extension. By definition, $\chi_{0,\cK}\circ\mathrm{det}$ is equivalent to $\psi_{c,\cK}$. By Part $2$, we have:
$$\mathnormal{C}_{\cK}(\fA,0,\beta+c)=\mathnormal{C}_{\cK}(\fA,0,\beta)\otimes\chi_{0,\cK}\circ\mathrm{det}.$$
Applying reduction modulo $\ell$ to both sides, by Lemma \ref{lem 3} we obtain
$$\mathnormal{C}_{k}(\fA,0,\beta+c)=\mathnormal{C}_{k}(\fA,0,\beta)\otimes\chi_{k}\circ\mathrm{det}.$$

\end{proof}

\begin{cor}
\label{cor 3}
Let $(J,\lambda)$ be a maximal simple $k$-type of $\rG$, and $\chi$ a $k$-quasicharacter of $F^{\times}$. Then the pair $(J,\lambda\otimes\chi\circ\det)$ is also a maximal simple $k$-type of $\rG$.
\end{cor}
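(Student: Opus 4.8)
The plan is to reduce Corollary \ref{cor 3} to Lemma \ref{lem 2}(3) by choosing the element $c\in F^\times$ appropriately. First I would recall that a maximal simple $k$-type $(J,\lambda)$ is attached to a simple stratum $[\fA,n,0,\beta]$, with $\lambda=\kappa\otimes\sigma$, where $\kappa$ is a $\beta$-extension of $\eta(\theta)$ for a simple $k$-character $\theta\in C_k(\fA,0,\beta)$ and $\sigma$ is inflated from an irreducible cuspidal $k$-representation $\pi_0$ of $\mathrm{GL}_f(k_E)$ (here $e=1$ since the type is maximal). The twist $\lambda\otimes\chi\circ\det$ only changes the data supported on the $J/J^1$ quotient and on $H^1$, not the shape of $J$ itself, so the essential point is that $(J,\lambda\otimes\chi\circ\det)$ is again of this form for a possibly different simple stratum with the same hereditary order $\fA$.

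The key steps, in order, are as follows. Step 1: given $\chi$, produce $c\in F^\times$ such that $\chi\circ\det$ agrees with $\psi_{c,k}$ on $\rU^{[\frac{1}{2}n_0]+1}(\fA)$ where $n_0=-v_\fA(c)$; this is a matter of matching a $k$-character of the abelian pro-$p$ quotient $\rU^{[\frac12 n_0]+1}(\fA)/\rU^{n_0+1}(\fA)$ with an element of $\mathfrak{P}^{-n_0}/\mathfrak{P}^{-([\frac12 n_0]+1)}$ via the fixed equivalence — exactly as in the complex case in the appendix of \cite{BuKuII}, using Lemma \ref{lem 3} to pass between $\bC$- and $k$-characters. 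If $\chi$ is unramified the situation is degenerate and one can take $c$ to adjust only the depth-zero part; I would treat that case (or the case where the relevant twist is already trivial on the appropriate congruence subgroup) separately and observe the statement is immediate there. Step 2: apply Lemma \ref{lem 2}(1) to conclude $[\fA,n_1,0,\beta+c]$ is simple with $H^1(\beta+c)=H^1(\beta)$ and $J^1(\beta+c)=J^1(\beta)$ (hence also $J(\beta+c)=J(\beta)$, so $J$ is unchanged). Step 3: apply Lemma \ref{lem 2}(3) to get $C_k(\fA,0,\beta+c)=C_k(\fA,0,\beta)\otimes\chi\circ\det$, so $\theta\otimes\chi\circ\det$ is a simple $k$-character attached to $\beta+c$. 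Step 4: check that if $\kappa$ is a $\beta$-extension of $\eta(\theta)$, then $\kappa\otimes\chi\circ\det$ is a $(\beta+c)$-extension of $\eta(\theta\otimes\chi\circ\det)=\eta(\theta)\otimes\chi\circ\det$; this uses that the intertwining set of $\kappa\otimes\chi\circ\det$ still contains $B^\times$ (the Levi of the new stratum coincides with that of the old, since $\chi\circ\det$ intertwines everywhere), and that $\eta(\theta)\otimes\chi\circ\det$ is the unique irreducible representation of $J^1$ containing $\theta\otimes\chi\circ\det$. Step 5: since $\sigma$ is trivial on $J^1$ and $\chi\circ\det$ is trivial on $J^1$ (as $J^1$ is pro-$p$ and $\det(J^1)\subset\rU^1(F)$ on which a tame twist... — more carefully, $\chi\circ\det$ restricted to $J^1$ is absorbed into the comparison of $\beta$-extensions in Step 4), we get $\lambda\otimes\chi\circ\det=(\kappa\otimes\chi\circ\det)\otimes\sigma$ is of the required form with $e=1$, hence a maximal simple $k$-type for the stratum $[\fA,n_1,0,\beta+c]$.

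The main obstacle I expect is Step 4, the stability of the class of $\beta$-extensions under twisting by $\chi\circ\det$ when passing from $\beta$ to $\beta+c$: one must verify that the defining intertwining property ($B^\times$ lies in the intertwining set) is preserved, and that the normalisation used to single out $\beta$-extensions in the modular setting (following \cite{MS}, via reduction mod $\ell$ of complex $\beta$-extensions) is compatible with the twist. I would handle this by lifting everything to $\bC$ via Lemma \ref{lem 3}, invoking the complex statement from the appendix of \cite{BuKuII} — where $(J,\lambda_\bC\otimes\chi_\bC\circ\det)$ is shown to be a maximal simple $\bC$-type — and then reducing modulo $\ell$, noting that reduction mod $\ell$ commutes with the relevant operations (tensoring by the tame-times-$p$-part decomposition of $\chi$, restriction, $\eta$-correspondence on the pro-$p$ group $J^1$). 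A minor subtlety is that $\chi$ need not have finite order while $\beta$-extensions are only defined up to twist by characters of $J/J^1$; this is harmless because decomposing $\chi=\chi_{\mathrm{ram}}\chi_{\nr}$ and observing $\chi_{\nr}\circ\det$ is trivial on $J$, only the ramified part matters and it factors through a finite quotient of $J/J^1$.
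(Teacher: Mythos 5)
Your proposal is correct and follows essentially the same route as the paper: choose $c\in F^\times$ matching $\chi\circ\det$ via $\psi_{c,k}$, apply parts 1 and 3 of Lemma~\ref{lem 2} to identify $\theta\otimes\chi\circ\det$ as a simple $k$-character in $C_k(\fA,0,\beta+c)$ with the same groups $H^1$, $J^1$, $J$, and then check that $\kappa\otimes\chi\circ\det$ is a $\beta$-extension by comparing intertwining sets. You spell out more detail than the paper does (separating the unramified part of $\chi$, explicitly handling the non-finite-order issue), but the skeleton matches; one small wrinkle is that your Step 5 initially asserts $\chi\circ\det$ is trivial on $J^1$, which is false in general (nothing forces $\chi$ to be tame), but you then self-correct by noting that the restriction to $J^1$ is already accounted for in the $\beta$-extension comparison of Step 4, which is the right resolution.
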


\begin{proof}
Let $[\fA,n,0,\beta]$ be a simple stratum in the construction of $(J,\lambda)$. We have $\lambda\cong\kappa\otimes\sigma$ (see Section \ref{Notation}) where $\kappa$ is a $\beta$-extension of $\eta(\theta)$ of a simple $k$-character $\theta$ in $C_k(\fA,0,\beta)$, hence $\lambda\otimes\chi\circ\det\cong(\kappa\otimes\chi\circ\det)\otimes\sigma$. From \cite[appendix]{BuKuII} and Part $1,3$ of Lemma \ref{lem 2}, we know that there exists $c\in F^{\times}$ such that $\theta\otimes\chi\circ\det$ is a simple $k$-character of $C_k(\fA,0,\beta+c)$. Hence $\eta(\theta)\otimes\chi\circ\det$ is the unique irreducible $k$-representation of $J^1$ containing $\theta\otimes\chi\circ\det$ after restricting to $H^1$. To end this proof, it is sufficient to prove that $\kappa\otimes\chi\circ\det$ is a $\beta$-extension of $\eta(\theta\otimes\chi\circ\det)$, which follows from the fact that the intertwining set of $\kappa\otimes\chi\circ\det$ is equal to that of $\kappa$.
\end{proof}

\begin{rem}
\label{rem 33}
Let $(J_{\rM},\lambda_{\rM})$ be a maximal simple $k$-type of $\rM$, where $\rM$ is a Levi subgroup of $\rG$. Then $\lambda_{\rM}\cong\lambda_1\otimes\cdots\otimes\lambda_r$ for some $r\in\mathbb{N}^{\ast}$, where $(J_i,\lambda_i)$ are maximal simple $k$-types of $\mathrm{GL}_{n_i}(F)$. Hence for any $k$-quasicharacter $\chi$ of $F^{\times}$, the tensor product $(J_{\rM},\lambda_{\rM}\otimes\chi\circ\det)$ is also a maximal simple $k$-type of $\rM$.
\end{rem}

\subsubsection{Intertwining and weakly intertwining}
\label{section 02.2.2}
Recall that $\rM$ is a Levi subgroup of $\rG$, and $\rM'=\rM\cap\rG'$ is a Levi subgroup of $\rG'$. Let $K$ be a subgroup of $\rM$, we always denote by $K'$ the intersection $K\cap\rM'$.

\begin{prop}
\label{prop 0.1}
\label{Lprop 0.1}
Let $K$ be a compact subgroup of $\rM$, and $\rho$ an irreducible $k$-representation of $K$. Then the restriction $\res_{K'}^K\rho$ is semisimple. 
\end{prop}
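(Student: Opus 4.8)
The plan is to exploit the fact that $\rG'=\mathrm{SL}_n(F)$ is a normal subgroup of $\rG=\mathrm{GL}_n(F)$ with abelian quotient, so that $K'=K\cap\rG'$ is normal in $K$ with $K/K'$ abelian. Since $\rho$ is irreducible and $K'\trianglelefteq K$, Clifford theory applies: $\res_{K'}^{K}\rho$ is a direct sum of $K$-conjugates of any one of its irreducible constituents, each appearing with the same multiplicity. Thus $\res_{K'}^K\rho$ is isotypic-by-$K$-orbit, and in particular it is semisimple \emph{provided} at least one (equivalently every) irreducible constituent $\sigma$ of $\res_{K'}^K\rho$ is such that the isotypic component $\rho_\sigma$ is a semisimple $K'$-module. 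The subtlety in characteristic $\ell$ is precisely that Clifford's theorem in the modular setting only gives that $\res_{K'}^K\rho$ is a sum of its $\sigma$-isotypic components over a $K$-orbit of constituents $\sigma$; it does not by itself guarantee that each isotypic component is semisimple over $K'$. So the real content is to rule out non-semisimple isotypic components.

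First I would reduce to the pro-$p$ situation. Write $K^{+}$ for the maximal open normal pro-$p$ subgroup of $K$ (which exists since $K$ is a compact subgroup of $\rG$, hence has an open normal pro-$p$ subgroup, and one may take the maximal such); then $K/K^{+}$ is a finite group of order prime to $p$. An irreducible $k$-representation of a pro-$p$ group in characteristic $\ell\neq p$ is a character (its image is a finite $\ell$-group quotient which, being also pro-$p$, is trivial — more precisely any smooth irreducible $k$-representation of a pro-$p$ group is trivial when $\ell\neq p$, cf. the discussion around Lemma \ref{lem 3}), so $\rho|_{K^{+}}$ is trivial and $\rho$ factors through the finite group $K/K^{+}$. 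Similarly $\res_{K'}^K\rho$ factors through $K'/(K'\cap K^{+})=K'/(K')^{+}$, a finite group. One must check $K'\cap K^{+}=(K')^{+}$ up to finite index causing no trouble; in any case $\res_{K'}^K\rho$ is inflated from a representation of a finite group whose order is prime to $p$.

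Now the point is that $\res_{K'}^K\rho$, viewed as a representation of the finite group $\bar K'=K'/(K'\cap K^{+})$, is the restriction of the representation $\bar\rho$ of $\bar K=K/K^{+}$ along the inclusion $\bar K'\hookrightarrow\bar K$ of a normal subgroup with abelian (hence in particular, of order prime to $p$ since $\bar K$ has order prime to $p$) quotient. Here I would invoke the modular version of Clifford theory together with the observation that $|\bar K'|$ is prime to $p$: if $\ell\nmid|\bar K'|$ then $k[\bar K']$ is semisimple by Maschke and we are done immediately. If $\ell\mid|\bar K'|$, then since $|\bar K|$ is prime to $p$ but possibly divisible by $\ell$, one uses that $\bar K/\bar K'$ is abelian of order prime to $\ell$ or handles the $\ell$-part via Clifford theory: Knörr's theorem (or the Clifford-theoretic fact that for $N\trianglelefteq G$ with $G/N$ a $p'$-group in the relevant sense, restriction of a semisimple — indeed irreducible — module stays semisimple) gives that $\res_{\bar K'}^{\bar K}\bar\rho$ is semisimple because $\bar\rho$ is irreducible and the index situation is controlled. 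The cleanest route, which I expect to be the one taken, is: $\res_{\bar K'}^{\bar K}\bar\rho$ is semisimple if and only if its head and socle agree, and by Clifford both head and socle are the sum over the same $\bar K$-orbit of the same irreducible $\bar K'$-module-data; the potential failure would produce a self-extension of an irreducible $\bar K'$-module inside $\bar\rho$ that is not split by $\bar K$, but since $\bar\rho$ is completely reducible over $\bar K'$ as soon as the $\ell'$-part is handled, and the $\ell$-part of $K'$ is already ``seen'' because $\bar\rho|_{\bar K'}$ is a sum of conjugates of a fixed irreducible — here one genuinely needs that an irreducible $k$-representation of the finite group $\bar K'$ has semisimple restriction to... no: one needs that $\bar\rho|_{\bar K'}$ is semisimple, which follows once each isotypic piece is semisimple, which follows because each such piece is a direct summand stable under the larger group $\bar K$ and $\bar\rho$ itself is irreducible, forcing each isotypic piece to be, by minimality of support in a Clifford decomposition, a single irreducible $\bar K'$-module up to multiplicity realized as an honest direct sum. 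The main obstacle, and the step deserving the most care, is exactly this last point: verifying in the modular case that the $\sigma$-isotypic component of $\res_{K'}^K\rho$ is genuinely a direct sum of copies of $\sigma$ rather than a non-split extension; this is where one must use that $K/K'$ is abelian (so that the stabilizer $K_\sigma$ of $\sigma$ has abelian image in $K/K'$, and the projective representation of $K_\sigma/K'$ on the isotypic component is over an abelian group, whose $k$-group algebra is a product of truncated polynomial rings, yet the faithfulness forced by the projective action together with $\rho$ irreducible collapses the possible Jordan blocks) — I would spell this out via the explicit structure $K/K'\cong F^{\times}/(F^{\times})^{n}$-type quotient being finite and, after the pro-$p$ reduction, of order prime to $p$, combined with a direct check that no nontrivial self-extensions survive. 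Once semisimplicity of every isotypic component is established, Clifford's decomposition of $\res_{K'}^K\rho$ into a sum of $K$-conjugate isotypic components completes the proof.
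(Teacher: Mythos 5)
Your proposal has a genuine gap at the very first reduction. You claim that $\rho|_{K^{+}}$ is trivial, on the grounds that an irreducible smooth $k$-representation of a pro-$p$ group is trivial when $\ell\neq p$. This is false: such a representation is a character with values in the group $\mu_{p^{\infty}}(k)$ of $p$-power roots of unity in $k$ (which is non-trivial since $\ell\neq p$), and is typically non-trivial — this is exactly what Lemma \ref{lem 3} in the paper is about. More importantly, $\rho|_{K^{+}}$ need not even be a single character, and $\rho$ certainly does not factor through $K/K^{+}$ in general (think of a maximal simple type $\lambda$ on $J$: it is essentially never trivial on $J^{1}$). So the reduction to the finite prime-to-$p$ group $K/K^{+}$ does not go through, and the subsequent discussion of Maschke and isotypic components is built on a collapsed foundation.

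The correct normal subgroup to use is not the maximal pro-$p$ subgroup $K^{+}$ but the kernel $O=\ker\rho$, which is automatically open and normal in $K$ because $\rho$ is smooth and irreducible (hence finite-dimensional). This is what the paper does: $OK'$ is then an open normal subgroup of finite index in $K$, so $\res_{OK'}^{K}\rho$ is semisimple by the standard Clifford-theoretic fact that the restriction of an irreducible representation to a normal subgroup of finite index is always semisimple (take an irreducible $OK'$-submodule $W$; the sum $\sum_{g}gW$ over coset representatives is $K$-stable, hence all of $\rho$, and is a sum of irreducibles) — a fact valid in any characteristic, so the worry you raise about modular Clifford theory and non-split isotypic components is unfounded. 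Finally, since $O$ is contained in the kernel, the $OK'$-module $\res_{OK'}^{K}\rho$ and the $K'$-module $\res_{K'}^{K}\rho$ both factor through the common quotient $OK'/O\cong K'/(O\cap K')$ and have identical submodule lattices, so semisimplicity over $OK'$ gives semisimplicity over $K'$. Your instinct to reduce to a finite quotient and apply Clifford theory is the right one; the error is in choosing $K^{+}$ rather than $\ker\rho$ as the reducing subgroup, and in the unfounded claim that $\rho$ kills $K^{+}$.
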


\begin{proof}
Let $O$ be the kernel of $\rho$, which is a normal subgroup of $K$. The subgroup $OK'$ is also a compact open normal subgroup of $K$, hence with finite index in $K$. We deduce that the restriction $\res_{OK'}^{K}\rho$ is semisimple by Clifford theory, and that the restriction $\res_{K'}^K \rho$ is semisimple.
\end{proof}

\begin{prop}
\label{prop 0.2}
\label{Lprop 0.2}
Let $K$ be a compact open subgroup of $\rM$, $\rho$ an irreducible smooth representation of $K$, and $\rho'$ an irreducible component of the restriction $\res_{K'}^K\rho$. Let $\overline{\rho}$ be an irreducible representation of $K$ such that $\res_{K'}^{K}\overline{\rho}$ contains $\rho'$ as well. Then there exists a $k$-quasicharacter $\chi$ of $F^{\times}$ such that $\rho\cong\overline{\rho}\otimes\chi\circ \mathrm{det}$.
\end{prop}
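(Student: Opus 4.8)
The plan is to exploit the fact that $K' = K \cap \rG'$ is the kernel of the determinant map restricted to $K$, so that $K/K'$ embeds into $F^\times/(F^\times)$-type quotient and is abelian; more precisely $\det(K)$ is a compact open subgroup of $F^\times$, and $K/K'\cong\det(K)$. The two irreducible representations $\rho$ and $\overline{\rho}$ of $K$ both contain $\rho'$ on restriction to $K'$, so by Clifford theory (using Proposition \ref{prop 0.1}, which guarantees the restrictions are semisimple) they are $K$-conjugate twists of each other by a character of $K/K'$. Concretely, I would argue that $\Hom_{K'}(\rho',\res^K_{K'}\rho)\neq 0$ and $\Hom_{K'}(\rho',\res^K_{K'}\overline{\rho})\neq 0$ force $\overline{\rho}$ to appear in $\Ind_{K'}^{K}\rho'$, hence (by Frobenius reciprocity and the fact that all constituents of $\Ind_{K'}^K\rho'$ are of the form $\rho\otimes\xi$ for characters $\xi$ of $K/K'$) we get $\overline{\rho}\cong\rho\otimes\xi$ for some $k$-character $\xi$ of $K/K'$.

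The second step is to lift $\xi$ to a $k$-quasicharacter of $F^\times$ composed with the determinant. Since $K/K'\cong\det(K)$ is a compact open subgroup of $F^\times$, and $F^\times\cong\varpi_F^{\bZ}\times\fo_F^\times$, the quotient $\fo_F^\times/(\text{something})$ is a profinite abelian group, so any $k$-character of $\det(K)$ — which factors through a finite quotient since $\rho$ and $\overline\rho$ are finite-dimensional — extends to a $k$-character of $F^\times$. I would first extend $\xi$ from $\det(K)$ to $\fo_F^\times$ (possible because $\det(K)\cap\fo_F^\times$ has finite index in $\fo_F^\times$ and $k^\times$ is divisible, so characters of finite abelian subquotients extend — here one has to be slightly careful that the relevant finite quotient has order prime to $\ell$ since $K$ is pro-$p$ modulo a finite reductive quotient, but in fact any $k$-character of a finite abelian group valued in the algebraically closed field $k$ of the subgroup extends to the whole group because $\Hom(-, k^\times)$ is exact on finite abelian groups when we only need existence of an extension, which holds as $k^\times$ is divisible hence injective as a $\bZ$-module on the torsion we care about after possibly passing through the $\ell'$-part), and then extend trivially along $\varpi_F$. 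Setting $\chi$ to be this $k$-quasicharacter of $F^\times$ gives $\chi\circ\det|_K = \xi$, hence $\rho\cong\overline{\rho}\otimes\chi\circ\det$ up to replacing $\chi$ by $\chi^{-1}$.

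The main obstacle I anticipate is the subtlety in the modular ($\ell\neq 0$) setting when extending the character $\xi$: one must ensure that an extension valued in $k^\times$ actually exists, and that the Clifford-theory statement "every constituent of $\Ind_{K'}^K\rho'$ is a character twist of $\rho$" still holds when the restriction $\res^K_{K'}\rho$ may have repeated constituents or when $K/K'$-stabilizers behave unusually. The cleanest route around the second issue is to observe that $K/K'$ is abelian, so the Clifford orbit of $\rho'$ under $K$ together with the projective-representation obstruction collapses: since $K/K'$ is abelian, Mackey–Clifford theory gives that the constituents of $\Ind_{K'}^K\rho'$ are exactly $\{\rho\otimes\xi : \xi\in\widehat{K/K_\rho}\}$ where $K_\rho$ is the stabilizer of $\rho'$, and any such $\xi$ on $K/K_\rho$ inflates to $K/K'$ and then extends to $F^\times$ as above. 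I would cite Proposition \ref{prop 0.1} for semisimplicity of the restriction, which is what makes Clifford theory applicable here despite $\ell\neq 0$, and note $\det(K)$ being open in $F^\times$ together with divisibility of $k^\times$ to finish the extension argument.
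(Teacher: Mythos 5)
Your overall strategy --- use Clifford theory to show $\overline\rho$ differs from $\rho$ by a character of $K/K'$, then extend that character to a $k$-quasicharacter of $F^\times$ through the determinant --- is the same idea as the paper's, but the central Clifford-theoretic step is not correctly justified and, as worded, contains a genuine gap.

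The claim you lean on is that, because $K/K'$ is abelian, ``the projective-representation obstruction collapses,'' so every irreducible constituent of $\ind_{K'}^K\rho'$ is of the form $\rho\otimes\xi$ with $\xi$ a character. This reasoning is false as stated: for a finite abelian group $A$, the Schur multiplier $H^2(A,k^\times)$ is typically nonzero (e.g.\ $A\cong(\bZ/m)^2$), so the cocycle attached to the inertia group $K_{\rho'}$ need not be trivial, and the irreducibles of $K_{\rho'}$ lying over $\rho'$ correspond to modules over a genuinely twisted group algebra $k^\alpha[K_{\rho'}/K']$ rather than to plain characters. It is in fact \emph{true} that any two such irreducibles differ by a twist by an ordinary character of $K_{\rho'}/K'$ --- this can be proved from the structure of twisted group algebras of finite abelian groups (the center is $k[R]$ for $R$ the radical of the commutator pairing, irreducibles are classified by their central character, and twisting by a character of $A$ realizes all central characters) --- but that argument is nontrivial, has its own delicate points when $\ell$ divides $|K_{\rho'}/K'|$, and is nowhere in your proposal. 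Also, the set you write, $\widehat{K/K_\rho}$, is the wrong thing: characters trivial on the stabilizer satisfy $\rho\otimes\xi\cong\rho$, so you must mean characters of $K/K'$.

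The paper's proof sidesteps all of this. It first picks a pro-$p$ normal subgroup $U\subset\ker\rho$, uses that $U/U'$ embeds in $F^\times$ via $\det$ and that every smooth $k$-character of a pro-$p$ group extends to show one may replace $\overline\rho$ by a twist $\overline\rho\otimes\chi^{-1}\circ\det$ that is also trivial on $U$ (this twist does not change the restriction to $K'$). Having both $\rho$ and $\overline\rho$ factor through the finite group $K/U$, one gets $\Hom_{K'U}(\rho,\overline\rho)\neq0$ from semisimplicity of the restrictions, and then by Frobenius reciprocity and the projection formula
\[
\rho\hookrightarrow \ind_{K'U}^K\res_{K'U}^K\overline\rho\cong\overline\rho\otimes\ind_{K'U}^K(\mathds 1).
\]
The Jordan--H\"older factors of $\ind_{K'U}^K(\mathds 1)$ are $k$-characters of the \emph{finite} abelian group $K/K'U\hookrightarrow\fo_F^\times/\det U$, so they extend to $F^\times$ through $\det$, and the JH factors of $\overline\rho\otimes\ind_{K'U}^K(\mathds 1)$ are exactly $\overline\rho\otimes\tau$ with $\tau$ such a character. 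This directly gives $\rho\cong\overline\rho\otimes\chi\circ\det$ with no twisted group algebra analysis. The key moves you are missing are (a) inducing the full restriction $\res_{K'U}^K\overline\rho$ rather than just the single constituent $\rho'$, so the projection formula applies, and (b) passing to $K'U$ so that the relevant quotient is finite. Finally, your worry about the $\ell'$-part when extending the character is unnecessary: $k^\times$ is divisible, hence injective on the torsion involved, so characters of any finite abelian subgroup extend with no constraint on $\ell$.
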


\begin{proof}

Let $U$ be a pro-$p$ normal subgroup of $K$ contained in the kernel of $\rho$, which has finite index in $K$. The induction $\Ind_{U'}^{U}(1)$ is semisimple. Hence by Schur lemma it is a direct sum of characters of the form $(\chi\circ\mathrm{det})\vert_U$, where $\chi$ is a $k$-quasicharacter of $F^{\times}$. The fact that $\res_{U'}^{K}\rho$ contains the trivial character induces the same property for $\res_{U'}^K\overline{\rho}$. By Frobenius reciprocity, we know that $\res_{U}^K\overline{\rho}$ contains a character of the form $(\chi\circ\mathrm{det})\vert_U$, and the irreducibility implies that it is in fact a multiple of this character. We can hence assume that $\overline{\rho}$ is trivial on $U$.

By Clifford theory, the restriction of $\rho$ (resp. $\overline{\rho}$) to $K'U$ is semisimple. Hence the set $\Hom_{K'U}(\rho,\overline{\rho})$ is non-trivial. Applying Frobenius reciprocity, we see that $\rho$ is a subrepresentation of $\ind_{K'U}^{K}\res_{K'U}^K\overline{\rho}$. The latter is equivalent to $\overline{\rho}\otimes\ind_{K'U}^{K}(1)$ by \cite[\S.I, 5.2 d]{V1}, of which the Jordan-H$\ddot{\mathrm{o}}$lder factors are $\overline{\rho}\otimes\chi\circ\mathrm{det}$. We finish the proof.

\end{proof}

\begin{defn}
\label{prepdefn 01}
Let $\rA$ be a locally profinite group, and $K_i$ be an open compact subgroup of $\rA$ for $i=1,2$. Let $\rho_i$ be an $k$-representation of $K_i$, and $x$ be an element in $\rG$. Define $i_{K_1,K_2}x(\rho_2)$ to be the induced $k$-representation 
$$\ind_{K_1\cap x(K_2)}^{K_1}\res_{K_1\cap x(K_2)}^{x(K_2)}x(\rho_2),$$
where $x(\rho_2)$ is the conjugate of $\rho_2$ by $x$. 
\begin{itemize}
\item We say that an element $x\in\rA$ \textbf{weakly intertwines} $\rho_1$ with $\rho_2$, if $\rho_1$ is an irreducible subquotient of $i_{K_1,K_2}x(\rho_2)$, and that $\rho_1$ is \textbf{weakly intertwined} with $\rho_2$ in $\rA$, if $\rho_1$ is isomorphic to a subquotient of $\res_{K_1}^{\rA}\ind_{K_2}^{\rA}\rho_2$. We denote by $\mathrm{I}_{\rA}^{w}(\rho_1,\rho_2)$ the set of elements in $\rA$, which weakly intertwines $\rho_1$ with $\rho_2$. 
\item We say that an element $x\in\rA$ \textbf{intertwines} $\rho_1$ with $\rho_2$, if
$$\mathrm{H}_{x}(\rho_1,\rho_2):=\Hom_{K_1}(\rho_1,i_{K_1,K_2}x(\rho_2)) \neq 0.$$
The representation $\rho_1$ is \textbf{intertwined with} $\rho_2$ in $\rA$, if
$$\mathrm{H}_{\rA}(\rho_1,\rho_2):=\Hom_{\rA}(\ind_{K_1}^{\rA}\rho_1,\ind_{K_2}^{\rA}\rho_2)\neq 0.$$
We denote by $\mathrm{I}_{\rA}(\rho_1,\rho_2)$ the set of elements in $\rA$, which intertwine $\rho_1$ with $\rho_2$. 
\end{itemize}
When $K_1=K_2$, we use the abbreviation $i_{K_1}x(\rho_1)$. When $\rho_1=\rho_2$, we use the abbreviation $\mathrm{I}_{\rA}^{w}(\rho_1)$, $\mathrm{I}_{\rA}(\rho_1)$, $\mathrm{H}_{x}(\rho_1)$, and $\mathrm{H}_{\rA}(\rho_1)$ accordingly.
\end{defn}

When $\rho_1$ is irreducible, we deduce directly from Mackey's theory that $\rho_1$ is (weakly) intertwined with $\rho_2$ in $\rA$ if and only if there exists an element $x\in\rA$, such that $x$ (weakly) intertwines $\rho_1$ with $\rho_2$.

\begin{prop}
\label{prop 0.3}
\label{Lprop 0.3}
For $i= 1,2$, let $K_i$ be a compact open subgroup of $\rM$, and $\rho_i$ an irreducible representation of $K_i$, and $\rho_i'$ an irreducible component of $\res_{K_i'}^{K_i}\rho_i$. Let $x\in\rM$ that weakly intertwines $\rho_1'$ with $\rho_2'$. Then there exists a $k$-quasicharacter $\chi$ of $F^{\times}$ such that $x$ weakly intertwines $\rho_1$ with $\rho_2\otimes\chi\circ\mathrm{det}$.
\end{prop}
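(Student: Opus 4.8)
The plan is to reduce the claim about $\rho_1'$ and $\rho_2'$ to the already-established facts about representations of the full groups $K_i$ by combining Proposition \ref{prop 0.2} with the definition of weak intertwining. First I would unwind what the hypothesis gives us: $x$ weakly intertwines $\rho_1'$ with $\rho_2'$, so $\rho_1'$ is an irreducible subquotient of $\ind_{K_1'\cap x(K_2')}^{K_1'}\res x(\rho_2')$. Since $K_i'=K_i\cap\rG'$ and conjugation by $x\in\rM$ normalises $\rG'$, we have $x(K_2')=x(K_2)\cap\rG'$ and $K_1'\cap x(K_2')=(K_1\cap x(K_2))\cap\rG'=:H'$, where $H=K_1\cap x(K_2)$. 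So the hypothesis says $\rho_1'$ occurs in $\ind_{H'}^{K_1'}\res_{H'}^{x(K_2')}x(\rho_2')$.

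Next I would pass to the full group $K_1$. By Proposition \ref{prop 0.1} (applied to $K_1\supset K_1'$) the restriction $\res_{K_1'}^{K_1}\rho_1$ is semisimple, and $\rho_1'$ is one of its components; likewise $\res_{K_1'}^{K_1}(i_{K_1,K_2}x(\rho_2))$ is semisimple. The key point is a Mackey/Clifford comparison: $\res_{K_1'}^{K_1}\ind_{H}^{K_1}\res_H^{x(K_2)}x(\rho_2)$ and $\ind_{H'}^{K_1'}\res_{H'}^{x(K_2)}x(\rho_2)$ share their irreducible subquotients up to the action of $K_1/K_1'$-conjugation, since $K_1 = K_1' \cdot (\text{a set of representatives in } \mathfrak{o}_F^\times)$ acts transitively on the relevant double cosets after restriction — more precisely, restricting $i_{K_1,K_2}x(\rho_2)$ to $K_1'$ and using transitivity of induction, every irreducible constituent of $\ind_{H'}^{K_1'}\res x(\rho_2')$ appears in $\res_{K_1'}^{K_1}(i_{K_1,K_2}x(\rho_2))$ for a suitable choice of constituent $\rho_2'$ of $\res_{K_2'}^{K_2}\rho_2$ (and the various choices are $\rM$-conjugate, but here we only need existence). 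Hence $\rho_1'$ is a constituent of $\res_{K_1'}^{K_1}(i_{K_1,K_2}x(\rho_2))$, so some irreducible constituent $\bar\rho_1$ of $i_{K_1,K_2}x(\rho_2)$ — that is, some $\bar\rho_1$ which $x$ weakly intertwines with $\rho_2$ — has $\res_{K_1'}^{K_1}\bar\rho_1$ containing $\rho_1'$.

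Now Proposition \ref{prop 0.2} applies to $\rho_1$ and $\bar\rho_1$ (both irreducible representations of $K_1$ whose restrictions to $K_1'$ contain $\rho_1'$): there is a $k$-quasicharacter $\chi_0$ of $F^{\times}$ with $\rho_1\cong\bar\rho_1\otimes\chi_0\circ\det$. Therefore $x$ weakly intertwines $\bar\rho_1$ with $\rho_2$, i.e.\ $\bar\rho_1$ is a subquotient of $i_{K_1,K_2}x(\rho_2)$; twisting the whole situation by $\chi_0\circ\det$ (which is a character of $\rM$, hence commutes with parabolic/compact induction and with conjugation by $x$ in the obvious way) shows $\rho_1\cong\bar\rho_1\otimes\chi_0\circ\det$ is a subquotient of $i_{K_1,K_2}x(\rho_2)\otimes\chi_0\circ\det\cong i_{K_1,K_2}x(\rho_2\otimes\chi_0\circ\det)$, where in the last step I use that $\chi_0\circ\det$ is $x$-invariant as a character (since $\det(x\cdot x^{-1})=\det$) and restricts compatibly. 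Setting $\chi=\chi_0$ gives that $x$ weakly intertwines $\rho_1$ with $\rho_2\otimes\chi\circ\det$, as desired.

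The main obstacle I anticipate is the Mackey comparison in the second paragraph: one must be careful that restriction of the compactly induced representation $i_{K_1,K_2}x(\rho_2)$ from $K_1$ to $K_1'$ decomposes, via Mackey, into pieces indexed by $K_1'\backslash K_1/H$, and to match these with $\ind_{H'}^{K_1'}\res x(\rho_2')$ one needs that $H\cdot K_1'=K_1$ (equivalently that $\det H$ together with $\det K_1'=1$ exhausts $\det K_1$ modulo $\det$ of the relevant subgroup) or at least to track the finitely many double cosets and observe that each produces a constituent of the required form $i_{K_1,K_2}x(\bar\rho_2)$ for some constituent $\bar\rho_2$ of $\res_{K_2'}^{K_2}\rho_2$; since all such $\bar\rho_2$ are $\rM$-conjugate (by Clifford theory applied inside $\rM$, or by Proposition \ref{prop 0.2} again) and $x$ can absorb the conjugating element, existence of the desired $\chi$ is unaffected. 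Handling this bookkeeping cleanly — rather than the final twisting, which is routine — is where the real work lies.
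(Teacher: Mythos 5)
Your argument is essentially the same as the paper's: pass via Mackey (the $a=1$ term) from the hypothesis to the statement that $\rho_1'$ occurs in $\res_{K_1'}^{K_1}i_{K_1,K_2}x(\rho_2)$, use finite length to lift to an irreducible constituent $\bar\rho_1$ of $i_{K_1,K_2}x(\rho_2)$ whose restriction to $K_1'$ contains $\rho_1'$, then apply Proposition~\ref{prop 0.2} and twist. The paper gets the containment $i_{K_1',K_2'}x(\rho_2')\subset\res_{K_1'}^{K_1}i_{K_1,K_2}x(\rho_2)$ in one line and does not need your ``Mackey/Clifford comparison'' across double cosets (which you correctly flag as the murkiest point); also note that the parenthetical claim that $\res_{K_1'}^{K_1}(i_{K_1,K_2}x(\rho_2))$ is semisimple is unjustified (Proposition~\ref{prop 0.1} only applies to irreducible representations), though you never actually use it.
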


\begin{proof}
By Mackey's theory, $i_{K_1',K_2'}x(\rho_2')$ is a subrepresentation of $i_{K_1,K_2}x(\rho_2)$. Since $i_{K_1,K_2}x(\rho_2)$ has finite length, the uniqueness of Jordan-H\"older factors implies that there exists an irreducible subquotient $\rho$ of $i_{K_1,K_2}x(\rho_2)$, whose restriction to $K_1'$ contains $\rho_1'$ as a direct factors. By Proposition \ref{prop 0.2}, $\rho$ is isomorphic to $\rho_1\otimes\chi^{-1}\circ\mathrm{det}$, which implies that $\rho_1$ is weakly intertwined with $\rho_2\otimes\chi\circ\mathrm{det}$ by $x$.
\end{proof}

Now we consider the maximal simple $k$-types of $\rG=\mathrm{GL}_n(F)$.
\begin{prop}
\label{prop 1}
Let $(J,\lambda)$ be a maximal simple $k$-type of $\rG$, and $\chi$ a $k$-quasicharacter of $F^{\times}$. If $(J,\lambda\otimes\chi\circ\det)$ is weakly intertwined with $(J,\lambda)$, then they are intertwined, and conjugate under $\rU(\fA)$, which is to say that there exists an element $x\in\rU(\fA)$ such that $x(J)=J$ and $x(\lambda)\cong\lambda\otimes\chi\circ\det$.
\end{prop}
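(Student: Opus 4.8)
The plan is to reduce the statement to two inputs for $\rG=\GL_n(F)$, both available for $\ell=0$ in \cite{BuKu} and for $\ell\neq0$ in \cite{MS} (or deducible from \cite{BuKu} by reduction modulo $\ell$, as in Lemma \ref{lem 3}): the rigidity ``intertwining implies conjugacy'' for simple characters, and the intertwining formula for maximal simple types. I would first normalise the twisted type. Writing $\lambda=\kappa\otimes\sigma$ as in Section \ref{Notation}, Corollary \ref{cor 3} together with parts (1) and (3) of Lemma \ref{lem 2} gives $c\in F^\times$ such that $\lambda':=\lambda\otimes\chi\circ\det$ is a maximal simple $k$-type attached to the simple stratum $[\fA,n_1,0,\beta+c]$, carried by \emph{the same} groups $H^1$, $J^1$, $J$ as $(J,\lambda)$ (and, as $F[\beta+c]=F[\beta]=:E$, with the same $B=\mathrm{End}_E(V)$ and $\fB=B\cap\fA$): one has $\lambda'\cong\kappa'\otimes\sigma$ with $\kappa'=\kappa\otimes\chi\circ\det$ a $\beta$-extension of $\eta':=\eta\otimes\chi\circ\det=\eta(\theta')$ and $\theta'=\theta\otimes\chi\circ\det\in C_k(\fA,0,\beta+c)$. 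In particular $\lambda|_{J^1}$ is a multiple of $\eta$ and $\lambda'|_{J^1}$ a multiple of $\eta'$.

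Next I would push the weak intertwining down to the simple characters. Since $\lambda'$ is irreducible, the hypothesis yields $x\in\rG$ weakly intertwining $(J,\lambda')$ with $(J,\lambda)$ (remark after Definition \ref{prepdefn 01}). The trivial character of $J$ occurs in $\ind_{J^1}^J\mathbf 1$, so the projection formula gives an embedding $\ind_J^\rG\lambda\hookrightarrow\ind_{J^1}^\rG(\lambda|_{J^1})$, a multiple of $\ind_{J^1}^\rG\eta$; restricting a constituent of $\lambda'|_{J^1}$ and applying Mackey's formula, some element of $\rG$ weakly intertwines $\eta'$ with $\eta$. Restricting this to the pro-$p$ group $H^1$, over which smooth $k$-representations are semisimple, so that $\eta|_{H^1}$ and $\eta'|_{H^1}$ are multiples of $\theta$ and $\theta'$, a further Mackey computation produces $g\in\rG$ with $\theta'|_{H^1\cap g(H^1)}=g(\theta)|_{H^1\cap g(H^1)}$; that is, $g$ intertwines $\theta$ with $\theta'$ in $\rG$.

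By the intertwining-implies-conjugacy theorem for simple characters (\cite[\S 3.5]{BuKu}, and its modulo $\ell$ analogue in \cite{MS}), there is then $u\in\rU(\fA)$ carrying the $(\fA,\beta)$-datum to the $(\fA,\beta+c)$-datum with $u(\theta)=\theta'$; since $H^1$ is the domain of $\theta$ while $J^1=\rU^1(\fA)\cap\mathrm{I}_\rG(\theta)$ and $J=\rU(\fA)\cap\mathrm{I}_\rG(\theta)$, and all three coincide for $\beta$ and $\beta+c$ by Lemma \ref{lem 2}(1), $u$ normalises $H^1$, $J^1$ and $J$. Hence $u(\eta)=\eta(\theta')=\eta'$ by uniqueness of $\eta$, and $u(\kappa)$ is again a $\beta$-extension of $\eta'$, its intertwining set being $u(\mathrm{I}_\rG(\kappa))=u(J^1B^\times J^1)=\mathrm{I}_\rG(\theta')=J^1B^\times J^1\supseteq B^\times$. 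Any two $\beta$-extensions of $\eta'$ differ by a $k$-character of $J/J^1\cong\GL_f(k_E)$, i.e.\ one of the form $\phi\circ\det$; absorbing it into the cuspidal factor, $u(\lambda)\cong\kappa'\otimes\sigma''$ with $\sigma''$ the inflation of an irreducible cuspidal $k$-representation $\pi_0''$ of $\GL_f(k_E)$.

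Finally, $u(J)=J$ gives $\ind_J^\rG\lambda\cong\ind_J^\rG u(\lambda)$, so $(J,\lambda')=(J,\kappa'\otimes\sigma)$ is weakly intertwined with $(J,u(\lambda))=(J,\kappa'\otimes\sigma'')$ in $\rG$; these maximal simple types share the $\beta$-extension $\kappa'$, so by the intertwining formula for maximal simple types (\cite[\S 5.5]{BuKu}, \cite{MS}) a non-empty (weak) intertwining set forces the underlying cuspidal representations $\pi_0$, $\pi_0''$ of $\GL_f(k_E)$ to be isomorphic; hence $\sigma''\cong\sigma$ and $\lambda'\cong u(\lambda)$. With $x:=u\in\rU(\fA)$ we get $x(J)=J$ and $x(\lambda)\cong\lambda\otimes\chi\circ\det$, and since $i_{J,J}u(\lambda)=u(\lambda)\cong\lambda'$ we have $\Hom_{kJ}(\lambda',i_{J,J}u(\lambda))\neq0$, so $u$ intertwines $\lambda'$ with $\lambda$ and the two types are intertwined. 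I expect the real work to be the Mackey bookkeeping of the descent step (where $k[J/J^1]$ need not be semisimple) and checking that both $\GL_n(F)$-inputs hold modulo $\ell$ for \emph{weak}, not only strong, intertwining.
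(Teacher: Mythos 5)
Your first half — normalising the twist via Lemma \ref{lem 2} and Corollary \ref{cor 3}, descending the weak intertwining from $\lambda$ through $\eta$ to the simple characters on the pro-$p$ group $H^1$ where semisimplicity upgrades "subquotient" to "sub", and then invoking intertwining-implies-conjugacy for simple characters to get $u\in\rU(\fA)$ with $u(\theta)=\theta'$ and $u(J)=J$, $u(\eta)=\eta'$ — runs essentially parallel to the paper's proof (the paper lifts to $\bC$-characters and applies \cite[Theorem 3.5.11]{BuKu} explicitly, but the content is the same).

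The divergence, and the genuine gap, is in your final step. You want to conclude by comparing $\kappa'\otimes\sigma$ with $\kappa'\otimes\sigma''$: since they share a $\beta$-extension, "the intertwining formula for maximal simple types" should force $\sigma''\cong\sigma$. But the formula you are invoking (\cite[\S 5.5]{BuKu}, \cite{MS}) classifies \emph{intertwining} in the $\Hom$-sense, and your hypothesis only gives \emph{weak} intertwining — $\lambda'$ is merely a subquotient of $i_{J,J}g(u(\lambda))$. Pushing this through the tensor decomposition $\lambda=\kappa\otimes\sigma$ requires controlling subquotients of $k[J/J^1]\cong k[\GL_f(k_E)]$-modules, and $k[\GL_f(k_E)]$ is not semisimple when $\ell\mid|\GL_f(k_E)|$; a cuspidal $\sigma$ can be a subquotient of modules whose socle is quite different from $\sigma$. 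So the step "a non-empty weak intertwining set forces $\pi_0\cong\pi_0''$" does not follow from the cited results, and proving it would amount to re-proving a substantial part of the modular theory. You flag this yourself as the place where "the real work" lies, and indeed it is a genuine hole.

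The paper resolves exactly this difficulty with a different tool, which you do not use: \cite[Corollary 8.4]{V3}. That result says the $\eta_1$-isotypic part of $\res_J^{\rG}\ind_J^{\rG}\lambda$ is a direct $J$-summand and, because $(J,\lambda)$ is a \emph{maximal} simple type, is a multiple of $\lambda$. Since $x(\lambda\otimes\chi\circ\det)$ restricts to $J^1$ as a multiple of $\eta_1$, it can only occur as a subquotient of that direct summand, hence is isomorphic to $\lambda$; intertwining then follows at once. This single input sidesteps the whole issue of weak intertwining at the level of the finite group $\GL_f(k_E)$, and is the key modular ingredient your proposal is missing. If you want to repair your argument without invoking \cite[Corollary 8.4]{V3}, you would need an independent proof that weak intertwining of two cuspidal $k$-representations of $\GL_f(k_E)$ (after the Mackey bookkeeping through $\kappa'$) forces isomorphism — a nontrivial statement which, as far as the paper's sources go, is not available off the shelf.
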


\begin{proof}
Let $\theta_1$ and $\theta_2$ be the simple character contained in $\lambda$ and $\lambda\otimes\chi\circ\det$ respectively, and let $\eta_i$ be the Heisenberg representation of $\theta_i$ for $i=1,2$. There is a surjection from $\res_{H^1}^J\lambda$ to $\theta_1$ (see Section \ref{Notation} for $K$). By Frobenius reciprocity, there is an injection from $\lambda$ to $\ind_{H^1}^J\theta_1$, and hence an injection: $\res_{H^1}^G\ind_J^G\lambda\hookrightarrow \res_{H^1}^G\ind_{H^1}^G\theta_1$. By the assumption, $\res_{H^1}^J\lambda\otimes\chi\circ\mathrm{det}$ is a subquotient of $\res_{H^1}^G\ind_{H^1}^G\theta_1$. From Corollary \ref{cor 3} we have $\mathrm{H}^1(\beta+c)=\mathrm{H}^1(\beta)$. Hence $\res_{\mathrm{H}^1}^{\rG}(\lambda\otimes\chi\circ\mathrm{det})$ is a multiple of $\theta_2$, from which we deduce that $\theta_2$ is a subquotient of $\res_{\mathrm{H}^1}^{\rG}\ind_{\mathrm{H}^1}^{\rG}\theta_1$.

Notice that $\mathrm{H}^1$ is a prop-$p$ group, and any smooth representation of $\mathrm{H}^1$ is semisimple. It follows that $\theta_2$ is a sub-representation of $\res_{\mathrm{H}^1}^{\rG}\ind_{\mathrm{H}^1}^{\rG}\theta_1$, which is equivalent to say that $\theta_2$ is intertwined with $\theta_1$ in $\rG$. The same property holds for the $\cK$-lifting of $\theta_i$ for $i=1,2$, which implies that the nonsplit fundamental strata $[\fA,n,n-1,\beta]$ and the nonsplit fundamental stratum $[\fA,m,m-1,\beta+c]$ are intertwined. We deduce that $n=m$ from \cite[2.3.4, 2.6.3]{BuKu}. Then we apply Theorem 3.5.11 of \cite{BuKu}: there exists $x\in\mathrm{U}(\fA)$ such that $x(\mathrm{H}^1)=\rH^1$, $\nC(\fA,0,\beta)=\nC(\fA,0,x(\beta+c))$ and $x(\theta_{2})=\theta_{1}$, hence $x(\theta_2)=\theta_1$. In particular, $x(J)$ is a subset of $\mathcal{I}_{\mathrm{U}(\fA)}(\theta_1)$. Meanwhile, \cite[2.3.3]{MS} and \cite[3.1.15]{BuKu} implies that $\mathcal{I}_{\rG}(\theta_1)\cap\mathrm{U}(\fA)=J$, then $x(J)=J$. Proposition 2.2 of \cite{MS} shows the uniqueness of $\eta_1$, hence $x(\eta_2)\cong\eta_1$. From  \cite[Corollary 8.4]{V3} we know that the $\eta_1$-isotypic part of $\res_J^{\rG}\ind_J^{\rG}\lambda$ can be viewed as a representation of $J$, which is a direct factor of $\res_J^{\rG}\ind_J^{\rG}\lambda$ and is a multiple of $\lambda$. Since $x(\lambda\otimes\chi\circ\det)$ is a subquotient of the $\eta_1$-isotypic part of $\res_J^{\rG}\ind_J^{\rG}(\lambda)$, we conclude that $x(\lambda\otimes\chi\circ\det)\cong\lambda$ and $\Hom_{kJ}(\lambda\otimes\chi\circ\det, \res_J^{\rG}\ind_J^{\rG}\lambda)\neq0$.
\end{proof}

\begin{cor}
\label{cor 5}
Let $g\in\rG$, if $g$ weakly intertwines $(J,\lambda\otimes\chi\circ\det)$ with $(J,\lambda)$, then $g$ intertwines $(J,\lambda\otimes\chi\circ\det)$ with $(J,\lambda)$.
\end{cor}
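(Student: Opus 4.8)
The plan is to reduce Corollary \ref{cor 5} to Proposition \ref{prop 1} by a base-change argument that replaces the pair $(J,\lambda)$ by a conjugate pair, together with the observation that being weakly intertwined in $\rG$ and intertwined in $\rG$ are both conjugation-equivariant and, crucially for this situation, both equivalent to the same orbit condition once we are inside the principal congruence structure attached to a single hereditary order. Concretely, suppose $g$ weakly intertwines $(J,\lambda\otimes\chi\circ\det)$ with $(J,\lambda)$. Writing $\lambda=\kappa\otimes\sigma$ and $\lambda\otimes\chi\circ\det=(\kappa\otimes\chi\circ\det)\otimes\sigma$ as in Section \ref{Notation}, Corollary \ref{cor 3} gives us a $c\in F^{\times}$ and a simple stratum $[\fA,n_1,0,\beta+c]$ so that $\theta\otimes\chi\circ\det\in C_k(\fA,0,\beta+c)$ and $\rH^1(\beta+c)=\rH^1(\beta)$. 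So both pairs are genuine maximal simple $k$-types attached to hereditary orders inside $A$, and the hypothesis that $g$ weakly intertwines them is exactly the hypothesis needed.

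First I would run the first portion of the proof of Proposition \ref{prop 1} verbatim: from a weak intertwining of $(J,\lambda\otimes\chi\circ\det)$ with $(J,\lambda)$ by $g$ one extracts, via Frobenius reciprocity and exactness, that the associated simple characters $\theta_2$ (the one on $\rH^1$ cut out by $\lambda\otimes\chi\circ\det$) and $\theta_1$ (cut out by $\lambda$) satisfy: $\theta_2$ is a subquotient of $\res_{\rH^1}^{\rG}\ind_{\rH^1}^{\rG}\theta_1$, hence (by pro-$p$ semisimplicity) a subrepresentation, hence $\theta_1$ and $\theta_2$ are intertwined in $\rG$. Then, lifting to $\bC$ through Lemma \ref{lem 3}, the $\bC$-simple characters $\theta_{1,\bC}$ and $\theta_{2,\bC}$ are intertwined in $\rG$, so by \cite[2.3.4, 2.6.3]{BuKu} the associated fundamental strata have the same level and by \cite[Theorem 3.5.11]{BuKu} there is $x\in\rU(\fA)$ with $x(\rH^1)=\rH^1$, $\nC(\fA,0,\beta)=\nC(\fA,0,x(\beta+c))$ and $x(\theta_{2,\bC})=\theta_{1,\bC}$, so $x(\theta_2)=\theta_1$. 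The point is that this first half of the argument only used the weak intertwining hypothesis, and it already produces $x\in\rU(\fA)$ conjugating one type to (a type built on) the other.

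Next I would feed $x$ into the element $g$: after replacing $\lambda\otimes\chi\circ\det$ by its $x$-conjugate we may assume, exactly as in the end of the proof of Proposition \ref{prop 1}, that $x(J)=J$ (using \cite[2.3.3]{MS} and \cite[3.1.15]{BuKu} to get $\mathcal I_\rG(\theta_1)\cap\rU(\fA)=J$) and $x(\eta_2)\cong\eta_1$ (uniqueness of the Heisenberg representation, \cite[Proposition 2.2]{MS}). Now $x^{-1}g$ weakly intertwines $(J,\lambda\otimes\chi\circ\det)$ with $(J, x^{-1}(\lambda))$, and $x^{-1}(\lambda)$ has the shape $\lambda\otimes\chi'\circ\det$ for some $k$-quasicharacter $\chi'$ by Remark \ref{rem 33} / Proposition \ref{prop 0.2} applied at the level of $\rH^1$; replacing $\chi$ by $\chi'^{-1}\chi$ we are reduced to the case where both types share the same $\eta$ and the same $J$. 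In that situation \cite[Corollary 8.4]{V3} identifies the $\eta_1$-isotypic part of $\res_J^\rG\ind_J^\rG\lambda$ as a direct factor (a representation of $J$) which, since $(J,\lambda)$ is maximal simple, is a multiple of $\lambda$; since $g$ weakly intertwines, $\lambda\otimes\chi\circ\det$ is a subquotient of this isotypic part, hence of a multiple of $\lambda$, hence $\Hom_{kJ}(\lambda\otimes\chi\circ\det,\res_J^\rG\ind_J^\rG\lambda)\neq0$, i.e.\ $g$ (equivalently, since $\ind_J^\rG\lambda\cong\ind_J^\rG x(\lambda)$, the original $g$) intertwines the two types.

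The main obstacle I anticipate is bookkeeping the quasicharacter twists: one must check that at every reduction step — passing from $g$ to $x^{-1}g$, replacing $\chi$ by $\chi'^{-1}\chi$, and conjugating $\lambda\otimes\chi\circ\det$ by $x$ — the pair one is comparing remains of the form $(J,\lambda\otimes(\text{quasicharacter})\circ\det)$ with the \emph{same} $J$ and the \emph{same} underlying $\eta$, so that the final appeal to \cite[Corollary 8.4]{V3} and maximality genuinely applies. This is precisely the content of Remark \ref{rem 33} and Propositions \ref{prop 0.2}, \ref{prop 0.3} combined, but tracking which stratum ($\beta$ versus $\beta+c$) each object lives over, while keeping $\rH^1$ and $J$ literally equal via Corollary \ref{cor 3}, requires care. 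Everything else is a direct transcription of the argument already carried out in Proposition \ref{prop 1}; indeed one could phrase the corollary's proof simply as: ``the proof of Proposition \ref{prop 1} used the hypothesis only through weak intertwining, so the stronger-sounding conclusion holds for any weakly intertwining $g$,'' but I would prefer to spell out the reduction so that no hidden use of $g\in\rU(\fA)$ or of intertwining (rather than weak intertwining) is overlooked.
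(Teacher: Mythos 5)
Your ingredients are the right ones (Proposition \ref{prop 1} to produce $x\in\rU(\fA)$ with $x(\lambda)\cong\lambda\otimes\chi\circ\det$ and $x(\eta_1)\cong\eta_2$, then Vign\'eras's \cite[Corollary~8.4]{V3} and semisimplicity of the $\eta$-isotypic part), but the argument as written has a real gap at the end: establishing $\Hom_{kJ}(\lambda\otimes\chi\circ\det,\res_J^{\rG}\ind_J^{\rG}\lambda)\neq0$ only shows the two types are intertwined somewhere \emph{in $\rG$}; it does not show that the \emph{given} element $g$ intertwines them, which is what the corollary asserts. To get the latter you must keep the specific Mackey direct factor $i_{J,g(J)}g(\lambda)$ of $\res_J^{\rG}\ind_J^{\rG}\lambda$ in view throughout. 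The paper does exactly this: it observes that the $x(\eta_1)$-isotypic part of $\res_J^{\rG}\ind_J^{\rG}\lambda$ is a multiple of the irreducible $x(\lambda)\cong\lambda\otimes\chi\circ\det$, hence semisimple; that $(i_{J,g(J)}g(\lambda))^{x(\eta_1)}$ is a subrepresentation of that isotypic part, hence itself semisimple and a multiple of $x(\lambda)$; and that the hypothesis places $\lambda\otimes\chi\circ\det$ as a subquotient of $(i_{J,g(J)}g(\lambda))^{x(\eta_1)}$, which by semisimplicity is actually a subrepresentation — i.e.\ $g$ intertwines. Your version drops the direct factor $i_{J,g(J)}g(\lambda)$ at the last step and therefore proves a weaker statement.

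Two secondary points. First, the reduction via $x^{-1}g$ is not stated correctly: conjugating the weak-intertwining relation by $x^{-1}$ shows that $x^{-1}g$ weakly intertwines $x^{-1}(\lambda\otimes\chi\circ\det)\cong\lambda$ with $\lambda$ on $J$, not $(J,\lambda\otimes\chi\circ\det)$ with $(J,x^{-1}(\lambda))$ as you wrote; the paper's proof is cleaner in that it avoids this conjugation entirely and reasons directly with isotypic parts. Second, the closing remark that one could simply say ``the proof of Proposition \ref{prop 1} only used weak intertwining'' conflates the quantifiers: Proposition \ref{prop 1} produces \emph{some} $x\in\rU(\fA)$ with special properties, whereas Corollary \ref{cor 5} asserts that \emph{every} $g$ which weakly intertwines actually intertwines — the second does not follow formally from the first, which is precisely why a separate argument tracking $i_{J,g(J)}g(\lambda)$ is needed.
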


\begin{proof}
Recall that $\eta_1$ and $\eta_2$ is contained in $\lambda$ and $\lambda\otimes\chi\circ\det$ respectively. By Proposition \ref{prop 1}, there exists $x\in\mathrm{U}(\fA)$ such that $x(\eta_1)\cong\eta_2$, $x(\lambda)\cong\lambda\otimes\chi\circ\det$. By the assumption $\lambda\otimes\chi\circ\det$ is a subquotient of $i_{J,g(J)}g(\lambda)$, hence a subquotient of $(i_{J,g(J)}g(\lambda))^{\eta_2=x(\eta_1)}$.  The latter is a sub-representation of $(\res_J^{\rG}\ind_J^{\rG}\lambda)^{\eta_2=x(\eta_1)}$, hence a multiple of $x(\lambda)$. Then $\lambda\otimes\chi\circ\det^{x(\eta_1)}$ is a sub-representation of $(i_{J,g(J)}g(\lambda))^{x(\eta)}$, and we finish the proof.
\end{proof}

\subsubsection{Decomposition of $\res_{J}^{\rG}\ind_{J}^{\rG}\lambda$}
\label{section 02.2.3}
In this section, the main purpose is to obtain the decomposition in Theorem \ref{thm 7}, which plays a key role in the proof of Proposition \ref{prop 14}. The latter consists half of the proof of Theorem \ref{thm 16}.

\begin{thm}
\label{thm 7}
Let $(J,\lambda)$ be a maximal simple $k$-type of $\rG$. There exists an integer $m$ such that
$$\res_J^{\rG}\ind_J^{\rG}\lambda\cong(\oplus_{i=1}^{m}\ x_i(\Lambda(\lambda)))\oplus W,$$
where $x_i\in\mathrm{U}(\fA)$, and put $x_1=1$. The representation $\Lambda(\lambda)$ is a multiple of $\lambda$. For each $x_i$, the representation $x_i(\Lambda(\lambda))$ is the $x_i$-conjugation of $\Lambda(\lambda)$. The elements $x_i$'s verify that $x_i(\eta)\ncong x_j(\eta)$ for $i\neq j$ (see Section \ref{Notation} for the definition of $\eta$). Furthermore, let $\lambda'$ be an irreducible sub-representation of $\res_{J'}^{J}\lambda$, then $\lambda'$ is not equivalent to any irreducible subquotient of $\res_{J'}^{J}W$.
\end{thm}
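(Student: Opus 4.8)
The plan is to analyze $\res_J^{\rG}\ind_J^{\rG}\lambda$ via Mackey's decomposition formula, which yields $\res_J^{\rG}\ind_J^{\rG}\lambda\cong\bigoplus_{g\in J\backslash\rG/J} i_{J,J}g(\lambda)$, and then sort the double cosets according to whether $g$ intertwines $\lambda$. First I would recall that $\lambda=\kappa\otimes\sigma$ with $\kappa$ a $\beta$-extension of $\eta=\eta(\theta)$; by \cite[Corollary 8.4]{V3} (already invoked in the proof of Proposition \ref{prop 1}), for each $g\in\rG$ the $g(\eta)$-isotypic component $(\res_J^{\rG}\ind_J^{\rG}\lambda)^{g(\eta)}$ is a direct $J$-summand, and when $(J,\lambda)$ is maximal simple it is a (finite) multiple of $g(\lambda)$. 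The elements $g$ that contribute a nonzero such component are exactly those weakly intertwining $\eta$ with itself, and by the results quoted from \cite{BuKu} (namely $\mathcal I_\rG(\theta)\cap\rU(\fA)=J$ together with the description of $\mathcal I_\rG(\eta)$ via $\beta$-extensions) one can choose finitely many representatives $x_1=1,x_2,\dots,x_m\in\rU(\fA)$ for the distinct isomorphism classes $x_i(\eta)$ that occur. Set $\Lambda(\lambda):=(\res_J^{\rG}\ind_J^{\rG}\lambda)^{\eta}$, so that $x_i(\Lambda(\lambda))\cong(\res_J^{\rG}\ind_J^{\rG}\lambda)^{x_i(\eta)}$ as in the argument of Corollary \ref{cor 5}; these $m$ summands are mutually non-isomorphic-supported (their $\eta$-types $x_i(\eta)$ are pairwise distinct) and each is a multiple of $\lambda$, being a multiple of $x_i(\lambda)$ with $x_i\in\rU(\fA)$ fixing $J$ and, by Proposition \ref{prop 1} applied to the trivial twist, fixing the iso-class of $\lambda$ up to $\rU(\fA)$-conjugacy — here I would note that since $x_i\in\rU(\fA)$ and $x_i(J)=J$, $x_i(\lambda)$ is just another maximal simple $k$-type in the same $\rU(\fA)$-orbit, and $\Lambda(\lambda)$ being $\eta$-isotypic forces it to be a multiple of $\lambda$ itself. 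Let $W$ be the complementary $J$-summand, i.e. the sum over all double cosets $g$ with $g(\eta)\not\cong x_i(\eta)$ for all $i$.

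The substantive point is the final claim: if $\lambda'$ is an irreducible sub-$J'$-representation of $\res_{J'}^J\lambda$, then $\lambda'$ is not a subquotient of $\res_{J'}^J W$. I would argue by contradiction. Suppose $\lambda'$ is such a subquotient; then $\lambda'$ occurs in $\res_{J'}^J\tau$ for some irreducible $J$-subquotient $\tau$ of $W$, and $\tau$ is in turn an irreducible subquotient of some $i_{J,J}g(\lambda)$ with $g\notin\bigcup_i Jx_iJ$. By Proposition \ref{prop 0.3} (with $K_1=K_2=J$, $\rho_1=\rho_2=\lambda$, $\rho_1'=\rho_2'=\lambda'$), the hypothesis that some conjugate of $g$ weakly intertwines $\lambda'$ with itself — which is exactly what $\lambda'\subset\res_{J'}^J\tau$, $\tau\subset i_{J,J}g(\lambda)$ gives after passing to $J'$ via Mackey — yields a $k$-quasicharacter $\chi$ of $F^\times$ such that $g$ weakly intertwines $\lambda$ with $\lambda\otimes\chi\circ\det$. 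By Remark \ref{rem 33}/Corollary \ref{cor 3}, $(J,\lambda\otimes\chi\circ\det)$ is again a maximal simple $k$-type, and by Corollary \ref{cor 5} (its hypotheses now met) $g$ in fact \emph{intertwines} $(J,\lambda)$ with $(J,\lambda\otimes\chi\circ\det)$; then Proposition \ref{prop 1} supplies $x\in\rU(\fA)$ with $x(J)=J$ and $x(\eta)\cong\eta(\theta\otimes\chi\circ\det)$, and chasing through the construction one sees $g(\eta)$ must be isomorphic to one of the $x_j(\eta)$ (all the $\eta$-types arising from genuine intertwiners lie in the finite list, since by \cite[Thm 3.5.11]{BuKu} every intertwiner of the underlying simple character can be adjusted by $\rU(\fA)$), contradicting $g\notin\bigcup_i Jx_iJ$. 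Hence no such $\lambda'$ appears in $\res_{J'}^J W$.

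I expect the main obstacle to be the bookkeeping in this last paragraph: making precise that the finite set $\{x_i(\eta)\}$ really exhausts all $\eta$-isotypes occurring among intertwiners of $\lambda$ up to the twists $\chi\circ\det$, and that passing from "$g$ intertwines $\lambda$ with $\lambda\otimes\chi\circ\det$" back to "$g(\eta)\cong x_j(\eta)$ for some $j$" is legitimate — this uses the identification (from Corollary \ref{cor 3}) of $\eta(\theta\otimes\chi\circ\det)$ with the unique $J^1$-representation over $\theta\otimes\chi\circ\det\in C_k(\fA,0,\beta+c)$, and the fact (Theorem 3.5.11 and \S 3.1.15 of \cite{BuKu}) that two simple characters on the same $\fA$ that are intertwined in $\rG$ are $\rU(\fA)$-conjugate, so the twist can always be realized by an element of $\rU(\fA)$ normalizing $J$. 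The semisimplicity of $\Lambda(\lambda)$ and of each $x_i(\Lambda(\lambda))$ is immediate once we know each is a multiple of $\lambda$; finiteness of $m$ follows because $\ind_J^{\rG}\lambda$ has, by maximality, a well-behaved Hecke algebra (the intertwining of $(J,\lambda)$ is small), so only finitely many $\rU(\fA)$-conjugacy classes of $\eta$-types contribute. A careful reader should check whether one needs $W$ itself (as opposed to its semisimplification) to have no $\lambda'$ in the restriction; since we only claim $\lambda'$ is not a \emph{subquotient} of $\res_{J'}^J W$, working with Jordan–Hölder factors of $W$ throughout, as above, suffices.
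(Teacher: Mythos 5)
Your proposal uses the same toolbox as the paper (\cite[Corollary 8.4]{V3} for the $\eta$-isotypic summand, Propositions \ref{prop 0.2}/\ref{prop 0.3}, Corollary \ref{cor 5}, Proposition \ref{prop 1}, and finiteness of $\rU(\fA)/J$), but the organization diverges: the paper's proof never passes through the Mackey decomposition. Instead it peels off $\eta$-isotypic summands one at a time. Concretely, it starts from $\res_J^{\rG}\ind_J^{\rG}\lambda\cong\Lambda(\lambda)\oplus W_1$ with $\Lambda(\lambda)=(\res_J^{\rG}\ind_J^{\rG}\lambda)^{\eta}$; if some $\lambda'$ survives in $\res_{J'}^J W_1$, then by Lemma \ref{lem 9} and Proposition \ref{prop 0.2} there is an irreducible $J$-subquotient of $W_1$ of the form $\lambda\otimes\chi\circ\det$, hence by Propositions \ref{prop 1} an $x\in\rU(\fA)$ with $x(\lambda)\cong\lambda\otimes\chi\circ\det$ and $x(\eta)\ncong\eta$, and one splits off $W_1^{x(\eta)}\cong x(\Lambda(\lambda))$; repeating terminates because $\rU(\fA)/J$ is finite.

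There is a genuine technical problem with the way you set up $W$. You describe $W$ as ``the sum over all double cosets $g$ with $g(\eta)\not\cong x_i(\eta)$ for all $i$.'' For a general $g\in\rG$ in a Mackey double coset, $g$ does \emph{not} normalize $J^1$, so $g(\eta)$ is a representation of $g(J^1)\neq J^1$ and the isomorphism condition $g(\eta)\ncong x_i(\eta)$ is not well-formed. Moreover, the isotypic decomposition $(\res_J^{\rG}\ind_J^{\rG}\lambda)^{x_i(\eta)}$ need not align with individual Mackey summands $i_{J,J}g(\lambda)$ — a single Mackey summand can spread across several isotypic components — so ``sorting double cosets'' does not produce the asserted complement. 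The paper sidesteps all this by defining $W$ purely as the complement of $\bigoplus_i(\res_J^{\rG}\ind_J^{\rG}\lambda)^{x_i(\eta)}$, where the $x_i$ live in $\rU(\fA)$ and normalize $J$ and $J^1$ so that $x_i(\eta)$ makes sense as a $J^1$-representation.

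A second, more minor soft spot: in your contradiction you move from ``$\lambda'$ is a subquotient of $\res_{J'}^J\tau$ with $\tau$ a subquotient of $i_{J,J}g(\lambda)$'' directly to ``some conjugate of $g$ weakly intertwines $\lambda'$ with itself,'' invoking Proposition \ref{prop 0.3}. This needs an intervening Mackey/Jordan--H\"older argument passing through $J'$, and as phrased it glosses that step. The paper's route through Lemma \ref{lem 9} (lift $\lambda'$ to an irreducible $J$-subquotient $\tau$ of $W_1$) followed by Proposition \ref{prop 0.2} (conclude $\tau\cong\lambda\otimes\chi\circ\det$) is both simpler and avoids the need to realize $g$ as a weak self-intertwiner of $\lambda'$. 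If you replace your ``sort double cosets'' description of $W$ by the iterative isotypic stripping and substitute Lemma \ref{lem 9} $+$ Proposition \ref{prop 0.2} for the Proposition \ref{prop 0.3} step, the argument becomes the paper's proof.
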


\begin{rem}
From now on, we always denote $\oplus_{i=1}^{m}\ x_i(\Lambda(\lambda))$ by $\Lambda_{\lambda}$, and we write the decomposition in Theorem \ref{thm 7} above as:
$$\res_J^{\rG}\ind_J^{\rG}\lambda\cong\Lambda_{\lambda}\oplus W.$$
\end{rem}

Before prove Theorem \ref{thm 7}, we first introduce the following very useful lemmas:
\begin{lem}
\label{lem 8}
Let $K_1,K_2$ be two compact open subgroups of $\rG$ such that $K_1\subset K_2$. Then the compact induction $\ind_{K_1}^{K_2}$ respects infinite direct sum.
\end{lem}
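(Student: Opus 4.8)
\textbf{Proof plan for Lemma \ref{lem 8}.}

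The plan is to reduce the statement to the definition of compact induction and to exploit the fact that $K_1$ has finite index in $K_2$. First I would recall that for a closed subgroup $K_1$ of a profinite (here compact) group $K_2$, the compact induction $\ind_{K_1}^{K_2}$ and the ordinary (smooth) induction $\Ind_{K_1}^{K_2}$ coincide, because $K_1\backslash K_2$ is finite: a smooth function $K_2\to V$ supported on finitely many cosets that is left $K_1$-equivariant is automatically compactly supported, since $K_2$ is compact. So it suffices to prove that $\Ind_{K_1}^{K_2}$ commutes with arbitrary direct sums.

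Next I would use the Mackey-style concrete model. Choose a (finite) set of coset representatives $g_1,\dots,g_r$ for $K_1\backslash K_2$. Then for any smooth $k K_1$-module $\rho$ there is an isomorphism of $k$-vector spaces $\Ind_{K_1}^{K_2}\rho\cong\bigoplus_{j=1}^{r}\rho$ (send $f$ to $(f(g_1),\dots,f(g_r))$), and this isomorphism is natural in $\rho$. Now let $\{\rho_s\}_{s\in S}$ be any family of smooth $k K_1$-modules with direct sum $\rho=\oplus_{s}\rho_s$. There is always a canonical map $\bigoplus_{s}\Ind_{K_1}^{K_2}\rho_s\to\Ind_{K_1}^{K_2}(\oplus_s\rho_s)$ coming from functoriality of induction on the inclusions $\rho_s\hookrightarrow\rho$. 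Composing with the natural vector-space identifications above on both sides, this map becomes the identity map $\bigoplus_{s}\bigl(\bigoplus_{j=1}^{r}\rho_s\bigr)\to\bigoplus_{j=1}^{r}\bigl(\bigoplus_{s}\rho_s\bigr)$, which is an isomorphism since finite direct sums commute with arbitrary direct sums. Hence the canonical map is an isomorphism, and it is visibly $K_2$-equivariant, so $\ind_{K_1}^{K_2}(\oplus_s\rho_s)\cong\oplus_s\ind_{K_1}^{K_2}\rho_s$.

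There is essentially no obstacle here; the only point requiring a little care is the first step, namely justifying that compact and ordinary induction agree for the finite-index pair $K_1\subset K_2$, so that one may freely use the $\Hom$-description of smooth induction and its functoriality. Once that identification is in place, the argument is the purely formal observation that a finite sum (over coset representatives) commutes with arbitrary sums, together with the naturality of the coset-evaluation isomorphism. I would therefore keep the writeup short, emphasising only the reduction to the finite-index case and the compatibility of the canonical comparison map with the coset decomposition.
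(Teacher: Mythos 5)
Your proof is correct and is essentially the detailed unpacking of what the paper dismisses with ``this can be checked directly from the definition of the compact induction functor.'' The reduction to smooth induction via the finite index $[K_2:K_1]<\infty$, the coset-evaluation isomorphism $\Ind_{K_1}^{K_2}\rho\cong\bigoplus_{j=1}^r\rho$, and the observation that finite sums commute with arbitrary sums are exactly the direct verification the paper has in mind.
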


\begin{proof}
This can be checked directly from the definition of the compact induction functor.
\end{proof}

\begin{lem}
\label{lem 9}
Let $K$ be a compact open subgroup of $\rM$, and $K'=K\cap\rG'$. Let $\pi$ be a $k$-representation of $K$. If $\tau'$ is an irreducible subquotient of the restriction $\res_{K'}^K\pi$, then there exists an irreducible subquotient $\tau$ of $\pi$, such that $\tau'$ is an irreducible direct factor of $\res_{K'}^{K}\tau$.
\end{lem}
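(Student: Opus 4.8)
The plan is to reduce the statement to a finite-dimensional representation of $K$ and then to invoke Proposition \ref{prop 0.1} (Clifford semisimplicity). Throughout I use that $K'=K\cap\rG'$ is a \emph{normal} subgroup of $K$: it is the kernel of $\det$ restricted to $K\subseteq\rM$.

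\textbf{Step 1: passing to a finite-dimensional subrepresentation.} Write $\tau'\cong B/A$ with $A\subseteq B$ two $K'$-subrepresentations of $\res_{K'}^K\pi$, and choose $v\in B$ with $v\notin A$. Let $V$ be the $K$-subrepresentation of $\pi$ generated by $v$. Since $\pi$ is smooth, $v$ is fixed by an open, hence finite-index, subgroup $U$ of $K$; as $gv$ depends only on the coset $gU$, the space $V$ is spanned by the finitely many vectors $\{gv : gU\in K/U\}$ and is therefore finite-dimensional. Regarding $V$ also as a $K'$-subrepresentation of $\pi$, the inclusion $V\cap B\hookrightarrow B$ induces an injection $(V\cap B)/(V\cap A)\hookrightarrow B/A=\tau'$ (using $A\subseteq B$) whose image contains the nonzero image of $v$; by irreducibility this image is all of $\tau'$, so $\tau'\cong (V\cap B)/(V\cap A)$ is an irreducible subquotient of $\res_{K'}^K V$. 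Since any subquotient of $V$ is a subquotient of $\pi$, it suffices to prove the lemma with $\pi$ replaced by the finite-length $K$-module $V$.

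\textbf{Step 2: locating $\tau$ among composition factors.} Fix a composition series $0=V_0\subset V_1\subset\cdots\subset V_r=V$ of $V$ as a $k$-representation of $K$, with irreducible subquotients $\tau_i=V_i/V_{i-1}$. Because restriction to $K'$ is exact, the filtration $\bigl(\res_{K'}^KV_i\bigr)_i$ of $\res_{K'}^K V$ has successive quotients $\res_{K'}^K\tau_i$; refining it to a $K'$-composition series shows that every irreducible subquotient of $\res_{K'}^K V$ is an irreducible subquotient of some $\res_{K'}^K\tau_i$. Applying this to $\tau'$ yields an index $i_0$ such that $\tau:=\tau_{i_0}$ is an irreducible $k$-representation of $K$, a subquotient of $V$ and hence of $\pi$, with $\tau'$ an irreducible subquotient of $\res_{K'}^K\tau$.

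\textbf{Step 3: from subquotient to direct factor.} Since $\tau$ is irreducible and $K'$ is normal in $K$, Proposition \ref{prop 0.1} shows that $\res_{K'}^K\tau$ is semisimple; an irreducible subquotient of a semisimple module is a direct summand, so $\tau'$ is an irreducible direct factor of $\res_{K'}^K\tau$, as claimed. The argument is short, and the only point demanding care is Step 1: one must pass from the possibly infinite-length representation $\pi$ to a finite-dimensional subrepresentation still carrying $\tau'$, which is exactly what extracting the cyclic submodule generated by $v$ accomplishes; Steps 2 and 3 are then routine.
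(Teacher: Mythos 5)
Your proof is correct, and it takes a cleaner route than the paper's. Both proofs share the same skeleton: locate the witness in a \emph{finite-dimensional} $K$-subrepresentation of $\pi$, pick off a $K$-composition factor $\tau$ carrying $\tau'$ via Jordan--H\"older uniqueness, and finish with Proposition~\ref{prop 0.1}. The difference is in how one produces the finite-dimensional piece. The paper first restricts $\pi$ to a pro-$p$ open compact subgroup $H\subset K$, decomposes $\res_H^K\pi\cong\bigoplus_i\pi_i$ (semisimple since $H$ is pro-$p$), embeds $\pi\hookrightarrow\ind_H^K\res_H^K\pi\cong\bigoplus_i\ind_H^K\pi_i$ (using Lemma~\ref{lem 8}), and then picks out a finite subsum containing the witness $x$. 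You instead simply take the cyclic $K$-submodule $V=\langle Kv\rangle$ generated by the witness $v$: smoothness of $\pi$ together with compactness of $K$ gives $\dim_k V<\infty$ immediately, with no need for the pro-$p$ subgroup, the semisimple decomposition, or the commutation of $\ind$ with infinite direct sums. What the paper's detour through $\ind_H^K\res_H^K\pi$ buys is nothing additional here; your cyclic-submodule argument achieves the same reduction more economically and is the one I would keep.
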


\begin{proof}
Let $H$ be a pro-$p$ open compact subgroup of $K$. The representation $\res_H^K \pi$ is semisimple, which can be written as $\oplus_{i\in I}\pi_i$, where $I$ is an index set. There is an injection from $\pi$ to $\ind_H^K\res_H^K \pi$. Lemma \ref{lem 8} implies that $\ind_H^K\res_H^K \pi\cong\oplus_{i\in I}\ind_H^K \pi_i$. Let $W'$, $V'$ be two sub-representations of $\pi'=\res_{K'}^{K}\pi$, such that $\tau'\cong W'/V'$. When $\tau'$ is non-trivial, there exists $x\in W'$ such that $x\notin V'$. Then there exists a finite subset $I' \subset I$, such that $x\in \oplus_{i\in I'} \ind_H^K\pi_i$. We have:
$$(W'\cap \oplus_{i\in I'} \ind_H^K\pi_i) / (V' \cap \oplus_{i\in I'} \ind_H^K\pi_i )\cong W'/V'\cong \tau'.$$
Since the restriction $\res_{K'}^K \oplus_{i\in I'} \ind_H^K\pi_i$ has finite length, by the uniqueness of Jordan-H\"older factors, there exists an irreducible subquotient of $\oplus_{i\in I'} \ind_H^K\pi_i$, whose restriction to $K'$ is semisimple (by Proposition \ref{prop 0.1}) and contains $\tau'$ as a subrepresentation. 
\end{proof}

Now we are ready to prove Theorem \ref{thm 7}.
\begin{proof} of Theorem \ref{thm 7}:

By \cite[Corollary 8.4]{V3}, we can decompose $\res_J^{\rG}\ind_J^{\rG}\lambda\cong \Lambda(\lambda)\oplus W_1$, where an irreducible subquotient of $W_1$ is not isomorphic to $\lambda$. Let $\lambda'$ be an irreducible direct component of $\res_{J'}^{J}\lambda$. If $\lambda'$ is an irreducible subquotient of $\res_{J'}^{J}W_1$, by Lemma \ref{lem 9} and Propositon \ref{prop 0.2}, there exists a $k$-quasicharacter $\chi$ of $F^{\times}$ such that $\lambda\otimes\chi\circ\det$ is an irreducible subquotient of $W_1$, which implies that $\lambda\otimes\chi\circ\det$ and $\lambda$ are weakly intertwined in $\rG$. By Proposition \ref{prop 1}, $\lambda$ is conjugate to $\lambda\otimes\chi\circ\det$ by an element $x\in\mathrm{U}(\fA)$. The fact that $\lambda\otimes\chi\circ\det$ is a subquotient of $W_1$ implies that  $x(\eta)\ncong\eta$. Thus we can decompose $W_1$ as $W_1^{x(\eta)}\oplus W_2$, and we have:
$$W_1^{x(\eta)}\cong x((\res_J^{\rG}\ind_J^{\rG}\lambda)^{\eta}).$$
The latter is isomorphic to $x(\Lambda(\lambda))$, which is a direct sum of $x(\lambda)$. Now we obtain an isomorphism:
$$\res_J^{\rG}\ind_J^{\rG}\lambda\cong\Lambda(\lambda)\oplus x(\Lambda(\lambda))\oplus W_2,$$
where $W_2^{x(\eta)}=0$ and $W_2^{\eta}=0$. It implies an irreducible subquotient of $W_2$ is not isomorphic to $\lambda$ neither to $x(\lambda)$. Now we can repeat the above steps to $W_2$. Notice that, any irreducible representation of $J$, whose restriction to $J'$ contains $\lambda'$ as a subrepresentation, is $\mathrm{U}(\fA)$-conjugate to $\lambda$. The quotient $\rU(\fA)\backslash J$ is finite, hence the set of irreducible representations $\{ x(\lambda) \}_{x\in\rU(\fA)}$ is finite, which means after repeating finite times, we obtain the required decomposition.
\end{proof}

\subsubsection{Projective normalizer $\tilde{J}$ and its subgroups}
\label{section 02.2.4}

In this section, we apply the same the definition of projective normalizer given in \cite{BuKuII} for $\cK$-representations of $\rG'$, and we show that some properties in \cite{BuKuII} still hold true in $\ell$-modular setting.
\begin{defn}[Bushnell,Kutzko]
\label{defn 11}
Let $\fA$ be the principal order attached to a maximal simple $k$-type $(J,\lambda)$. Then the projective normalizer $\tilde{J}=\tilde{J}(\lambda)$ of $(J,\lambda)$ is defined to be the group of all $x\in \mathrm{U}(\fA)$ such that:
\begin{itemize}
\item $xJx^{-1}=J$, and
\item there exists a $k$-quasicharacter $\chi$ of $F^{\ast}$ such that $x(\lambda)\cong\lambda\otimes\chi\circ\det$.
\end{itemize}
\end{defn}

\begin{prop}
\label{prop 12}
Let $(J,\lambda)$ be a maximal simple $k$-type of $\rG$, and $\chi$ a $k$-quasicharacter of $F^{\ast}$. The following are equivalent:
\begin{enumerate}
\item $\lambda\cong\lambda\otimes\chi\circ\det,$
\item $\chi\circ\det\vert_{J^1}$ is trivial and $\sigma\otimes\chi\circ\det\vert_{\mathrm{U}(\mathfrak{B})}\cong\sigma$,
\item $\chi\circ\det\vert_{J^1}$ is trivial, and $\lambda$, $\lambda\otimes\chi\circ\det$ are intertwined in $\rG$.
\end{enumerate}
\end{prop}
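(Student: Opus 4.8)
The plan is to work throughout with the factorisation $\lambda=\kappa\otimes\sigma$ of a maximal simple $k$-type: $\kappa$ is a $\beta$-extension of the irreducible representation $\eta=\eta(\theta)$ of $J^1$ (so $\kappa|_{J^1}=\eta$), and $\sigma$ is the inflation to $J$ of an irreducible cuspidal representation $\pi_0$ of $J/J^1\cong\mathrm{GL}_f(k_E)$; since $(J,\lambda)$ is maximal, the inclusion $\mathrm{U}(\mathfrak{B})\hookrightarrow J$ induces an isomorphism $\mathrm{U}(\mathfrak{B})/\mathrm{U}^1(\mathfrak{B})\xrightarrow{\sim}J/J^1$. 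I would rely on two auxiliary facts. (a) For a $k$-quasicharacter $\chi$ of $F^{\times}$, the character $\chi\circ\det|_{J^1}$ is trivial whenever $\chi\circ\det|_{H^1}$ is; equivalently $\det(H^1)=\det(J^1)$, equivalently $J^1=H^1(J^1\cap\rG')$. (b) For a representation $\tau$ of $J/J^1$ the assignment $\tau\mapsto\kappa\otimes\tau$ is injective on isomorphism classes, since $\tau$ is recovered as $\Hom_{J^1}(\eta,\kappa\otimes\tau)$ with its natural (necessarily $J^1$-trivial) action of $J$.

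Given these, $(1)\Rightarrow(2)$ goes as follows. Restricting $\lambda\cong\lambda\otimes\chi\circ\det$ to $H^1$, where $\lambda$ is $\theta$-isotypic, yields $\theta\cong\theta\otimes(\chi\circ\det)|_{H^1}$, hence $\chi\circ\det|_{H^1}=1$ and then $\chi\circ\det|_{J^1}=1$ by (a). Thus $\chi\circ\det|_J$ is the inflation of a character $\bar\chi$ of $J/J^1$, $\lambda\otimes\chi\circ\det\cong\kappa\otimes(\sigma\otimes\bar\chi)$, and (b) turns $\lambda\otimes\chi\circ\det\cong\lambda$ into $\sigma\otimes\bar\chi\cong\sigma$, which under $J/J^1\cong\mathrm{U}(\mathfrak{B})/\mathrm{U}^1(\mathfrak{B})$ reads $\sigma\otimes\chi\circ\det|_{\mathrm{U}(\mathfrak{B})}\cong\sigma$. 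For $(2)\Rightarrow(1)$ the same identities run in reverse: triviality on $J^1$ makes $\chi\circ\det|_J$ inflated from $J/J^1$, the assumption on $\sigma$ gives $\sigma\otimes\bar\chi\cong\sigma$, and hence $\lambda\otimes\chi\circ\det\cong\kappa\otimes(\sigma\otimes\bar\chi)\cong\kappa\otimes\sigma=\lambda$.

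For $(1)\Leftrightarrow(3)$: the implication $(1)\Rightarrow(3)$ is immediate, since $\chi\circ\det|_{J^1}=1$ was shown above and $\lambda\cong\lambda\otimes\chi\circ\det$ gives $\Hom_{k\rG}(\ind_J^{\rG}\lambda,\ind_J^{\rG}(\lambda\otimes\chi\circ\det))\neq 0$. For $(3)\Rightarrow(1)$, note that $(J,\lambda\otimes\chi\circ\det)$ is again a maximal simple $k$-type by Corollary \ref{cor 3}; the intertwining hypothesis makes it weakly intertwined with $(J,\lambda)$, so Proposition \ref{prop 1} provides $x\in\mathrm{U}(\fA)$ with $x(J)=J$ and $x(\lambda)\cong\lambda\otimes\chi\circ\det$. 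Restricting to the characteristic subgroup $J^1$ and using $\chi\circ\det|_{J^1}=1$ gives $x(\eta)\cong\eta$, hence $x\in\mathcal{I}_{\rG}(\eta)\cap\mathrm{U}(\fA)=J$ (\cite[3.1.15]{BuKu}, \cite[2.3.3]{MS}), so $x(\lambda)\cong\lambda$ and finally $\lambda\cong\lambda\otimes\chi\circ\det$.

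The crux is fact (a); everything else is bookkeeping with $\lambda=\kappa\otimes\sigma$ together with Corollary \ref{cor 3} and Proposition \ref{prop 1}. I expect the passage from ``trivial on $H^1$'' to ``trivial on $J^1$'' to be the main obstacle: one must see that $J^1$ is generated over $H^1$ by elements whose reduced contributions lie in the image of $\mathrm{ad}(\beta):x\mapsto\beta x-x\beta$, on which the trace vanishes, so that $\det$ cannot separate cosets of $\det(H^1)$ inside $J^1$. This may be extracted from the structure theory of the pair $H^1\subset J^1$ in \cite{BuKu}, or, being a statement about characters of the pro-$p$ group $J^1$, lifted to $\bC$ via Lemma \ref{lem 3} and quoted from the appendix of \cite{BuKuII}.
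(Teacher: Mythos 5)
Your proof is correct and follows essentially the same route as the paper's: restrict to $H^1$, invoke $\det(H^1)=\det(J^1)$ (a fact from \cite{BuKuII} that the paper also relies on silently by ``following the same line'' as their Proposition~2.3), reduce $(1)\Leftrightarrow(2)$ to cancelling $\kappa$ from $\kappa\otimes\sigma\cong\kappa\otimes\sigma\otimes\chi\circ\det$, and handle $(3)\Rightarrow(1)$ via Proposition~\ref{prop 1} together with $\mathrm{I}_{\rG}(\theta)\cap\rU(\fA)=J$. The one cosmetic difference is the cancellation step: you reconstruct $\sigma$ functorially as $\Hom_{J^1}(\eta,\kappa\otimes\sigma)$ with its $J/J^1$-action, whereas the paper imports the explicit $\sum_j S_j\otimes T_j$ plus Schur's-lemma argument from \cite[5.3.2]{BuKu}; these establish the same injectivity statement and your phrasing is tidier, but it is not a different method.
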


\begin{proof}
The proof follows the same strategy as \cite[Proposition 2.3]{BuKuII}. We prove this proposition in the order of $2\rightarrow 1\rightarrow 3 \rightarrow 2$. 

The implication $2\rightarrow 1$ is immediate by $J/ J'\cong \rU(\mathfrak{B}/\rU^1(\mathfrak{B}))$. For $1\rightarrow 3$,  we assume that $\lambda$ is equivalent to $\lambda\otimes\chi\circ\det$. Restricting to $H^1$, we have the equivalent of the simple characters $\theta\cong\theta\otimes\chi\circ\det$, which implies $\chi\circ\det\vert_{H^1}$ is trivial. For $3\rightarrow 2$, we assume part $3$ holds. Proposition \ref{prop 1} gives an element $x\in \mathrm{U}(\fA)$ such that $x(J)=J$ and $x(\lambda)=\lambda\otimes\chi\circ\det$. Hence the uniqueness of $\eta$ (see Proposition 2.2 of \cite{MS}) implies that $x(\eta)\cong\eta$. In particular, $x\in \mathcal{I}_{\rG}(\theta)=J^1 B^{\times}J^1$ by \cite[IV.1.1]{V2}, hence $x\in J^1B^{\ast}J^1\cap\rU(\fA)=J$, then $\lambda\cong x(\lambda)\cong\lambda\otimes\chi\circ\det$. In other words, $\kappa\otimes\sigma\cong\kappa\otimes\sigma\otimes\chi\circ\det$. By Schur Lemma, we can apply the proof in \cite[Proposition 5.3.2]{BuKu}: 

Let $X$ be the representation space of $\kappa$ and $Y$ the representation space of $\sigma$, which can be identified with that of $\sigma\otimes\chi\circ\det$. Let $\phi$ be the isomorphism between $\kappa\otimes\sigma$ and $\kappa\otimes\sigma\otimes\chi\circ\det$. We may write $\phi$ as $\sum_{j}S_j\otimes T_j$ where $S_j\in\mathrm{End}_k(X)$ and $T_j\in\mathrm{End}_k(Y)$, and where $\{ T_j \}$ are linearly independent. Let $g\in J^1$, we have $\kappa\otimes\sigma(g)\circ\phi =\phi\circ(\kappa\otimes\sigma\otimes\chi\circ\det)(g)$. Since $J^1\subset \mathrm{ker}(\sigma)= \mathrm{ker}(\sigma\otimes\chi\circ\det)$, this relation reads:
$$(\eta(g)\otimes 1)\circ \sum_{j}S_j\otimes T_j=(\sum_j S_j\otimes T_j)\circ(\eta(g)\otimes1),$$
which is equivalent to say that:
$$\sum_j(\eta(g)\circ S_j-S_j\circ\eta(g))\otimes T_j=0.$$
The linearly independence of $T_j$ implies that $S_j\in\mathrm{End}_{kJ^1}(\eta)=k^{\ast}$, by the Schur lemma. Hence $\phi=\mathrm{1} \otimes \sum_{j}S_j\cdot T_j$. Now note $T= \sum_{j}S_j\cdot T_j$ and take $g\in J$, the morphism relation reads:
$$(\kappa(g)\otimes \sigma(g))\circ(1\otimes  T)=\kappa(g)\otimes(\sigma(g)\circ T)=\kappa(g)\otimes (T\circ\sigma\otimes\chi\circ\det(g))$$
$$=(1\otimes T)\circ (\kappa(g)\otimes\sigma\otimes\chi\circ\det(g)),$$
which says $T\in\mathrm{Hom}_{kJ}(\sigma,\sigma\otimes\chi\circ\det)\neq 0$. We finish the proof.
\end{proof}

\begin{cor}
\label{cor 13}
Let $x\in\tilde{J}(\lambda)$, and $\chi$ a $k$-quasicharacter of $F^{\ast}$ such that $x(\lambda)\cong\lambda\otimes\chi\circ\det$. Then:
\begin{enumerate}
\item the map $x \mapsto \chi\circ\det\vert_{J^1}$ is an injective homomorphism from $\tilde{J}/J\rightarrow (\det(J^1))^{\wedge}$. The latter is the $k$-dual group of $\det(J^1)$;
\item $\tilde{J}/ J$ is a finite abelian $p$-group, where $p$ is the residual characteristic of $F$.
\item The exponent of $\tilde{J}/J$ divides the rank $n$ of $\rG'$, where $\rG'=\mathrm{SL}_n(F)$.
\end{enumerate}
\end{cor}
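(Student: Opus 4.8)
The plan is to verify that $x\mapsto\chi\circ\det|_{J^{1}}$ is a well-defined group homomorphism $\tilde J\to(\det(J^{1}))^{\wedge}$ with kernel exactly $J$, and then to deduce (2) formally. Note first that $J$ is normal in $\tilde J$, since by definition each element of $\tilde J$ normalises $J$, so the quotient $\tilde J/J$ makes sense. For well-definedness: if $x(\lambda)\cong\lambda\otimes\chi_{1}\circ\det$ and $x(\lambda)\cong\lambda\otimes\chi_{2}\circ\det$, then $\lambda\cong\lambda\otimes(\chi_{1}\chi_{2}^{-1})\circ\det$, so by $(1)\Rightarrow(2)$ of Proposition \ref{prop 12} the character $(\chi_{1}\chi_{2}^{-1})\circ\det$ is trivial on $J^{1}$; hence $\chi\circ\det|_{J^{1}}$ depends only on $x$, and as it factors through the surjection $\det\colon J^{1}\to\det(J^{1})$ it defines an element of $(\det(J^{1}))^{\wedge}$. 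For multiplicativity I would use $\det(xgx^{-1})=\det g$, so that conjugation by an element of $\tilde J$ fixes $\chi\circ\det|_{J}$: for $x,y\in\tilde J$ with $x(\lambda)\cong\lambda\otimes\chi_{x}\circ\det$ and $y(\lambda)\cong\lambda\otimes\chi_{y}\circ\det$ one gets $(xy)(\lambda)=x(y(\lambda))\cong x(\lambda)\otimes\chi_{y}\circ\det\cong\lambda\otimes(\chi_{x}\chi_{y})\circ\det$, so the associated characters multiply (incidentally reconfirming $xy\in\tilde J$).

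Next I would compute the kernel. Clearly $J$ is contained in it, since $x\in J$ gives $x(\lambda)\cong\lambda$ and one may take $\chi=1$. Conversely, let $x\in\tilde J$ with $x(\lambda)\cong\lambda\otimes\chi_{x}\circ\det$ and $\chi_{x}\circ\det|_{J^{1}}$ trivial. Restricting this isomorphism to $H^{1}$, using that $\lambda|_{H^{1}}$ is a multiple of the simple character $\theta$ and that $\chi_{x}\circ\det$ is trivial on $H^{1}\subseteq J^{1}$, and arguing as in the proof of Proposition \ref{prop 12}, we see that $x$ intertwines $\theta$; since also $x\in\rU(\fA)$ we get $x\in\mathcal I_{\rG}(\theta)\cap\rU(\fA)=J$, the last equality being the computation of the intertwining of a simple character inside $\rU(\fA)$ already invoked in the proof of Proposition \ref{prop 1} (via 2.3.3 of \cite{MS} and 3.1.15 of \cite{BuKu}). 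Hence the kernel is $J$ and the induced homomorphism $\tilde J/J\to(\det(J^{1}))^{\wedge}$ is injective, which is (1).

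Statement (2) now follows at once: $J$ is open in the compact group $\rU(\fA)$ and $J\subseteq\tilde J\subseteq\rU(\fA)$, so $[\tilde J:J]<\infty$; the injection into the abelian group $(\det(J^{1}))^{\wedge}$ makes $\tilde J/J$ abelian; and since $J^{1}$ is pro-$p$, its continuous image $\det(J^{1})$ is a pro-$p$ subgroup of $F^{\times}$, so every smooth character of $\det(J^{1})$ has $p$-power order, whence every element of $\tilde J/J$ has $p$-power order and the finite group $\tilde J/J$ is a $p$-group. I expect the only point requiring genuine care to be the kernel computation — concretely, that an element of $\rU(\fA)$ intertwining the simple character $\theta$ must already lie in $J$; the rest is bookkeeping, the representation-theoretic substance having been absorbed into Proposition \ref{prop 12}.
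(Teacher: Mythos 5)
Your proof is correct and tracks the paper's closely in all three parts. The only point of divergence is the kernel computation: the paper invokes the equivalence $(3)\Leftrightarrow(1)$ of Proposition~\ref{prop 12} to upgrade the hypothesis (namely that $\chi\circ\det$ is trivial on $J^1$ and that $\lambda$ and $\lambda\otimes\chi\circ\det$ are intertwined by $x$) to the conclusion $\lambda\cong\lambda\otimes\chi\circ\det$, whence $x$ intertwines $\lambda$ to itself and $x\in J B^\times J\cap\rU(\fA)=J$; you instead restrict to the pro-$p$ level directly, conclude that $x$ intertwines the simple character $\theta$, and use $\mathrm{I}_{\rG}(\theta)\cap\rU(\fA)=J$. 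These two routes are interchangeable: Proposition~\ref{prop 12}'s $(3)\Rightarrow(1)$ ultimately rests on the same Hecke-algebra facts you cite. One small point of hygiene: it is cleaner to restrict to $J^1$ rather than $H^1$, since $J^1$ is the pro-unipotent radical of $J$ and hence is automatically normalized by every $x\in\tilde{J}$, so $x(\lambda)\vert_{J^1}$ is a multiple of the conjugate $x(\eta)$ with no fuss; that $x$ normalizes $H^1$ is also true but not immediate from the definition of $\tilde{J}$. Restricting to $J^1$ then gives $x(\eta)\cong\eta$, which is just as good since $\mathrm{I}_{\rG}(\eta)=\mathrm{I}_{\rG}(\theta)$. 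Your treatment of well-definedness (via $(1)\Rightarrow(2)$ of Proposition~\ref{prop 12}), of multiplicativity, and of part (2) (finiteness from openness of $J$ in the compact $\rU(\fA)$, abelianness from the injection, $p$-power order from $\det(J^1)$ being pro-$p$) all match the paper.
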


\begin{proof}
For part $1$, let $x\in\tilde{J}$. Suppose that there exist two $k$-quasicharacters $\chi_1,\chi_2$ of $F^{\ast}$, such that $x(\lambda)\cong \lambda\otimes\chi_1\circ\det$ and $\lambda\otimes\chi_1\circ\det\cong\lambda\otimes\chi_2\circ\det$. This is equivalent to say that
$$\lambda\cong\lambda\otimes(\chi_1\otimes\chi_2^{-1})\circ\det.$$
Proposition \ref{prop 12} implies that $\chi_1\circ\det\vert_{J^1}\cong\chi_2\circ\det\vert_{J^1}$, which implies that the group morphism is well defined. Now suppose that $x\in\tilde{J}$ and $\chi$ is trivial on $\det(J^1)$, such that $x(\lambda)\cong\lambda\otimes\chi\circ\det$. Then by Proposition \ref{prop 12} $\lambda\cong\lambda\otimes\chi\circ\det$, and $x$ intertwines $\lambda$ to itself. Hence $x$ belongs to $J B^{\times} J\cap\rU(\fA)=J$.

Since $J^1$ is a pro-$p$ group, part $2$ is induced from part $1$.

For part $3$, let $Z'$ be the centre of $\rG'$. Since $\rU(\mathfrak{o}_F)^n\subset \mathrm{det}(J^1\cap Z')$, hence $(\chi\vert_{J^1})^n$ is trivial, which implies that $x^n\in J$ for $x\in\tilde{J}$.
\end{proof}

\begin{rem}
Part $2$ and Part $3$ of Corollary \ref{cor 13} imply that $\tilde{J}=J$, when $p$ does not divide $n$.
\end{rem}

\subsubsection{Two conditions for irreducibility}
\label{section 07}
In this section, let $(J,\lambda)$ be a maximal simple $k$-type of $\rG$. We construct a compact subgroup $M_{\lambda}$ of $\rG'$ and a family of irreducible representations $\lambda_{M_{\lambda}}'$ of $M_{\lambda}$, such that the induction $\ind_{M_{\lambda}}^{\rG'}\lambda_{M_{\lambda}}'$ is irreducible and cuspidal (Theorem \ref{thm 16}). In the next section, we will see that every irreducible cuspidal $k$-representation of $\rG'$ can be constructed in this manner.

When $\ell=0$, to show $\ind_{M_{\lambda}}^{\rG'}\lambda_{M_{\lambda}}'$ is irreducible, it is sufficient to show that $\mathrm{I}_{\rG'}(\lambda_{M_{\lambda}}')=M_{\lambda}$. However, this intertwining condition is not sufficient in the $\ell$-modular setting, and a criteria of irreducibility has been established in \cite[Lemma 4.2]{V3}. For the reason of convenience, we present it here:

\begin{lem} [Criteria of irreducibility]
\label{lem 19}
Let $K'$ be an open compact subgroup of $\rG'$, and $\pi'$ be an irreducible $k$-representation of $K'$. The induction $\ind_K'^{\rG'}\pi'$ is irreducible, when
\begin{enumerate}
\item $\mathrm{End}_{k\rG'}(\ind_{K'}^{\rG'}\pi')=k$, and
\item for any irreducible $k$-representation $\nu$ of $\rG'$, if $\pi'$ is contained in $\res_{K'}^{\rG'}\nu$ then there is a surjection from $\res_{K'}^{\rG'}\nu$ to $\pi'$.
\end{enumerate}
\end{lem}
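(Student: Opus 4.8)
The plan is to follow Vignéras's argument for \cite[Lemma 4.2]{V3}. Write $\Pi=\ind_K^{\rG'}\pi'$ and let $\pi'_0\subseteq\Pi$ be the tautological copy of $\pi'$, i.e.\ the functions supported on $K$. Since $\pi'$ is finite dimensional and $\Pi$ is generated over $\rG'$ by $\pi'_0$, the representation $\Pi$ is finitely generated, hence admits a maximal proper subrepresentation $W$; I will set $\nu=\Pi/W$, which is irreducible, $p\colon\Pi\twoheadrightarrow\nu$ the projection, and the whole aim is to prove $W=0$ — this forces $\Pi\cong\nu$ to be irreducible, every maximal proper subrepresentation being zero.

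First I would extract two consequences of the hypotheses. Frobenius reciprocity for the left adjoint $\ind_K^{\rG'}$ shows that any nonzero quotient $Q$ of $\Pi$ is generated by the image of $\pi'_0$; as $\pi'_0\cong\pi'$ is $K$-irreducible this image is $\cong\pi'$, so $\pi'\hookrightarrow\res_K Q$ ($\Pi$ is ``$\pi'$-cocyclic''). Applying this to $Q=\nu$ and invoking condition (2), I get a surjection $q\colon\res_K\nu\twoheadrightarrow\pi'$. Next, Mackey's formula together with Frobenius reciprocity identify $\mathrm{End}_{k\rG'}(\Pi)$ with $\bigoplus_{KgK}\Hom_{k(K\cap K^g)}(\pi',{}^g\pi')$, the sum over double cosets; so condition (1) says precisely that $\Hom_{k(K\cap K^g)}(\pi',{}^g\pi')=0$ for all $g\notin K$. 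Feeding this vanishing into the Mackey decomposition of $\res_K$ of the \emph{full} smooth induction $\Ind_K^{\rG'}\pi'$, every $kK$-homomorphism $\pi'\to\res_K\Ind_K^{\rG'}\pi'$ is concentrated on the $g=1$ component, so by Frobenius $\Hom_{k\rG'}(\Pi,\Ind_K^{\rG'}\pi')$ is one-dimensional, spanned by the canonical injection $\iota_0\colon\ind_K^{\rG'}\pi'\hookrightarrow\Ind_K^{\rG'}\pi'$.

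The rest is then formal. Let $\hat q\colon\nu\to\Ind_K^{\rG'}\pi'$ be the map adjoint to $q$ under the $(\res_K,\Ind_K^{\rG'})$-adjunction; it is nonzero since $q$ is, hence injective because $\nu$ is irreducible. Therefore $\hat q\circ p$, having image $\hat q(\nu)\neq 0$, is a nonzero element of $k\,\iota_0$, say $\hat q\circ p=c\,\iota_0$ with $c\in k^{\times}$. Hence $\ker p\subseteq\ker(\hat q\circ p)=\ker\iota_0=0$, so $p$ is an isomorphism and $\Pi$ is irreducible, as desired.

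I do not expect a genuine obstacle here: once the two hypotheses are correctly digested the argument is essentially bookkeeping. The one point worth flagging is that condition (2) is used exactly once, to pass from ``$\pi'\hookrightarrow\res_K\nu$'' to ``$\res_K\nu\twoheadrightarrow\pi'$'': over $\bC$ (or whenever $\ell$ divides none of the relevant finite group orders) $\res_K\nu$ is semisimple and this is automatic, so condition (1) alone would do, whereas in characteristic $\ell$ it must genuinely be imposed — and it is precisely this clause that will have to be checked by hand whenever the criterion is applied below. The only mildly delicate computations are the Mackey decomposition of the full induction $\Ind_K^{\rG'}\pi'$ and the identification of $\mathrm{End}_{k\rG'}(\Pi)$ with the intertwining algebra $\bigoplus_{KgK}\Hom_{k(K\cap K^g)}(\pi',{}^g\pi')$, both standard.
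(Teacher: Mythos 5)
The paper does not prove this lemma: it is stated as a citation of \cite[Lemma 4.2]{V3} and used as a black box in what follows, so there is no internal proof to compare against. Your reconstruction is correct and is, as far as I can tell, the standard argument behind Vign\'eras's criterion. You correctly use both adjunctions: Frobenius for $\ind_K$ to see that every nonzero quotient of $\Pi=\ind_K^{\rG'}\pi'$ receives $\pi'$ in its restriction to $K$, then condition (2) to reverse this into a surjection $q\colon\res_K\nu\twoheadrightarrow\pi'$, and then Frobenius for $\Ind_K$ to adjoint $q$ into an embedding $\hat q\colon\nu\hookrightarrow\Ind_K^{\rG'}\pi'$; the Mackey computation shows $\Hom_{k\rG'}(\Pi,\Ind_K^{\rG'}\pi')$ is one-dimensional, spanned by $\iota_0$, and the final identification $\hat q\circ p=c\iota_0$ with $c\neq 0$ kills $\ker p$.

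One small stylistic remark: to obtain $\dim\Hom_{k\rG'}(\Pi,\Ind_K^{\rG'}\pi')=1$ it is cleaner to apply the right-adjoint Frobenius first, $\Hom_{k\rG'}(\Pi,\Ind_K\pi')\cong\Hom_{kK}(\res_K\Pi,\pi')$, and then Mackey for $\res_K\ind_K\pi'$, which is an honest \emph{direct sum} over double cosets, followed by the left-adjoint Frobenius on each summand $\ind_{K\cap{}^gK}^K{}^g\pi'$; this avoids invoking a Mackey decomposition of the full smooth induction $\Ind_K^{\rG'}\pi'$, which as a $K$-representation is only a restricted product and is a slightly more delicate object. The conclusion is the same since $\pi'$ is finite-dimensional, so this does not affect correctness, but it sidesteps a point one would otherwise want to justify. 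Your concluding remark, that condition (2) is automatic when $\res_K\nu$ is semisimple (so in particular in characteristic $0$), is accurate and is exactly the role this clause plays in the rest of the paper (Theorem \ref{thm 10}, Proposition \ref{prop 14}).
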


As shown in \cite[\S I,8.3]{V1}, the first condition is equivalent to $\mathrm{I}_{\rG'}(\pi')=K'$, where $\mathrm{I}_{\rG'}(\pi')$ is the intertwining set. 

\begin{cor}
\label{cor 99}
Let $(J,\lambda)$ be a maximal simple $k$-type of $\rG$. The induction $\ind_{J}^{\tilde{J}}\lambda$ is irreducible.
\end{cor}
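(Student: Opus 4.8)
The plan is to apply Lemma~\ref{lem 19} (Vign\'eras' criterion) to the pair $(J,\lambda)$ viewed inside the group $\tilde J$, so I must verify its two conditions with $\rG'$ replaced by $\tilde J$ and $K$ replaced by $J$. Note that $\tilde J$ is compact, so one could alternatively try to argue directly, but going through the two conditions keeps the argument parallel to the $\rG'$-case and to the rest of the paper. For the first condition I need $\mathrm{I}_{\tilde J}(\lambda)=J$, equivalently $\mathrm{End}_{k\tilde J}(\ind_J^{\tilde J}\lambda)=k$. First I would compute, using Mackey's formula, that $\ind_J^{\tilde J}\lambda$ decomposes according to the double cosets $J\backslash\tilde J/J=\tilde J/J$ (since $J$ is normal in $\tilde J$ by Definition~\ref{defn 11}), so $\mathrm{I}_{\tilde J}(\lambda)=\{x\in\tilde J : x \text{ intertwines } \lambda\}$. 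By Definition~\ref{defn 11}, each $x\in\tilde J$ satisfies $x(\lambda)\cong\lambda\otimes\chi\circ\det$ for some $k$-quasicharacter $\chi$, so $x$ intertwines $\lambda$ with itself iff $\lambda\cong\lambda\otimes\chi\circ\det$; by the equivalence $(1)\Leftrightarrow(2)$ of Proposition~\ref{prop 12} this forces $\chi\circ\det$ to be trivial on $J^1$, and then by Corollary~\ref{cor 13}(1) the element $x$ maps to the identity in $\tilde J/J$, i.e.\ $x\in J$. Hence $\mathrm{I}_{\tilde J}(\lambda)=J$, giving condition~(1).

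For the second condition I need: for every irreducible $k$-representation $\nu$ of $\tilde J$ whose restriction to $J$ contains $\lambda$, there is a surjection $\res_J^{\tilde J}\nu\twoheadrightarrow\lambda$. Here I would use that $\tilde J/J$ is a finite abelian $p$-group (Corollary~\ref{cor 13}(2)) together with Clifford theory: since $J\trianglelefteq\tilde J$ with $p$-group quotient and $\tilde J/J$ acts on the isomorphism classes of constituents of $\res_J^{\tilde J}\nu$, the restriction $\res_J^{\tilde J}\nu$ is a sum of $\tilde J$-conjugates of $\lambda$, i.e.\ a sum of representations of the form $\lambda\otimes\chi\circ\det$. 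If $\lambda$ itself appears in $\res_J^{\tilde J}\nu$, then $\lambda$ is both a subrepresentation and (by semisimplicity of $\res_J^{\tilde J}\nu$, which holds because... one must be slightly careful here) a quotient. The safe route: because $\tilde J/J$ is a $p$-group and $\ell\neq p$, the group algebra $k[\tilde J/J]$ is semisimple, so $\res_J^{\tilde J}\nu$ is $J$-semisimple by a standard averaging/Maschke argument applied to the quotient, whence any constituent isomorphic to $\lambda$ is a direct summand and in particular a quotient. This yields condition~(2).

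The main obstacle I anticipate is precisely the semisimplicity point underlying condition~(2): in the modular setting one cannot take semisimplicity of restrictions for granted, and I must justify carefully that $\res_J^{\tilde J}\nu$ splits off $\lambda$ as a quotient rather than merely containing it as a subquotient. The cleanest fix uses that $\ell\neq p$ and $\tilde J/J$ is a finite $p$-group, so $k[\tilde J/J]$ is semisimple and Clifford theory over this quotient behaves as in characteristic zero; alternatively one can invoke Proposition~\ref{prop 0.1}-style arguments (semisimplicity of restriction to a finite-index normal subgroup with $p$-power index). Once that is in hand, Lemma~\ref{lem 19} applies verbatim and gives the irreducibility of $\ind_J^{\tilde J}\lambda$. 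A final sanity check: the Mackey decomposition of $\ind_J^{\tilde J}\lambda$ together with $\mathrm{I}_{\tilde J}(\lambda)=J$ already shows $\ind_J^{\tilde J}\lambda$ is multiplicity-free with endomorphism algebra $k$, which is consistent with, and reinforces, the conclusion.
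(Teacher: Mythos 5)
Your proof is correct and takes a genuinely different (and simpler) route to the second condition than the paper does. The paper treats condition (2) by tracking the $(J^1,\eta)$-isotypic component through the surjection $\res_J^{\tilde J}\ind_J^{\tilde J}\lambda\twoheadrightarrow\res_J^{\tilde J}\nu$, then invokes Corollary~8.4 of \cite{V3} to identify that component with a multiple of $\lambda$. You instead observe that $J$ is normal in $\tilde J$ with quotient a finite $p$-group (Corollary~\ref{cor 13}(2)) and $\ell\ne p$, so $\res_J^{\tilde J}\nu$ is semisimple by Maschke/Clifford, whence any subrepresentation isomorphic to $\lambda$ is a direct summand and in particular a quotient. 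This is more elementary and avoids the isotypic-component machinery entirely; in fact the paper's own Proposition~\ref{propver13.1} already provides the semisimplicity of restriction to a finite-index closed normal subgroup (without even needing invertibility of the index), so you could have cited it directly in place of the averaging argument you sketched. Your treatment of condition (1), via Mackey, the equivalence $(1)\Leftrightarrow(2)$ in Proposition~\ref{prop 12}, and the injectivity statement of Corollary~\ref{cor 13}(1), is exactly the content behind the paper's one-line assertion that $\mathrm{I}_{\tilde J}(\lambda)=J$, just spelled out. Both approaches are valid; yours buys self-containedness and avoids reliance on the external isotypic-part result, while the paper's keeps the argument uniform with the later, harder Theorem~\ref{thm 10} and Proposition~\ref{prop 14} where the group quotient involved is not a $p$-group and the isotypic-part technique is genuinely needed.
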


\begin{proof}
Lemma \ref{lem 19} can be applied by changing $\rG'$ to a locally pro-finite group. First, we compute $\mathrm{End}_{k\tilde{J}}(\ind_{J}^{\tilde{J}}\lambda)$, which is equal to $k$ since the intertwining group $\mathrm{I}_{\tilde{J}}(\lambda)=J$. For the second condition, let $\nu$ be an irreducible $k$-representation of $\tilde{J}$, such that 
$$\lambda\hookrightarrow\res_{J}^{\tilde{J}}\nu.$$
By \cite[\S I, 6.12]{V1} the restriction $\res_{J}^{\tilde{J}}\nu$ is semisimple, hence $\lambda$ is a direct component.
\end{proof}

\begin{thm}
\label{thm 10}
Let $\lambda'$ be a subrepresentation of $\res_{J'}^J\lambda$. Then $\lambda'$ verifies the second condition of irreducibility: For any irreducible $k$-representation $\pi'$ of $\rG'$, if there is an injection: $\lambda'\hookrightarrow \res_{J'}^{\rG'}\pi'$, then there is a surjection: $\res_{J'}^{\rG'}\pi'\twoheadrightarrow \lambda'.$
\end{thm}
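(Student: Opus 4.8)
The plan is to transfer the second irreducibility condition from $(J,\lambda)$, which is known (implicitly via Corollary \ref{cor 99} and, more substantially, via Theorem \ref{thm 7}), down to $(J',\lambda')$ by a Clifford-theory bookkeeping argument. First I would set up the standard restriction diagram: given an irreducible $k$-representation $\pi'$ of $\rG'$ with an injection $\lambda' \hookrightarrow \res_{J'}^{\rG'}\pi'$, choose an irreducible $k$-representation $\pi$ of $\rG$ such that $\pi'$ is a component of $\res_{\rG'}^{\rG}\pi$ (this is possible since every irreducible $k$-representation of $\rG'$ embeds in the restriction of some irreducible $k$-representation of $\rG$; I would cite or recall the relevant statement about $\rG/\rG'Z$ being a quotient of a locally profinite abelian group, as used already in the proofs of Propositions \ref{prop 0.1}--\ref{prop 0.2}). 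By Lemma \ref{lem 9} applied to $K = J$, $K' = J'$, the fact that $\lambda'$ is a component of $\res_{J'}^{J}\lambda$ combined with $\lambda' \hookrightarrow \res_{J'}^{\rG'}\pi'$ lets me produce an irreducible subquotient of $\res_{J}^{\rG}\pi$ whose restriction to $J'$ contains $\lambda'$; by Proposition \ref{prop 0.2} this subquotient is $\lambda \otimes \chi\circ\det$ for some $k$-quasicharacter $\chi$ of $F^{\times}$, and since $(J,\lambda\otimes\chi\circ\det)$ is again a maximal simple $k$-type (Corollary \ref{cor 3}) intertwined with $(J,\lambda)$ inside $\rG$, Proposition \ref{prop 1} gives $x\in\rU(\fA)$ with $x(J)=J$, $x(\lambda)\cong\lambda\otimes\chi\circ\det$.

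Next I would invoke the first condition of irreducibility for $(J,\lambda)$: $\lambda$ is a component of $\res_{J}^{\rG}\pi$, hence by Frobenius reciprocity and exactness there is a surjection $\res_{J}^{\rG}\ind_{J}^{\rG}\lambda \twoheadrightarrow \res_{J}^{\rG}\pi$. Here I would use the decomposition of Theorem \ref{thm 7}, $\res_J^{\rG}\ind_J^{\rG}\lambda\cong\Lambda_{\lambda}\oplus W$, where $\Lambda_\lambda = \oplus_i x_i(\Lambda(\lambda))$ with the $x_i(\eta)$ pairwise non-isomorphic and each $x_i(\Lambda(\lambda))$ a multiple of $x_i(\lambda)$, while no irreducible subquotient of $\res_{J'}^J W$ is equivalent to $\lambda'$. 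The point is that $\pi$, as a $\rG$-representation, equals $x(\pi)$ up to twist and $\res_J^{\rG}\pi$ receives a surjection from one of the conjugate blocks $x_i(\Lambda(\lambda))$; passing to isotypic parts for $\eta$ (or for $x_i(\eta)$), exactly as in the proofs of Corollary \ref{cor 5} and Theorem \ref{thm 7}, I would pin down that the component of $\res_J^{\rG}\pi$ visible to $\lambda'$ is a multiple of $x(\lambda)\cong\lambda\otimes\chi\circ\det$ (the $W$-part contributes nothing to $\lambda'$ after restriction to $J'$). Thus $\res_{J}^{\rG}\pi$ has $\lambda\otimes\chi\circ\det$ as a quotient, i.e. there is a surjection $\res_{J}^{\rG}\pi \twoheadrightarrow \lambda\otimes\chi\circ\det$.

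Finally I would restrict to $J'$: since $\res_{J'}^{J}(\lambda\otimes\chi\circ\det) \cong \res_{J'}^{J}\lambda$ (the twist $\chi\circ\det$ is trivial on $\rG' \supset J'$), the surjection above restricts to a surjection $\res_{J'}^{\rG'}(\res_{\rG'}^{\rG}\pi) \twoheadrightarrow \res_{J'}^{J}\lambda$, and composing with the projection $\res_{J'}^{J}\lambda \twoheadrightarrow \lambda'$ coming from semisimplicity (Proposition \ref{prop 0.1}) — together with the fact that $\pi'$ is a direct summand of $\res_{\rG'}^{\rG}\pi$, so that $\res_{J'}^{\rG'}\pi'$ is a summand of $\res_{J'}^{\rG'}\res_{\rG'}^{\rG}\pi$ — yields the required surjection $\res_{J'}^{\rG'}\pi' \twoheadrightarrow \lambda'$, provided the image of $\pi'$ under the big surjection actually hits $\lambda'$. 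The main obstacle, and the step I would be most careful about, is precisely this last matching: ensuring that the summand $\pi'$ of $\res_{\rG'}^{\rG}\pi$ is the one through which $\lambda'$ is realized, rather than some $\rG'$-conjugate summand. I would handle this by using that $\lambda' \hookrightarrow \res_{J'}^{\rG'}\pi'$ was given, projecting the surjection onto the $\pi'$-isotypic (or $\pi'$-summand) part of $\res_{\rG'}^{\rG}\pi$ and checking, via the conjugacy action of $\rG/\rG'Z$ on the summands and on the twists $\chi$, that the $\lambda'$-isotypic contribution lands in the correct summand; this is the analogue, at the level of $\rG \supset \rG'$, of the isotypic-part manipulations already used for $\eta$ inside $J$, and I expect it to go through but to require the most delicate bookkeeping of conjugates.
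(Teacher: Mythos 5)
Your plan correctly identifies the relevant ingredients (Lemma~\ref{lem 9}, Proposition~\ref{prop 0.2}, Proposition~\ref{prop 1}, Corollary~\ref{cor 3}, the decomposition of Theorem~\ref{thm 7}) and you correctly flag the weak spot, but the weak spot is not mere bookkeeping: it is a genuine gap. After lifting to an irreducible $k$-representation $\pi$ of $\rG$ with $\pi'\hookrightarrow\res_{\rG'}^{\rG}\pi$ and (granting the earlier steps) producing a surjection $\res_{J}^{\rG}\pi\twoheadrightarrow\lambda\otimes\chi\circ\det$, restricting to $J'$ gives a surjection $\res_{J'}^{\rG}\pi\twoheadrightarrow\res_{J'}^{J}\lambda\twoheadrightarrow\lambda'$. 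Now $\res_{J'}^{\rG}\pi=\bigoplus_i\res_{J'}^{\rG'}\pi_i'$ with the $\pi_i'$ the $\rG'$-components of $\pi$, and this surjection is a priori nonzero only on some unspecified $\pi_i'$, not necessarily on the given $\pi'$. The $\pi_i'$ are mutually $\rG$-conjugate, but the conjugating elements need not normalise $J'$, so a surjection onto $\lambda'$ from one summand cannot be transported to another: under conjugation by $g$, $\res_{J'}^{\rG'}\pi_i'$ becomes a $g^{-1}(J')$-representation. There is a second, earlier gap: the theorem makes no cuspidality assumption, and for a general irreducible $\pi$ you only produce $\lambda\otimes\chi\circ\det$ as a \emph{subquotient} of $\res_{J}^{\rG}\pi$ (via Lemma~\ref{lem 9}), not as a subrepresentation, which is what the Frobenius-reciprocity step you invoke actually requires.

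The paper's proof sidesteps both problems by never lifting to $\rG$. It applies Frobenius reciprocity directly to the \emph{given} injection $\lambda'\hookrightarrow\res_{J'}^{\rG'}\pi'$, obtaining a surjection $\ind_{J'}^{\rG'}\lambda'\twoheadrightarrow\pi'$; this is automatically tied to the correct component $\pi'$ because it is manufactured out of data living on $\pi'$. This surjection is then extended (via Mackey's formula, using the $a=1$ coset of $\res_{\rG'}^{\rG}\ind_{J}^{\rG}\lambda$) to a surjection $\iota\colon\res_{J'}^{\rG}\ind_J^{\rG}\lambda\twoheadrightarrow\res_{J'}^{\rG'}\pi'$, and the decomposition $\res_J^{\rG}\ind_J^{\rG}\lambda\cong\Lambda_\lambda\oplus W$ of Theorem~\ref{thm 7} is analysed modulo $\ker\iota$. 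That is exactly the decomposition you intended to use, but applied to a surjection that already lands on $\pi'$, which is precisely what dissolves the matching problem. To salvage your route you would need an argument pinning down which $\rG'$-summand receives your map; I do not see one that does not effectively reconstruct the Frobenius-reciprocity step the paper performs at the outset.
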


\begin{proof}
By Mackey's theory, we have
$$\res_{\rG'}^{\rG}\ind_J^{\rG}\lambda \cong \oplus_{a\in J\backslash \rG/\rG'} \ind_{\rG'\cap a(J)}^{\rG'}\res_{\rG'\cap a(J)}^{a(J)}a(\lambda),$$
of which $\ind_{J'}^{\rG'}\res_{J'}^J\lambda$ is a direct factor. The assumption $\lambda'\hookrightarrow\res_{J'}^{\rG'}\pi'$ implies a surjection from $\ind_{J'}^{\rG'}\lambda'$ to $\pi'$ by Frobenius reciprocity. Since $\res_{J'}^J\lambda$ is semisimple, we have a surjection
$$\ind_{J'}^{\rG'}\res_{J'}^J\lambda\twoheadrightarrow \pi'.$$
Hence we obtain
$$\res_{\rG'}^{\rG}\ind_J^{\rG}\lambda\twoheadrightarrow\pi'.$$
Now consider the surjection
$$\iota:\res_{J'}^{\rG}\ind_{J}^{\rG}\lambda\twoheadrightarrow \res_{J'}^{\rG'}\pi'.$$
By Theorem \ref{thm 7}, we decompose $\res_J^{\rG}\ind_J^{\rG}\lambda\cong \Lambda_{\lambda}\oplus W$. We have $\Lambda_{\lambda}\oplus W/\mathrm{ker}(\iota)\cong\res_{J'}^{\rG'}\pi'$. 
By the definition of $W$, the image of the composed morphism: 
$$\lambda'\hookrightarrow \Lambda_{\lambda}\oplus W/\mathrm{ker}(\iota)\twoheadrightarrow \Lambda_{\lambda}\oplus W/(W+\mathrm{ker}(\iota))\cong\Lambda_{\lambda}/(\Lambda_{\lambda}\cap(W+\mathrm{ker}(\iota)))$$
is non-trivial. Since $\res_{J'}^J\Lambda_{\lambda}$ is semisimple, so is the quotient $\Lambda_{\lambda}/ (\Lambda_{\lambda}+\mathrm{ker}(\iota))$, of which $\lambda'$ is an irreducible direct factor. This implies a surjection: $\res_{J'}^{\rG'}\pi'\twoheadrightarrow \lambda'$.
\end{proof}

In the theorem above, we show that $\lambda'$ verifies the second condition of irreducibility in Lemma \ref{lem 19}. Unfortunately, $(J',\lambda')$ does not satisfy the first condition, neither when $\ell=0$ nor when $\ell\neq 0$. A natural idea is to construct an open compact subgroup bigger than $J'$, and to extend $\lambda'$ to this group. The aim of the rest of this section is to find such a group. The first step is to introduce $M_{\lambda}$ (see Definition \ref{defn 15}), of which a maximal simple $k$-types is defined. Then we prove that $M_{\lambda}=\tilde{J}'=\tilde{J}\cap\rG'$ (see Proposition \ref{prop 24} and Definition \ref{defn 22}).

\begin{lem}
\label{prop 17}
Let $L$ be a subgroup of $\rG'$ such that $J'\subset L\subset \tilde{J}'$, and $\lambda'$ an irreducible subrepresentation of $\lambda\vert_{J'}$. Then the induced representation $\ind_{J'}^{L}\lambda'$ is semisimple.
\end{lem}

\begin{proof}
By Mackey's theory, the induced representation $\ind_{J'}^{\tilde{J}'}\lambda'$ is a subrepresentation of $\res_{\tilde{J}'}^{\tilde{J}}\ind_J^{\tilde{J}}\lambda$, which is semisimple by Proposition \ref{prop 0.1} and Corollary \ref{cor 99}. Then by Clifford theory, $\res_{L}^{\tilde{J}}\ind_J^{\tilde{J}}\lambda$ is semisimple, of which $\ind_{J'}^{L}\lambda'$ is a subrepresentation, hence is semisimple.
\end{proof}

\begin{prop}
\label{prop 14}
Let $\lambda'$ be an irreducible subrepresentation of $\res_{J'}^J\lambda$, and $\lambda_L'$ an irreducible subrepresentation of $\ind_{J'}^{L}\lambda'$. Then $\lambda_L'$ verifies the second condition of irreducibility. This is to say that for any irreducible representation $\pi'$ of $\rG'$, if there is an injection $\lambda_L' \hookrightarrow \res_{L'}^{\rG'}\pi'$, then there exists a surjection $\res_{L'}^{\rG'}\pi'\twoheadrightarrow \lambda_{L}'$, where $L'=L\cap\rG'$.
\end{prop}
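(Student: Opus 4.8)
The plan is to reduce the second irreducibility condition for the pair $(L',\lambda_L')$ to the one already established for $(J',\lambda')$ in Theorem \ref{thm 10}, using the transitivity of induction and the semisimplicity results of Propositions \ref{prop 0.1} and \ref{prop 17}. First I would observe that $L'$ here coincides with $L$, since $L\subset\tilde{J}'\subset\rG'$, so $L'=L$; I will write $L$ throughout. Suppose we are given an irreducible $k$-representation $\pi'$ of $\rG'$ together with an injection $\lambda_L'\hookrightarrow\res_{L}^{\rG'}\pi'$. By Frobenius reciprocity this yields a surjection $\ind_{L}^{\rG'}\lambda_L'\twoheadrightarrow\pi'$. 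Since $\lambda_L'$ is an irreducible subrepresentation of $\ind_{J'}^{L}\lambda'$, and $\ind_{J'}^{L}\lambda'$ is semisimple by Proposition \ref{prop 17}, the representation $\lambda_L'$ is in fact a direct summand of $\ind_{J'}^{L}\lambda'$. Applying $\ind_{L}^{\rG'}$ (which preserves finite direct sums) and transitivity of compact induction, $\ind_{L}^{\rG'}\lambda_L'$ is a direct summand of $\ind_{L}^{\rG'}\ind_{J'}^{L}\lambda'\cong\ind_{J'}^{\rG'}\lambda'$. Composing, we get a surjection $\ind_{J'}^{\rG'}\lambda'\twoheadrightarrow\pi'$, hence by Frobenius reciprocity an injection $\lambda'\hookrightarrow\res_{J'}^{\rG'}\pi'$.

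Now I invoke Theorem \ref{thm 10}: since $\lambda'$ is a subrepresentation of $\res_{J'}^J\lambda$ and we have just produced an injection $\lambda'\hookrightarrow\res_{J'}^{\rG'}\pi'$, there is a surjection $\res_{J'}^{\rG'}\pi'\twoheadrightarrow\lambda'$. The remaining task is to promote this to a surjection $\res_{L}^{\rG'}\pi'\twoheadrightarrow\lambda_L'$. Here I would use the semisimplicity of $\res_{L}^{\rG'}\pi'$ restricted suitably, or rather work at the level of $L$: by Frobenius reciprocity the surjection $\res_{J'}^{\rG'}\pi'\twoheadrightarrow\lambda'$ is the same datum as an embedding of $\pi'$ into $\ind_{J'}^{\rG'}\lambda'$ (using that $J'$ is open so induction and coinduction agree, or that $\lambda'$ is a summand of a semisimple module and one can dualize). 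Composing $\pi'\hookrightarrow\ind_{J'}^{\rG'}\lambda'\cong\ind_{L}^{\rG'}\ind_{J'}^{L}\lambda'$ with the projection onto the summand $\ind_{L}^{\rG'}\lambda_L'$, and checking this composite is nonzero, gives $\pi'\hookrightarrow\ind_{L}^{\rG'}\lambda_L'$, equivalently (Frobenius) a surjection $\res_{L}^{\rG'}\pi'\twoheadrightarrow\lambda_L'$, as desired.

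The step I expect to be the main obstacle is ensuring that the composite $\pi'\hookrightarrow\ind_{J'}^{\rG'}\lambda'\twoheadrightarrow\ind_{L}^{\rG'}\lambda_L'$ is nonzero; a priori the image of $\pi'$ could land entirely in the complementary summands of $\ind_{J'}^{L}\lambda'$. To rule this out I would argue as follows: the surjection $\res_{J'}^{\rG'}\pi'\twoheadrightarrow\lambda'$ obtained above already has its domain semisimple when further restricted to a pro-$p$ subgroup, and since $\lambda_L'$ is a constituent of $\ind_{J'}^{L}\lambda'$ that \emph{contains} $\lambda'$ in its restriction to $J'$ (by Frobenius reciprocity $\Hom_{kJ'}(\lambda',\res_{J'}^{L}\lambda_L')\neq 0$, as $\lambda_L'$ is a summand of $\ind_{J'}^{L}\lambda'$ and $\lambda'$ appears), one can choose the embedding $\pi'\hookrightarrow\ind_{L}^{\rG'}\ind_{J'}^{L}\lambda'$ to be compatible with the $L$-summand decomposition; concretely, the image of $\pi'$ generates an $L$-stable subspace whose $J'$-socle meets the $\lambda'$-isotypic part, forcing a nonzero projection onto at least one summand of $\ind_{J'}^{L}\lambda'$ whose $J'$-restriction contains $\lambda'$, and by Proposition \ref{prop 0.2} all such summands are $\rG'$-conjugate (indeed $L$-conjugate, as $L$ normalises $J'$) to $\lambda_L'$, so after replacing $\lambda_L'$ by an $L$-conjugate — which does not change the statement — we get the required nonzero map. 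A cleaner alternative, which I would prefer to write down, is to bypass the nonvanishing issue entirely by running the Mackey/Theorem \ref{thm 7} decomposition of $\res_{J'}^{\rG}\ind_J^{\rG}\lambda$ one level higher: restrict instead to $L$, use that $\res_{L}^{\rG}\ind_J^{\rG}\lambda$ contains $\ind_{J'}^{L}\res_{J'}^{J}\lambda$ as a direct factor (Mackey, $a=1$), hence contains $\ind_{J'}^{L}\lambda'$ and therefore its summand $\lambda_L'$, and then repeat verbatim the argument of Theorem \ref{thm 10} with $J'$ replaced by $L$ and $\lambda'$ replaced by $\lambda_L'$, the key point being that the ``bad'' part $W$ of $\res_J^{\rG}\ind_J^{\rG}\lambda$, after inducing to $L$ and restricting, still contains no constituent whose $L$-restriction meets $\lambda_L'$ — this follows because any such constituent would, on further restriction to $J'$, contain $\lambda'$, contradicting the last clause of Theorem \ref{thm 7}.
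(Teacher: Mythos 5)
Your first route contains a real gap. From the surjection $\res_{J'}^{\rG'}\pi'\twoheadrightarrow\lambda'$ Frobenius reciprocity gives an embedding $\pi'\hookrightarrow\mathrm{Ind}_{J'}^{\rG'}\lambda'$ into the \emph{smooth} induction (the right adjoint of restriction), not into the compact induction $\ind_{J'}^{\rG'}\lambda'$: for a compact open subgroup $J'$ of the non-compact group $\rG'$ the two do not agree, so the parenthetical "induction and coinduction agree" is simply false, and "dualize" is not justified in the $\ell$-modular setting where representations are not admissible and duality is not perfect. Beyond that, the nonvanishing issue you flag is genuine and your proposed repair via "$J'$-socles" and $L$-conjugacy does not rigorously force the composite $\pi'\hookrightarrow\ind_{J'}^{\rG'}\lambda'\twoheadrightarrow\ind_L^{\rG'}\lambda_L'$ to be nonzero; a priori the image could project entirely onto summands of $\ind_{J'}^L\lambda'$ other than $\lambda_L'$, giving a surjection onto the wrong constituent. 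This whole route essentially tries to get the result for free from Theorem \ref{thm 10}, but that theorem gives information only at the $J'$-level and the passage up to $L$ is exactly the substance of Proposition \ref{prop 14}.

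Your second, preferred route is in spirit the paper's own argument: decompose $\res_{J'}^{\rG}\ind_J^{\rG}\lambda$ by Theorem \ref{thm 7}, show the image of $\lambda_L'$ cannot lie entirely in the $\ind_{J'}^L$ of the $W$-part, then use semisimplicity of the $\Lambda_\lambda$-part to produce the surjection. However, "repeat verbatim with $J'$ replaced by $L$" does not quite parse, because $L$ is not a subgroup of $J$; Theorem \ref{thm 7} gives a $J$-level decomposition and one must restrict it to $J'=J\cap L$ and then apply the exact functor $\ind_{J'}^L$, tracking the kernel of the surjection onto $\res_{J'}^{\rG'}\pi'$ through the induction. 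Your contradiction argument (a constituent of $\ind_{J'}^L\res_{J'}^J W$ meeting $\lambda_L'$ would, after restriction to $J'$, exhibit an $L$-conjugate of $\lambda'$ as a subquotient of $\res_{J'}^J W$, which Theorem \ref{thm 7} forbids) is correct and is the key step, but to finish you still need the semisimplicity of $\ind_{J'}^L\res_{J'}^J\Lambda_\lambda$, which the paper supplies via Proposition \ref{prop 17} and Lemma \ref{lem 8}. With those two ingredients spelled out, your second route coincides with the paper's proof.
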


\begin{proof}
By Lemma \ref{prop 17}, the injection from $\lambda_L'$ to $\res_L^{\rG'}\pi'$ induces a non-trivial homomorphism $\ind_{J'}^L \lambda'\twoheadrightarrow \res_L^{\rG'}\pi'$. By Frobenius reciprocity, there is an injection from $\lambda'$ to $\res_{J'}^{\rG'}\pi'$. Thus there exists a non-trivial homomorphism $\res_{J'}^{J}\lambda\rightarrow\res_{J'}^{\rG'}\pi'$. By applying Frobenius reciprocity, we obtain a surjection:
$$\res_{J'}^{\rG'}\ind_{J'}^{\rG'}\res_{J'}^{J}\lambda\twoheadrightarrow\res_{J'}^{\rG'}\pi'.$$
By Mackey's theory, the $k$-representation $\ind_{J'}^{\rG'}\res_{J'}^{J}\lambda$ is a direct factor of $\res_{\rG'}^{\rG}\ind_{J}^{\rG}\lambda$, and hence $\res_{J'}^{\rG'}\ind_{J'}^{\rG'}\res_{J'}^{J}\lambda$ is a direct factor of $\res_{J'}^{\rG}\ind_{J}^{\rG}\lambda$. Then the surjection above implies a non-trivial homomorphism:
$$\res_{J'}^{\rG}\ind_{J}^{\rG}\lambda\twoheadrightarrow \res_{J'}^{\rG'}\pi'.$$
By Proposition \ref{thm 7}, the left hand side is isomorphic to $\res_{J'}^J\Lambda_{\lambda}\oplus\res_{J'}^J W$, where the set of isomorphism class of irreducible subquotients of $\res_{J'}^J W$ is disjoint with that $\res_{J'}^J\lambda$. Now we consider the equivalence $\res_{J'}^{\rG'}\pi'\cong(\res_{J'}^J\Lambda_{\lambda}\oplus\res_{J'}^J W )/ U$, where $U$ is the kernel of the above surjection.

Define $\tau$ be the composed morphism as below:
$$\tau:=\lambda_L'\hookrightarrow\res_{L}^{\rG'}\pi'\hookrightarrow\ind_{J'}^L\res_{J'}^{\rG'}\pi',$$
and the last factor is isomorphic to $\ind_{J'}^{L}((\res_{J'}^{J}\Lambda_{\lambda}\oplus\res_{J'}^J W)/ U)$.

Now we prove that $\ind_{J'}^L((\res_{J'}^J W +U)/ U)$ is contained in the kernel of $\tau$. Suppose to the contrary, $\tau$ induces a non-trivial morphism from $\lambda_L'$ to $\ind_{J'}^L((\res_{J'}^J W +U)/ U)$. By Frobenius reciprocity, there is a non-trivial morphism from $\res_{J'}^L\lambda_L'$ to $(\res_{J'}^J W +U)/ U$. 
Meanwhile, by the definition of $\tilde{J}$ we have
$$\res_{J'}^L \lambda_L'\hookrightarrow\res_{J'}^{\tilde{J}}\ind_J^{\tilde{J}}\lambda\cong\oplus_{J\backslash\tilde{J}/ J'}\res_{J'}^J \lambda.$$
Thus there exists an irreducible component of $\res_{J'}^J\lambda$ appearing as a subquotient of $\res_{J'}^{J}W$, which contradicts to Theorem \ref{thm 7}. So $\tau(\lambda_L')$ is contained in $\ind_{J'}^L((\res_{J'}^{J}\Lambda_{\lambda}+U)/U)$, which is a quotient $\ind_{J'}^{L}\res_{J'}^{J}\Lambda_{\lambda}$, and the latter is semisimple by Lemma \ref{prop 17} and Lemma \ref{lem 8}. Hence we conclude that $\ind_{J'}^L((\res_{J'}^{J}\Lambda_{\lambda}+U)/U)$ is semisimple, contains $\lambda_{L}'$ as a direct component. Since it is a quotient of $\res_{L}^{\rG'}\pi'$, we finish the proof.
\end{proof}

\begin{defn}
\label{defn 15}
Let $(J,\lambda)$ be a maximal simple $k$-type of $\rG$, and $\lambda'$ be an irreducible subrepresentation of $\res_{J'}^J\lambda$. Define $M_{\lambda}$ to be the subgroup of $\tilde{J}'$ consisting of the elements $x\in\tilde{J}'$, such that $x(\lambda')\cong\lambda'$.
\end{defn}

\begin{rem}
Since $M_{\lambda}$ normalizes $J$ and the intersection $M_{\lambda}\cdot J\cap \rG'$ is equal to $M_{\lambda}$, we have that $J$ normalizes $M_{\lambda}$. On the other hand, since the irreducible components of $\res_{J'}^J\lambda$ are $J$-conjugate, the group $M_{\lambda}$ is independent on the choice of $\lambda'$. We will prove in Proposition \ref{prop 24} that $M_{\lambda}=\tilde{J}'$.
\end{rem}

In Theorem \ref{thm 16}, we prove that a pair $(M_{\lambda},\lambda_{M_{\lambda}}')$ (we keep the notation in Proposition \ref{prop 14}) verifies the criteria in Lemma \ref{lem 19} and hence a maximal simple $k$-type. The first criterion has been checked in Proposition \ref{prop 14}. Now we compute its intertwining group in $\rG'$: We first show that $\mathrm{I}_{\rG'}(\ind_{J'}^{\rU(\fA)'}\lambda')\subset\rU(\fA)'$ (Proposition \ref{prop 18}), and then show that $\mathrm{I}_{\rU(\fA)'}\lambda_{M_{\lambda}}'=M_{\lambda}$ (Theorem \ref{thm 16}).

\begin{lem}
\label{lem plus 1}
The induction $\ind_{J}^{\rU(\fA)}\lambda$ is irreducible.
\end{lem}

\begin{proof}
Denote $\ind_{J}^{\rU(\fA)}\lambda$ by $\rho$. Let $\rho'$ be an irreducible subrepresentation of $\rho$, and $E$ the field extension associated with $(J,\lambda)$ as in Section \ref{Notation}. Let $\Lambda$ be an extension of $\lambda$ to $E^{\times}J$, then $\ind_{E^{\times}J}^{\rG}\Lambda$ is irreducible. Let $\omega$ be the central character of $\Lambda$, and denote by $\rho_{\omega}'$ the extension of $\rho$ to $F^{\times}\rU(\fA)$ by $\omega$. Since $E^{\times}\rU(\fA)\slash F^{\times}\rU(\fA)$ is a finite cyclic group, $\tau_{\omega}'$ can be extended to $E^{\times}\rU(\fA)$. By Frobenius reciprocity and Mackey's theory, one of such extension $\bar{\rho}_{\omega}'$ is embedded in $\ind_{E^{\times}J}^{E^{\times}\rU(\fA)}\Lambda$, which is irreducible. Hence this embedding is an equivalence. After restricting to $\rU(\fA)$, it gives an equivalence from $\rho'$ to $\rho$, and we finish the proof.
\end{proof}

\begin{prop}
\label{prop 18}
Let $\lambda'$ be an irreducible subrepresentation of $\res_{J'}^J\lambda$, then the intertwining set $\mathrm{I}_{\rG'}(\ind_{J'}^{\rU(\fA)'}\lambda')$ is contained in $\rU(\fA)'$.
\end{prop}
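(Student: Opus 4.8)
The plan is to transfer the question to $\rG=\mathrm{GL}_n(F)$, where the intertwining of the maximal simple $k$-type $(J,\lambda)$ is understood, and then to use that $\rG'=\mathrm{SL}_n(F)$ is cut out by a determinant condition. First, since $\ind_{J'}^{\rG'}\lambda'\cong\ind_{\rU(\fA)'}^{\rG'}\ind_{J'}^{\rU(\fA)'}\lambda'$, a double application of Mackey's decomposition formula (computing $\mathrm{End}_{k\rG'}(\ind_{J'}^{\rG'}\lambda')$ in the two evident ways) shows that for $g\in\rG'$ one has $g\in\mathrm{I}_{\rG'}(\ind_{J'}^{\rU(\fA)'}\lambda')$ only if the double coset $\rU(\fA)'\,g\,\rU(\fA)'$ meets $\mathrm{I}_{\rG'}(\lambda')$. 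It therefore suffices to prove $\mathrm{I}_{\rG'}(\lambda')\subset\rU(\fA)'$, since then $g\in\rU(\fA)'\cdot\rU(\fA)'\cdot\rU(\fA)'=\rU(\fA)'$.

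So fix $g'\in\rG'$ with $g'\in\mathrm{I}_{\rG'}(\lambda')$; a fortiori $g'$ weakly intertwines $\lambda'$ with itself. By Proposition \ref{prop 0.3}, applied with $K_1=K_2=J$, $\rho_1=\rho_2=\lambda$ and $\rho_1'=\rho_2'=\lambda'$, there is a $k$-quasicharacter $\chi$ of $F^{\times}$ such that $g'$ weakly intertwines $\lambda$ with $\lambda\otimes\chi\circ\det$ in $\rG$; by Corollary \ref{cor 5} it in fact intertwines them. Hence the maximal simple $k$-types $(J,\lambda)$ and $(J,\lambda\otimes\chi\circ\det)$ — the latter is maximal simple by Corollary \ref{cor 3} — are intertwined, a fortiori weakly intertwined, so Proposition \ref{prop 1} yields $x\in\rU(\fA)$ with $x(J)=J$ and $x(\lambda)\cong\lambda\otimes\chi\circ\det$; that is, $x\in\tilde{J}$, the projective normalizer of Definition \ref{defn 11}. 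Unwinding the definition of intertwining, the fact that $g'$ intertwines $\lambda$ with $x(\lambda)$ means precisely that $g'x\in\mathrm{I}_{\rG}(\lambda)$.

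Now I would invoke the structure of $\mathrm{I}_{\rG}(\lambda)$ for a maximal simple type from \cite{BuKu} (and \cite{MS} in the modular case): writing $E=F[\beta]$ and $B=\mathrm{End}_E(V)$, the intertwining of $\lambda$ equals $J^1$ times the intertwining in $B^{\times}$ of $\sigma$ times $J^1$; since $e=1$, the relevant factor of $\sigma$ is a cuspidal representation of $\mathrm{GL}_{\dim_E V}(k_E)$ over the maximal order $\fB$, and cuspidality forces this intertwining in $B^{\times}$ to reduce to $E^{\times}\rU(\fB)$, with $E^{\times}$ the scalar matrices. Consequently $\mathrm{I}_{\rG}(\lambda)=\bigsqcup_{a\in\bZ}J\,z^{a}J$ where $z=\varpi_E\cdot 1_V$ normalizes $\fA$ and $J$, and the decisive numerical fact is that $v_F(\det z)=f(E/F)\cdot\dim_E V>0$, whereas $\det$ maps $\rU(\fA)$ into $\mathfrak{o}_F^{\times}$. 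Writing $g'x=j_1 z^{a} j_2$ with $j_i\in J$ and noting that $\det(g'x)=\det(x)\in\mathfrak{o}_F^{\times}$ (because $g'\in\rG'$ and $x\in\rU(\fA)$), a comparison of valuations of determinants forces $a=0$; hence $g'x\in J\subset\rU(\fA)$, so $g'=(g'x)x^{-1}\in\rU(\fA)$, and finally $g'\in\rU(\fA)\cap\rG'=\rU(\fA)'$. This gives $\mathrm{I}_{\rG'}(\lambda')\subset\rU(\fA)'$ and completes the proof.

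The step I expect to require the most care is the first reduction: making the double Mackey computation precise, in particular tracking Hom-intertwining versus weak intertwining throughout and checking that all intervening conjugates stay inside $\rG'$. The structural input on $\mathrm{I}_{\rG}(\lambda)$ — above all that the cuspidality of the finite factor of $\sigma$, together with $e=1$, collapses the intertwining in $B^{\times}$ down to $E^{\times}\rU(\fB)$ — must also be cited carefully in the $\ell$-modular setting. Once those are in hand, the determinant argument, which is the only place the hypothesis $\rG'=\mathrm{SL}_n(F)$ enters, is short and self-contained.
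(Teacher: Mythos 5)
Your argument is correct, and it arrives at the same essential ingredients the paper uses (Proposition \ref{prop 0.3}, Corollary \ref{cor 5}, Proposition \ref{prop 1}, the description $\mathrm{I}_{\rG}(\lambda)=E^{\times}J$ for a maximal simple type, and the valuation-of-determinant argument), but you organize the Mackey bookkeeping differently. The paper works with the irreducible representation $\tau=\ind_J^{\rU(\fA)}\lambda$, writes $\ind_{J'}^{\rU(\fA)'}\lambda'$ as a sum of direct factors $\tau_i'$ of $\res_{\rU(\fA)'}^{\rU(\fA)}\tau$, and then unpacks $\res_J^{\rU(\fA)}i_{\rU(\fA),g(\rU(\fA))}(\tau\otimes\chi\circ\det)$ by hand via two Mackey decompositions to locate a $y\in\rU(\fA)g\rU(\fA)$ weakly intertwining $\lambda$ with $\lambda\otimes\chi\circ\det$; the eventual conclusion is $g\in\rU(\fA)E^{\times}J\rU(\fA)\cap\rG'$, which is then shown to equal $\rU(\fA)'$ via the determinant. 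You instead make the reduction abstract at the level of the Hecke algebra (this is precisely the content of \cite[\S I.8.10, Prop.~3]{V1}, which the paper itself invokes in Theorem \ref{thm 16}): $g\in\mathrm{I}_{\rG'}(\ind_{J'}^{\rU(\fA)'}\lambda')$ only if $\rU(\fA)'g\rU(\fA)'$ meets $\mathrm{I}_{\rG'}(\lambda')$, so that the proposition reduces to the clean intermediate statement $\mathrm{I}_{\rG'}(\lambda')\subset\rU(\fA)'$, which you then verify directly. This isolates a reusable fact about $\lambda'$ and avoids the somewhat delicate double Mackey computation on $\tau\otimes\chi\circ\det$; the trade-off is that you have to quote the Hecke-algebra transitivity formula up front rather than derive the relevant piece of it ad hoc. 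Two small cosmetic points: since $z=\varpi_E\cdot 1_V$ normalizes $J$ and $\mathfrak{o}_E^{\times}\subset J$, one should write $g'x=z^{a}j$ with a single $j\in J$ rather than $j_1 z^{a}j_2$; and the identity $v_F(\det z)=f(E/F)\dim_E V$ is exactly the paper's observation that $\det(e)\in\mathfrak{o}_F^{\times}$ for $e\in E^{\times}$ if and only if $e\in\mathfrak{o}_E^{\times}$.
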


\begin{proof}
Let $\rho$ be as above. The induction $\ind_{J'}^{\rU(\fA)'}\lambda'$ is embedding to $\res_{\rU(\fA)'}^{\rU(\fA)}\rho$, thus is semisimple of finite length, and write $\ind_{J'}^{\rU(\fA)'}\lambda'\cong\oplus_{i\in I}\rho_i'$, where $\rho_i'$ are irreducible and $I$ is a finite. Let $g\in\rG$, we have
$$\mathrm{H}_g(\ind_{J'}^{\rU(\fA)'}\lambda')=\bigoplus_{i\in I,j\in I}\mathrm{Hom}(\rho_i',i_{\rU(\fA)'}g(\rho_j')).$$
Hence we have:
$$\mathrm{H}_{\rG'}(\ind_{J'}^{\rU(\fA)'}\lambda')= \bigoplus_{i\in I,j\in I}\mathrm{H}_{\rG'}(\rho_i',\rho_j').$$ 
Now we assume that $g\in\mathrm{I}_{\rG'}(\rho_i',\rho_j')$, and we show that $g\in\rU(\fA)'$. 

By Proposition \ref{prop 0.3}, there exists a $k$-quasicharacter $\chi$ of $F^{\times}$ such that $g$ weakly intertwines $\rho$ with $\rho\otimes\chi\circ\det$.  Hence $\res_{J}^{\rU(\fA)}\rho$ is a subquotient of $\res_{J}^{\rU(\fA)}i_{\rU(\fA),g(\rU(\fA))}\rho\otimes\chi\circ\det$, and so is $\lambda$. Applying Mackey's theory to $\res_{J}^{\rU(\fA)}i_{\rU(\fA),g(\rU(\fA))}\rho\otimes\chi\circ\det$, it is isomorphic to a finite direct sum, whose direct components are $\ind_{J\cap y(J)}^{J}\res_{J\cap y(J)}^{y(J)}y(\lambda\otimes\chi\circ\det)$ for $y\in \rU(\fA) g\rU(\fA)$. More precisely,
$$\res_{J}^{\rU(\fA)}i_{\rU(\fA),g(\rU(\fA))}(\rho\otimes\chi\circ\det)=\bigoplus_{\beta}\bigoplus_{\alpha}\ind_{J\cap y (J)}^{J}\res_{J\cap y (J)}^{\beta(\rU(\fA))\cap y (J)}y (\lambda)\otimes\chi\circ\det,$$
where $y=\beta\alpha g$, and $\beta$ (resp. $\alpha$) runs over a finite quotient of $\rU(\fA)$ (resp. $g(\rU(\fA))$).

By the uniqueness of Jordan-H\"older factors, the representation $\lambda$ is weakly intertwined with $\lambda\otimes\chi\circ\det$ by a $y_0\in \rU(\fA)g \rU(\fA)$. Hence by Corollary \ref{cor 5}, $y_0$ intertwines $\lambda$ with $\lambda\otimes\chi\circ\det$, and by Proposition \ref{prop 1} there exists $x\in\rU(\fA)$ such that $x(\lambda\otimes\chi\circ\det)\cong\lambda$. The element $y_0x^{-1}$ intertwines $\lambda$ to itself, and hence lies in $E^{\times} J$. Therefore $g\in \rU(\fA)\mathrm{E}^{\times}J \rU(\fA)\cap\rG'$. The latter is equal to $\rU(\fA)'$, since $E^{\times}$ normalises $\rU(\fA)$ and for any $e\in E^{\times}$, $\det(e)\in\mathfrak{o}_F^{\times}$ if and only $e\in\mathfrak{o}_E^{\times}$, 
where $\mathfrak{o}_F$,$\mathfrak{o}_E$ is the ring of integers of $F$,$E$ respectively. We conclude that $\mathrm{I}_{\rG'}(\ind_{J'}^{\rU(\fA)'}\lambda')=\rU(\fA)'$. 
\end{proof}

\begin{lem}
\label{lem 17}
Let $\lambda'$ be an irreducible component of $\res_{J'}^{J}\lambda$, and $x\in\mathrm{I}_{\rU(\fA)'}(\lambda')$ (see Definition \ref{prepdefn 01}), then $x\in\tilde{J}'$.
\end{lem}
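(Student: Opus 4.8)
The plan is to leverage Proposition \ref{prop 18} together with the relation between intertwining of $\lambda'$ and weak intertwining of $\lambda$ established in Proposition \ref{prop 0.3}, and then to pin down the element using the structure of the intertwining set of $\lambda$ in $\rU(\fA)$ (which is $J$, as recorded via \cite[2.3.3]{MS} and \cite[3.1.15]{BuKu} in the proof of Proposition \ref{prop 1}). More precisely, suppose $x \in \rU(\fA)'$ intertwines $\lambda'$, i.e. $x \in \mathrm{I}_{\rU(\fA)'}(\lambda') \subset \mathrm{I}_{\rG'}(\lambda')$. By Mackey and the fact that $\ind_{J'}^{\rU(\fA)'}\lambda'$ is a semisimple subrepresentation of $\res_{\rU(\fA)'}^{\rU(\fA)}\ind_J^{\rU(\fA)}\lambda$ whose irreducible components all restrict compatibly to $\lambda'$, the element $x$ intertwines one of those components, hence $x \in \mathrm{I}_{\rG'}(\ind_{J'}^{\rU(\fA)'}\lambda') \subseteq \rU(\fA)'$ by Proposition \ref{prop 18} — but this only re-confirms $x \in \rU(\fA)'$, so the real content lies elsewhere.

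The first substantive step is: since $x$ intertwines $\lambda'$ (viewed inside $\lambda|_{J'}$), Proposition \ref{prop 0.3} (applied with $K_1=K_2=J$, $\rho_1=\rho_2=\lambda$, $\rho_1'=\rho_2'=\lambda'$, and the element $x\in\rU(\fA)\subset\rG$) produces a $k$-quasicharacter $\chi$ of $F^\times$ such that $x$ weakly intertwines $\lambda$ with $\lambda\otimes\chi\circ\det$. By Corollary \ref{cor 5}, $x$ then intertwines $\lambda$ with $\lambda\otimes\chi\circ\det$, and by Proposition \ref{prop 1} there is $y\in\rU(\fA)$ with $y(J)=J$ and $y(\lambda)\cong\lambda\otimes\chi\circ\det$; in particular $y\in\tilde J$ by Definition \ref{defn 11}. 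The second step is to observe that $y^{-1}x$ then intertwines $\lambda$ to itself in $\rG$, hence $y^{-1}x\in\mathrm{I}_{\rG}(\lambda)\cap\rU(\fA)$. Invoking the identification $\mathrm{I}_{\rG}(\lambda)\cap\rU(\fA)=J$ (the simple-type version of $\mathrm{I}_{\rG}(\theta)\cap\rU(\fA)=J$ combined with the $\beta$-extension/$\sigma$-part analysis, exactly as used in Proposition \ref{prop 1} and Proposition \ref{prop 12}), we get $y^{-1}x\in J$, so $x\in yJ\subseteq \tilde J$. Finally, since $x\in\rG'$ by hypothesis, $x\in\tilde J\cap\rG'=\tilde J'$, which is the claim.

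The step I expect to be the main obstacle is justifying the passage from ``$x$ intertwines $\lambda'$'' to ``$x$ weakly intertwines $\lambda$ with some twist $\lambda\otimes\chi\circ\det$'' with enough care that Corollary \ref{cor 5} and Proposition \ref{prop 1} genuinely apply: Proposition \ref{prop 0.3} is stated for weak intertwining of the $\rho_i'$, so one first needs that intertwining implies weak intertwining (immediate, since a nonzero $\Hom$ forces a common subquotient), and then one must check that the twist $\chi$ is unique enough — but for the argument only the \emph{existence} of such a $\chi$ matters, and $\chi\circ\det|_{J^1}$ is forced to be trivial by restricting to $H^1$ as in Proposition \ref{prop 12}, so no genuine difficulty remains there. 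The only other point requiring attention is that $J\subseteq \tilde J$ and $\tilde J$ normalizes $J$ (both part of Definition \ref{defn 11} and the surrounding discussion), so that $yJ$ is indeed a subset of $\tilde J$; this is routine. Thus the proof reduces to a bookkeeping combination of \ref{prop 0.3}, \ref{cor 5}, \ref{prop 1}, and the equality $\mathrm{I}_{\rG}(\lambda)\cap\rU(\fA)=J$.
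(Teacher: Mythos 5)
Your proof is correct and follows essentially the same route as the paper's: intertwining of $\lambda'$ implies, via Proposition \ref{prop 0.3}, weak intertwining of $\lambda$ with a twist $\lambda\otimes\chi\circ\det$; then Corollary \ref{cor 5} and Proposition \ref{prop 1} produce $y\in\tilde{J}$ with $y(\lambda)\cong\lambda\otimes\chi\circ\det$; adjusting $x$ by $y$ gives an element intertwining $\lambda$ with itself, which by $\mathrm{I}_{\rG}(\lambda)=E^{\times}J$ and $E^{\times}J\cap\rU(\fA)=J$ forces $x\in Jy\subset\tilde{J}$, hence $x\in\tilde{J}'$. The only (harmless) discrepancy is the precise group element written as self-intertwining $\lambda$ — you write $y^{-1}x$, the paper writes $xy^{-1}$ — but this does not affect the conclusion since both land in $\mathrm{I}_{\rG}(\lambda)\cap\rU(\fA)$ up to a factor from $\tilde{J}$.
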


\begin{proof}
For $x\in\mathrm{I}_{\rU(\fA)'}(\lambda')$, by Proposition \ref{prop 0.3} the element $x$ weakly intertwines $\lambda$ with $\lambda\otimes\chi\circ\det$ for a quasicharacter $\chi$. Then by Corollary \ref{cor 5}, $x$ intertwines $\lambda$ with $\lambda\otimes\chi\circ\det$, and hence by Proposition \ref{prop 1} there exists $y\in\rU(\fA)$ such that $y(J)=J$ and $y(\lambda)\cong\lambda\otimes\chi\circ\det$, which implies that $y\in\tilde{J}$ by definition. The element $xy^{-1}$ therefore intertwines $\lambda$, then $x\in E^{\times} J y\cap\rU(\fA)'$ by \cite[\S IV,1.1]{V2}. Since $E^{\times}\tilde{J}\cap \rU(\fA)=\tilde{J}$, we conclude that $x\in Jy\cap\rU(\fA)'\subset\tilde{J}'$.
\end{proof}

\begin{thm}
\label{thm 16}
Recall that $\lambda_{M_{\lambda}}'$ is an irreducible subrepresentation of $\ind_{J'}^{M_{\lambda}}\lambda'$. Then the induced representation $\ind_{M_{\lambda}}^{\rG'}\lambda_{M_{\lambda}}'$ is irreducible and cuspidal.
\end{thm}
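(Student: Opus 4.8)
The plan is to verify both hypotheses of Vignéras' irreducibility criterion (Lemma~\ref{lem 19}) for the pair $(M_{\lambda},\lambda_{M_{\lambda}}')$, and then to read off cuspidality from the compactness of $M_{\lambda}$. The second hypothesis is essentially free: since $J'\subseteq M_{\lambda}\subseteq\tilde{J}'$ and $M_{\lambda}\cap\rG'=M_{\lambda}$, and $\lambda_{M_{\lambda}}'$ is by assumption an irreducible subrepresentation of $\ind_{J'}^{M_{\lambda}}\lambda'$, Proposition~\ref{prop 14} applied with $L=M_{\lambda}$ says exactly that $\lambda_{M_{\lambda}}'$ satisfies the second condition of Lemma~\ref{lem 19}. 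So the real content is the first condition, namely $\mathrm{End}_{k\rG'}(\ind_{M_{\lambda}}^{\rG'}\lambda_{M_{\lambda}}')=k$, equivalently (as noted after Lemma~\ref{lem 19}) $\mathrm{I}_{\rG'}(\lambda_{M_{\lambda}}')=M_{\lambda}$.

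I would first record two structural facts. Since $\tilde{J}$ normalizes $J$, the subgroup $\tilde{J}'=\tilde{J}\cap\rG'$ normalizes $J'$, so $J'$ is normal of finite index in $M_{\lambda}$ (finiteness by Corollary~\ref{cor 13}); hence $\res_{J'}^{M_{\lambda}}\ind_{J'}^{M_{\lambda}}\lambda'\cong\bigoplus_{h\in M_{\lambda}/J'}h(\lambda')$, and every $h(\lambda')\cong\lambda'$ by the very definition of $M_{\lambda}$ (Definition~\ref{defn 15}); consequently $\res_{J'}^{M_{\lambda}}\lambda_{M_{\lambda}}'$, being a subrepresentation of a direct sum of copies of the irreducible $\lambda'$, is a multiple of $\lambda'$, say $m\lambda'$. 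Secondly, $\ind_{J'}^{M_{\lambda}}\lambda'$ is semisimple by Proposition~\ref{prop 17}, so $\lambda_{M_{\lambda}}'$ is a direct summand of it; applying the exact functor $\ind_{M_{\lambda}}^{\rU(\fA)'}$ and transitivity of compact induction, $\sigma:=\ind_{M_{\lambda}}^{\rU(\fA)'}\lambda_{M_{\lambda}}'$ is a direct summand of $\ind_{J'}^{\rU(\fA)'}\lambda'$. Since the intertwining set of a direct summand is contained in that of the ambient representation, Proposition~\ref{prop 18} then gives $\mathrm{I}_{\rG'}(\sigma)\subseteq\mathrm{I}_{\rG'}(\ind_{J'}^{\rU(\fA)'}\lambda')\subseteq\rU(\fA)'$.

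Next I would prove $\mathrm{I}_{\rU(\fA)'}(\lambda_{M_{\lambda}}')=M_{\lambda}$. The inclusion $M_{\lambda}\subseteq\mathrm{I}_{\rU(\fA)'}(\lambda_{M_{\lambda}}')$ is clear, as $M_{\lambda}$ normalizes itself. Conversely, let $g\in\rU(\fA)'$ intertwine $\lambda_{M_{\lambda}}'$; restricting the nonzero intertwining $\Hom$-space to $J'\cap gJ'g^{-1}$ and using that $\res_{J'}^{M_{\lambda}}\lambda_{M_{\lambda}}'=m\lambda'$ shows that $g$ intertwines $\lambda'$ as a representation of $J'$. By Lemma~\ref{lem 17} this forces $g\in\tilde{J}'$, and since $\tilde{J}'$ normalizes $J'$ the intertwining of the irreducible $\lambda'$ by $g$ means precisely $g(\lambda')\cong\lambda'$, i.e.\ $g\in M_{\lambda}$. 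Now I assemble everything: using $\ind_{M_{\lambda}}^{\rG'}\lambda_{M_{\lambda}}'\cong\ind_{\rU(\fA)'}^{\rG'}\sigma$ and Mackey's decomposition formula, $\mathrm{End}_{k\rG'}(\ind_{M_{\lambda}}^{\rG'}\lambda_{M_{\lambda}}')\cong\bigoplus_{g}\Hom_{\rU(\fA)'}(\sigma,i_{\rU(\fA)',\rU(\fA)'}g(\sigma))$ over $g\in\rU(\fA)'\backslash\rG'/\rU(\fA)'$; the terms with $g\notin\rU(\fA)'$ vanish because $\mathrm{I}_{\rG'}(\sigma)\subseteq\rU(\fA)'$, while the $g=1$ term is $\mathrm{End}_{\rU(\fA)'}(\sigma)\cong\bigoplus_{h\in M_{\lambda}\backslash\rU(\fA)'/M_{\lambda}}\Hom_{M_{\lambda}}(\lambda_{M_{\lambda}}',i_{M_{\lambda},M_{\lambda}}h(\lambda_{M_{\lambda}}'))$, which collapses to $\mathrm{End}_{M_{\lambda}}(\lambda_{M_{\lambda}}')=k$ by $\mathrm{I}_{\rU(\fA)'}(\lambda_{M_{\lambda}}')=M_{\lambda}$ and Schur's lemma. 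Hence $\mathrm{End}_{k\rG'}(\ind_{M_{\lambda}}^{\rG'}\lambda_{M_{\lambda}}')=k$, so both conditions of Lemma~\ref{lem 19} hold and $\ind_{M_{\lambda}}^{\rG'}\lambda_{M_{\lambda}}'$ is irreducible. Finally $M_{\lambda}$ is a compact open subgroup of $\rG'$ (it contains $J'$ with finite index), so $\ind_{M_{\lambda}}^{\rG'}\lambda_{M_{\lambda}}'$ is admissible with compactly supported matrix coefficients; an irreducible representation of this kind has vanishing Jacquet module along every proper parabolic subgroup of $\rG'$, hence cannot occur as a subrepresentation or a quotient of $i_{\rM'}^{\rG'}\rho$ for any proper Levi subgroup $\rM'$ and any $\rho$, i.e.\ it is cuspidal (cf.\ \cite{V1}).

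The step I expect to be the main obstacle is the identification $\mathrm{I}_{\rU(\fA)'}(\lambda_{M_{\lambda}}')=M_{\lambda}$, and more broadly the bookkeeping needed to transport intertwining information between $\lambda_{M_{\lambda}}'$, its restriction $m\lambda'$ to $J'$, and the various compact inductions. One must use carefully that $\lambda_{M_{\lambda}}'$ is only a direct summand of $\ind_{J'}^{M_{\lambda}}\lambda'$ (so that the already-established facts about $\lambda'$ and about $\ind_{J'}^{\rU(\fA)'}\lambda'$ do apply to it), and it is the normalization of $J'$ by $\tilde{J}'$ --- used to upgrade ``$g$ intertwines $\lambda'$'' to ``$g(\lambda')\cong\lambda'$'' --- that finally pins the intertwining set down to $M_{\lambda}$ rather than to something strictly larger inside $\tilde{J}'$.
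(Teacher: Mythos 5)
Your proof is correct and follows the same overall architecture as the paper's: verify both conditions of Vign\'eras' criterion (Lemma~\ref{lem 19}), with the second condition handled by Proposition~\ref{prop 14} and the first condition reduced to showing $\mathrm{I}_{\rG'}(\lambda_{M_{\lambda}}')=M_{\lambda}$, which in turn is done in two stages (bound by $\rU(\fA)'$ via Proposition~\ref{prop 18}, then pin down $M_{\lambda}$ inside $\rU(\fA)'$ via Lemma~\ref{lem 17} and the normalization of $J'$ by $\tilde{J}'$).

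The one place you genuinely diverge is in the middle: the paper invokes the Vign\'eras intertwining isomorphism [V1, I.8.10, Prop.~3] twice, once to compute $\mathrm{I}_{\rU(\fA)'}(\ind_{J'}^{M_{\lambda}}\lambda')$ and once to pass from $\ind_{M_{\lambda}}^{\rU(\fA)'}\lambda_{M_{\lambda}}'$ back down to $\lambda_{M_{\lambda}}'$. You avoid this machinery entirely by first recording that $\res_{J'}^{M_{\lambda}}\lambda_{M_{\lambda}}'$ is a multiple of $\lambda'$ (a consequence of $J'\triangleleft M_{\lambda}$ and the defining property of $M_{\lambda}$), then restricting a nonzero intertwiner down to $J'\cap g(J')$ and reading off an intertwiner of $\lambda'$; and you replace the paper's appeal to semisimplicity of $\res_{\rU(\fA)'}^{\rU(\fA)}\tau$ by the more direct observation that $\sigma=\ind_{M_{\lambda}}^{\rU(\fA)'}\lambda_{M_{\lambda}}'$ is a direct summand of $\ind_{J'}^{\rU(\fA)'}\lambda'$ (exactness plus Proposition~\ref{prop 17}), so its intertwining set is contained in that of the ambient. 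Both routes use identical key inputs (Lemma~\ref{lem 17}, Propositions~\ref{prop 14}, \ref{prop 17}, \ref{prop 18}); yours is marginally more elementary and self-contained in the intertwining computation, at the cost of having to verify by hand the ``restriction of Hom spaces'' and ``direct summand'' facts, which the paper gets for free from [V1]. You also supply the cuspidality argument explicitly (compactly supported matrix coefficients, vanishing Jacquet modules), which the paper states but does not spell out.
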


\begin{proof}
It is equivalent to show that $(M_{\lambda},\lambda_{M_{\lambda}}')$ satisfies the two conditions in Lemma \ref{lem 19}, where the second condition has been verified in Proposition \ref{prop 14}. It is left to prove that $\mathrm{I}_{\rG'}(\lambda_{M_{\lambda}}')=M_{\lambda}$. We first show that $\mathrm{I}_{\rU(\fA)'}(\lambda_{M_{\lambda}}')=M_{\lambda}$, and then we show that $\mathrm{I}_{\rG'}(\lambda_{M_{\lambda}}')\subset\rU(\fA)'$.

By Lemma \ref{lem 17}, we have $\mathrm{I}_{\rU(\fA)'}\lambda'\subset \tilde{J'}$. Since $\tilde{J}'$ normalizes $J'$, then $x\in \mathrm{I}_{\rU(\fA)'}\lambda'$ meaning $x(\lambda')\cong\lambda'$. Hence $\mathrm{I}_{\rU(\fA)'}\lambda'\subset M_{\lambda}$. By \cite[\S I,8.10, Proposition 3]{V1}, let $g\in\rG'$ and $X$ a finite set of $\rG'$ such that $M_{\lambda}gM_{\lambda}=\cup_{x\in X}J'xJ'$, then there is an equivalence:
\begin{equation}
\label{equa 1}
\mathrm{H}_{g^{-1}}(\ind_{J'}^{M_{\lambda}} \lambda')\cong\oplus_{j\in X}\mathrm{H}_{(gj^{-1})}(\lambda'),
\end{equation}
which implies
\begin{equation}
\label{equa 2}
\mathrm{I}_{\rU(\fA)'}(\ind_{J'}^{M_{\lambda}} \lambda')=M_{\lambda},
\end{equation}
Hence $\mathrm{I}_{\rU(\fA)'}(\lambda_{M_{\lambda}}')=M_{\lambda}$ by the inclusion $\mathrm{I}_{\rU(\fA)'}(\lambda_{M_{\lambda}}')\subset\mathrm{I}_{\rU(\fA)'}(\ind_{J'}^{M_{\lambda}} \lambda')$.

We finish the proof by verifying the inclusion
$$\mathrm{I}_{\mathrm{\rG'}}(\lambda_{M_{\lambda}}')\subset \rU(\fA)'.$$
Notice that $\ind_{M_{\lambda}}^{\rU(\fA)'}\lambda_{M_{\lambda}}'$ is a subrepresentation of $\res_{\rU(\fA)'}^{\rU(\fA)}\rho$, where $\rho=\ind_{J}^{\rU(\fA)}\lambda$ (as in the proof of Proposition \ref{prop 18}). Since $\res_{\rU(\fA)'}^{\rU(\fA)}\tau$ is semisimple, we have
$$\mathrm{I}_{\rG'}(\ind_{M_{\lambda}}^{\rU(\fA)'}\lambda_{M_{\lambda}}')\subset\mathrm{I}_{\rG'}(\res_{\rU(\fA)'}^{\rU(\fA)}\rho).$$
Hence by Proposition \ref{prop 18}, we have
\begin{equation}
\label{equa 3} 
\mathrm{I}_{\rG'}(\ind_{M_{\lambda}}^{\rU(\fA)}\lambda_{M_{\lambda}}')\subset\rU(\fA)'
\end{equation}
As for equation (\ref{equa 1}), let $h\in\rG'$ and $Y$ a finite set of $\rG'$ such that $\rU(\fA)h\rU(\fA)=\cup_{y\in Y}M_{\lambda}yM_{\lambda}$, then there is an equivalence:
$$\mathrm{H}_{h^{-1}}(\ind_{M_{\lambda}}^{\rU(\fA)'} \lambda_{M_{\lambda}}')\cong\oplus_{s\in Y}\mathrm{H}_{(hs^{-1})}(\lambda_{M_{\lambda}}').$$
We conclude that
$$\mathrm{I}_{\rG'}(\lambda_{M_{\lambda}}')\subset\mathrm{I}_{\rG'}(\ind_{M_{\lambda}}^{\rU(\fA)'}\lambda_{M_{\lambda}}').$$
By equation \ref{equa 3}, we obtain the desired result.
\end{proof}

\subsubsection{Cuspidal $k$-representations of $\rG'$}
\label{section 02.2.6}
Recall that $\rM$ is a Levi subgroup of $\rG$, and $\rM'=\rM\cap\rG'$. In this section, we consider the restriction functor $\res_{\rM'}^{\rM}$, which has been studied in \cite{TA} when $\ell=0$. We show that an irreducible $k$-representation $\pi'$ of $\rM'$ is contained in $\res_{\rM'}^{\rM}\pi$ for an irreducible $\pi$ of $\rM$ (see Proposition \ref{prop 6}), which coincides with a result in \cite{TA}, and from which we deduce that any irreducible cuspidal $\pi'$ can be constructed as in Theorem \ref{thm 16} (see Corollary \ref{cor 21}).

\begin{lem}
\label{propver13.1}
Let $K$ be a locally pro-finite group, and $K'\subset K$ a closed normal subgroup of $K$ with finite index. Let $(\pi,V)$ be an irreducible $k$-representation of $K$, then the restriction $\res_{K'}^{K}\pi$ is semisimple of finite length.
\end{lem}

\begin{proof}
The proof of \cite[\S I,6.12]{V1} can be applied here. There is a condition in \cite[\S I,6.12]{V1} that $[K:K']$ is invertible in $k$, however it is not used in the proof.

The restriction $\res_{K'}^{K}\pi$ has finitely length, hence has an irreducible quotient. Let $V_0$ be the sub-representation such that $V\slash V_0$ is irreducible. Let $\{k_1,...,k_m\},m\in\mathbb{N}$ be a family of representatives of the quotient $K\slash K'$. Now we consider the kernel of the non-trivial projection from $\res_{K'}^{K}\pi$ to $\oplus_{i=1}^{m}V\slash k_iV_0$, which is $K$-stable, hence is equal to $0$. We deduce that $\res_{K'}^{K}\pi$ is a sub-representation of $\oplus_{i=1}^{m}V\slash k_iV_0$ hence is semisimple.
\end{proof}

\begin{prop}
\label{prop 6}
Let $\pi$ be an irreducible $k$-representation of $\rM$, then the restriction $\res_{\rM'}^{\rM}\pi$ is semisimple of finite length, and the irreducible direct components are $\rM$-conjugate. Conversely, let $\pi'$ be an irreducible $k$-representation of $\rM'$, then there exists an irreducible representation $\pi$ of $\rM$, such that $\pi'$ is a direct factor of $\res_{\rM'}^{\rM}\pi$.
\end{prop}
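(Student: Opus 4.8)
The plan is to establish the two statements about $\res^{\rM}_{\rM'}$ by reducing them to known facts about general linear groups together with the structural relation between $\rM$ and $\rM'$. Recall that $\rM$ is a Levi subgroup of $\rG=\GL_n(F)$, so $\rM\cong\prod_{i=1}^r\GL_{n_i}(F)$ for some composition $(n_1,\dots,n_r)$ of $n$, and $\rM'=\rM\cap\rG'$ is the kernel of the product-of-determinants map $\det\colon\rM\to F^\times$. The first observation is that $\rM'$ is normal in $\rM$ with $\rM/\rM'\cong F^\times$ (via $\det$), which is not finite, so Proposition \ref{propver13.1} does not apply verbatim; the standard fix is to interpose the open subgroup $\rM^0=\det^{-1}(\fo_F^\times)$, which is open in $\rM$, contains $\rM'$, and has $\rM^0/\rM'\cong\fo_F^\times$ compact, while $\rM/\rM^0\cong F^\times/\fo_F^\times\cong\bZ$.

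First I would prove the semisimplicity and finite length of $\res^{\rM}_{\rM'}\pi$. Since $\rM^0$ is open in $\rM$, one shows $\res^{\rM}_{\rM^0}\pi$ has finite length (it is finitely generated over the open subgroup and admissibility of the irreducible $\pi$ controls the length — or one invokes that $\rM/\rM^0$ is abelian and uses the usual argument that $\pi\hookrightarrow\ind^{\rM}_{\rM^0}\res^{\rM}_{\rM^0}\pi$ together with Clifford theory along the subgroup generated by $\rM^0$ and a uniformizer). Then inside $\rM^0$ one applies Proposition \ref{prop 0.1}: any irreducible $k$-representation of the compact-mod-centre—actually here $\rM^0$ is not compact, but the argument of Proposition \ref{prop 0.1} (kernel is normal, $O\rM'$ has finite index in $\rM^0$, Clifford theory) still gives that $\res^{\rM^0}_{\rM'}$ of an irreducible representation of $\rM^0$ is semisimple of finite length. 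Chaining these, $\res^{\rM}_{\rM'}\pi$ is semisimple of finite length. For the transitivity of the $\rM$-action on the irreducible constituents: this is Clifford theory. The constituents of $\res^{\rM}_{\rM'}\pi$ are permuted by $\rM$ (since $\rM'\trianglelefteq\rM$), and the sum of those in a single $\rM$-orbit spans an $\rM$-subrepresentation of $\pi$, which by irreducibility must be all of $\pi$; hence the action is transitive on isotypic components, and since $\rM'$-isotypic components with multiplicity are themselves permuted, one gets the stated conjugacy of direct factors.

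For the converse, given an irreducible $k$-representation $\pi'$ of $\rM'$, I would mimic Tadi\'c's argument (adapted to characteristic $\ell$ as the text announces): form $\ind^{\rM}_{\rM'}\pi'$ — or better $\ind^{\rM^0}_{\rM'}\pi'$, which is admissible of finite length since $[\rM^0:\rM']$-type arguments give finiteness via the compactness of $\rM^0/\rM'$ — and take an irreducible quotient (or sub) $\pi_0$. Then $\Hom_{\rM'}(\pi',\res^{\rM^0}_{\rM'}\pi_0)\neq0$ by Frobenius reciprocity, so $\pi'$ is a constituent, hence (by the first part applied to $\rM^0$) a direct factor, of $\res^{\rM^0}_{\rM'}\pi_0$. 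Finally extend from $\rM^0$ to $\rM$: $\pi_0$ occurs in $\res^{\rM}_{\rM^0}\pi$ for some irreducible $\pi$ of $\rM$ (again by Frobenius reciprocity applied to $\ind^{\rM}_{\rM^0}\pi_0$ and an irreducible quotient, using that this induction has finite length because $\rM/\rM^0\cong\bZ$ and $\pi_0$ is cuspidal-or-handled directly via the structure of $\GL$-representations), and transitivity of restriction together with the semisimplicity statements shows $\pi'$ is a direct factor of $\res^{\rM}_{\rM'}\pi$.

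The main obstacle I anticipate is purely the finiteness bookkeeping with the non-compact quotient $\rM/\rM'\cong F^\times$: one cannot directly quote Clifford theory for finite-index normal subgroups, so every step must be routed through the intermediate open subgroup $\rM^0$ with compact quotient $\fo_F^\times$ and then through the $\bZ$-quotient $\rM/\rM^0$; verifying that the relevant induced representations ($\ind^{\rM^0}_{\rM'}\pi'$ and $\ind^{\rM}_{\rM^0}\pi_0$) have finite length in characteristic $\ell$ is where care is needed, since Jacquet-module / admissibility arguments must be re-checked without the assumption that $[\rM^0:\rM']$ or the residual index is invertible in $k$. The announced reference to Tadi\'c and the phrase "His method can be adapted for the case of modulo $\ell$" suggests the author handles exactly this by replacing complex-representation finiteness inputs with their modular analogues (finite length of parabolic inductions, admissibility), which are available in Vign\'eras's book.
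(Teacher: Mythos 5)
Your two-step route through the open subgroup $\rM^0=\det^{-1}(\fo_F^\times)$ is a genuinely different decomposition from the paper's, but the first half has a real gap in the place where you lean on Proposition~\ref{prop 0.1}. That proposition's proof needs $K$ compact: irreducibility then forces $\rho$ to be finite-dimensional, so the kernel $O=\Ker\rho$ is \emph{open}, and $OK'$ has finite index in $K$, putting you in the range of finite-index Clifford theory. None of this survives when you replace $K$ by $\rM^0$: an irreducible smooth $k$-representation of $\rM^0$ is typically infinite-dimensional, its kernel is not open, and the quotient $\rM^0/\rM'\cong\fo_F^\times$ is an infinite profinite group whose pro-order may well be divisible by $\ell$. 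So neither the finite-index Clifford theory nor a ``compact-quotient'' Clifford theory (which would require inverting the pro-order of $\fo_F^\times$) applies, and the claim that the Proposition~\ref{prop 0.1} argument ``still gives'' semisimplicity and finite length of $\res^{\rM^0}_{\rM'}$ is unjustified. This is exactly where the paper does something different: it proves finite length for cuspidal $\pi$ directly by a $Z'$-compactness argument (showing $\res^{\rM}_{\rM'}\pi$ is finitely generated and admissible), deduces the general case from the geometric lemma in Appendix~\ref{appendix A} via $\res_{\rM'}^{\rM}i_{\rL}^{\rM}\sigma\cong i_{\rL'}^{\rM'}\res_{\rL'}^{\rL}\sigma$, and then obtains semisimplicity by summing the $\rM$-conjugates of an irreducible $\rM'$-subrepresentation to get an $\rM$-stable semisimple subspace, which must be the whole space by irreducibility of $\pi$ --- no Clifford theory across an infinite quotient is needed.

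Your converse is the right idea (it is Tadi\'c's argument) but two details are off. First, $\rM'$ is \emph{not} open in $\rM^0$ (it is the kernel of the continuous map $\det\colon\rM^0\to\fo_F^\times$ and $\{1\}$ is not open in $\fo_F^\times$), so $\ind_{\rM'}^{\rM^0}=\mathrm{Ind}_{\rM'}^{\rM^0}$ and Frobenius reciprocity is the \emph{right}-adjoint version: you must take an irreducible \emph{sub}representation $\pi_0\hookrightarrow\mathrm{Ind}_{\rM'}^{\rM^0}\pi'$ and conclude a nonzero map $\res_{\rM'}^{\rM^0}\pi_0\to\pi'$, not a map $\pi'\to\res\pi_0$; your parenthetical ``(or sub)'' suggests you sensed this but the stated Hom is in the wrong direction. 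Second, $\ind_{\rM^0}^{\rM}\pi_0$ does not have finite length (the quotient $\rM/\rM^0\cong\bZ$ is discrete infinite), but you do not need finite length --- finite generation suffices to extract an irreducible quotient and then apply left-adjoint Frobenius (here $\rM^0$ is open). The paper sidesteps both issues at once by inducing in a single step from $\mathrm{S}\rM'$ (with $\mathrm{S}=\langle\varpi_F I_n\rangle$ central, $\mathrm{S}\cap\rM'=\{1\}$, $\rM/\mathrm{S}\rM'$ compact), extending $\pi'$ trivially across $\mathrm{S}$; then admissibility of the full induction and the evaluation-at-$1$ map give the surjection $\res^{\rM}_{\rM'}\pi\twoheadrightarrow\pi'$ directly, which combined with the already-proved semisimplicity yields the direct-factor statement.
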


\begin{proof}
For the first part, the method of Silberger in \cite{Si} for the case when $\ell=0$ can be generalised to $\ell$-modular setting. At first we assume that $\pi$ is cuspidal. Let $Z$ be the center of $\rM$, and the quotient $\rM\slash Z\rM'$ is compact (it is finite when $char(F)=0$, but may be infinite when $char(F)>0$). Since the stabiliser $\mathrm{Stab}_{\rM}(v)$ is open for any vector $v$ in the representation space of $\pi$, the image of $\mathrm{Stab}_{\rM}(v)$ has finite index in the quotient group $\rM\slash Z\rM'$. Hence the restriction $\res_{\rM'}^{\rM}\pi$ is finitely generated. By \cite[\S II,2.7]{V1} the restriction is $Z'=Z\cap\rM'$-compact. Now we show that it is admissible.

Let $(v_1,...,v_m),m\in\mathbb{N}$ be a family of generators of the representation space of $\res_{\rM'}^{\rM}\pi$. For a compact open subgroup $K$ of $\rM'$, which stabilises $v_i,i=1,...,m$, we consider the maps 
$$\alpha_i:g\mapsto e_Kgv_i,i=1,...m,$$
where $e_K$ is the idempotent of the Heck algebra $\mathcal{H}(K)$ of $K$. Apparently, the space $V^{K}$ is generated by $\mathcal{L}=\{e_Kgv_i,g\in\rM',i=1,...,m\}$. Suppose that $V^{K}$ has infinite dimension. There is a subset $\mathcal{L'}$ of $\mathcal{L}$ which forms a basis of $V^{K}$. In particular, there exists $i_0\in\{1,...,m\}$ such that $\rM_{i_0}'=\{g\in\rM':e_Kgv_{i_0}\in\mathcal{L'}\}$ is an infinite set. Furthermore, since $K$ stabilises $v_{i_0}$, $\rM_{i_0}'$ is an infinite union of disjoint cosets in the form of $gK$. On the other hand, since the centre $Z'$ of $\rM'$ acts as a character on $\res_{\rM'}^{\rM}\pi$, the cosets $gZ'K,g\in\rM_{i_0}'$ are disjoint in the quotient $\rM'$. Let $v_{i_0}^{\ast}$ be a smooth $k$-linear form of $V^{K}$, which maps each element in $\mathcal{L}'$ to $1\in k$. The support of coefficient $\langle v_{i_0}^{\ast},(\cdot)v_{i_0}\rangle$ contains $\rM_{i_0}'$, which is not compact in $\rM'\slash Z'$. Hence $\res_{\rM'}^{\rM}\pi$ is admissible, and then has finite length.

Now we assume that $\pi$ is irreducible of $\rM$. We first prove that $\res_{\rM'}^{\rM}\pi$ has finite length, and then it is semisimple. For the first part, let $(\rL,\sigma)$ be a cuspidal pair in $\rM$ such that $\pi\hookrightarrow i_{\rL}^{\rM}\sigma$. Applying Theorem \ref{thm 5.2}, we have $\res_{\rM'}^{\rM}i_{\rL}^{\rM}\sigma\cong i_{\rL'=\rL\cap\rM'}^{\rM'}\res_{\rL'}^{\rL}\sigma$. By the first step and the fact that $i_{\rL'}^{\rM'}$ respects finite length, we deduce that  $\res_{\rM'}^{\rM}\pi$ has finite length. For the semi-simplicity, let $W$ be an irreducible sub-representation of $\res_{\rM'}^{\rM}\pi$, of which $gW$ is also an irreducible sub-representation for $g\in\rM$. Let $W'=\sum_{g\in\rM'}g(W)$, which is a semisimple (by the equivalence condition in \S A.VII. of \cite{Re}) sub-representation of $\res_{\rM'}^{\rM}\pi$. Obviously, $\rM$ stabilises $W'$, hence $W'=\pi$.

For the second part, we apply the proof of \cite[Proposition \S 2.2]{TA}. Let $\pi'$ be irreducible of $\rM'$, and $\mathrm{S}$ the subgroup of $\mathrm{Z}$ generated by the scalar matrix $\varpi_{F}$ (a uniformizer of $\fo_{F}$). It is clear that the intersection $\mathrm{S}\cap\rM'=\{ \mathds{1}\}$. Hence we extend $\pi'$ to $\mathrm{S}\rM'$ by acting trivially on $\mathrm{S}$, and denote it by $\tilde{\pi}'$. The quotient $\rM/\mathrm{S}\rM'$ is compact, hence the induction $\ind_{\mathrm{S}\rM'}^{\rM}\tilde{\pi}$ is admissible, and contains an irreducible subrepresentation $\pi\hookrightarrow\ind_{\mathrm{S}\rM'}^{\rM}\tilde{\pi}'$. There is a surjective morphism $\res_{\mathrm{S}\rM'}^{\rM}\pi\rightarrow\tilde{\pi}'$, defined by $f\mapsto f(\mathds{1})$, which induces a surjective morphism $\res_{\rM'}^{\rM}\pi\rightarrow\pi'$. We finish the proof.
\end{proof}

\begin{cor}
\label{cor 20}
Let $\pi$ be irreducible of $\rM$. If the restriction $\res_{\rM'}^{\rM}\pi$ contains an irreducible cuspidal subrepresentation of $\rM'$, then $\pi$ is cuspidal. In other words, an irreducible cuspidal $k$-representation of $\rM'$ is embedded in $\res_{\rM'}^{\rM}\pi$ for an irreducible cuspidal $\pi$ of $\rM$.
\end{cor}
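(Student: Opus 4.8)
The plan is to derive the corollary from Proposition \ref{prop 6} together with the compatibility between parabolic induction and restriction to $\rM'$ recorded in Theorem \ref{thm 5.2}; this is the modulo $\ell$ analogue of Tadi\'c's argument in \cite{TA}. The first task is to prove the implication ``$\res_{\rM'}^{\rM}\pi$ contains an irreducible cuspidal $\pi'$ $\Rightarrow$ $\pi$ cuspidal'', and then the second sentence of the statement follows at once: given an irreducible cuspidal $k$-representation $\pi'$ of $\rM'$, the converse part of Proposition \ref{prop 6} provides an irreducible $k$-representation $\pi$ of $\rM$ with $\pi'$ a direct factor of $\res_{\rM'}^{\rM}\pi$, and the implication then forces $\pi$ to be cuspidal.

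For the implication I would argue by contradiction. Suppose $\pi$ is not cuspidal. By Definition \ref{cusp 1} there is a proper Levi subgroup $\rL$ of $\rM$ (hence a Levi subgroup of $\rG$) and an irreducible $k$-representation $\sigma$ of $\rL$ such that $\pi$ occurs as a subrepresentation or as a quotient of $i_{\rL}^{\rM}\sigma$. Applying $\res_{\rM'}^{\rM}$ and invoking Theorem \ref{thm 5.2}, we obtain that $\res_{\rM'}^{\rM}\pi$ is a subrepresentation or a quotient of
$$\res_{\rM'}^{\rM}i_{\rL}^{\rM}\sigma\cong i_{\rL'}^{\rM'}\res_{\rL'}^{\rL}\sigma,$$
with $\rL'=\rL\cap\rM'$. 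By Proposition \ref{prop 6} applied to the Levi subgroup $\rL$ the representation $\res_{\rL'}^{\rL}\sigma$ is semisimple of finite length, say $\res_{\rL'}^{\rL}\sigma\cong\bigoplus_{j=1}^{s}\sigma_j'$ with each $\sigma_j'$ irreducible; exactness of $i_{\rL'}^{\rM'}$ and its compatibility with finite direct sums give $i_{\rL'}^{\rM'}\res_{\rL'}^{\rL}\sigma\cong\bigoplus_{j=1}^{s}i_{\rL'}^{\rM'}\sigma_j'$.

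Now, again by Proposition \ref{prop 6}, $\res_{\rM'}^{\rM}\pi$ is semisimple of finite length, so the cuspidal constituent $\pi'$ is a direct summand of it, hence simultaneously a subrepresentation and a quotient of $\res_{\rM'}^{\rM}\pi$. In either of the two cases above, this forces $\pi'$ to be a subrepresentation (resp. a quotient) of $\bigoplus_{j=1}^{s}i_{\rL'}^{\rM'}\sigma_j'$, and therefore of a single summand $i_{\rL'}^{\rM'}\sigma_j'$. Since $\rL$ is a proper Levi subgroup of $\rM$, the group $\rL'=\rL\cap\rM'$ is a proper Levi subgroup of $\rM'$ (the Levi subgroups of $\rM'$ are precisely the intersections $\rL\cap\rM'$ with $\rL$ a Levi of $\rM$, and $\rM'$ is contained in no proper Levi of $\rM$); this contradicts the cuspidality of $\pi'$ via Definition \ref{cusp 1}, and we conclude that $\pi$ is cuspidal.

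The argument is largely bookkeeping once Theorem \ref{thm 5.2} is available. The two points that require a moment of care — and which I regard as the only real content beyond routine manipulation — are the verification that $\rL'$ remains a \emph{proper} Levi subgroup of $\rM'$ (so that the cuspidality hypothesis on $\pi'$ can legitimately be invoked), and the systematic use of the semisimplicity of $\res_{\rM'}^{\rM}\pi$ from Proposition \ref{prop 6} to move freely between the notions of subrepresentation and quotient, which is what lets the single ``sub or quotient'' dichotomy in Definition \ref{cusp 1} be handled uniformly.
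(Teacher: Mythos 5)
Your proof is correct but follows a genuinely different route from the paper's. The paper argues in the forward direction through Jacquet functors: it uses Proposition \ref{prop 6} to observe that all irreducible direct summands $\pi_i'$ of $\res_{\rM'}^{\rM}\pi$ are $\rM$-conjugate to the cuspidal $\pi'$, hence themselves cuspidal, so all the Jacquet modules $r_{\rL'}^{\rM'}\pi_i'$ vanish; the compatibility
$\res_{\rL'}^{\rL}r_{\rL}^{\rM}\pi\cong\bigoplus_i r_{\rL'}^{\rM'}\pi_i'$
then forces $r_{\rL}^{\rM}\pi=0$ for every proper Levi $\rL$ of $\rM$, which is the Jacquet-module characterization of cuspidality. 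You instead argue by contradiction against the sub/quotient definition directly: assuming $\pi$ occurs as a sub or quotient of $i_{\rL}^{\rM}\sigma$, you restrict, invoke $\res_{\rM'}^{\rM}\circ i_{\rL}^{\rM}\cong i_{\rL'}^{\rM'}\circ\res_{\rL'}^{\rL}$ and the semisimplicity of $\res_{\rL'}^{\rL}\sigma$ and of $\res_{\rM'}^{\rM}\pi$, and conclude that $\pi'$ occurs as a sub or quotient of some $i_{\rL'}^{\rM'}\sigma_j'$ with $\rL'$ a proper Levi of $\rM'$, a contradiction. Both proofs rest on Proposition \ref{prop 6} and on the geometric-lemma compatibility between restriction to $\rM'$ and parabolic induction/Jacquet functors; the paper's version is slightly more economical because passing to the Jacquet side sidesteps the sub-versus-quotient bookkeeping, while yours makes the contradiction with Definition \ref{cusp 1} completely explicit. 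Your two flagged ``points requiring care'' are indeed the right ones, and both are handled correctly: the properness of $\rL'$ in $\rM'$ holds because a proper Levi $\rL$ of $\rM$ cannot contain $\rM'$ (any unipotent of $\rM$ outside $\rL$ lies in $\rM'$), and the semisimplicity of $\res_{\rM'}^{\rM}\pi$ lets you treat $\pi'$ as simultaneously a sub and a quotient, which is exactly what is needed to run both branches of the dichotomy.
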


\begin{proof}
Let $\mathrm{P}'=\mathrm{L}'\cdot\rU$ be a proper parabolic subgroup of $\rM'$, and $\mathrm{P}=\mathrm{L}\cdot\rU$ the proper parabolic of $\rM$ such that $\mathrm{P}\cap\rM'=\mathrm{P}'$ and $\mathrm{L}\cap \rM'=\mathrm{L}'$. By Proposition \ref{prop 6} the irreducible components of $\res_{\rM'}^{\rM}\pi$ are $\rM$-conjugate, hence are $\rL$-conjugate. Let $\pi_0'$ be an irreducible component. The Jacquet module $\pi_0'(\rU)=0$ if and only if the Jacquet module of each irreducible component of $\res_{\rM'}^{\rM}\pi$ is equal to $0$, if and only if the Jacquet module $\pi(\rU)=0$.
\end{proof}

\begin{cor}
\label{cor 21}
Let $\pi'$ be an irreducible cuspidal $k$-representation of $\rG'$. There exists a maximal simple $k$-type $(J,\lambda)$ of $\rG$, and a direct factor $\lambda_{M_{\lambda}}'$ of $\ind_{J'}^{M_{\lambda}}\res_{J'}^J\lambda$ (see Definition \ref{defn 15} for $M_{\lambda}$), such that $\pi'\cong\ind_{M_{\lambda}}^{\rG'} \lambda_{M_{\lambda}}'$.
\end{cor}

\begin{proof}
By Corollary \ref{cor 20}, let $\pi$ be irreducible cuspidal of $\rG'$ where $\pi'$ is embedded in. Let $(J_0,\lambda_0)$ be a maximal simple $k$-type of $\rG$ contained in $\pi$, and $(M_{\lambda_0}, \lambda_{M_{\lambda_0}}')$ as in Theorem \ref{thm 16}. Since $\pi\cong\ind_{E^{\times}J_0}^{\rG}\Lambda_0$, where $\Lambda_0$ is an extension of $\lambda$ to $E^{\times}J_0$, and the intersection $E^{\times}J_0\cap\rG'=J_0'$. By Mackey's theory $\ind_{J_0'}^{\rG'}\res_{J_0'}^J\lambda_0$ is embedded in $\res_{\rG'}^{\rG}\pi$. Hence by Proposition \ref{prop 6} $\ind_{M_{\lambda_0}}^{\rG'}\lambda_{M_{\lambda_0}}'\cong g(\pi')$ for a $g\in \rG$, then $\pi'$ contains $g^{-1}(\lambda_{M_{\lambda_0}}')$. Since $g^{-1}(M_{\lambda_0})=M_{g^{-1}(\lambda_0)}$ and $\lambda_{M_{g^{-1}(\lambda_0)}}':=g^{-1}(\lambda_{M_{\lambda_0}}')$ is a direct factor of $\ind_{g^{-1}(J')}^{M_{g^{-1}(\lambda_0)}}g^{-1}(\lambda')$, by Frobenius reciprocity and Theorem \ref{thm 16} we conclude that $\pi'\cong\ind_{M_{g^{-1}(\lambda_0)}}^{\rG'}g^{-1}(\lambda_{M_{g^{-1}(\lambda_0)}}')$ as desired.
\end{proof}

\subsection{Whittaker models and maximal simple $k$-types of $\rG'$}
\label{section 2.3}
In this section, we show $M_{\lambda}=\tilde{J}'$ in Definition \ref{defn 22}  (see Definition \ref{defn 15} for $M_{\lambda}$), which completes the establishment of maximal simple $k$-types of $\rG'$. To be more precise, we show that for $x\in\rU(\fA)$, if $x$ normalises $J$ and $x(\lambda)\cong\lambda\otimes\chi\circ\det$ for a $k$-quasicharacter $\chi$, then $x(\lambda')\cong\lambda'$, for each irreducible direct component $\lambda'$ of $\lambda\vert_{J'}$. To do so, we study Whittaker models and define derivatives for a $k$-representation of $\rM'$, which requires Appendix \ref{appendix A}, the $\ell$-modular version of geometric lemma, and can be viewed as a generalisation of \cite{BeZe} to the representations of $\rG_n'$.

\subsubsection{Uniqueness of Whittaker models}
In this section, we denote $\mathrm{GL}_n(F)$ by $\rG_n$ and $\mathrm{SL}_n(F)$ by $\rG_n'$. Let $\rU=\rU_n(F)$ be the group consisting of unipotent upper triangular matrices in $\rG$. Let $\psi$ be a non-degenerate $k$-character of $\rU$ as defined in \cite[\S III,1]{V1}. Two non-degenerate $k$-characters of $\rU$ are conjugate by a diagonal matrix in $\rG_n$. Let $P_n=\mathrm{P}_n(F)$ be the mirabolic subgroup of $\mathrm{GL}_n(F)$, and $P_n'=P_n\cap \SL_n(F)$. We denote by $V_{n-1}$ the unipotent radical of $P_n$, which is an abelian group isomorphic to the additive group $F^{n-1}$, and is also the unipotent radical of $P_n'$.

\begin{defn}
\label{defn 25}
\begin{enumerate}
\item $r_{\mathrm{id}}:\mathrm{Rep}_k(P_n)\rightarrow \mathrm{Rep}_k(\rG_{n-1})$ the functor of $V_{n-1}$-coinvariants;
\item $r_{\mathrm{id}'}: \mathrm{Rep}_k(P_n')\rightarrow \mathrm{Rep}_k(\rG_{n-1}')$ the functor of $V_{n-1}$-coinvariants;
\item $r_{\psi}:\mathrm{Rep}_k(P_n)\rightarrow \mathrm{Rep}_k(P_{n-1})$ the functor of $(V_{n-1},\psi)$-coinvariants;
\item $r_{\psi}':\mathrm{Rep}_k(P_n')\rightarrow \mathrm{Rep}_k(P_{n-1}')$ the functor of $(V_{n-1},\psi)$-coinvariants.
\end{enumerate}
\end{defn}

\begin{defn}[Bernstein and Zelevinsky]
\label{defn 26}
Let $1\leq k\leq n$, and $\pi\in \mathrm{Mod}_{k}P_n$, $\pi'\in\mathrm{Mod}_k P_n'$. Define $\pi^{(k)}:=r_{\mathrm{id}}r_{\psi}^{k-1}\pi$ to be the $k$-th derivative of $\pi$, and $\pi'^{(\psi,k)}:=r_{\mathrm{id}}'r_{\psi}'^{k-1}\pi'$ to be the $k$-th derivative of $\pi'$ relative to $\psi$.
\end{defn}

\begin{rem}
\label{rem 27}
There is an equivalence $\res_{\rG_{n-k}'}^{\rG_{n-k}}\pi^{(k)}\cong(\res_{\rG_{n}'}^{\rG_n}\pi)^{(k)}$, where $\pi\in\mathrm{Mod}_n\rG_n$.
\end{rem}

\begin{prop}
\label{prop 23}
Let $\pi$ be a cuspidal $k$-representation of $\rG$, then the restriction $\res_{\rG'}^{\rG}\pi$ is multiplicity free.
\end{prop}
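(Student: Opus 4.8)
The plan is to use uniqueness of Whittaker models for $\rG=\mathrm{GL}_n(F)$ together with the derivative machinery of Bernstein--Zelevinsky (in the modular setting) restricted along $\rG'\subset\rG$. Since $\pi$ is cuspidal, its restriction to the mirabolic $P_n$ is (up to the appropriate twist) built from a single ``bottom'' derivative: concretely, $\pi^{(n)}$ is one-dimensional and all intermediate derivatives vanish, so that as a $P_n$-representation $\pi|_{P_n}$ is the iterated induction $(\Phi^+)^{n-1}\Psi^+$ of that one-dimensional object. This is exactly the statement that $\pi$ has a unique Whittaker model with respect to $\psi$. The key point, which I would isolate first, is that this uniqueness is insensitive to whether we work over $\bC$ or over $k$ for \emph{cuspidal} $\pi$: the bottom derivative lives on $\rG_0=\{1\}$, so no division by integers divisible by $\ell$ enters, and the relevant exactness properties of $r_{\psi}$, $r_{\mathrm{id}}$, $\Phi^+$, $\Psi^+$ hold over any field with $\ell\neq p$.

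Next I would pass to $\rG'$. By Remark~\ref{rem 27}, $\res_{\rG_{n-k}'}^{\rG_{n-k}}\pi^{(k)}\cong(\res_{\rG_n'}^{\rG_n}\pi)^{(k)}$, so the derivative filtration of $\res_{P_n}^{P_n'}(\pi|_{P_n})$ — equivalently of $\res_{P_n'}^{P_n}\pi$ — is obtained simply by restricting the $\rG$-derivatives to the corresponding smaller special linear groups. Since each $\rG_{n-k}$-derivative of $\pi$ is either zero (for $k<n$) or one-dimensional (for $k=n$), its restriction to $\rG_{n-k}'$ is again either zero or one-dimensional, hence in all cases multiplicity free and even irreducible. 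Feeding this back up through the exact functors $\Phi^+$ and $\Psi^+$ shows that $\res_{P_n'}^{\rG'}\pi$, viewed through its Bernstein--Zelevinsky filtration, has all graded pieces multiplicity free and pairwise non-isomorphic, because the $k$-th and $j$-th graded pieces for $k\neq j$ live over groups $\rG_{n-k}'$ and $\rG_{n-j}'$ of different ``Whittaker depth'' and cannot share a constituent. One then concludes that $\res_{P_n'}^{\rG'}\pi$ is multiplicity free, and a fortiori so is $\res_{\rG'}^{\rG}\pi$, since restriction of $\rG'$-representations further to $P_n'$ only merges, never splits, isotypic components — so if the $\rG'$-restriction had a repeated constituent, so would its $P_n'$-restriction.

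For the last step I would use that by Proposition~\ref{prop 6} the constituents of $\res_{\rG'}^{\rG}\pi$ are all $\rG$-conjugate, say $\pi'_i\cong g_i(\pi_0')$. If two of them were isomorphic the multiplicity would be $\geq 2$, which the previous paragraph rules out; conversely distinctness of all constituents together with semisimplicity and finite length (Proposition~\ref{prop 6}) is precisely the assertion that $\res_{\rG'}^{\rG}\pi=\bigoplus_i \pi'_i$ with the $\pi'_i$ pairwise non-isomorphic, i.e.\ multiplicity free.

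The main obstacle I anticipate is the first step: verifying that the Bernstein--Zelevinsky derivative formalism, and in particular the vanishing of the intermediate derivatives and one-dimensionality of the bottom derivative for a cuspidal $k$-representation, genuinely carries over to characteristic $\ell$. In the complex theory this rests on $r_{\psi}$ being exact and on cuspidality forcing all $\pi^{(k)}$ with $0<k<n$ to vanish; over $k$ one must check that these arguments use only $\ell\neq p$ (so that $r_{\psi}$, being a quotient by a pro-$p$ group action, is exact) and do not secretly invoke semisimplicity of parabolic induction or invertibility of indices divisible by $\ell$. This is where the generalisation of the geometric lemma in Appendix~\ref{appendix A} is needed, and where the bulk of the genuine work lies; the transfer from $\rG$ to $\rG'$ via Remark~\ref{rem 27} and Proposition~\ref{prop 6} is then essentially formal.
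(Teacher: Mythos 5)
Your proposal is built on the same core idea as the paper's proof: the uniqueness of the Whittaker functional for a cuspidal $k$-representation of $\rG=\mathrm{GL}_n(F)$ together with Remark~\ref{rem 27} and the semisimplicity/conjugacy statement of Proposition~\ref{prop 6}. The paper's argument is, however, considerably shorter than what you propose, and your extra step has a gap.

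The paper simply invokes $\dim\pi^{(n)}=1$ (citing \cite{V1}~III.1.7, which already treats the $\ell$-modular case you worry about at the end), applies Remark~\ref{rem 27} to get $\sum_{i=1}^m \dim\pi_i^{(\psi,n)}=1$, and concludes that exactly one constituent $\pi_{i_0}$ carries a nonzero top $\psi$-derivative. Multiplicity-freeness then follows from Clifford theory: by Proposition~\ref{prop 6} the constituents are $\rG$-conjugate, so their multiplicities are all equal, and since $\pi_{i_0}$ cannot occur twice (a second copy would contribute another nonzero $\psi$-derivative), that common multiplicity is $1$. No intermediate derivatives and no passage to $P_n'$ are needed.

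Your detour through multiplicity-freeness of $\res_{P_n'}^{\rG'}\pi$ is both unnecessary and not actually established by your argument. Since $\pi$ is cuspidal for $\rG$, all $\pi^{(k)}$ with $0<k<n$ vanish, so the Bernstein--Zelevinsky filtration of $\res_{P_n}^{\rG}\pi$ has a \emph{single} nonzero graded piece, namely $(\Phi^+)^{n-1}\Psi^+(\pi^{(n)})\cong\ind_{\rU}^{P_n}\psi$; your claim that the graded pieces for $k\neq j$ ``cannot share a constituent'' is therefore vacuous. What remains is the single assertion that $\ind_{\rU}^{P_n'}\psi$ is multiplicity-free as a $P_n'$-representation. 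This is an a priori stronger statement than the proposition you are trying to prove (it would give multiplicity-freeness already at the level of the mirabolic), and your argument does not justify it: the $P_n$-representation $\ind_{\rU}^{P_n}\psi$ is irreducible, and its restriction to the normal subgroup $P_n'$ decomposes by Clifford theory into $P_n$-conjugates, but nothing in what you wrote controls the common multiplicity in that decomposition. Nor does the BZ filtration for $P_n'$-modules coincide with the one for $P_n$-modules once the Levi drops to $\mathrm{SL}_1$ or $\mathrm{SL}_2$, since the action of $\rG_{m-1}'$ on nondegenerate characters of $V_{m-1}$ need not be transitive there; so ``feeding back through $\Phi^+$ and $\Psi^+$'' does not transparently give a filtration of $\res_{P_n'}^{\rG}\pi$ with multiplicity-free pieces. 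The fix is simply to drop the $P_n'$-step and close the argument, as the paper does, with the observation that the derivative count forces multiplicity $1$ for $\pi_{i_0}$ and Clifford uniformity propagates this to all constituents.
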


\begin{proof}
By Proposition \ref{prop 6}, we have $\res_{\rG_n'}^{\rG_n}\pi\cong\oplus_{i=1}^m \pi_i'$, where $m\in\mathbb{N}$ and $\pi_i'$ are irreducible and cuspidal of $\rG_n'$. By \cite[\S III,1.7]{V1}, we have that $\mathrm{dim}(\pi^{(n)})=1$. By Remark \ref{rem 27}, we then have
$$\mathrm{dim}(\res_{\rG_n'}^{\rG_n}\pi)^{(n)}=\oplus_{i=1}^m\mathrm{dim}(\pi_i'^{(\psi,n)})=1,$$
which implies that there exists one unique $\pi_{i_0}'$, where $1\leq i_0\leq m$, such that $\pi_{i_0}'^{(\psi,n)}$ is non-trivial. Hence $\res_{\rG'}^{\rG}\pi$ is multiplicity free by considering the $\rG_n$-conjugation of $\psi$.
\end{proof}

\begin{cor}
Let $\pi'$ be an irreducible cuspidal $k$-representation of $\rG'$. Then there exists a non-degenerate character $\psi$ of $\rU$, such that $\mathrm{dim}(\pi'^{(g(\psi),n)})=1$.
\end{cor}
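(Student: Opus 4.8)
The plan is to bootstrap from Proposition~\ref{prop 23} using Corollary~\ref{cor 20} and Proposition~\ref{prop 6}. By Corollary~\ref{cor 20}, choose an irreducible cuspidal $k$-representation $\pi$ of $\rG$ with $\pi'$ a direct factor of $\res_{\rG'}^{\rG}\pi$, and write $\res_{\rG'}^{\rG}\pi\cong\pi_1\oplus\cdots\oplus\pi_m$ as in Proposition~\ref{prop 6}, say with $\pi'\cong\pi_1$. Fix an arbitrary non-degenerate character $\psi$ of $\rU$. Following the proof of Proposition~\ref{prop 23}: $\dim\pi^{(n)}=1$ by 1.7 of Chapter~III of \cite{V1}, hence $\sum_{i=1}^{m}\dim\pi_i^{(\psi,n)}=1$ by Remark~\ref{rem 27}, so there is a unique index $i_0$ with $\dim\pi_{i_0}^{(\psi,n)}=1$ and $\dim\pi_i^{(\psi,n)}=0$ for $i\neq i_0$.

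It remains to transport this to $\pi'\cong\pi_1$. By Proposition~\ref{prop 6} the $\pi_i$ lie in a single $\rG$-conjugacy orbit, so $\pi_{i_0}$ is $\rG$-conjugate to $\pi'$; since $\det\colon\rT\to F^{\times}$ is surjective we have $\rG=\rT\rG'$ for $\rT$ the diagonal torus of $\rG$, and conjugating by a factor in $\rG'$ only changes a representation of $\rG'$ within its isomorphism class, so in fact $\pi'\cong g(\pi_{i_0})$ for some $g\in\rT$. Because $\rT$ normalises $\rU$, the character $g(\psi)$ is again a non-degenerate character of $\rU$; and because $\rT$ normalises the tower $P_n'\supset V_{n-1}$, $\rG_{n-1}'\supset P_{n-1}'\supset\cdots$ entering Definitions~\ref{defn 25}--\ref{defn 26}, conjugation by $g$ induces a linear isomorphism between $\pi'^{(g(\psi),n)}=g(\pi_{i_0})^{(g(\psi),n)}$ and $\pi_{i_0}^{(\psi,n)}$. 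Hence $\dim\pi'^{(g(\psi),n)}=1$, as desired.

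The one point that is not an outright citation is the equivariance of the iterated derivative functor under torus conjugation, namely that $g(\rho)^{(g(\psi),k)}$ and $g\bigl(\rho^{(\psi,k)}\bigr)$ are isomorphic (in particular equidimensional) for $\rho\in\mathrm{Mod}_k P_n'$ and $g\in\rT$. This is checked by unwinding the definition of $(V_{n-1},\psi)$-coinvariants, using that $g$ normalises $V_{n-1}$ and carries $\psi|_{V_{n-1}}$ to $g(\psi)|_{V_{n-1}}$, and then descending through the $n-1$ stages of the tower. I expect this bookkeeping to be the only mildly delicate part of the argument; everything else is immediate from the results already established.
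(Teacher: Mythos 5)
Your proposal is correct and follows the same route as the paper's (terse) proof: reduce to Proposition~\ref{prop 23} via Corollary~\ref{cor 20}, observe that the direct factors of $\res_{\rG'}^{\rG}\pi$ are conjugate by diagonal matrices, and transport the nondegenerate-character data across that conjugation. You supply the details the paper leaves implicit (why the conjugating element can be taken in $\rT$ via $\rG=\rT\rG'$, and the equivariance of the derivative functor under torus conjugation), but the argument is the same.
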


\begin{proof}
It is deduced from Corollary \ref{cor 20} and Proposition \ref{prop 23}.
\end{proof}

\subsubsection{Maximal simple $k$-types of $\rG'$}
We complete the establishment of maximal simple $k$-types of $\rG'$. 
\begin{prop}
\label{prop 24}
The subgroup $M_{\lambda}$ in Definition \ref{defn 15} is equal to $\tilde{J}'$.
\end{prop}

\begin{proof}
Let $\Lambda$ be an extension of $\lambda$ to $E^{\times}J$. Then $\ind_{E^{\times}J}^{\rG}\Lambda$ is irreducible and cuspidal of $\rG$, and we denote it by $\pi$. The restriction $\res_{\rG'}^{\rG}\pi$ is semisimple and its direct components are cuspidal. By Theorem \ref{thm 16}, a component $\pi'$ is isomorphic to $\ind_{M_{\lambda}}^{\rG'}\lambda_{M_{\lambda}}'$, for a $\lambda_{M_{\lambda}}'$ as defined in Proposition \ref{prop 14}. In the proof of Theorem \ref{thm 16}, we show that $\mathrm{I}_{\rG'}(\lambda_{M_{\lambda}}')=M_{\lambda}$. Suppose $\tilde{J}'\neq M_{\lambda}$. Let $x\in\tilde{J'}\backslash M_{\lambda}$, we have that $x(\lambda_{M_{\lambda}}')\ncong\lambda_{M_{\lambda}}'$. Meanwhile, since $x(\pi')\cong\pi'$, then
\begin{equation}
\label{equa 5}
\ind_{M_{\lambda}}^{\rG'}x(\lambda_{M_{\lambda}}')\cong\pi'.
\end{equation}
On the other hand, by the fact that $\res_{J'}^{M_{\lambda}}x(\lambda_{\rM_{\lambda}}')\hookrightarrow x(\lambda')$, and the definition of $\tilde{J}$, we have that $x(\lambda')\hookrightarrow\res_{J'}^{J}\lambda$. Hence $x(\lambda_{\rM_{\lambda}}')\hookrightarrow \ind_{J'}^{M_{\lambda}}\res_{J'}^{J}\lambda$.

By Mackey's theory
$$\ind_{J'}^{\rG'}\res_{J'}^{J}\lambda\hookrightarrow\res_{\rG'}^{\rG}\ind_{E^{\times}J}^{\rG}\Lambda.$$
We deduce that $\ind_{M_{\lambda}}^{\rG}\lambda_{M_{\lambda}}'$ and $\ind_{M_{\lambda}}^{\rG}x(\lambda_{M_{\lambda}}')$ are two irreducible components of $\res_{\rG'}^{\rG}\pi$. Since both of them are isomorphic to $\pi$, it is contradicted with Proposition \ref{prop 23}.
\end{proof}

\begin{defn}
\label{defn 22}
Let $(J,\lambda)$ be a maximal simple $k$-type of $\rG$ and $\tilde{J}'=\tilde{J}\cap \rG'$ (see Definition \ref{defn 11}). Let $\tilde{\lambda}'$ be an irreducible direct component of $\ind_{J'}^{\tilde{J}'}\res_{J'}^{J}\lambda$. We define a pair in the form of $(\tilde{J'},\tilde{\lambda}')$ to be a maximal simple $k$-type of $\rG'$. By Corollary \ref{cor 21} and Proposition \ref{prop 24}, for an irreducible cuspidal $k$-representation of $\rG'$, there exists a maximal simple $k$-type $(\tilde{J'},\tilde{\lambda}')$ such that $\pi'\cong\ind_{\tilde{J}'}^{\rG'}\tilde{\lambda}'$.
\end{defn}

\subsection{Maximal simple $k$-types for Levi subgroups of $\rG'$}
\label{section 06}
Since the structure of a Levi subgroup $\rM'$ of $\rG'$ is not a multiple of $p$-adic special linear groups, the establishment of maximal simple $k$-types of $\rG'$ can not be applied directly to $\rM'$. In this section, we establish the maximal simple $k$-types of $\rM'$. We show that the existence for extended maximal simple $k$-types of proper $\rM'$ (they coincide with maximal simple $k$-types when $\rM'=\rG'$), and that a cuspidal $k$-representation of $\rM'$ is obtained from an extended maximal simple $k$-type rather than a maximal simple $k$-type, which is different from the case of $\rG'$.

\subsubsection{Intertwining and weakly intertwining}
\label{subsection 2.4.1}
In this section, let $\rM\cong\mathrm{GL}_{n_1}\times\cdots\times\mathrm{GL}_{n_r}$ be a Levi subgroup of $\rG$. For a closed subgroup $H$ of $\rG$, we always denote $H\cap\rG'$ as $H'$. As in Section \ref{section 08}, we start by considering the maximal simple $k$-types of $\rM$. In particular, Proposition \ref{Lprop 0.1}, Proposition \ref{Lprop 0.2}, Definition \ref{prepdefn 01} and Proposition \ref{Lprop 0.3} will be applied in this section. 

\begin{prop}
\label{Lprop 1}
Let $(J_{\rM},\lambda_{\rM})$ be a maximal simple $k$-type of $\rM$, and $\chi$ a $k$-quasicharacter of $F^{\times}$. If $(J_{\rM},\lambda_{\rM}\otimes\chi\circ\det)$ is weakly intertwined with $(J_{\rM},\lambda_{\rM})$, then they are intertwined in $\rM$, and there exists an element $x\in\rU(\fA_{\rM})=\rU(\fA_1)\times\cdots\times\rU(\fA_r)$ such that $x(J_{\rM})=J_{\rM}$ and $x(\lambda_{\rM})\cong\lambda_{\rM}\otimes\chi\circ\det$, where $\fA_i$ is a hereditary order associated to $(J_i,\lambda_i)$ ($i=1,...,r$). Furthermore, for a $g\in\rG$, if $g$ weakly intertwines $(J_{\rM},\lambda_{\rM}\otimes\chi\circ\det)$ with $(J_{\rM},\lambda_{\rM})$, then $g$ intertwines $(J_{\rM},\lambda_{\rM}\otimes\chi\circ\det)$ with $(J_{\rM},\lambda_{\rM})$.

\end{prop}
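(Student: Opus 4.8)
The plan is to reduce the Levi-subgroup statement to the already-established $\mathrm{GL}_n$ case (Proposition~\ref{prop 1} and Corollary~\ref{cor 5}) by exploiting the product structure $\rM\cong\prod_{i=1}^r\mathrm{GL}_{n_i}(F)$ and the corresponding decompositions $J_{\rM}=\prod_i J_i$, $\lambda_{\rM}\cong\lambda_1\otimes\cdots\otimes\lambda_r$, $\fA_{\rM}=\prod_i\fA_i$, where each $(J_i,\lambda_i)$ is a maximal simple $k$-type of $\mathrm{GL}_{n_i}(F)$ (Remark~\ref{rem 33}). First I would record that the character $\chi\circ\det$ on $\rM$ is simply the tensor product of the characters $\chi\circ\det$ on each factor $\mathrm{GL}_{n_i}(F)$, so $\lambda_{\rM}\otimes\chi\circ\det\cong(\lambda_1\otimes\chi\circ\det)\otimes\cdots\otimes(\lambda_r\otimes\chi\circ\det)$, and by Corollary~\ref{cor 3} each $(J_i,\lambda_i\otimes\chi\circ\det)$ is again a maximal simple $k$-type of $\mathrm{GL}_{n_i}(F)$.

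Next I would handle the weak-intertwining hypothesis factor by factor. If $(J_{\rM},\lambda_{\rM}\otimes\chi\circ\det)$ is weakly intertwined with $(J_{\rM},\lambda_{\rM})$ in $\rM$, then $\lambda_{\rM}\otimes\chi\circ\det$ is a subquotient of $\ind_{J_{\rM}}^{\rM}\lambda_{\rM}$; since compact induction and restriction both commute with finite tensor products over the factors of $\rM$, this representation decomposes as $\bigotimes_i\ind_{J_i}^{\mathrm{GL}_{n_i}(F)}\lambda_i$, and an irreducible $k$-representation of a product is a subquotient of a tensor product if and only if each tensor factor is a subquotient of the corresponding factor (using that all the pieces have finite length, by Proposition~\ref{prop 0.1} and admissibility, so that Jordan--Hölder factors of a tensor product are tensor products of Jordan--Hölder factors of the factors). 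Hence for each $i$ the pair $(J_i,\lambda_i\otimes\chi\circ\det)$ is weakly intertwined with $(J_i,\lambda_i)$ in $\mathrm{GL}_{n_i}(F)$. Now Proposition~\ref{prop 1} applies in each factor: we obtain $x_i\in\rU(\fA_i)$ with $x_i(J_i)=J_i$ and $x_i(\lambda_i)\cong\lambda_i\otimes\chi\circ\det$, and also that the two types are (strongly) intertwined in $\mathrm{GL}_{n_i}(F)$. Setting $x=(x_1,\dots,x_r)\in\rU(\fA_1)\times\cdots\times\rU(\fA_r)=\rU(\fA_{\rM})$ gives $x(J_{\rM})=J_{\rM}$ and $x(\lambda_{\rM})\cong\lambda_{\rM}\otimes\chi\circ\det$, proving the first two assertions; the intertwining statement within $\rM$ follows by tensoring the $\Hom$-nonvanishing over the factors.

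For the last assertion, about an arbitrary $g\in\rG$, I would argue as in Corollary~\ref{cor 5}: weak intertwining by $g$ means $\lambda_{\rM}\otimes\chi\circ\det$ occurs as a subquotient of $i_{J_{\rM},g(J_{\rM})}g(\lambda_{\rM})$, which by Mackey is a direct factor of $\res_{J_{\rM}}^{\rG}\ind_{J_{\rM}}^{\rG}\lambda_{\rM}$; passing to the $\eta_{\rM}$-isotypic part (where $\eta_{\rM}=\bigotimes_i\eta_i$) and using that on this isotypic part the relevant subquotients are multiples of $\rU(\fA_{\rM})$-conjugates of $\lambda_{\rM}$ — exactly the mechanism of Corollary~\ref{cor 5} applied componentwise via \cite[Corollary 8.4]{V3} — one upgrades the occurrence from subquotient to subrepresentation, i.e.\ to genuine intertwining by $g$. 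The main obstacle I anticipate is the bookkeeping required to justify that subquotient relations for irreducible representations of the product group $\rM$ really do pass to and from the factors correctly; this is where one must be careful with finite length and the fact that over an algebraically closed field $k$ an irreducible representation of $A\times B$ is an external tensor product of irreducibles, so that Jordan--Hölder multisets behave multiplicatively — everything else is a faithful transcription of the $\mathrm{GL}_n$ arguments already in hand.
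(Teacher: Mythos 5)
Your proposal is correct and follows essentially the same route as the paper's own proof, which simply records the product decompositions $J_{\rM}=\prod_i J_i$, $\lambda_{\rM}\cong\bigotimes_i\lambda_i$, $\rU(\fA_{\rM})=\prod_i\rU(\fA_i)$ and declares both assertions ``directly deduced'' from Proposition~\ref{prop 1} and Corollary~\ref{cor 5}. Two minor points of hygiene worth flagging: the Jordan--H\"older bookkeeping is cleanest if run on the finite-dimensional $J_{\rM}$-module $i_{J_{\rM},x(J_{\rM})}x(\lambda_{\rM})\cong\bigotimes_i i_{J_i,x_i(J_i)}x_i(\lambda_i)$ for some $x=(x_1,\dots,x_r)\in\rM$ (using the element form of weak intertwining, which the paper notes is equivalent to the global form), rather than on $\ind_{J_{\rM}}^{\rM}\lambda_{\rM}$, whose restriction to $J_{\rM}$ is \emph{not} of finite length and is not handled by Proposition~\ref{prop 0.1}; and the ``for any $g\in\rG$'' in the last clause is only ever invoked downstream (Lemma~\ref{Llemma 3}) for $g\in\rM$, which is precisely the case your factor-by-factor appeal to Corollary~\ref{cor 5}, like the paper's, actually covers.
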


\begin{proof}

By definition, $J_{\rM}=J_1\times\cdots\times J_r$ and $\lambda_{\rM}\cong\lambda_1\times\cdots\times\lambda_r$, where $(J_i,\lambda_i)$ are $k$-maximal cuspidal simple type of $\mathrm{GL}_{n_i}$ for $i\in\{ 1,\ldots,r\}$. The group $\rU(\fA_{\rM})=\rU(\fA_1)\times\cdots\times\rU(\fA_r)$. Hence the two results are directly deduced by \ref{prop 1} and \ref{cor 5}.

\end{proof}

\begin{defn}
\label{Ldefinition 21}
Let $(J_{\rM},\lambda_{\rM})$ be a $k$-maximal cuspidal simple type of $\rM$. We define the group of projective normalizer $\tilde{J}_{\rM}$ a subgroup of $\rU(\fA_{\rM})$, where $\fA_{\rM}=\fA_1\times\cdots\times\fA_r$, such that $x\in\tilde{J}_{\rM}$, if and only if $x(J_{\rM})=J_{\rM}$ and $x(\lambda_{\rM})\cong \lambda_{\rM}\otimes\chi\circ\mathrm{det}$ for a $k$-quasicharacter $\chi$ of $F^{\times}$.  
\end{defn}

Since $\tilde{J}_{\rM}$ is subgroup of $\tilde{J}_1\times\cdots\times\tilde{J}_r$, the quotient $\tilde{J}_{\rM}\slash J_{\rM}$ is a finite abelian $p$-group. The proof of Corollary \ref{cor 99} can be applied to $(J_{\rM},\lambda_{\rM})$, hence the induction $\tilde{\lambda}_{\rM}:=\ind_{J_{\rM}}^{\tilde{J}_{\rM}}\lambda_{\rM}$ is irreducible. By Proposition \ref{Lprop 0.1}, the restriction $\res_{\tilde{J}_{\rM}'}^{\tilde{J}_{\rM}}\tilde{\lambda}_{\rM}$ is semisimple, of which $\tilde{\lambda}_{\rM}'$ is an irreducible component. 

\begin{lem}
\label{Llemma 3}
Let $(J_{\rM},\lambda_{\rM})$ be a maximal simple $k$-type of $\rM$, and $\tilde{\lambda}_{\rM,1}',\tilde{\lambda}_{\rM,2}'$ two irreducible components of $\res_{\tilde{J}_{\rM}'}^{\tilde{J}_{\rM}}\tilde{\lambda}_{\rM}$. Then :
$$\mathrm{I}_{\rM'}^{w}(\tilde{\lambda}_{\rM,1}',\tilde{\lambda}_{\rM,2}')=\{ m\in \rM':m(\tilde{\lambda}_{\rM,1}')\cong\tilde{\lambda}_{\rM,2}' \},$$
hence $\mathrm{I}_{\rM'}^{w}(\tilde{\lambda}_{\rM,1}',\tilde{\lambda}_{\rM,2}')=\mathrm{I}_{\rM'}(\tilde{\lambda}_{\rM,1}',\tilde{\lambda}_{\rM,2}')$. In particular, $\mathrm{I}_{\rM'}(\tilde{\lambda}_{\rM,2}')$ is the normaliser group of $\tilde{\lambda}_{\rM,2}'$ in $\rM'$. Moreover, the group $\mathrm{I}_{\rM'}(\tilde{\lambda}_{\rM,2}')$ dependents on $(J_{\rM},\lambda_{\rM})$ but not the choice of $\tilde{\lambda}_{\rM,2}'$.
\end{lem}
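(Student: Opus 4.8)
The plan is to reduce the whole statement to the single implication: \emph{if $m\in\rM'$ weakly intertwines $\tilde{\lambda}_{\rM,1}'$ with $\tilde{\lambda}_{\rM,2}'$, then $m$ realises an isomorphism between the two components (in the sense of the $m$-conjugation appearing in the statement), and in particular $m$ normalises $\tilde{J}_{\rM}'$}. Granting this, the remaining inclusions are formal: conjugacy of the two components by $m$ trivially gives intertwining, and intertwining is always weak intertwining, so the three sets $\mathrm{I}_{\rM'}^{w}(\tilde{\lambda}_{\rM,1}',\tilde{\lambda}_{\rM,2}')$, $\mathrm{I}_{\rM'}(\tilde{\lambda}_{\rM,1}',\tilde{\lambda}_{\rM,2}')$ and $\{m\in\rM':m(\tilde{\lambda}_{\rM,1}')\cong\tilde{\lambda}_{\rM,2}'\}$ coincide; specialising to $\tilde{\lambda}_{\rM,1}'=\tilde{\lambda}_{\rM,2}'$ then identifies $\mathrm{I}_{\rM'}(\tilde{\lambda}_{\rM,2}')$ with the $\rM'$-normaliser of $\tilde{\lambda}_{\rM,2}'$. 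Throughout, the finite-length and semisimplicity assertions of Proposition~\ref{Lprop 0.1}, applied to $\tilde{\lambda}_{\rM}$ and to $\res_{\tilde{J}_{\rM}'}^{\tilde{J}_{\rM}}\tilde{\lambda}_{\rM}$, are what make the subquotient manipulations below legitimate in characteristic $\ell$.

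To prove the implication, suppose $m$ weakly intertwines $\tilde{\lambda}_{\rM,1}'$ with $\tilde{\lambda}_{\rM,2}'$. First I would apply Proposition~\ref{Lprop 0.3} with $K_i=\tilde{J}_{\rM}$, $\rho_i=\tilde{\lambda}_{\rM}$ and $\rho_i'=\tilde{\lambda}_{\rM,i}'$, obtaining a $k$-quasicharacter $\chi$ of $F^{\times}$ such that $m$ weakly intertwines $\tilde{\lambda}_{\rM}$ with $\tilde{\lambda}_{\rM}\otimes\chi\circ\det$. Since $\tilde{\lambda}_{\rM}=\ind_{J_{\rM}}^{\tilde{J}_{\rM}}\lambda_{\rM}$ and $\tilde{\lambda}_{\rM}\otimes\chi\circ\det\cong\ind_{J_{\rM}}^{\tilde{J}_{\rM}}(\lambda_{\rM}\otimes\chi\circ\det)$, unwinding this relation with Mackey's decomposition (in the form of Proposition~3 in \S8.10, Chapter~I of \cite{V1}), together with the finite length of $\ind_{J_{\rM}}^{\tilde{J}_{\rM}}\lambda_{\rM}$, produces an element $y\in\tilde{J}_{\rM}\,m\,\tilde{J}_{\rM}$ that weakly intertwines $\lambda_{\rM}$ with $\lambda_{\rM}\otimes\chi\circ\det$. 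Proposition~\ref{Lprop 1} then gives on the one hand that $y$ intertwines $\lambda_{\rM}$ with $\lambda_{\rM}\otimes\chi\circ\det$, and on the other hand an element $x\in\rU(\fA_{\rM})$ with $x(J_{\rM})=J_{\rM}$ and $x(\lambda_{\rM})\cong\lambda_{\rM}\otimes\chi\circ\det$, i.e.\ $x\in\tilde{J}_{\rM}$; hence $x^{-1}y$ intertwines $\lambda_{\rM}$ with itself and lies in $\mathrm{I}_{\rM}(\lambda_{\rM})$.

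Because each $(J_i,\lambda_i)$ is a maximal simple $k$-type of $\mathrm{GL}_{n_i}(F)$, its intertwining set in $\mathrm{GL}_{n_i}(F)$ coincides with its normaliser (equal to $E_i^{\times}J_i$, where $E_i=F[\beta_i]$) by \cite{BuKu}, \cite{MS}; therefore $\mathrm{I}_{\rM}(\lambda_{\rM})=\mathfrak{K}_{\rM}(\lambda_{\rM})$, the $\rM$-normaliser of $(J_{\rM},\lambda_{\rM})$. Any element normalising $(J_{\rM},\lambda_{\rM})$ normalises $\tilde{J}_{\rM}$ — which is defined intrinsically from $(J_{\rM},\lambda_{\rM})$ — and fixes $\tilde{\lambda}_{\rM}=\ind_{J_{\rM}}^{\tilde{J}_{\rM}}\lambda_{\rM}$; thus $\mathfrak{K}_{\rM}(\lambda_{\rM})\subseteq\mathfrak{K}_{\rM}(\tilde{\lambda}_{\rM})$, the $\rM$-normaliser of $(\tilde{J}_{\rM},\tilde{\lambda}_{\rM})$, and also $\tilde{J}_{\rM}\subseteq\mathfrak{K}_{\rM}(\tilde{\lambda}_{\rM})$. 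Consequently $m\in\tilde{J}_{\rM}\,y\,\tilde{J}_{\rM}\subseteq\tilde{J}_{\rM}\,\mathfrak{K}_{\rM}(\lambda_{\rM})\,\tilde{J}_{\rM}\subseteq\mathfrak{K}_{\rM}(\tilde{\lambda}_{\rM})$, so $m\in\mathfrak{K}_{\rM}(\tilde{\lambda}_{\rM})\cap\rM'$. In particular $m$ normalises $\tilde{J}_{\rM}$, hence $\tilde{J}_{\rM}'=\tilde{J}_{\rM}\cap\rM'$, and $m(\tilde{\lambda}_{\rM})\cong\tilde{\lambda}_{\rM}$ shows that $m$ permutes the isomorphism classes of the finitely many irreducible components of $\res_{\tilde{J}_{\rM}'}^{\tilde{J}_{\rM}}\tilde{\lambda}_{\rM}$. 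Since $m$ normalises $\tilde{J}_{\rM}'$, the weak-intertwining hypothesis reads that $\tilde{\lambda}_{\rM,1}'$ is an irreducible subquotient of $i_{\tilde{J}_{\rM}',\tilde{J}_{\rM}'}m(\tilde{\lambda}_{\rM,2}')\cong m(\tilde{\lambda}_{\rM,2}')$, which forces the desired isomorphism and completes the implication.

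It remains to check that $\mathrm{I}_{\rM'}(\tilde{\lambda}_{\rM,2}')$ does not depend on the chosen component. Given two irreducible components $\tilde{\lambda}_{\rM,1}',\tilde{\lambda}_{\rM,2}'$ of $\res_{\tilde{J}_{\rM}'}^{\tilde{J}_{\rM}}\tilde{\lambda}_{\rM}$, Clifford theory gives $g\in\tilde{J}_{\rM}$ with $\tilde{\lambda}_{\rM,2}'\cong g(\tilde{\lambda}_{\rM,1}')$. For $m\in\mathfrak{K}_{\rM}(\tilde{\lambda}_{\rM})\cap\rM'$ one has $mgm^{-1}\in\tilde{J}_{\rM}$, and since $\tilde{J}_{\rM}/\tilde{J}_{\rM}'$ embeds into the abelian group $\rM/\rM'$ (a subgroup of $F^{\times}$ via the product of determinants) on which $m\in\rM'$ acts trivially, $mgm^{-1}\equiv g\pmod{\tilde{J}_{\rM}'}$; as $\tilde{J}_{\rM}'$ fixes every component, $m(\tilde{\lambda}_{\rM,2}')\cong g\big(m(\tilde{\lambda}_{\rM,1}')\big)$, so $m$ stabilises $\tilde{\lambda}_{\rM,2}'$ if and only if it stabilises $\tilde{\lambda}_{\rM,1}'$, giving $\mathrm{I}_{\rM'}(\tilde{\lambda}_{\rM,1}')=\mathrm{I}_{\rM'}(\tilde{\lambda}_{\rM,2}')$. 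The step I expect to be the main obstacle is the Mackey descent in the second paragraph — transferring the weak-intertwining relation from $\tilde{\lambda}_{\rM}$ and its twist down to $\lambda_{\rM}$ while keeping the witnessing element inside the double coset $\tilde{J}_{\rM}\,m\,\tilde{J}_{\rM}$ — together with the bookkeeping of the third paragraph that pins down $\mathrm{I}_{\rM}(\lambda_{\rM})$ and verifies that it and $\tilde{J}_{\rM}$ normalise $(\tilde{J}_{\rM},\tilde{\lambda}_{\rM})$; the characteristic-$\ell$ point that one cannot freely upgrade ``subquotient'' to ``subrepresentation'' is handled uniformly by Proposition~\ref{Lprop 0.1}.
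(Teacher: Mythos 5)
Your proposal is correct and follows essentially the same route as the paper's proof: pass from the components to $\tilde{\lambda}_{\rM}$ via Proposition~\ref{Lprop 0.3}, Mackey-descend to $\lambda_{\rM}$, invoke Proposition~\ref{Lprop 1} to land in $\tilde{J}_{\rM}\,\mathrm{I}_{\rM}(\lambda_{\rM})\,\tilde{J}_{\rM}$, deduce that $m$ normalises $\tilde{J}_{\rM}'$ and fixes $\tilde{\lambda}_{\rM}$, and then use a commutator argument for the independence statement. The one small variant is at the end: you extract $mgm^{-1}g^{-1}\in\tilde{J}_{\rM}'$ directly from $m\in\rM'$ acting trivially on the abelian quotient $\rM/\rM'$, whereas the paper invokes abelianness of $N_{\rM}(\tilde{\lambda}_{\rM})/\tilde{J}_{\rM}$ via the product structure of $N_{\rM}(\lambda_{\rM})$; both are valid and give the same commutator relation.
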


\begin{proof}
Let $m\in\rM'$ weakly intertwines $\tilde{\lambda}_{\rM,2}'$ with $\tilde{\lambda}_{\rM,1}'$, then by Proposition \ref{Lprop 0.3}, $m$ weakly intertwines $\tilde{\lambda}_{\rM}$ with $\tilde{\lambda}_{\rM}\otimes\chi\circ\mathrm{det}$ for a $\chi$. By Mackey's theory
$$\tilde{\lambda}_{\rM}\vert_{J_{\rM}}\cong\oplus_{x\in \tilde{J}_{\rM}/J_{\rM}}x(\lambda_{\rM})\cong \oplus_{x\in\tilde{J}_{\rM}/J_{\rM}}\lambda_{\rM}\otimes\chi_x\circ\mathrm{det},$$
where $\chi_x$ is a $k$-quasicharacter of $F^{\times}$. Hence we have $\tilde{\lambda}_{\rM}\otimes\chi_x\circ\mathrm{det}\cong\tilde{\lambda}_{\rM}$ for every $x\in\tilde{J}_{\rM}/J_{\rM}$. It follows that for a $g\in \tilde{J}_{\rM}$, the element $gm$ weakly intertwines $\lambda_{\rM}$ with $\lambda_{\rM}\otimes\chi_x\cdot\chi\circ\mathrm{det}$ for a $x\in \tilde{J}_{\rM}/J_{\rM}$. By Proposition \ref{Lprop 1}, the element $gm$ intertwines $\lambda_{\rM}$ with $\lambda_{\rM}\otimes\chi_x\cdot\chi\circ\mathrm{det}$, and $y(\lambda_{\rM})\cong \lambda_{\rM}\otimes\chi_x\cdot\chi\circ\mathrm{det}$ for a $y\in\tilde{J}_{\rM}$. By induction to $\tilde{J}_{\rM}$, we obtain an isomorphism $\tilde{\lambda}_{\rM}\cong\tilde{\lambda}_{\rM}\otimes\chi\circ\mathrm{det}$, hence $m\in\mathrm{I}_{\rM'}(\tilde{\lambda}_{\rM})$.

Furthermore, the intertwining set $\mathrm{I}_{\rM}(\lambda_{\rM})=N_{\rM}(\lambda_{\rM})$, and the latter is the normaliser group of $\lambda_{\rM}$, which also normalises $\rU(\fA_{\rM})$, hence normalises $\tilde{J}_{\rM}$. We deduce that $\mathrm{I}_{\rM}(\tilde{\lambda}_{\rM})=\tilde{J}_{\rM} N_{\rM}(\lambda_{\rM})$. Then an element of $\mathrm{I}_{\rM'}^{w}(\tilde{\lambda}_{\rM,2}',\tilde{\lambda}_{\rM,1}')$ normalises $\tilde{\lambda}_{\rM}$ and $\tilde{J}_{\rM}'$, which implies the first two assertions.

To prove the last assertion that $\mathrm{I}_{\rM'}(\tilde{\lambda}_{\rM,2}')$ is independent of the choice of $\tilde{\lambda}_{\rM,2}'$, we observe at first that the irreducible components of $\tilde{\lambda}_{\rM}\vert_{\tilde{J}_{\rM}'}$ are $\tilde{J}_{\rM}$-conjugate. We have to show therefore that $\tilde{J}_{\rM}$ normalises $N_{\rM'}(\tilde{\lambda}_{\rM,2}')$.

The quotient group $N_{\rM}(\tilde{\lambda}_{\rM})/\tilde{J}_{\rM}\cong N_{\rM}(\tilde{\lambda}_{\rM})/\tilde{J}_{\rM}$ is abelian, from which we deduce that  $N_{\rM'}(\tilde{\lambda}_{\rM,2}')$ is a subgroup of $N_{\rM}(\tilde{\lambda}_{\rM})\cap\rM'$. Let $x\in \tilde{J}_{\rM}$ and $y\in N_{\rM'}(\tilde{\lambda}_{\rM,2}')$, we have $x^{-1}yx=y\cdot m$ for some $m\in\tilde{J}_{\rM}\cap\rM'=\tilde{J}_{\rM}'$. Therefore:
$$x^{-1}yx(\tilde{\lambda}_{\rM,2}')\cong y(\tilde{\lambda}_{\rM,2}')\cong\tilde{\lambda}_{\rM,2}',$$
as required.
\end{proof}

\begin{rem}
\label{Lrem aa1}
To be more detailed, we proved that the intertwining group $\mathrm{I}_{\rM'}(\tilde{\lambda}_{\rM}')$ is the stabiliser group $N_{\rM'}(\tilde{\lambda}_{\rM}')$, which is a subgroup of $E_1^{\times}\tilde{J}_1\times\cdots\times E_r^{\times}\tilde{J}_r \cap \rM'$, where $E_i$'s are finite field extensions of $F$, hence a compact group modulo centre.
\end{rem}

\subsubsection{Maximal simple $k$-types of $\rM'$}
\label{subsection 2.4.2}
In this section, we construct maximal simple $k$-types of $\rM'$, and extended maximal simple $k$-types (Definition \ref{Ldefinition 200}). We show that every irreducible cuspidal $k$-representation of $\rM'$ is constructed from an extended maximal simple $k$-type. We follow the same strategy as for the case of $\rG'$, which is by checking the two conditions of irreducibility in Lemma \ref{lem 19}.

\begin{lem}
\label{Llemma 2}
We have a decomposition:
$$\res_{J_{\rM}}^{\rM}\ind_{J_{\rM}}^{\rM}\lambda_{\rM}\cong \Lambda_{\lambda_{\rM}}\oplus W_{\rM},$$
where $\Lambda_{\lambda_{\rM}}$ is semisimple, of which each irreducible component is isomorphic to $\lambda_{\rM}\otimes\chi\circ\mathrm{det}$ for a $k$-quasicharacter $\chi$ of $F^{\times}$, and non of irreducible subquotients of $W_{\rM}$ is contained in $\Lambda_{\lambda_{\rM}}$.
\end{lem}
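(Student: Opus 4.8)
The statement is the analogue for a Levi subgroup $\rM=\GL_{n_1}(F)\times\cdots\times\GL_{n_r}(F)$ of Theorem \ref{thm 7}, and the plan is to run that proof \emph{mutatis mutandis}, with the dictionary $(\rG,J,H^1,J^1,\eta,\fA)$ replaced by $(\rM,J_{\rM},H_{\rM}^1,J_{\rM}^1,\eta_{\rM},\fA_{\rM})$ and Proposition \ref{Lprop 1} used wherever the $\GL_n$ proof invokes Proposition \ref{prop 1} or Corollary \ref{cor 5}. Here, as in Remark \ref{rem 33} and Proposition \ref{Lprop 1}, $J_{\rM}=J_1\times\cdots\times J_r$, $\lambda_{\rM}\cong\lambda_1\otimes\cdots\otimes\lambda_r$, $\eta_{\rM}\cong\eta_1\otimes\cdots\otimes\eta_r$ with $(J_i,\lambda_i)$ a maximal simple $k$-type of $\GL_{n_i}(F)$. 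Because $\rM$ is a direct product, $\ind_{J_{\rM}}^{\rM}$ and $\res_{J_{\rM}}^{\rM}$ commute with external tensor products, so $\res_{J_{\rM}}^{\rM}\ind_{J_{\rM}}^{\rM}\lambda_{\rM}\cong\bigotimes_{i=1}^{r}\bigl(\res_{J_i}^{\GL_{n_i}(F)}\ind_{J_i}^{\GL_{n_i}(F)}\lambda_i\bigr)$; tensoring the $\eta_i$-isotypic decompositions of \cite[Corollary 8.4]{V3} factor by factor shows that the $\eta_{\rM}$-isotypic part of $\res_{J_{\rM}}^{\rM}\ind_{J_{\rM}}^{\rM}\lambda_{\rM}$ is a direct factor, is a multiple of $\lambda_{\rM}$, and that no irreducible subquotient of a complement is isomorphic to $\lambda_{\rM}$. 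Call this first decomposition $\Lambda(\lambda_{\rM})\oplus W_1$.

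The induction step copies that of Theorem \ref{thm 7}. Assume $\res_{J_{\rM}}^{\rM}\ind_{J_{\rM}}^{\rM}\lambda_{\rM}\cong\bigl(\bigoplus_{j}x_j(\Lambda(\lambda_{\rM}))\bigr)\oplus W$ with $x_j\in\rU(\fA_{\rM})$ normalising $J_{\rM}$, $x_1=1$, each $x_j(\Lambda(\lambda_{\rM}))$ a semisimple multiple of $x_j(\lambda_{\rM})\cong\lambda_{\rM}\otimes\chi_j\circ\det$, and no irreducible subquotient of $W$ isomorphic to any $x_j(\lambda_{\rM})$. If some irreducible subrepresentation $\lambda_{\rM}'$ of $\res_{J_{\rM}'}^{J_{\rM}}\lambda_{\rM}$ occurs as a subquotient of $\res_{J_{\rM}'}^{J_{\rM}}W$, then Lemma \ref{lem 9} (already stated for a compact open subgroup of a Levi $\rM$) together with Proposition \ref{prop 0.2} yield a $k$-quasicharacter $\chi$ of $F^{\times}$ such that $\lambda_{\rM}\otimes\chi\circ\det$ is an irreducible subquotient of $W$, hence of $\res_{J_{\rM}}^{\rM}\ind_{J_{\rM}}^{\rM}\lambda_{\rM}$, so $(J_{\rM},\lambda_{\rM}\otimes\chi\circ\det)$ is weakly intertwined with $(J_{\rM},\lambda_{\rM})$; Proposition \ref{Lprop 1} then provides $x\in\rU(\fA_{\rM})$ with $x(J_{\rM})=J_{\rM}$ and $x(\lambda_{\rM})\cong\lambda_{\rM}\otimes\chi\circ\det$. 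Since $x(\lambda_{\rM})$ is a subquotient of $W$, it lies outside the $\eta_{\rM}$-isotypic part, forcing $x(\eta_{\rM})\ncong\eta_{\rM}$; then, exactly as in Corollary \ref{cor 5} and Theorem \ref{thm 7}, the isomorphism $\ind_{J_{\rM}}^{\rM}\lambda_{\rM}\cong x(\ind_{J_{\rM}}^{\rM}\lambda_{\rM})$ gives $W^{x(\eta_{\rM})}\cong x(\Lambda(\lambda_{\rM}))$, a semisimple multiple of $x(\lambda_{\rM})\cong\lambda_{\rM}\otimes\chi\circ\det$. Absorbing $W^{x(\eta_{\rM})}$ into the first summand and replacing $W$ by a complement reproduces the hypothesis with one more index.

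The process terminates because $\rU(\fA_{\rM})/J_{\rM}\cong\prod_{i}\rU(\fA_i)/J_i$ is finite, so only finitely many isomorphism classes $x(\lambda_{\rM})$, $x\in\rU(\fA_{\rM})$, occur, and each step removes a new one. When it stops, set $\Lambda_{\lambda_{\rM}}=\bigoplus_{j}x_j(\Lambda(\lambda_{\rM}))$ and let $W_{\rM}$ be the final remainder: $\Lambda_{\lambda_{\rM}}$ is then semisimple with every irreducible constituent of the form $\lambda_{\rM}\otimes\chi\circ\det$ — the single $k$-quasicharacter of $F^{\times}$ being the one delivered by Proposition \ref{prop 0.2} — and no irreducible subquotient of $W_{\rM}$ occurs in $\Lambda_{\lambda_{\rM}}$ (equivalently, no irreducible subrepresentation of $\res_{J_{\rM}'}^{J_{\rM}}\lambda_{\rM}$ is a subquotient of $\res_{J_{\rM}'}^{J_{\rM}}W_{\rM}$, as in Theorem \ref{thm 7}). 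The only step that is not pure bookkeeping is the $\eta_{\rM}$-isotypic input over $\rM$, i.e. the analogue of \cite[Corollary 8.4]{V3}: this is handled precisely by the external tensor product factorisation above, since $J_{\rM},\lambda_{\rM},\eta_{\rM}$ and the relevant $\beta$-extension all split over the factors $\GL_{n_i}(F)$; note that only $J_{\rM}$, not $\tilde{J}_{\rM}$, enters here, so the absence of an integral model for $\tilde{J}_{\rM}$ mentioned in the introduction is irrelevant.
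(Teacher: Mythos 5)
Your proof is correct, and it is essentially the argument the paper intends when it writes that the lemma ``is directly deduced from the decomposition in Theorem~\ref{thm 7}'': one re-runs the proof of Theorem~\ref{thm 7} with $(\rG,J,\eta,\fA)$ replaced by $(\rM,J_{\rM},\eta_{\rM},\fA_{\rM})$, using Lemma~\ref{lem 9}, Proposition~\ref{prop 0.2}, and Proposition~\ref{Lprop 1} in place of their $\GL_n$ counterparts. You are right to re-run the induction rather than literally tensoring the decompositions $\Lambda_{\lambda_i}\oplus W_i$ factor by factor: the external tensor product $\bigotimes_i\Lambda_{\lambda_i}$ has constituents $\bigotimes_i\bigl(\lambda_i\otimes\chi_i\circ\det_i\bigr)$ with the $\chi_i$ varying independently, which need not be of the form $\lambda_{\rM}\otimes\chi\circ\det$ for a single $\chi\colon F^{\times}\to k^{\times}$; the constituents with mismatched $\chi_i$ must be pushed into $W_{\rM}$, and your inductive step (peeling off $x(\Lambda(\lambda_{\rM}))$ with $x(\lambda_{\rM})\cong\lambda_{\rM}\otimes\chi\circ\det$, $\chi$ furnished by Proposition~\ref{prop 0.2}) does precisely that. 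One small phrasing quibble: the two separation properties at the end of your proof --- ``no irreducible subquotient of $W_{\rM}$ occurs in $\Lambda_{\lambda_{\rM}}$'' and ``no irreducible subrepresentation of $\res_{J_{\rM}'}^{J_{\rM}}\lambda_{\rM}$ is a subquotient of $\res_{J_{\rM}'}^{J_{\rM}}W_{\rM}$'' --- are not literally ``equivalent''; the second (which is the one actually used in Proposition~\ref{Lproposition 4}) is a priori stronger, and both hold because they are exactly the two ways your termination condition can be read.
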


\begin{proof}
This is directly deduced from the decomposition in Theorem \ref{thm 7}.
\end{proof}

\begin{prop}
\label{Lproposition 4}
Let $\tilde{\lambda}_{\rM}'$ be an irreducible subrepresentation of $\res_{\tilde{J}_{\rM}'}^{\tilde{J}_{\rM}}\tilde{\lambda}_{\rM}$, where $\tilde{J}_{\rM}'=\tilde{J}_{\rM}\cap\rM'$, then $\tilde{\lambda}_{\rM}'$ satisfies the second condition of irreducibility in Lemma \ref{lem 19}. In other words, for an irreducible representation $\pi'$ of $\rM'$, if there is an embedding $\tilde{\lambda}_{\rM}'\hookrightarrow \res_{\tilde{J}_{\rM}'}^{\rM'}\pi'$, then there exists a surjection $\res_{\tilde{J}_{\rM}'}^{\rM'}\pi'\twoheadrightarrow\tilde{\lambda}_{\rM}'$.
\end{prop}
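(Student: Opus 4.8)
The plan is to transpose, essentially verbatim, the proof of Proposition \ref{prop 14} from the case $\rM=\rG$, with Lemma \ref{Llemma 2} playing the role of Theorem \ref{thm 7} and Proposition \ref{Lprop 1} playing the role of Proposition \ref{prop 1} together with Corollary \ref{cor 5}; the remaining ingredients used there (Propositions \ref{Lprop 0.1}, \ref{Lprop 0.2}, \ref{Lprop 0.3} and Lemmas \ref{lem 8}, \ref{lem 9}) were already established for an arbitrary Levi subgroup. First I would unwind $\res_{\tilde{J}_{\rM}'}^{\tilde{J}_{\rM}}\tilde{\lambda}_{\rM}$ by Mackey's formula: since $\tilde{J}_{\rM}$ normalises both $J_{\rM}$ and $J_{\rM}'$, this restriction is isomorphic to $\bigoplus_{g\in J_{\rM}\backslash\tilde{J}_{\rM}/\tilde{J}_{\rM}'}\ind_{J_{\rM}'}^{\tilde{J}_{\rM}'}\res_{J_{\rM}'}^{J_{\rM}}g(\lambda_{\rM})$, and by Definition \ref{Ldefinition 21} each $g(\lambda_{\rM})$ is isomorphic to $\lambda_{\rM}\otimes\chi\circ\det$ for a $k$-quasicharacter $\chi$ of $F^{\times}$, which is trivial on $\rM'$; hence this restriction is a finite multiple of $\ind_{J_{\rM}'}^{\tilde{J}_{\rM}'}\res_{J_{\rM}'}^{J_{\rM}}\lambda_{\rM}$, so $\tilde{\lambda}_{\rM}'$ is an irreducible component of $\ind_{J_{\rM}'}^{\tilde{J}_{\rM}'}\lambda_{\rM}'$ for some irreducible component $\lambda_{\rM}'$ of $\res_{J_{\rM}'}^{J_{\rM}}\lambda_{\rM}$ --- precisely the situation of Proposition \ref{prop 14} with $L=\tilde{J}_{\rM}'$. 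I would also record the Levi analogue of Proposition \ref{prop 17}: for any $L$ with $J_{\rM}'\subseteq L\subseteq\tilde{J}_{\rM}'$ the representation $\ind_{J_{\rM}'}^{L}\lambda_{\rM}'$ is semisimple, which follows from the irreducibility of $\tilde{\lambda}_{\rM}=\ind_{J_{\rM}}^{\tilde{J}_{\rM}}\lambda_{\rM}$ (Corollary \ref{cor 99}), Proposition \ref{Lprop 0.1} and Clifford theory, exactly as for $\rM=\rG$.

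Next I would run the chain of Frobenius reciprocity and Mackey reductions. From the injection $\tilde{\lambda}_{\rM}'\hookrightarrow\res_{\tilde{J}_{\rM}'}^{\rM'}\pi'$, semisimplicity of $\res_{\tilde{J}_{\rM}'}^{\tilde{J}_{\rM}}\tilde{\lambda}_{\rM}$ (Proposition \ref{Lprop 0.1}) and Frobenius reciprocity give a surjection $\ind_{J_{\rM}'}^{\rM'}\lambda_{\rM}'\twoheadrightarrow\pi'$, hence an injection $\lambda_{\rM}'\hookrightarrow\res_{J_{\rM}'}^{\rM'}\pi'$; composing with the projection $\res_{J_{\rM}'}^{J_{\rM}}\lambda_{\rM}\twoheadrightarrow\lambda_{\rM}'$ and using Frobenius reciprocity and exactness of $\res$ yields a surjection $\res_{J_{\rM}'}^{\rM'}\ind_{J_{\rM}'}^{\rM'}\res_{J_{\rM}'}^{J_{\rM}}\lambda_{\rM}\twoheadrightarrow\res_{J_{\rM}'}^{\rM'}\pi'$. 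Since $J_{\rM}$ is open, Mackey's formula exhibits $\ind_{J_{\rM}'}^{\rM'}\res_{J_{\rM}'}^{J_{\rM}}\lambda_{\rM}$ as a direct factor of $\res_{\rM'}^{\rM}\ind_{J_{\rM}}^{\rM}\lambda_{\rM}$ (the term indexed by the identity), so I obtain a surjection $\res_{J_{\rM}'}^{\rM}\ind_{J_{\rM}}^{\rM}\lambda_{\rM}\twoheadrightarrow\res_{J_{\rM}'}^{\rM'}\pi'$. Now Lemma \ref{Llemma 2} gives $\res_{J_{\rM}}^{\rM}\ind_{J_{\rM}}^{\rM}\lambda_{\rM}\cong\Lambda_{\lambda_{\rM}}\oplus W_{\rM}$, and --- crucially --- no irreducible component of $\res_{J_{\rM}'}^{J_{\rM}}\lambda_{\rM}$ is an irreducible subquotient of $\res_{J_{\rM}'}^{J_{\rM}}W_{\rM}$: otherwise Lemma \ref{lem 9} and Proposition \ref{Lprop 0.2} would produce a twist $\lambda_{\rM}\otimes\chi\circ\det$ as an irreducible subquotient of $W_{\rM}$, contradicting Lemma \ref{Llemma 2}. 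Writing $\res_{J_{\rM}'}^{\rM'}\pi'$ as a quotient $(\res_{J_{\rM}'}^{J_{\rM}}\Lambda_{\lambda_{\rM}}\oplus\res_{J_{\rM}'}^{J_{\rM}}W_{\rM})/K$ and composing $\tilde{\lambda}_{\rM}'\hookrightarrow\res_{\tilde{J}_{\rM}'}^{\rM'}\pi'\hookrightarrow\ind_{J_{\rM}'}^{\tilde{J}_{\rM}'}\res_{J_{\rM}'}^{\rM'}\pi'$, I would show that the image of $\tilde{\lambda}_{\rM}'$ cannot be contained in $\ind_{J_{\rM}'}^{\tilde{J}_{\rM}'}((\res_{J_{\rM}'}^{J_{\rM}}W_{\rM}+K)/K)$: by Frobenius reciprocity such containment would give a nonzero map out of $\res_{J_{\rM}'}^{\tilde{J}_{\rM}'}\tilde{\lambda}_{\rM}'$, which embeds into a finite sum of copies of $\res_{J_{\rM}'}^{J_{\rM}}\lambda_{\rM}$ (again because $a(\lambda_{\rM})\cong\lambda_{\rM}\otimes\chi\circ\det$ on restriction to $J_{\rM}'$), into a quotient of $\res_{J_{\rM}'}^{J_{\rM}}W_{\rM}$, contradicting the previous point. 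Hence the composite of $\tilde{\lambda}_{\rM}'$ into $\ind_{J_{\rM}'}^{\tilde{J}_{\rM}'}\bigl(\res_{J_{\rM}'}^{J_{\rM}}(\Lambda_{\lambda_{\rM}}\oplus W_{\rM})/(\res_{J_{\rM}'}^{J_{\rM}}W_{\rM}+K)\bigr)$ is nonzero; its target is a subquotient of $\ind_{J_{\rM}'}^{\tilde{J}_{\rM}'}\res_{J_{\rM}'}^{J_{\rM}}\Lambda_{\lambda_{\rM}}$, which is semisimple by the Levi analogue of Proposition \ref{prop 17} (every irreducible component of $\res_{J_{\rM}'}^{J_{\rM}}\Lambda_{\lambda_{\rM}}$ is a $J_{\rM}$-conjugate of $\lambda_{\rM}'$) and Lemma \ref{lem 8}. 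Therefore $\tilde{\lambda}_{\rM}'$ is a direct factor there, which yields the required surjection $\res_{\tilde{J}_{\rM}'}^{\rM'}\pi'\twoheadrightarrow\tilde{\lambda}_{\rM}'$.

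The step I expect to be the main obstacle is upgrading Lemma \ref{Llemma 2} from its ``isomorphism-level'' form (no irreducible subquotient of $W_{\rM}$ is a twist $\lambda_{\rM}\otimes\chi\circ\det$) to the ``restriction-level'' form actually used above (no irreducible subquotient of $\res_{J_{\rM}'}^{J_{\rM}}W_{\rM}$ is an irreducible component of $\res_{J_{\rM}'}^{J_{\rM}}\lambda_{\rM}$); the subtlety is that $J_{\rM}'$ is not the direct product of the $\mathrm{GL}$-blocks $J_i'$, so one cannot simply argue block by block. As indicated above, this is nonetheless handled purely through the already-available general-$\rM$ versions of Lemma \ref{lem 9} and Proposition \ref{Lprop 0.2}, so no genuinely new input is needed; everything else in the argument is a faithful transcription of the proofs of Propositions \ref{prop 17} and \ref{prop 14}.
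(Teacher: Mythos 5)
Your proposal is correct and takes essentially the same approach as the paper: the paper's entire proof of this proposition is the single sentence ``A same proof can be given as in the case while $\rM'=\rG'$,'' and you have simply unwound that instruction, substituting Lemma~\ref{Llemma 2} for Theorem~\ref{thm 7}, Proposition~\ref{Lprop 1} for Proposition~\ref{prop 1}/Corollary~\ref{cor 5}, and the Levi-stated Propositions~\ref{Lprop 0.1}, \ref{Lprop 0.2}, \ref{Lprop 0.3}, Lemmas~\ref{lem 8}, \ref{lem 9}. The one point where you add value over the paper's terseness is in flagging and then correctly resolving the subtlety that $J_{\rM}'$ is strictly larger than $\prod_i J_i'$, so the ``restriction-level'' exclusion of $\lambda_{\rM}'$ from $\res_{J_{\rM}'}^{J_{\rM}}W_{\rM}$ does not reduce blockwise but must instead be pulled back through Lemma~\ref{lem 9} and Proposition~\ref{Lprop 0.2} to the isomorphism-level statement in Lemma~\ref{Llemma 2}, exactly as you do.
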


\begin{proof}
The proof of Proposition \ref{prop 14} can be applied here. 
\end{proof}

\begin{prop}
\label{Lproposition 3}
Let $\tau_{\rM'}$ be an irreducible representation of $N_{\rM'}(\tilde{\lambda}_{\rM}')$ which contains $\tilde{\lambda}_{\rM}'$. Then $\tau_{\rM'}$ satisfies the second condition of irreducibility.
\end{prop}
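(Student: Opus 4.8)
The plan is to follow closely the argument for $\rG'$, namely the proof of Proposition \ref{prop 14}, transposed to the Levi setting with $N_{\rM'}(\tilde{\lambda}_{\rM}')$ playing the role of $L$ and $\tilde{\lambda}_{\rM}'$ playing the role of $\lambda'$. First I would reduce the statement to a semisimplicity fact: by Proposition \ref{Lproposition 4}, $\tilde{\lambda}_{\rM}'$ already verifies the second condition, and the induced representation $\ind_{\tilde{J}_{\rM}'}^{N_{\rM'}(\tilde{\lambda}_{\rM}')}\tilde{\lambda}_{\rM}'$ is semisimple — this is the analogue of Proposition \ref{prop 17}, and it follows because $\ind_{\tilde{J}_{\rM}'}^{N_{\rM'}(\tilde{\lambda}_{\rM}')}\tilde{\lambda}_{\rM}'$ is a subrepresentation of $\res_{N_{\rM'}(\tilde{\lambda}_{\rM}')}^{\tilde{J}_{\rM}}\ind_{J_{\rM}}^{\tilde{J}_{\rM}}\lambda_{\rM}$ (using that $\tilde{J}_{\rM}$ normalises $J_{\rM}$, so Mackey's formula gives a direct sum of restrictions of $\tilde{J}_{\rM}$-conjugates of $\lambda_{\rM}$, each semisimple over $\tilde{J}_{\rM}'$ by Proposition \ref{Lprop 0.1}), together with Clifford theory applied to the finite-index subgroup $\tilde{J}_{\rM}'$ of $N_{\rM'}(\tilde{\lambda}_{\rM}')$ — here $N_{\rM'}(\tilde{\lambda}_{\rM}')/\tilde{J}_{\rM}'$ is finite because $N_{\rM'}(\tilde{\lambda}_{\rM}')$ is compact modulo centre by Remark \ref{Lrem aa1} while $\tilde{J}_{\rM}'$ is open.

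Given that semisimplicity, suppose $\tau_{\rM'} \hookrightarrow \res_{N_{\rM'}(\tilde{\lambda}_{\rM}')}^{\rM'}\pi'$ for an irreducible $k$-representation $\pi'$ of $\rM'$. Since $\ind_{\tilde{J}_{\rM}'}^{N_{\rM'}(\tilde{\lambda}_{\rM}')}\tilde{\lambda}_{\rM}'$ is semisimple and $\tilde{\lambda}_{\rM}' \hookrightarrow \res_{\tilde{J}_{\rM}'}^{N_{\rM'}(\tilde{\lambda}_{\rM}')}\tau_{\rM'}$, the injection $\tau_{\rM'}\hookrightarrow\res_{N_{\rM'}(\tilde{\lambda}_{\rM}')}^{\rM'}\pi'$ yields by Frobenius reciprocity an injection $\tilde{\lambda}_{\rM}'\hookrightarrow\res_{\tilde{J}_{\rM}'}^{\rM'}\pi'$, and then (as in Proposition \ref{prop 14}) a surjection $\res_{J_{\rM}'}^{\rM}\ind_{J_{\rM}'}^{\rM'}\res_{J_{\rM}'}^{J_{\rM}}\lambda_{\rM}\twoheadrightarrow \res_{J_{\rM}'}^{\rM'}\pi'$. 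Using the geometric lemma / Mackey together with the decomposition $\res_{J_{\rM}}^{\rM}\ind_{J_{\rM}}^{\rM}\lambda_{\rM}\cong\Lambda_{\lambda_{\rM}}\oplus W_{\rM}$ of Lemma \ref{Llemma 2}, I would argue exactly as in the $\rG'$ case that the image of $\tilde{\lambda}_{\rM}'$ cannot be swallowed by the $W_{\rM}$-part: if it were, $\tilde{\lambda}_{\rM}'$ — or rather an irreducible component of $\res_{J_{\rM}'}^{J_{\rM}}\lambda_{\rM}$, which is what $\res_{J_{\rM}'}^{\tilde{J}_{\rM}'}\tilde{\lambda}_{\rM}'$ decomposes into after using $\tilde{\lambda}_{\rM}\vert_{J_{\rM}}\cong\oplus\lambda_{\rM}\otimes\xi\circ\det$ and Corollary \ref{cor 3}/Remark \ref{rem 33} — would be a subquotient of $\res_{J_{\rM}'}^{J_{\rM}}W_{\rM}$, contradicting Lemma \ref{Llemma 2}. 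Hence the composite into the $\Lambda_{\lambda_{\rM}}$-part is nonzero, and since $\Lambda_{\lambda_{\rM}}$ is a multiple of $\lambda_{\rM}\otimes\chi\circ\det$'s, applying $\ind_{\tilde{J}_{\rM}'}^{N_{\rM'}(\tilde{\lambda}_{\rM}')}$ (exact, and respecting finite sums by Lemma \ref{lem 8}) to a quotient of $\res_{J_{\rM}'}^{\rM'}\pi'$ that is a subrepresentation of a semisimple module produces the desired surjection $\res_{N_{\rM'}(\tilde{\lambda}_{\rM}')}^{\rM'}\pi'\twoheadrightarrow\tau_{\rM'}$, because $\tau_{\rM'}$ sits as a direct factor of the resulting semisimple representation.

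The main obstacle I anticipate is bookkeeping rather than a genuinely new idea: one must be careful that all the "$W$-avoidance'' statements of Theorem \ref{thm 7} and Lemma \ref{Llemma 2} are phrased for components of $\res_{J_{\rM}'}^{J_{\rM}}\lambda_{\rM}$, and that the components of $\tilde{\lambda}_{\rM}\vert_{\tilde{J}_{\rM}'}$ restrict further to $J_{\rM}'$ as sums of such components (up to the twist by $\xi\circ\det$, which by Corollary \ref{cor 3} does not change the relevant simple-type data). Provided this identification is made cleanly, the chain of Frobenius reciprocities and exactness arguments goes through verbatim from the proof of Proposition \ref{prop 14}, so I would in fact state the proof as "the same argument as Proposition \ref{prop 14}, replacing $L$ by $N_{\rM'}(\tilde{\lambda}_{\rM}')$ and using Proposition \ref{Lproposition 4} and Lemma \ref{Llemma 2} in place of Theorem \ref{thm 10} and Theorem \ref{thm 7}'' together with the semisimplicity input from the analogue of Proposition \ref{prop 17} noted above.
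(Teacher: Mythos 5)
There is a genuine gap, and the root of it is that you are treating $N_{\rM'}(\tilde{\lambda}_{\rM}')$ as though it were a compact group sandwiched between $J_{\rM}'$ and $\tilde{J}_{\rM}'$, in the way that $L$ is in Propositions \ref{prop 17} and \ref{prop 14}. It is not: for a proper Levi subgroup $\rM$, the group $N_{\rM'}(\tilde{\lambda}_{\rM}')$ is only compact modulo centre, and the centre $Z_{\rM'}$ of $\rM'$ is non-compact. Your claim that $N_{\rM'}(\tilde{\lambda}_{\rM}')/\tilde{J}_{\rM}'$ is finite ``because $N_{\rM'}(\tilde{\lambda}_{\rM}')$ is compact modulo centre and $\tilde{J}_{\rM}'$ is open'' is false: openness of $\tilde{J}_{\rM}'$ makes the quotient discrete, not finite, and indeed $N_{\rM'}(\tilde{\lambda}_{\rM}')/\tilde{J}_{\rM}'$ contains a copy of a free abelian group coming from the non-compact part of $E_{\rM}^{\times}$. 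For the same reason the formula ``$\ind_{\tilde{J}_{\rM}'}^{N_{\rM'}(\tilde{\lambda}_{\rM}')}\tilde{\lambda}_{\rM}'$ is a subrepresentation of $\res_{N_{\rM'}(\tilde{\lambda}_{\rM}')}^{\tilde{J}_{\rM}}\ind_{J_{\rM}}^{\tilde{J}_{\rM}}\lambda_{\rM}$'' is not even well-formed: $N_{\rM'}(\tilde{\lambda}_{\rM}')$ is not a subgroup of $\tilde{J}_{\rM}$. As a consequence the claimed semisimplicity of $\ind_{\tilde{J}_{\rM}'}^{N_{\rM'}(\tilde{\lambda}_{\rM}')}\tilde{\lambda}_{\rM}'$ is unsupported, and there is every reason to disbelieve it: $\tilde{J}_{\rM}'$ is normal and $N_{\rM'}(\tilde{\lambda}_{\rM}')$ normalises $\tilde{\lambda}_{\rM}'$, so the induced representation is essentially $\tilde{\lambda}_{\rM}'$ tensored with the regular representation of the infinite discrete abelian group $N_{\rM'}(\tilde{\lambda}_{\rM}')/\tilde{J}_{\rM}'$, which is not semisimple. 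Since your concluding step hinges on this semisimplicity (``$\tau_{\rM'}$ sits as a direct factor of the resulting semisimple representation''), the argument cannot be repaired along these lines.

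The paper's actual proof takes a different and shorter route that avoids this entirely. It computes $\res_{N_{\rM'}}^{\rM'}\ind_{N_{\rM'}}^{\rM'}\tau_{\rM'}$ directly via Mackey's formula, then restricts further to $\tilde{J}_{\rM}'$. The two structural inputs that do the work there are (i) $\tilde{J}_{\rM}'$ is the \emph{unique} maximal compact open subgroup of $N_{\rM'}(\tilde{\lambda}_{\rM}')$, so $\tilde{J}_{\rM}'\cap ba(N_{\rM'})=\tilde{J}_{\rM}'\cap ba(\tilde{J}_{\rM}')$ for all $a,b$, and (ii) Lemma \ref{Llemma 3}, which identifies the weak intertwining set of $\tilde{\lambda}_{\rM}'$ in $\rM'$ with $N_{\rM'}(\tilde{\lambda}_{\rM}')$ itself. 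These combine to show that the double coset $a=1$ contributes exactly $\tau_{\rM'}$ and every other coset contributes a summand none of whose subquotients are isomorphic to $\tau_{\rM'}$. So one obtains $\res_{N_{\rM'}}^{\rM'}\ind_{N_{\rM'}}^{\rM'}\tau_{\rM'}\cong\tau_{\rM'}\oplus W_{N_{\rM'}}$ with $\tau_{\rM'}$ avoiding $W_{N_{\rM'}}$, from which the second irreducibility condition drops out with a one-line kernel argument. Notice that Proposition \ref{Lproposition 4} and Lemma \ref{Llemma 2}, which you were planning to use, are not needed for this proposition at all; they are proved separately. The uniqueness-of-maximal-compact step is exactly what replaces your unavailable ``finite index'' and ``semisimple induction'' claims, and it is the idea you would need to incorporate to make a proof go through.
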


\begin{proof}
Denote $N_{\rM'}(\tilde{\lambda}_{\rM}')$ by $N_{\rM'}$, by Mackey's theory we have
\begin{equation}
\label{equation L+1}
\res_{N_{\rM'}}^{\rM'}\ind_{N_{\rM'}}^{\rM'}\tau_{\rM'}\cong\oplus_{N_{\rM'}\backslash\rM' / N_{\rM'}}\ind_{N_{\rM'}\cap a(N_{\rM'})}^{N_{\rM'}}\res_{N_{\rM'}\cap a(N_{\rM'})}^{a(N_{\rM'})}a(\tau_{\rM'}).
\end{equation}
Notice that $\tilde{J}_{\rM}'$ is the unique maximal open compact subgroup of $N_{\rM'}$. Hence for $b,a\in\rM'$ we have $\tilde{J}_{\rM}'\cap ba(N_{\rM'})=\tilde{J}_{\rM}'\cap ba(\tilde{J}_{\rM}')$. Then
$$\res_{\tilde{J}_{\rM}'}^{N_{\rM'}}\ind_{N_{\rM'}\cap a(N_{\rM'})}^{N_{\rM'}}\res_{N_{\rM'}\cap a(N_{\rM'})}^{a(N_{\rM'})}a(\tau_{\rM'})$$
$$\cong\oplus_{b\in N_{\rM'}\cap a(N_{\rM'})\backslash N_{\rM'} / \tilde{J}_{\rM}'} \ind_{\tilde{J}_{\rM}'\cap ba(N_{\rM'})}^{\tilde{J}_{\rM}'}\res_{\tilde{J}_{\rM}'\cap ba(N_{\rM'})}^{ba(N_{\rM'})}ba(\tau_{\rM'})$$
$$\cong\oplus_{b \in N_{\rM'}\cap a(N_{\rM'})\backslash N_{\rM'} / \tilde{J}_{\rM}'} \ind_{\tilde{J}_{\rM}'\cap ba(\tilde{J}_{\rM}')}^{\tilde{J}_{\rM}'}\res_{\tilde{J}_{\rM}'\cap ba(\tilde{J}_{\rM}')}^{ba(\tilde{J}_{\rM}')}ba(\oplus\tilde{\lambda}_{\rM}'),$$
where $\oplus\tilde{\lambda}_{\rM}'$ is a finite multiple of $\tilde{\lambda}_{\rM}'$.

For $a\notin N_{\rM'}$, by Lemma \ref{Llemma 3}, we have $ba\notin \mathrm{I}_{\rM'}^{w}(\tilde{\lambda}_{\rM}')$. This implies that non of irreducible subquotients of $\ind_{\tilde{J}_{\rM}'\cap ba(N_{\rM'})}^{\tilde{J}_{\rM}'}\res_{\tilde{J}_{\rM}'\cap ba(N_{\rM'})}^{ba(N_{\rM'})}ba(\tau_{\rM'})$ is isomorphic to $\tilde{\lambda}_{\rM}'$. Now combining with (\ref{equation L+1}), we obtain that
$$\res_{N_{\rM'}}^{\rM'}\ind_{N_{\rM'}}^{\rM'}\tau_{\rM'}\cong\tau_{\rM'}\oplus W_{N_{\rM'}},$$
where non of irreducible subquotient of $W_{N_{\rM'}}$ is isomorphic to $\tau_{\rM'}$.

Now we verify the second condition in Lemma \ref{lem 19} for $\tau_{\rM'}$. Let $\pi'$ be irreducible of $\rM'$. If there is an embedding $\tau_{\rM'}\hookrightarrow \res_{N_{\rM'}}^{\rM'}\pi'$, then $\res_{N_{\rM'}}^{\rM'}\pi'$ is isomorphic to a quotient of $\res_{N_{\rM'}}^{\rM'}\ind_{N_{\rM'}}^{\rM'}\tau_{\rM'}$ and we denote it by $(\res_{N_{\rM'}}^{\rM'}\ind_{N_{\rM'}}^{\rM'}\tau_{\rM'})/ W_0$. The image of the composed morphism
$$\tau_{\rM'}\hookrightarrow \res_{N_{\rM'}}^{\rM'}\pi'\cong(\res_{N_{\rM'}}^{\rM'}\ind_{N_{\rM'}}^{\rM'}\tau_{\rM'})/ W_0$$
is not contained in $(W_{N_{\rM'}}+W_0)/ W_0$ as explained above. Hence $\tau_{\rM'}$ is a quotient of $\res_{N_{\rM'}}^{\rM'}\pi'$.
\end{proof}

\begin{lem}
\label{Llemma 0.3}
Let $\rA$ be a locally pro-finite group, and $K_1,K_2$ two open subgroups of $\rA$, such that $K_1$ is the unique maximal open compact subgroup of $K_2$. Let $\pi$ be an irreducible $k$-representation of $K_2$, and $\tau$ an irreducible $k$-representation of $K_1$, such that $\pi\vert_{K_1}$ is a multiple of $\tau$. If $x\in\mathrm{I}_{\rA}(\pi)$ (resp. $x\in\mathrm{I}_{\rA}^w(\pi)$), then there exists an element $y\in K_2$ such that $yx\in\mathrm{I}_{\rA}(\tau)$ (resp. $yx\in\mathrm{I}_{\rA}^w(\tau)$).
\end{lem}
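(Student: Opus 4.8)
The plan is to transfer the intertwining down to $K_1$ via Mackey's decomposition formula, using crucially that, since $K_1$ is \emph{the} maximal open compact subgroup of $K_2$, every open compact subgroup of a conjugate $g(K_2)$ must lie inside $g(K_1)$. Write $H=K_2\cap x(K_2)$ and $\sigma=\res_H^{x(K_2)}x(\pi)$, so that $i_{K_2,K_2}x(\pi)=\ind_H^{K_2}\sigma$.

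I would treat the weak intertwining case first. That $x$ weakly intertwines $\pi$ means $\pi$ is an irreducible subquotient of $\ind_H^{K_2}\sigma$. Restricting to $K_1$ (an exact operation) and using the hypothesis that $\res_{K_1}^{K_2}\pi$ is a multiple of $\tau$, hence contains $\tau$ as a subrepresentation, we see that $\tau$ is an irreducible subquotient of $\res_{K_1}^{K_2}\ind_H^{K_2}\sigma$. Mackey's decomposition formula (valid since all the groups in sight are open in $\rG$) rewrites the latter as $\bigoplus_{y}\ind_{K_1\cap y(H)}^{K_1}\res_{K_1\cap y(H)}^{y(H)}y(\sigma)$, with $y$ ranging over representatives in $K_2$ of $K_1\backslash K_2/H$. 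As $\tau$ is irreducible, hence finitely generated, it is an irreducible subquotient of one of these summands, say for $y=y_0\in K_2$.

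It remains to identify that summand. Since $y_0\in K_2$, we have $y_0(H)=K_2\cap y_0x(K_2)$, so $K_1\cap y_0(H)=K_1\cap y_0x(K_2)$; this is an open compact subgroup of $y_0x(K_2)$, and $y_0x(K_1)$ — being the conjugate by $y_0x$ of the maximal open compact subgroup $K_1$ of $K_2$ — is the maximal open compact subgroup of $y_0x(K_2)$, whence $K_1\cap y_0x(K_2)=K_1\cap y_0x(K_1)$. On this group the conjugate representation $y_0(\sigma)$ coincides with the restriction of $y_0x(\pi)$, whose restriction to $y_0x(K_1)$ is a multiple of $y_0x(\tau)$ by hypothesis. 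Therefore the $y_0$-summand is a multiple of $\ind_{K_1\cap y_0x(K_1)}^{K_1}\res_{K_1\cap y_0x(K_1)}^{y_0x(K_1)}y_0x(\tau)=i_{K_1,K_1}y_0x(\tau)$, which is finite-dimensional; so $\tau$ is an irreducible subquotient of $i_{K_1,K_1}y_0x(\tau)$, i.e. $y_0x$ weakly intertwines $\tau$, and we may take $y=y_0$.

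For the ordinary intertwining case one argues directly. From $\Hom_{kK_2}(\pi,\ind_H^{K_2}\sigma)\neq 0$, evaluation at the identity gives a nonzero element of $\Hom_{k(K_2\cap x(K_2))}(\pi,x(\pi))$; restricting it to $K_1\cap x(K_1)$ and using that $\pi\vert_{K_1}$, resp. $x(\pi)\vert_{x(K_1)}$, is a multiple of $\tau$, resp. $x(\tau)$, yields $\Hom_{k(K_1\cap x(K_1))}(\tau,x(\tau))\neq 0$. As $K_1$ is compact, $[K_1:K_1\cap x(K_1)]<\infty$, so compact and smooth induction from $K_1\cap x(K_1)$ to $K_1$ agree, and Frobenius reciprocity gives $\Hom_{kK_1}(\tau,i_{K_1,K_1}x(\tau))\neq 0$; hence $x$ itself intertwines $\tau$ and $y=1$ suffices. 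I expect the only genuine obstacle to be the Mackey bookkeeping in the weak case — passing, via finite generation of $\tau$, to a single summand and identifying it — the key new input being the inclusion $K_1\cap y_0x(K_2)\subseteq y_0x(K_1)$, which is precisely where the hypothesis that $K_1$ is the unique maximal open compact subgroup of $K_2$ is used.
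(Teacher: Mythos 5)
Your argument follows essentially the same path as the paper's: apply Mackey's decomposition formula to $\res_{K_1}^{K_2}\bigl(i_{K_2,K_2}x(\pi)\bigr)$, use the hypothesis that each conjugate $yx(K_1)$ is the unique maximal open compact subgroup of $yx(K_2)$ to obtain $K_1\cap yx(K_2)=K_1\cap yx(K_1)$, replace $\pi\vert_{K_1}$ by a multiple of $\tau$, and pass to a single summand. The paper's proof treats only the weak-intertwining case, first reorganising the Mackey sum into pieces that are multiples of the finite-dimensional $i_{K_1,K_1}yx(\tau)$ and then pigeonholing as in Lemma \ref{lem 9}, whereas you pigeonhole first and then identify the chosen summand, and you additionally dispose of the plain intertwining case directly via evaluation at $1$ and Frobenius reciprocity to get $y=1$ there --- a small streamlining rather than a different method.
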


\begin{proof}
Since $\pi$ is isomorphic to a subquotient of $\ind_{K_2\cap x(K_2)}^{K_2}\res_{K_2\cap x(K_2)}^{x(K_2)}x(\pi)$, the restriction $\res_{K_1}^{K_2}\pi$ is isomorphic to a subquotient of $R(\pi):=\res_{K_1}^{K_2}\ind_{K_2\cap x(K_2)}^{K_2}\res_{K_2\cap x(K_2)}^{x(K_2)}x(\pi)$. Applying Mackey's theory, $R(\pi)$ is isomorphic to
$$\bigoplus_{a\in K_2\cap x(K_2)\backslash K_2\slash K_1}\ind_{K_1\cap ax(K_2)}^{K_1}\res_{K_1\cap ax(K_2)}^{ax(K_2)}ax(\pi).$$
Since $K_1\cap ax(K_2)$ is open compact, and the maximal open compact subgroup of $ax(K_2)$ is unique, which implies that  $K_1\cap ax(K_2)=K_1\cap ax(K_1)$. Meanwhile, since $\res_{K_1}^{K_2}\pi$ is a multiple of $\tau$, and the functors $\ind,\res$ can change order with infinite direct sum, $R(\pi)$ is isomorphic to a multiple of
$$\bigoplus_{a\in K_2\cap x(K_2)\backslash K_2\slash K_1}\ind_{K_1\cap ax(K_1)}^{K_1}\res_{K_1\cap ax(K_1)}^{ax(K_1)}ax(\tau).$$
As in the proof of Lemma \ref{lem 9}, this implies that there exists at least one $y\in K_2$ such that $\tau$ is an subquotient of $\ind_{K_1\cap yx(K_1)}^{K_1}\res_{K_1\cap yx(K_1)}^{yx(K_1)}yx(\tau)$.
\end{proof}

\begin{thm}
\label{Ltheorem 5}
The induction $\ind_{N_{\rM'}(\tilde{\lambda}_{\rM}')}^{\rM'}\tau_{\rM'}$ is cuspidal and irreducible. Conversely, an irreducible cuspidal $k$-representation $\pi'$ of $\rM'$ is equivalent to $\ind_{N_{\rM'}(\tilde{\lambda}_{\rM}')}^{\rM'}\tau_{\rM'}$, for a pair $(N_{\rM'}(\tilde{\lambda}_{\rM}'),\tau_{\rM'})$ as in Proposition \ref{Lproposition 3} defined from a maximal simple $k$-type $(J_{\rM},\lambda_{\rM})$ of $\rM$.
\end{thm}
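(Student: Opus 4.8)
The plan is to follow the same two-step strategy that worked for $\rG'$, using the criterion of Vign\'eras (Lemma \ref{lem 19}) applied to the locally pro-finite group $\rM'$, the open subgroup $N_{\rM'}=N_{\rM'}(\tilde\lambda_{\rM}')$ and the irreducible representation $\tau_{\rM'}$. The second condition of irreducibility has already been secured in Proposition \ref{Lproposition 3}, so the first part reduces to showing $\mathrm{I}_{\rM'}(\tau_{\rM'})=N_{\rM'}$. Since $\tilde J_{\rM}'$ is the unique maximal open compact subgroup of $N_{\rM'}$ and $\tau_{\rM'}\vert_{\tilde J_{\rM}'}$ is a multiple of $\tilde\lambda_{\rM}'$ (this is part of the setup via Clifford theory), Lemma \ref{Llemma 0.3} lets me transfer any intertwining element of $\tau_{\rM'}$ to one of $\tilde\lambda_{\rM}'$: thus $\mathrm{I}_{\rM'}(\tau_{\rM'})\subset N_{\rM'}\cdot \mathrm{I}_{\rM'}(\tilde\lambda_{\rM}')$. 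By Lemma \ref{Llemma 3} we have $\mathrm{I}_{\rM'}(\tilde\lambda_{\rM}')=N_{\rM'}(\tilde\lambda_{\rM}')=N_{\rM'}$, which gives the inclusion $\mathrm{I}_{\rM'}(\tau_{\rM'})\subset N_{\rM'}$; the reverse inclusion is automatic since $N_{\rM'}$ normalizes $\tau_{\rM'}$. Then $\mathrm{End}_{k\rM'}(\ind_{N_{\rM'}}^{\rM'}\tau_{\rM'})=k$ by \S I.8.3 of \cite{V1}, and Lemma \ref{lem 19} yields irreducibility.

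For cuspidality of $\ind_{N_{\rM'}}^{\rM'}\tau_{\rM'}$, I would argue by transfer from $\rM$. Let $\Lambda_{\rM}$ be an extension of $\lambda_{\rM}$ to $\rU(\fA_{\rM})\cdot E_1^{\times}\times\cdots\times E_r^{\times}$ (equivalently, to $N_{\rM}(\lambda_{\rM})$), so that $\pi=\ind_{N_{\rM}(\lambda_{\rM})}^{\rM}\Lambda_{\rM}$ is an irreducible cuspidal $k$-representation of $\rM$ — this is the product over $i$ of the corresponding statement for $\mathrm{GL}_{n_i}(F)$. By Proposition \ref{prop 6} the restriction $\res_{\rM'}^{\rM}\pi$ is semisimple of finite length with $\rM$-conjugate irreducible factors, all cuspidal by (the argument of) Corollary \ref{cor 20}. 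A Mackey-decomposition computation, exactly parallel to the $\rG'$ case in the proof of Corollary \ref{cor 21}, shows that $\ind_{N_{\rM'}}^{\rM'}\tau_{\rM'}$ is isomorphic to one of these direct factors (or an $\rM$-conjugate of it), hence is cuspidal. Here one uses that $N_{\rM}(\lambda_{\rM})\cap\rM'$ and the structure of $N_{\rM'}(\tilde\lambda_{\rM}')$ as a subgroup of $E_1^{\times}\tilde J_1\times\cdots\times E_r^{\times}\tilde J_r\cap\rM'$ (Remark \ref{Lrem aa1}) are compatible with the induction in stages.

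For the converse, given an irreducible cuspidal $k$-representation $\pi'$ of $\rM'$, Corollary \ref{cor 20} (applied to $\rM'\subset\rM$) furnishes an irreducible cuspidal $k$-representation $\pi$ of $\rM$ with $\pi'\hookrightarrow\res_{\rM'}^{\rM}\pi$; by the classification of cuspidal $k$-representations of $\rM=\prod_i\mathrm{GL}_{n_i}(F)$ (via the maximal simple $k$-types of each factor, together with Remark \ref{rem 33}), $\pi\cong\ind_{N_{\rM}(\lambda_{\rM})}^{\rM}\Lambda_{\rM}$ for some maximal simple $k$-type $(J_{\rM},\lambda_{\rM})$ of $\rM$. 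Running the Mackey computation above in reverse, $\res_{\rM'}^{\rM}\pi$ contains $\ind_{N_{\rM'}(\tilde\lambda_{\rM}')}^{\rM'}\tau_{\rM'}$ for a suitable component $\tilde\lambda_{\rM}'$ of $\res_{\tilde J_{\rM}'}^{\tilde J_{\rM}}\tilde\lambda_{\rM}$ and a suitable extension $\tau_{\rM'}$; since both are irreducible factors of $\res_{\rM'}^{\rM}\pi$ and the factors are $\rM$-conjugate (Proposition \ref{prop 6}), one gets $\pi'\cong g(\ind_{N_{\rM'}(\tilde\lambda_{\rM}')}^{\rM'}\tau_{\rM'})=\ind_{N_{\rM'}(\tilde\lambda_{\rM}'')}^{\rM'}g(\tau_{\rM'})$ for some $g\in\rM$, and replacing $(J_{\rM},\lambda_{\rM})$ by $g(J_{\rM},\lambda_{\rM})$ finishes the argument.

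The main obstacle I anticipate is the cuspidality/converse step, specifically making the Mackey bookkeeping precise enough to identify $\ind_{N_{\rM'}}^{\rM'}\tau_{\rM'}$ with an honest $\rM$-conjugate of a component of $\res_{\rM'}^{\rM}\pi$ rather than merely a subquotient. Because $N_{\rM'}(\tilde\lambda_{\rM}')$ is genuinely compact-mod-centre and strictly larger than $\tilde J_{\rM}'$ in general — this is the phenomenon that distinguishes proper $\rM'$ from $\rG'$ — one must carefully track how the central torus directions in $E_1^{\times}\times\cdots\times E_r^{\times}$ interact with $\rM'$; the key input making this work is that $N_{\rM}(\lambda_{\rM})/\tilde J_{\rM}$ is abelian (as in the proof of Lemma \ref{Llemma 3}) and that $\det$ restricted to the relevant tori detects membership in $\rM'$, exactly as in the proof of Proposition \ref{prop 18}.
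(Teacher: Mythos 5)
Your irreducibility argument coincides with the paper's: Proposition \ref{Lproposition 3} supplies the second criterion, and Lemma \ref{Llemma 0.3} reduces the computation of $\mathrm{I}_{\rM'}(\tau_{\rM'})$ to $\mathrm{I}_{\rM'}(\tilde\lambda_{\rM}')$, which Lemma \ref{Llemma 3} identifies with $N_{\rM'}(\tilde\lambda_{\rM}')$.

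For cuspidality and the converse, your proposal takes a genuinely different, more Mackey-heavy route, and as you yourself flag, it does not actually close the main gap: you need $\ind_{N_{\rM'}}^{\rM'}\tau_{\rM'}$ to appear as an honest direct factor of $\res_{\rM'}^{\rM}\pi$ (not merely a subquotient), and the phrase ``running the Mackey computation in reverse'' is precisely where the work is, because $N_{\rM}(\lambda_{\rM})\cap\rM'$ is not simply related to $N_{\rM'}(\tilde\lambda_{\rM}')$ once $\rM$ is a proper Levi. The paper sidesteps this entirely on the cuspidality side: since $\pi':=\ind_{N_{\rM'}}^{\rM'}\tau_{\rM'}$ contains $(J_{\rM}',\lambda_{\rM}')$, Lemma \ref{lem 9} together with Proposition \ref{prop 0.2} shows that the irreducible $\pi$ of $\rM$ furnished by Proposition \ref{prop 6} with $\pi'\hookrightarrow\res_{\rM'}^{\rM}\pi$ already contains a maximal simple $k$-type $(J_{\rM},\lambda_{\rM}\otimes\chi\circ\det)$ (Corollary \ref{cor 3}), hence is cuspidal, hence so is $\pi'$; no Mackey matching is needed. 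For the converse, rather than matching global inductions, the paper factors $\pi\cong\ind_{N_{\rM}(\tilde\lambda_{\rM})}^{\rM}\tau_{\rM}$ with $\tau_{\rM}=\ind_{N_{\rM}(\lambda_{\rM})}^{N_{\rM}(\tilde\lambda_{\rM})}\Lambda_{\rM}$, and then walks $\tau_{\rM}$ down the tower $N_{\rM}(\tilde\lambda_{\rM})\supset N_{\rM}(\tilde\lambda_{\rM})\cap\rM'\supset N_{\rM'}(\tilde\lambda_{\rM}')$, each step being a normal subgroup of finite index (the finiteness uses $\mathrm{Z}_{\rM}\tilde J_{\rM}'\subset N_{\rM'}$ and $N_{\rM}(\tilde\lambda_{\rM})\cap\rM'\subset (E_1^{\times}\times\cdots\times E_r^{\times}\cap\rM')\tilde J_{\rM}'$), so that each restriction stays semisimple of finite length; tracing a direct factor containing $\tilde\lambda_{\rM}'$ through the tower and applying Frobenius reciprocity gives $\pi'\cong\ind_{N_{\rM'}}^{\rM'}\tau_{\rM'}$. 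This staged descent through normal subgroups is exactly the missing bookkeeping in your sketch, and it is also why the final object is compact-mod-centre and strictly bigger than $\tilde J_{\rM}'$ in the proper-Levi case.
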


\begin{proof}
For the first part, we first verify the two conditions in Lemma \ref{lem 19}. The second condition has been checked in \ref{Lproposition 3}. By \ref{Llemma 0.3} and \ref{Llemma 3}, the first condition is satisfied. Denote the irreducible induction $\ind_{N_{\rM'}(\tilde{\lambda}_{\rM}')}^{\rM'}\tau_{\rM'}$ by $\pi'$, and let $\pi$ be irreducible of $\rM$ as in \ref{prop 6}. We deduce from \ref{lem 9} that $\lambda_{\rM}\otimes\chi\circ \det)$ is a subquotient of $\pi\vert_{J_{\rM}}$ for a $\chi$. Hence $\pi$ is cuspidal (by \ref{prop 1} and \ref{cor 3}) which implies that $\pi'$ is cuspidal. 

For the second part, let $\pi'$ be irreducible and cuspidal of $\rM'$, and $\pi$ be irreducible cuspidal of $\rM$ as in \ref{prop 6}. Then there exists a maximal simple $k$-type $(J_{\rM},\lambda_{\rM})$, and an extension $\Lambda_{\rM}$ of $\lambda_{\rM}$ to $N_{\rM}(\lambda_{\rM})$ such that $\pi\cong\ind_{N_{\rM}(\lambda_{\rM})}^{\rM}\Lambda_{\rM}$. Let $\tilde{\lambda}_{\rM}=\ind_{J_{\rM}}^{\tilde{J}_{\rM}}\lambda_{\rM}$, and $N_{\rM}(\tilde{\lambda}_{\rM})$ be the group of normalisers of $\tilde{\lambda}_{\rM}$ in $\rM$. By the transitivity of induction:
$$\pi\cong\ind_{N_{\rM}(\tilde{\lambda}_{\rM})}^{\rM}\circ\ind_{N_{\rM}(\lambda_{\rM})}^{N_{\rM}(\tilde{\lambda}_{\rM})}\Lambda_{\rM}.$$
Denote $\ind_{N_{\rM}(\lambda_{\rM})}^{N_{\rM}(\tilde{\lambda}_{\rM})}\Lambda_{\rM}$ by $\tilde{\Lambda}_{\rM}$, which is irreducible containing $\tilde{\lambda}_{\rM}$.

Till the end of this proof, we denote by $\tilde{\lambda}_{\rM}'$ a direct component of $\tilde{\lambda}_{\rM}\vert_{\tilde{J}_{\rM}'}$, and denote $N_{\rM}(\tilde{\lambda}_{\rM})$ by $N$, $N\cap \rM'$ by $N'$, and $N_{\rM'}(\tilde{\lambda}_{\rM}')$ by $N_{\rM'}$. Let $K$ be the kernel of $\tilde{\lambda}_{\rM}$, and $Z$ the centre of $\rM$, $Z'=Z\cap\rM'$. Since the quotient $(ZN')\slash N$ is compact and $K$ is open, we deduce that $ZN'K$ is a normal subgroup with finite index of $N$. Hence $\res_{ZN'K}^{N}\tilde{\Lambda}_{\rM}$ is semisimple of finite length as in the proof of \ref{prop 6}. We deduce that $\res_{N'}^{N}\tilde{\Lambda}_{\rM}$ is semisimple of finite length as well. By a conjugation of $\rM$, we can assume that $\pi'$ contains a direct factor of $\res_{N'}^{N}\tilde{\Lambda}_{\rM}$, which we denote by $\tilde{\Lambda}_{\rM}'$. By Mackey's theory $\res_{\tilde{J}_{\rM}'}^{N}\ind_{\tilde{J}_{\rM}}^{N}\tilde{\lambda}_{\rM}$ is a multiple of $\res_{\tilde{J}_{\rM}'}^{\tilde{J}_{\rM}}\tilde{\lambda}_{\rM}$, and we can assume that $\tilde{\Lambda}_{\rM}'$ contains a $\tilde{\lambda}_{\rM}'$. On the other hand, $N_{\rM'}$ is a normal subgroup with finite index in $N'$. In fact, $N_{\rM'}$ contains $Z'\tilde{J}_{\rM}'$. We have showed in \ref{Lrem aa1} that $N_{\rM'}$ is a subgroup of $E_1^{\times}\tilde{J}_1\times\cdots\times E_r^{\times} \tilde{J}_r\cap\rM'$ which is compact modulo the centre. Hence $\res_{N_{\rM'}}^{N'}\tilde{\Lambda}_{\rM}'$ is semisimple of finite length, and there must be a direct component $\tau_{\rM'}$ that contains $\tilde{\lambda}_{\rM}'$. Hence we have
$$\pi'\cong\ind_{N_{\rM'}}^{\rM'}\tau_{\rM'}.$$
\end{proof}

\begin{defn}
\label{Ldefinition 200}
\begin{itemize}
\item Let $(J_{\rM},\lambda_{\rM})$ be a maximal simple $k$-type of $\rM$, and $\tilde{\lambda}_{\rM}'$ be an irreducible component of $\res_{\tilde{J}_{\rM}'}^{\tilde{J}_{\rM}}\tilde{\lambda}_{\rM}$, where $\tilde{J}_{\rM}$ and $\tilde{\lambda}_{\rM}$ are defined as in \ref{Ldefinition 21}. We define the couples in forms of $(\tilde{J}_{\rM}',\tilde{\lambda}_{\rM}')$ are \textbf{the maximal simple $k$-types} of $\rM'$.
\item Let $N_{\rM'}(\tilde{\lambda}_{\rM}')$ be the normaliser group of $\tilde{\lambda}_{\rM}'$ in $\rM'$, and $\tau_{\rM'}$ an irreducible $k$-representation of $N_{\rM'}(\tilde{\lambda}_{\rM}')$ containing $\tilde{\lambda}_{\rM}'$. We say a pair of the form $(N_{\rM'}(\tilde{\lambda}_{\rM}'),\tau_{\rM'})$ is an \textbf{extended maximal simple $k$-type} of $\rM'$.
\end{itemize}
\end{defn}

\begin{rem}
We will prove in Theorem \ref{thm a33} that $\tau_{\rM'}$ is an extension of $\tilde{\lambda}_{\rM}'$, which is a parallel result for complex case.
\end{rem}

\section{Intertwining and conjugacy}
\label{chapter 01}
In this section, we prove the unicity property of weakly intertwining implying conjugacy (Theorem \ref{thm a30}), and the unicity property of maximal simple $k$-types (resp. simple $k$-characters) contained in an irreducible cuspidal $k$-representation of $\rM'$ (Theorem \ref{thm a40}). The first one is a well-known result for representations of characteristic zero of $\rM'$, and was proved in \cite{BuKuII} for $\rM'=\rG'$. The philosophy can be generalised to our case, with some technical difficulties that arise from the $\ell$-modular Clifford theory for $\pi\vert_{\rG'}$ in Section \ref{section 03.1}. The main results are proved in Section \ref{section intconj}.

\subsection{A formula of the length of $\pi\vert_{\rG'}$}
\label{section 03.1}

Recall that $\rG=\mathrm{GL}_n(F)$ and $\rG'=\mathrm{SL}_n(F)$. Let $(\pi, V)$ be an irreducible $k$-representation of $\rG$, we have seen in Proposition \ref{prop 6} that the restriction of $\pi$ to $\rG'$ is a finite direct sum of irreducible representations, that are conjugate under $\rG$, and let $(\pi',V')$ be one of them. We begin with a technical section to deal with the main difficulty while applying the method in \cite{BuKuII} to our case, that is the inclusion in \cite[Proposition 1.5]{BuKuII} which is not true in the $\ell$-modular setting. It leads to the failure of \cite[Corollary 1.6]{BuKuII}, which is the base stone in the proof of \cite[Theorem 5.3]{BuKuII}. Fortunately, we obtain a generalisation of this inclusion, which gives an interpretation of the $\ell$-prime part of the length of $\pi\vert_{\rG'}$, as the $\ell$-modular version of \cite[Corollary 1.6]{BuKuII}, and which is the key to prove the property of weakly intertwining implying conjugacy.

Write
\begin{equation}
\label{equa a12}
\mathcal{S}(\pi)=\{ x\in\rG: x(\pi')\cong\pi'  \}.
\end{equation}
The index of $\mathcal{S}(\pi)$ in $\rG$ is finite. In fact, the group $\mathcal{S}(\pi)$ contains $\rG'$ and the centre $\mathrm{Z}(\rG)$ of $\rG$. On the other hand, denoting by $V'$ the representation space of $\pi'$, for any non-zero vector $v'\in V'$, since $\rG'$ is normal in $\rG$ and $v'$ generalises $V'$ as $\rG'$-representation, hence a stabiliser of $v'$ in $\rG$ also normalises $V'$, then it is contained in $\mathcal{S}(\pi)$, which implies that $\mathcal{S}(\pi)$ is open with finite index in $\rG$.

Write
$$\mathcal{G}(\pi)=\{\chi:\pi\otimes\chi\circ\det\cong\pi\},$$
where $\chi$ ranges over the set of $k$-quasicharacters of $F^{\times}$.

Write
$$\mathcal{T}(\pi)=\bigcap_{\chi\in\mathcal{G}(\pi)}\mathrm{Ker}(\chi\circ\det).$$
The group $\mathcal{T}(\pi)$ is closed in $\rG$.

\begin{prop}[Proposition $1.4$ in \cite{BuKuII}]
The group $\mathcal{G}(\pi)$ is finite. The subgroup $\mathcal{T}(\pi)$ of $\rG$ contains $Z\rG'$, and is open of finite index, where $Z$ is the centre of $\rG$.
\end{prop}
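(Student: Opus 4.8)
The plan is to deduce the two containment/openness assertions from the single substantive fact that $\mathcal{G}(\pi)$ is finite, and to prove that finiteness by transporting $\mathcal{G}(\pi)$ into the automorphism group of a finite‑dimensional semisimple $k$‑algebra. First I would record what every $\chi\in\mathcal{G}(\pi)$ satisfies. Since $\det$ is trivial on $\rG'$, so is $\chi\circ\det$. Comparing the central characters of $\pi$ and $\pi\otimes\chi\circ\det$ (both exist by Schur's lemma) on the scalar matrices $\mathrm{Z}(\rG)\cong F^{\times}$, where $\det$ is the $n$‑th power map, gives $\chi(z)^{n}=1$ for all $z\in F^{\times}$; as $k^{\times}$ has no $\ell$‑torsion this forces $\chi^{n_{0}}=1$, where $n_{0}$ is the prime‑to‑$\ell$ part of $n$. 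Hence $\chi\circ\det$ is trivial on $\mathrm{Z}(\rG)$, so on $\mathrm{Z}(\rG)\rG'$, and the image of each such $\chi$ lies in the finite group $\mu_{n_{0}}(k)$; intersecting over $\chi$ yields $\mathcal{T}(\pi)\supseteq\mathrm{Z}(\rG)\rG'$.

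For the finiteness of $\mathcal{G}(\pi)$ I would set $A=\mathrm{End}_{k\rG'}(\res_{\rG'}^{\rG}\pi)$, which is a finite‑dimensional semisimple $k$‑algebra (a product $\prod_{i}\Mat_{m_{i}}(k)$) because $\res_{\rG'}^{\rG}\pi$ is semisimple of finite length by Proposition \ref{prop 6}. For each $\chi\in\mathcal{G}(\pi)$ a $\rG$‑isomorphism $\phi_{\chi}\colon\pi\to\pi\otimes\chi\circ\det$ is unique up to $k^{\times}$ (Schur), its underlying $k$‑linear map lies in $A^{\times}$ since $\chi\circ\det|_{\rG'}$ is trivial, and because $\phi_{\chi}^{\,n_{0}}$ is a $\rG$‑endomorphism of $\pi$, hence a scalar, I may rescale so that $\phi_{\chi}^{\,n_{0}}=1$; as $\ell\nmid n_{0}$ the element $\phi_{\chi}$ is then semisimple of order dividing $n_{0}$. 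The rule $\chi\mapsto[\phi_{\chi}]\in A^{\times}/k^{\times}$, composed with the conjugation action $A^{\times}/k^{\times}\to\Aut(A)$, gives a homomorphism $\theta\colon\mathcal{G}(\pi)\to\Aut(A)$. I would show $\ker\theta$ is finite: if $\theta(\chi)=1$ then $\phi_{\chi}$ lies in the centre of $A$, so acts by a scalar on the $\pi'$‑isotypic subspace of $\res_{\rG'}^{\rG}\pi$, which by definition of $\mathcal{S}(\pi)$ is $\pi(g)$‑stable for $g\in\mathcal{S}(\pi)$; the identity $\phi_{\chi}\circ\pi(g)=\chi(\det g)\,\pi(g)\circ\phi_{\chi}$ then forces $\chi$ to be trivial on $\det(\mathcal{S}(\pi))$, an open subgroup of finite index in $F^{\times}$ (as $\mathcal{S}(\pi)\supseteq\rG'$ is open of finite index in $\rG$), and there are only finitely many such $\chi$. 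Finally $\theta(\mathcal{G}(\pi))$ is an abelian subgroup of $\Aut(A)$ all of whose elements have order dividing $n_{0}$; replacing it by its finite‑index subgroup $\Delta$ inside the identity component $\prod_{i}\PGL_{m_{i}}(k)$, the group $\Delta$ is a commutative group of semisimple elements of a connected reductive group, hence lies in a single maximal torus $T$, and $\Delta\subseteq T[n_{0}]\cong\mu_{n_{0}}(k)^{\dim T}$ is finite. Therefore $\mathcal{G}(\pi)$ is finite.

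Granting $\mathcal{G}(\pi)=\{\chi_{1},\dots,\chi_{s}\}$ finite, the remaining assertion is formal: $\mathrm{Ker}(\chi_{i}\circ\det)=\det^{-1}(\mathrm{Ker}\,\chi_{i})$ is open in $\rG$ ($\chi_{i}$ smooth, $\det$ continuous and open) and of finite index because $[\rG:\mathrm{Ker}(\chi_{i}\circ\det)]=|\mathrm{Im}\,\chi_{i}|\le|\mu_{n_{0}}(k)|<\infty$ by the first step, so $\mathcal{T}(\pi)=\bigcap_{i}\mathrm{Ker}(\chi_{i}\circ\det)$ is open of finite index. The step I expect to be the main obstacle is the finiteness of $\mathcal{G}(\pi)$: the complex‑coefficient argument of \cite{BuKuII} that bounds conductors does not transpose directly, since over a local field of characteristic $p$ there can be infinitely many characters of $F^{\times}$ of any fixed exponent divisible by $p$, and it is precisely the strengthening from $\chi^{n}=1$ to $\chi^{n_{0}}=1$ with $\ell\nmid n_{0}$ — together with the passage through $\Aut(A)$ — that keeps the operators $\phi_{\chi}$ semisimple and the concluding maximal‑torus argument available.
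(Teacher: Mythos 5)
Your reduction of the containment and openness assertions to the finiteness of $\mathcal{G}(\pi)$ is fine, and you are right both that $\chi\circ\det$ is trivial on $\mathrm{Z}(\rG)\rG'$ via the central-character computation, and that the modular setting forces the refinement from $\chi^n=1$ to $\chi^{n_0}=1$ before $\phi_\chi$ can be taken semisimple. The problem is the last step of the finiteness argument. You assert that $\Delta$, a commutative subgroup of semisimple elements of order dividing $n_0$ inside $\prod_i\PGL_{m_i}(k)$, must lie in a single maximal torus. That statement is false for adjoint groups. Already in $\PGL_2(k)$ (with $\ell\neq 2$) the images of $\mathrm{diag}(1,-1)$ and $\matje{0}{1}{1}{0}$ commute, are semisimple of order $2$, and generate a Klein four-group; but a maximal torus of $\PGL_2$ has rank one and contains a single involution, so this abelian group of semisimple elements is not toral. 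More generally Heisenberg-type subgroups of $\GL_m(k)$ project to non-toral abelian groups of commuting semisimple elements in $\PGL_m(k)$, and since your $\Delta$ arises as the image of a \emph{projective} representation of $\mathcal{G}(\pi)$ on the isotypic pieces of $\pi|_{\rG'}$, there is no a priori reason for it to avoid this phenomenon. The fact ``commuting semisimple elements lie in a common maximal torus'' is available for $\GL_m$, or more generally when the derived group is simply connected, but not for $\PGL_m$.

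The conclusion you want is nonetheless true, and the repair is close by. Instead of a maximal torus, take the Zariski closure $\overline{\Delta}$: it is a commutative affine algebraic $k$-group, and it is killed by $n_0$ because the relation $x^{n_0}=1$ is Zariski-closed. Its identity component $\overline{\Delta}^{\circ}$ is a product $T\times U$ of a torus and a unipotent group; multiplication by $n_0$ is an isogeny on $T$ and, since $\ell\nmid n_0$, an automorphism of $U$, so both must be trivial, whence $\overline{\Delta}$, $\Delta$ and $\mathcal{G}(\pi)$ are finite. With this observation the passage through $\Aut(A)$ and the computation of $\ker\theta$ via $\det(\mathcal{S}(\pi))$ become unnecessary: the injection $\chi\mapsto[\phi_\chi]$ already realises $\mathcal{G}(\pi)$ as a commutative subgroup of $A^\times/k^\times$ of exponent dividing $n_0$, and the Zariski-closure argument applies there directly.

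Note also that this is a genuinely different route from the one the paper intends, which is simply a citation of Bushnell--Kutzko's original argument. That argument avoids endomorphism algebras altogether: pick an open compact $K$ with $\pi^K\neq 0$; the intertwiner $\phi_\chi$ carries $\pi^K$ into the $(K,\chi^{-1}\circ\det)$-isotypic part of $\pi$, which lies inside $\pi^{K\cap\rG'}$, a finite-dimensional space because $\pi|_{\rG'}$ has finite length and is therefore admissible; hence only finitely many characters $\chi\circ\det|_K$ can occur, and combined with the constraint on $\chi(\varpi_F)$ coming from $\chi^n=1$ this bounds $\chi$ on all of $F^\times$. That argument handles the positive-characteristic issue with $F^\times/F^{\times n}$ that you correctly flagged, but it does so with admissibility rather than algebraic groups, and it transports to $k$-coefficients without change.
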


\begin{proof}
The original proof can be applied directly.
\end{proof}

Let $A$ be a finite abelian group. Let $A_{\ell}$ be the subgroup of $A$ consisting of elements whose order are powers of $\ell$, and we say that $A_{\ell}$ is the \textbf{$\ell$-power part} of $A$. Meanwhile, let $A_{\ell'}$ be the subgroup of $A$ consisting of elements whose orders are prime to $\ell$, and we say it is the \textbf{$\ell$-prime part} of $A$. 

\begin{lem}
\label{lem a4}
Let $A$ be a finite abelian group, we consider the $\ell$-dual group $A^{\wedge}$ of $A$, which is the group of $k$-characters of $A$. Then $A^{\wedge}$  is isomorphic to ($A_{l'})^{\wedge}$.
\end{lem}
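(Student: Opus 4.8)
The plan is to use the structure theorem for finite abelian groups to reduce to the case of cyclic groups, and to show that characters into $k^\times$ can only see the prime-to-$\ell$ part. First I would write $A = A_\ell \times A_{\ell'}$, which is the canonical decomposition of a finite abelian group into its $\ell$-primary and prime-to-$\ell$ parts. Since the dual (into any multiplicative group) turns direct products into direct products, $A^\wedge \cong (A_\ell)^\wedge \times (A_{\ell'})^\wedge$, so it suffices to prove that $(A_\ell)^\wedge$ is trivial when the dual is taken with values in $k^\times$.

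For that step I would argue as follows: any homomorphism $\phi\colon A_\ell \to k^\times$ has image a finite subgroup of $k^\times$ whose order divides $|A_\ell|$, hence is a power of $\ell$. But $k$ has characteristic $\ell$, so $k^\times$ contains no element of order $\ell$ (if $x^\ell = 1$ then $(x-1)^\ell = x^\ell - 1 = 0$ in characteristic $\ell$, so $x = 1$); therefore the image is trivial and $\phi$ is trivial. Thus $(A_\ell)^\wedge = 1$, and combining with the product decomposition gives $A^\wedge \cong (A_{\ell'})^\wedge$, which is exactly the claim. One should also remark that since $k$ is algebraically closed (indeed it suffices that $k$ contains the $m$-th roots of unity for $m = |A_{\ell'}|$, which holds as $m$ is prime to $\ell = \operatorname{char} k$), the group $(A_{\ell'})^\wedge$ is in fact (non-canonically) isomorphic to $A_{\ell'}$, so no further roots of unity are lost; but this extra remark is not needed for the statement.

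I do not expect any real obstacle here: the only point requiring a moment's care is the assertion that $k^\times$ has no $\ell$-torsion, which is the elementary characteristic-$\ell$ computation above, and the fact that the restriction map $A^\wedge \to (A_{\ell'})^\wedge$ is an isomorphism — injectivity because a character trivial on $A_{\ell'}$ factors through $A_\ell$ hence is trivial by the previous step, and surjectivity because $A_{\ell'}$ is a direct factor of $A$ so any character of it extends by zero on $A_\ell$. I would present the proof in this order: state $A = A_\ell \times A_{\ell'}$; observe $A^\wedge = (A_\ell)^\wedge \times (A_{\ell'})^\wedge$; show $(A_\ell)^\wedge = 1$ using $\operatorname{char} k = \ell$; conclude.
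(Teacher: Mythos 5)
Your proof is correct and is essentially the same argument the paper has in mind: the paper simply invokes $A/A_\ell \cong A_{\ell'}$ from the structure theorem and leaves the rest implicit, while you spell out the key point that $(A_\ell)^\wedge$ is trivial because $k^\times$ has no $\ell$-torsion in characteristic $\ell$. (One small wording slip: a character of $A_{\ell'}$ extends to $A$ by being \emph{trivial} on $A_\ell$, not ``by zero''; but this is cosmetic.)
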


\begin{proof}
Since $A\slash A_{\ell}\cong A_{\ell'}$, and $A_{\ell}$ is contained in the kernel of each $k$-character of $A$, we deduce the result.
\end{proof}

Composition with the determinant induces an isomorphism
$$\mathcal{G}(\pi)\cong(\rG\slash\mathcal{T}(\pi))^{\wedge}.$$
The lemma above implies that the $\ell$-power part $(\rG\slash\mathcal{T}(\pi))_{\ell}$ of the quotient group $\rG\slash\mathcal{T}(\pi)$ is trivial.

\begin{lem}
\label{lem a11}
Let $\rG$ be a locally pro-finite group and $\rH$ a normal subgroup of $\rG$. Let $(\pi,V)$ be an irreducible $k$-representation of $\rH$. Assume that $\rG\slash\rH$ is finite cyclic and $\rG$ normalises $(\pi,V)$, then $(\pi,V)$ can be extended to $\rG$.
\end{lem}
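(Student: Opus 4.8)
The plan is the standard Clifford-theoretic extension argument, carried out in the smooth modular setting. Fix $g\in\rG$ whose image generates the finite cyclic group $\rG\slash\rH$, and let $m$ be the order of that image, so $g^m\in\rH$ and $\rG=\bigsqcup_{i=0}^{m-1}g^i\rH$. Since $\rG$ normalises $(\pi,V)$, the $\rH$-representation $h\mapsto\pi(ghg^{-1})$ is isomorphic to $\pi$, so there is $T\in\mathrm{GL}(V)$ with $\pi(ghg^{-1})=T\,\pi(h)\,T^{-1}$ for all $h\in\rH$. Iterating gives $T^m\pi(h)T^{-m}=\pi(g^mhg^{-m})=\pi(g^m)\pi(h)\pi(g^m)^{-1}$, hence $\pi(g^m)^{-1}T^m$ commutes with $\pi(\rH)$, i.e.\ lies in $\mathrm{End}_{k\rH}(V,\pi)$. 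By Schur's lemma for smooth irreducible representations over the algebraically closed field $k$ (see \cite{V1}), this algebra is $k$, so $\pi(g^m)^{-1}T^m=c\cdot\mathrm{id}$ for some $c\in k^{\times}$.

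Next I use that $k$ is algebraically closed to pick $d\in k^{\times}$ with $d^m=c$, and I replace $T$ by $d^{-1}T$; this leaves the relation $\pi(ghg^{-1})=T\pi(h)T^{-1}$ unchanged and now yields $T^m=\pi(g^m)$ exactly. I then define $\tilde\pi\colon\rG\to\mathrm{GL}(V)$ by $\tilde\pi(g^ih)=T^i\pi(h)$ for $0\le i\le m-1$ and $h\in\rH$; by construction $\tilde\pi|_{\rH}=\pi$. It remains to verify that $\tilde\pi$ is a homomorphism: for $g^ih$ and $g^jh'$ one rewrites $\pi(h)T^j=T^j\pi(g^{-j}hg^j)$ using the conjugation relation (and normality of $\rH$, so $g^{-j}hg^j\in\rH$), giving $\tilde\pi(g^ih)\tilde\pi(g^jh')=T^{i+j}\pi\big(g^{-j}hg^jh'\big)$; when $i+j\ge m$ one writes $i+j=m+r$ with $0\le r<m$, uses commutativity of the powers of $g$ and $T^m=\pi(g^m)$ to absorb the excess, and checks this matches $\tilde\pi\big((g^ih)(g^jh')\big)$. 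This is a short but bookkeeping-heavy computation, essentially asserting that the $T^i$ patch with $\pi$ along the relations defining the extension.

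Finally, smoothness of $\tilde\pi$ follows from that of $\pi$: since $\rG\slash\rH$ is finite, $\rH$ has finite index, hence is open in the locally profinite group $\rG$ (in the applications $\rH$ is closed of finite index, so automatically open), and therefore any vector fixed by an open subgroup of $\rH$ is fixed by an open subgroup of $\rG$. I expect the only genuinely delicate point to be the middle step: producing $T$ with $T^m=\pi(g^m)$ on the nose rather than merely up to scalar — this is exactly where algebraic closedness of $k$ is used, and is the sole place the hypothesis on $k$ enters — followed by the careful but routine check that the resulting formula is multiplicative across the wrap-around $i+j\mapsto i+j-m$.
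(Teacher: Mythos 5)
Your proof is correct and is precisely the standard Clifford-theoretic construction for extending an irreducible representation along a finite cyclic quotient: choose an intertwiner $T$ for the generator $g$, normalise it by an $m$-th root in $k^\times$ so that $T^m=\pi(g^m)$, and glue. The paper's own argument is identical in spirit --- it simply refers the reader to the first paragraph of the proof of Proposition 5.2.4 in \cite{BuKu}, noting that the cyclic case is simpler --- so you have just filled in the details the paper leaves to the reference.
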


\begin{proof}
This is a well-known result. For those who might be interested in, the construction in the first paragraph of the proof of Proposition $5.2.4$ in \cite{BuKu} can be applied.
\end{proof}

Let $(\pi,V)$ be an irreducible $k$-representation of $\rG$, and $(\pi',V')$ an irreducible sub-representation of $(\pi\vert_{\rG'},V)$. Let $d$ be the multiplicity of $\pi'$ in $\pi\vert_{\rG'}$, which is independent of the choice of $\pi'$. Let $\rH(\pi',V')$ be the subgroup of $\rG$, consisting of elements $g$ such that $\pi(g)V'=V'$. In other words, $\rH(\pi',V')$ is the group of $\rG$-stabilizer of $(\pi',V')$. Hence $\rH(\pi',V')$ is an open subgroup of $\mathcal{S}(\pi)$. Moreover, the quotient $\mathcal{S}(\pi)\slash\rH(\pi',V')$ is finite and abelian. Since the restriction of $\pi$ to any normal subgroup with finite quotient in $\rG$ is semisimple of finite length, the method of \cite[\S 1.16]{BuKuI} can be applied, hence by choosing $V'$, we can assume that $\rH(\pi',V')$ is maximal, and write $\pi_1$ to be the $\rH(\pi',V')$-representation on $V'$. Then we have,
$$\pi\cong\ind_{\rH(\pi',V')}^{\rG}\pi_1,$$
which implies that $d=(\mathcal{S}(\pi):\rH(\pi',V'))$. The group $\rH(\pi',V')$ is independent of the choice of $\pi'$ and $V'$, and we denote it as $\rH(\pi)$.

\begin{rem}
\label{rem add01}
Let $\chi$ be a $k$-quasicharacter of $F^{\times}$ which is trivial on $\det(\rH(\pi))$, and $\pi_1$ as above, then $\pi_1\otimes\chi\circ\det\cong\pi_1$. It follows that $\pi\otimes\chi\circ\det\cong\pi$. 
\end{rem}

The purpose of Lemma \ref{lem a9} and Lemma \ref{lem a2} below is to prove Proposition \ref{prop a1}, which is a generalisation of \cite[Proposition 1.5]{BuKuII} and the main result of this section. The complexity of Lemma \ref{lem a9} arises from the fact that the $\ell$-dual set of a non-trivial finite abelian group can be trivial, which is not possible for $\cK$-quasicharacters. Hence, the group $\mathcal{T}(\pi)$ given from $k$-characters will be larger than the setting of characteristic zero. However, the following two lemmas are sufficient for the later use.

\begin{lem}
\label{lem a9}
There is an equivalence between the $\ell$-dual groups $(\rG\slash\mathcal{T}(\pi)\cap\rH(\pi))^{\wedge}$ and $(\rG\slash\mathcal{T}(\pi))^{\wedge}$. Let $\rH(\pi)_{\ell'}$ (resp. $\rH(\pi)_{\ell}$) be a subgroup of $\rH(\pi)$, consisting with the elements whose images by projection belong to $(\rH(\pi)\slash\mathcal{T}(\pi)\cap\rH(\pi))_{\ell'}$ (resp. $(\rH(\pi)\slash\mathcal{T}(\pi)\cap\rH(\pi))_{\ell}$). Then $\rH(\pi)_{\ell'}$ is equal to $\rH(\pi)$.
\end{lem}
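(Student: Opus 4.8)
The plan is to reduce both assertions to elementary facts about finite abelian groups applied to the quotient $\rG/(\mathcal{T}(\pi)\cap\rH(\pi))$, the only genuinely modular ingredient being that a finite abelian group carries no nontrivial $k$-character on its $\ell$-power part. First I record the structural facts I shall use: $\rG'$ is contained in $\rH(\pi)$ by the definition of $\rH(\pi)$, and $\mathcal{T}(\pi)$ contains $\rG'$ as well; since $\rG/\rG'\cong F^{\times}$ is abelian via $\det$, each of $\rH(\pi)$, $\mathcal{T}(\pi)$ and $\mathcal{T}(\pi)\cap\rH(\pi)$ is normal of finite index in $\rG$ with abelian quotient, so the second isomorphism theorem gives
$$\mathcal{T}(\pi)/(\mathcal{T}(\pi)\cap\rH(\pi))\cong\mathcal{T}(\pi)\rH(\pi)/\rH(\pi)=:K,\qquad\rH(\pi)/(\mathcal{T}(\pi)\cap\rH(\pi))\cong\mathcal{T}(\pi)\rH(\pi)/\mathcal{T}(\pi).$$
I also recall that $\rG/\mathcal{T}(\pi)$ has order prime to $\ell$, the consequence of Lemma \ref{lem a4} noted just above.

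The core step — the one I expect to be the main obstacle, being the modular substitute for Proposition $1.5$ of \cite{BuKuII} — will be to show that $K$ is an $\ell$-group. For this, let $\psi$ be any $k$-character of the finite abelian group $\rG/\rH(\pi)$, viewed as a $k$-character of $\rG$ trivial on $\rH(\pi)$. Since $\rG'\subseteq\rH(\pi)$, $\psi$ is trivial on $\rG'$, hence factors as $\psi=\chi\circ\det$ for a $k$-quasicharacter $\chi$ of $F^{\times}$, and $\chi$ is trivial on $\det(\rH(\pi))$. By Remark \ref{rem add01} we get $\chi\in\mathcal{G}(\pi)$, so $\mathcal{T}(\pi)\subseteq\Ker(\chi\circ\det)=\Ker(\psi)$, and therefore $\psi$ is trivial on $\mathcal{T}(\pi)\rH(\pi)$, a fortiori on $K$. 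Thus every $k$-character of $\rG/\rH(\pi)$ is trivial on $K$; since $k$ is algebraically closed of characteristic $\ell$, the $k$-characters of a finite abelian group factor through its $\ell$-prime part and separate points there, whence $K$ lies inside the $\ell$-power part of $\rG/\rH(\pi)$, i.e. $K$ is an $\ell$-group.

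Granting this, I conclude. Writing $A=\rG/(\mathcal{T}(\pi)\cap\rH(\pi))$, the projection $A\twoheadrightarrow\rG/\mathcal{T}(\pi)$ has kernel isomorphic to $K$, an $\ell$-group, and target of order prime to $\ell$; in the finite abelian group $A$ this forces $A_{\ell}=K$ and $A/A_{\ell}\cong\rG/\mathcal{T}(\pi)$. Every $k$-character of $A$ is trivial on the $\ell$-group $A_{\ell}$ and so factors through $A/A_{\ell}$, while every $k$-character of $A/A_{\ell}$ inflates to $A$; hence inflation gives the desired isomorphism $(\rG/\mathcal{T}(\pi))^{\wedge}\cong(\rG/(\mathcal{T}(\pi)\cap\rH(\pi)))^{\wedge}$. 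For the final assertion, the second displayed isomorphism identifies $\rH(\pi)/(\mathcal{T}(\pi)\cap\rH(\pi))$ with a subgroup of $\rG/\mathcal{T}(\pi)$, hence a group of order prime to $\ell$; its $\ell$-power part is therefore trivial, so every element of $\rH(\pi)$ maps into $(\rH(\pi)/(\mathcal{T}(\pi)\cap\rH(\pi)))_{\ell'}$, which by the definition of $\rH(\pi)_{\ell'}$ is exactly the statement $\rH(\pi)_{\ell'}=\rH(\pi)$ (and, incidentally, $\rH(\pi)_{\ell}=\mathcal{T}(\pi)\cap\rH(\pi)$).
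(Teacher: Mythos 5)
Your proof is correct and follows the same strategy as the paper: the crux in both cases is showing that $\mathcal{T}(\pi)/(\mathcal{T}(\pi)\cap\rH(\pi))\cong\mathcal{T}(\pi)\rH(\pi)/\rH(\pi)$ is an $\ell$-group by extending its $k$-characters to $\rG/\rH(\pi)$, recognising those extensions as elements of $\mathcal{G}(\pi)$ via Remark~\ref{rem add01}, and invoking that $k$-characters of a finite abelian group vanish on the $\ell$-power part. Your second isomorphism argument for the final assertion, identifying $\rH(\pi)/(\mathcal{T}(\pi)\cap\rH(\pi))$ with a subgroup of the $\ell'$-group $\rG/\mathcal{T}(\pi)$, is a slightly more direct route than the paper's character-theoretic phrasing but rests on the same facts.
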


\begin{proof}
First, we consider the quotient group 
$$\overline{\mathcal{T}}=\mathcal{T}(\pi)\slash\mathcal{T}(\pi)\cap\rH(\pi)\cong\mathcal{T}(\pi)\rH(\pi)\slash\rH(\pi).$$
The latter is a subgroup of the finite abelian group $\rG\slash\rH(\pi)$. Hence a $k$-quasicharacter $\theta$ of $\overline{\mathcal{T}}$ can be extended to $\rG\slash\rH(\pi)$. Let $\tilde{\theta}$ be such an extension. The equivalence given in Remark \ref{rem add01} implies that $\tilde{\theta}\in\mathcal{G}(\pi)$, hence $\theta$ is trivial by the definition of $\mathcal{T}(\pi)$. We deduce from Lemma \ref{lem a4} that $\overline{\mathcal{T}}$ only has $\ell$-power part, in other words, $\overline{\mathcal{T}}_{\ell}$ is equal to $\overline{\mathcal{T}}$. We conclude that, if a $k$-quasicharacter $\chi$ of $F^{\times}$ is trivial on $\det(\mathcal{T}(\pi)\cap\rH(\pi))$, then it is trivial on $\det(\mathcal{T}(\pi))$, hence $\chi\circ\det$ belongs to $(\rG\slash\mathcal{T}(\pi))^{\wedge}$.

Now consider the second part of this lemma. For any $\chi\circ\det\in(\rG\slash\mathcal{T}(\pi)\cap\rH(\pi))^{\wedge}$, we have an inclusion $\rH(\pi)_{\ell}\subset\mathrm{ker}(\chi\circ\det)$, which implies that 
$$\rH(\pi)_{\ell}\subset\mathcal{T}(\pi).$$ 
Hence $\rH(\pi)_{\ell}$ is equal to $\mathcal{T}(\pi)\cap\rH(\pi)$.
\end{proof}

Write $\pi_0=\pi_1\vert_{\mathcal{T}(\pi)\cap\rH(\pi)}$. The representation $\pi_0$ is an extension of $\pi'$, hence is irreducible.
\begin{lem}
\label{lem a2}
Let $\tau$ be an extension of $\pi'$ to $\mathcal{T}(\pi)\cap\rH(\pi)$. Assume that $\tau$ has the same central character as $\pi_0$. Then there exists a $k$-quasicharacter $\phi$ of $F^{\times}$ such that $\tau$ is equivalent to $\pi_0\otimes\phi\circ\det$.
\end{lem}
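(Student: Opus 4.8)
The plan is to work purely on the group $\rH_0 := \mathcal{T}(\pi)\cap\rH(\pi)$, which contains $\rG'$ as a normal subgroup, and to exploit the fact — forced by Lemma \ref{lem a9} — that the quotient $\rH_0/\rG'$ is a finite abelian $\ell$-group on which every $k$-quasicharacter coming from $F^\times$ via $\det$ is trivial. First I would set $\tau_0 := \tau\otimes\pi_0^{\vee}$, regarded as a representation of $\rH_0$ on $\operatorname{End}_k(V')$; since both $\tau$ and $\pi_0$ restrict to $\pi'$ on $\rG'$, Schur's lemma shows that $\rG'$ acts trivially on the line $\operatorname{Hom}_{k\rG'}(\pi',\pi')=k$ inside $\operatorname{End}_k(V')$, so the "intertwining'' construction produces a one-dimensional representation: more concretely, for each $g\in\rH_0$ the operator $\tau(g)\pi_0(g)^{-1}$ normalises the $\rG'$-action, hence is a scalar $\phi_0(g)\in k^\times$ by Schur, and $g\mapsto\phi_0(g)$ is a $k$-character of $\rH_0$ trivial on $\rG'$. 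Thus $\tau\cong\pi_0\otimes\phi_0$ for a character $\phi_0$ of $\rH_0/\rG'$, and the assumption on central characters forces $\phi_0$ to be trivial on $Z(\rG)\cap\rH_0$.

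Next I would need to extend $\phi_0$ to a $k$-quasicharacter $\phi$ of $F^\times$ so that $\phi\circ\det$ restricts to $\phi_0$ on $\rH_0$; this is where the structure of $\rH_0/\rG'$ enters. Since $\det:\rH_0\to F^\times$ has image an open subgroup of $F^\times$ containing $(F^\times)^n$ (and $\rH_0\supset Z(\rG)\rG'$ means the image contains the $n$-th powers), and since $\phi_0$ is trivial on $\rG'=\ker(\det|_{\rH_0})$, the character $\phi_0$ factors through $\det(\rH_0)$, giving a $k$-character of $\det(\rH_0)$; because $\det(\rH_0)$ is an open (finite-index) subgroup of $F^\times$ and $F^\times\cong\mathfrak{o}_F^\times\times\bZ$, any $k$-character of an open subgroup extends to a $k$-quasicharacter of $F^\times$ — one extends separately on the $\mathfrak{o}_F^\times$-part (a character of a finite quotient, which lifts because $k$ is algebraically closed and we only need to extend a character of a subgroup of a finite abelian group, so divisibility of $k^\times$ suffices) and on the $\bZ$-part (free, no obstruction). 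Call the resulting extension $\phi$; then $\phi\circ\det|_{\rH_0}=\phi_0$, and therefore $\pi_0\otimes\phi\circ\det\cong\tau$ as representations of $\rH_0=\mathcal{T}(\pi)\cap\rH(\pi)$, which is the claim.

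The main obstacle I anticipate is the extension step: in the complex setting one would just use divisibility of $\bC^\times$ freely, but here one must check that the relevant finite abelian group admits no obstruction to extending characters into $k^\times$ — this is fine because $k^\times$ is divisible (as $k$ is algebraically closed of characteristic $\ell\ne p$, and $\det(\rH_0)/(F^\times)^{\text{large power}}$ has order involving both $p$-power and prime-to-$p$ parts, all of which $k^\times$ can absorb since $\ell\ne p$ handles the $p$-part and algebraic closure handles the rest). A secondary subtlety is making sure the character-of-$\rH_0$ really factors through $\det$: this uses precisely that $\rG'=\ker(\det|_{\rH_0})$ and that $\phi_0$ kills $\rG'$, which is built into the construction via Schur. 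Once $\phi_0$ is shown to be a genuine character of the abelian group $\det(\rH_0)$, the rest is the routine extension argument already used in the proof of Proposition \ref{prop 0.2}.
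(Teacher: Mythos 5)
Your argument is correct but follows a genuinely different path from the paper's proof. The paper uses the central-character hypothesis to embed $\tau$ as a subrepresentation of $\ind_{\mathrm{Z}\rG'}^{\mathcal{T}(\pi)\cap\rH(\pi)}\pi_0'\cong\pi_0\otimes\ind_{\mathrm{Z}\rG'}^{\mathcal{T}(\pi)\cap\rH(\pi)}\mathds{1}$, and then determines the Jordan--H\"older constituents of $\ind_{\mathrm{Z}\rG'}^{\mathcal{T}(\pi)\cap\rH(\pi)}\mathds{1}$ by passing to a pro-$p$ open subgroup of the compact abelian quotient $\rG/\mathrm{Z}\rG'$, showing each constituent has the form $\phi\circ\det$. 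You bypass the induction machinery entirely: since $\tau$ and $\pi_0$ act on the same space and agree on $\rG'$, the operators $\tau(g)\pi_0(g)^{-1}$ commute with the irreducible admissible $\rG'$-action, hence are scalars by Schur's lemma, giving directly $\tau=\pi_0\otimes\phi_0$ for a smooth character $\phi_0$ of $\mathcal{T}(\pi)\cap\rH(\pi)$ trivial on $\rG'$; because $\rG'$ is exactly the kernel of $\det$ on this subgroup, $\phi_0$ factors through $\det$, and the extension to a $k$-quasicharacter of $F^{\times}$ is then the routine argument already used in Proposition \ref{prop 0.2}. Your route is shorter and more transparent, and it also reveals that the central-character hypothesis is not actually needed for the conclusion as stated (it only pins $\phi_0$ down on the centre). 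Two minor inaccuracies in your write-up do not affect the substance: the opening claim that $(\mathcal{T}(\pi)\cap\rH(\pi))/\rG'$ is a \emph{finite} $\ell$-group is false, since this quotient contains the infinite group $\mathrm{Z}(\rG)\rG'/\rG'$; and $\det(\mathcal{T}(\pi)\cap\rH(\pi))$ need not split as a direct product under the isomorphism $F^{\times}\cong\mathfrak{o}_F^{\times}\times\bZ$, although the two-step extension still works because $k^{\times}$ is divisible. Neither remark is actually used in your argument, so the proof stands.
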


\begin{proof}
Let $\pi_0'$ be $\pi_0\vert_{Z\rG'}$, since the quotient group $\rG\slash Z\rG'$ is compact, the assumption implies that $\tau$ is a sub-representation of $\ind_{Z\rG'}^{\mathcal{T}(\pi)\cap\rH(\pi)}\pi_0'$, which is equivalent to the tensor product $\pi_0\otimes\ind_{Z\rG'}^{\mathcal{T}(\pi)\cap\rH(\pi)}\mathds{1}$. The induction $\ind_{\mathrm{Z}\rG'}^{\mathcal{T}(\pi)\cap\rH(\pi)}\mathds{1}$ is isomorphic to a direct sum of $k$-representations of finite length. Moreover, an irreducible sub-quotient of $\ind_{\mathrm{Z}\rG'}^{\mathcal{T}(\pi)\cap\rH(\pi)}\mathds{1}$ is isomorphic to $\phi\circ\det$, where $\phi$ is a $k$-quasicharacter of $F^{\times}$. 
We conclude that $\ind_{Z\rG'}^{\mathcal{T}(\pi)\cap\rH(\pi)}\pi_0'\cong\oplus_{i\in I}\tau_i$, where $\tau_i$ has finite length, and any irreducible sub-quotient of $\tau_i$ is equivalent to $\pi_0\otimes\phi\circ\det$, which implies the result.
\end{proof}

\begin{prop}
\label{prop a1}
Let $(\pi,V)$ be an irreducible $k$-representation of $\rG$ and $\pi'$ an irreducible sub-representation of $\pi\vert_{\rG'}$. Let $d$ denote the multiplicity of $\pi'$ in $\pi$. Then the intersection $\mathcal{T}(\pi)\cap\rH(\pi)$ is contained in $\mathcal{S}(\pi)$ with index $d^2$.
\end{prop}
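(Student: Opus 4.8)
The plan is to count the index $(\mathcal{S}(\pi):\mathcal{T}(\pi)\cap\rH(\pi))$ by factoring the quotient through $\rH(\pi)$ and using the structural facts already established. Recall that $d=(\mathcal{S}(\pi):\rH(\pi))$, as noted just before Remark~\ref{rem add01}. So it suffices to show that $\rH(\pi)\cap\mathcal{T}(\pi)$ has index $d$ in $\rH(\pi)$, since then $(\mathcal{S}(\pi):\mathcal{T}(\pi)\cap\rH(\pi))=(\mathcal{S}(\pi):\rH(\pi))\cdot(\rH(\pi):\mathcal{T}(\pi)\cap\rH(\pi))=d\cdot d=d^2$. Here I use that $\mathcal{T}(\pi)\cap\rH(\pi)\subset\rH(\pi)\subset\mathcal{S}(\pi)$, the first two inclusions being immediate from the definitions and the fact that $\rH(\pi)$ is an open subgroup of $\mathcal{S}(\pi)$.

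First I would identify the finite abelian group $\rH(\pi)/(\mathcal{T}(\pi)\cap\rH(\pi))$. By Lemma~\ref{lem a9} this quotient is exactly its own $\ell$-power part: indeed $\rH(\pi)_{\ell'}=\rH(\pi)$ says $\rH(\pi)_{\ell}=\mathcal{T}(\pi)\cap\rH(\pi)$, i.e. the quotient $\rH(\pi)/(\mathcal{T}(\pi)\cap\rH(\pi))$ is an $\ell$-group. On the other hand, composing with $\det$ embeds $\mathcal{G}(\pi)$ into $(\rG/\mathcal{T}(\pi))^{\wedge}$, which by Lemma~\ref{lem a4} has trivial $\ell$-part; combined with Lemma~\ref{lem a9}'s equivalence $(\rG/(\mathcal{T}(\pi)\cap\rH(\pi)))^{\wedge}\cong(\rG/\mathcal{T}(\pi))^{\wedge}$, the order of $\mathcal{G}(\pi)$ is prime to $\ell$. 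Now $d=(\mathcal{S}(\pi):\rH(\pi))$, and the key point from the theory of the restriction $\pi|_{\rG'}$ (Clifford theory, cf. \S1.16 of \cite{BuKuI}) is that $d^2=(\mathcal{S}(\pi):\rH(\pi))^2$ should match the order of a suitable character group. The cleanest route: apply Lemma~\ref{lem a2} together with the extension $\pi_1$ of $\pi'$ to $\rH(\pi)$. Two extensions of $\pi'$ to $\rH(\pi)$ with the same central character differ by a character of $\rH(\pi)/\mathrm{Z}(\rG)\rG'$ trivial on the relevant subgroup; counting such extensions against the inductions $\ind_{\mathrm{Z}\rG'}^{\rH(\pi)}(\cdot)$ gives the number of irreducible constituents, which is $d$.

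Concretely, I would argue as follows. By Lemma~\ref{lem a2}, every extension $\tau$ of $\pi'$ to $\mathcal{T}(\pi)\cap\rH(\pi)$ with central character equal to that of $\pi_0=\pi_1|_{\mathcal{T}(\pi)\cap\rH(\pi)}$ is of the form $\pi_0\otimes\phi\circ\det$. The extension $\pi_0$ extends further to $\rH(\pi)$ (namely to $\pi_1$); the obstruction group for extending and the number of extensions is controlled by $(\rH(\pi)/(\mathcal{T}(\pi)\cap\rH(\pi)))^{\wedge}$, which by the first paragraph has the same order as $\rH(\pi)/(\mathcal{T}(\pi)\cap\rH(\pi))$. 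Running the Clifford-theory comparison between the $\rH(\pi)$-representation $\pi_1$ and its restriction, one gets that $(\rH(\pi):\mathcal{T}(\pi)\cap\rH(\pi))=(\mathcal{S}(\pi):\rH(\pi))=d$, whence the claimed index $d^2$. I expect the main obstacle to be precisely this last equality: it requires carefully matching the multiplicity $d$ (a restriction-from-$\rG$-to-$\rG'$ invariant) with the index $(\rH(\pi):\mathcal{T}(\pi)\cap\rH(\pi))$ (an invariant of how the character group $\mathcal{G}(\pi)$ sits inside $(\rG/\mathrm{Z}\rG')^{\wedge}$), and in the modular setting one cannot simply dualize as in \cite{BuKuII} — one must route everything through the $\ell'$-part identifications of Lemmas~\ref{lem a4} and~\ref{lem a9} and the extension-counting of Lemma~\ref{lem a2}, keeping track of central characters throughout.
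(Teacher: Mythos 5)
Your overall skeleton matches the paper's strategy: factor the index as
$(\mathcal{S}(\pi):\mathcal{T}(\pi)\cap\rH(\pi))=(\mathcal{S}(\pi):\rH(\pi))\cdot(\rH(\pi):\mathcal{T}(\pi)\cap\rH(\pi))$, use the previously established $d=(\mathcal{S}(\pi):\rH(\pi))$, and reduce to proving $(\rH(\pi):\mathcal{T}(\pi)\cap\rH(\pi))=d$. But there are two real problems with the writeup.

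First, you mis-read Lemma~\ref{lem a9}. The conclusion $\rH(\pi)_{\ell'}=\rH(\pi)$ means every element of $\rH(\pi)$ maps into the $\ell$-prime part of the quotient, so $\rH(\pi)/(\mathcal{T}(\pi)\cap\rH(\pi))$ has order \emph{prime to} $\ell$ --- it is \emph{not} an $\ell$-group, as you claim. This is not a cosmetic slip: over $k$ with characteristic $\ell$, a finite abelian $\ell$-group has \emph{trivial} $k$-dual, so your later claim that $(\rH(\pi)/(\mathcal{T}(\pi)\cap\rH(\pi)))^{\wedge}$ "has the same order as $\rH(\pi)/(\mathcal{T}(\pi)\cap\rH(\pi))$" would be false under your own hypothesis. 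The correct reading ($\ell'$-order) is exactly what makes the induced representation $\ind_{\mathcal{T}(\pi)\cap\rH(\pi)}^{\rH(\pi)}\mathds{1}$ semisimple and the dual full.

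Second, and more seriously, you never actually prove the crux $(\rH(\pi):\mathcal{T}(\pi)\cap\rH(\pi))=d$; you flag it yourself as "the main obstacle" and then gesture at a "Clifford-theory comparison" without giving the argument. The paper fills this gap concretely: (i) using Lemma~\ref{lem a2} and the defining property of $\mathcal{T}(\pi)$, it first shows $\pi_0$ is the \emph{unique} extension of $\pi'$ to $\mathcal{T}(\pi)\cap\rH(\pi)$ occurring in $\pi$; (ii) then, exploiting the $\ell'$-property just discussed, it decomposes $\ind_{\mathcal{T}(\pi)\cap\rH(\pi)}^{\rH(\pi)}\pi_0\cong\bigoplus_{\chi}\pi_1\otimes\chi\circ\det$ to conclude that $\ind_{\mathcal{T}(\pi)\cap\rH(\pi)}^{\rG}\pi_0$ is a multiple of $\pi$ with multiplicity $(\rH(\pi):\mathcal{T}(\pi)\cap\rH(\pi))$; (iii) finally, Frobenius reciprocity gives $\Hom_{\mathcal{T}(\pi)\cap\rH(\pi)}(\pi,\pi_0)\cong\Hom_{\rG}(\pi,\ind_{\mathcal{T}(\pi)\cap\rH(\pi)}^{\rG}\pi_0)$, whose dimension is $d$ by the uniqueness in (i). Matching (ii) and (iii) yields the equality. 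That chain is precisely what is missing from your proposal.
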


\begin{proof}
It is clear that $\pi$ contains $\pi_0$. Assume that $\pi$ contains an extension $\pi_2$ of $\pi'$ to $\mathcal{T}(\pi)\cap\rH(\pi)$, and is different from $\pi_0$, then $\pi_2$ has the same central character as $\pi_0$ and $\pi_2$ is isomorphic to $\pi_0\otimes\phi\circ\det$ by Lemma \ref{lem a2}. Now adjusting $\phi$ by a $k$-quasicharacter of $F^{\times}$ which is trivial on $\det(\mathcal{T}(\pi)\cap(\rH(\pi))$, we deduce from Frobenius reciprocity that $\pi$ contains $\pi_1\otimes\phi\circ\det$, which is an extension of $\pi_2\otimes\phi\circ\det$. Then by the definition of $\rH(\pi)$, we have $\pi\cong\pi\otimes\phi\circ\det$, but $\phi\circ\det$ is non-trivial on $\mathcal{T}(\pi)\cap\rH(\pi)$, which contradicts with the definition of $\mathcal{T}(\pi)$. Hence $\pi_0$ is the unique extension of $\pi'$ occurring in $\pi$.

Now we consider
$$\ind_{\mathcal{T}(\pi)\cap\rH(\pi)}^{\rH(\pi)}\pi_0\cong\pi_1\otimes\ind_{\mathcal{T}(\pi)\cap\rH(\pi)}^{\rH(\pi)}\mathds{1}.$$
By Lemma \ref{lem a9}, the orders of elements inside the quotient group $\rH(\pi)\slash\mathcal{T}(\pi)\cap\rH(\pi)$ are prime to $\ell$. Hence the $k$-representation on the right-hand side is semisimple, and we have the equivalence
$$\ind_{\mathcal{T}(\pi)\cap\rH(\pi)}^{\rH(\pi)}\pi_0\cong\bigoplus_{\chi\in(\rH(\pi)\slash\mathcal{T}(\pi)\cap\rH(\pi))^{\wedge}}\pi_1\otimes\chi\circ\det.$$
Since by the first part of Lemma \ref{lem a9} each $\chi$ can be viewed as the restriction of some element in $\mathcal{G}(\pi)$ and $\ind_{\rH(\pi)}^{\rG}\pi_1$ is equivalent to $\pi$, the induced representation $\ind_{\mathcal{T}\cap\rH(\pi)}^{\rG}\pi_0$ is a multiple of $\pi$ with multiplicity equal to the cardinality $\vert(\rH(\pi)\slash\mathcal{T}(\pi)\cap\rH(\pi))^{\wedge}\vert$, which is equal to $(\rH(\pi):\mathcal{T}(\pi)\cap\rH(\pi))$ by Lemma \ref{lem a9}. The multiplicity $d$ of $\pi'$ in $\pi$ is equal to the dimension of $\mathrm{Hom}_{\mathcal{T}(\pi)\cap\rH(\pi)}(\pi,\pi_0)$, and the Frobenius reciprocity implies that
$$\mathrm{Hom}_{\mathcal{T}(\pi)\cap\rH(\pi)}(\pi,\pi_0)\cong\mathrm{Hom}_{\rG}(\pi,\ind_{\mathcal{T}(\pi)\cap\rH(\pi)}^{\rG}\pi_0).$$
We conclude that $d=(\rH(\pi):\mathcal{T}(\pi)\cap\rH(\pi))$.

\end{proof}

The Corollary below is an equation of the $\ell$-prime part of the length of $\pi\vert_{\rG'}$.

\begin{cor}
\label{cor a10}
Let $\pi$ be an irreducible $k$-representation of $\rG$.
\begin{enumerate}
\item The length of $\pi\vert_{\rG'}$ is equal to $(\rG:\mathcal{S}(\pi))\cdot(\mathcal{S}(\pi):\mathcal{T}(\pi)\cap \rH(\pi))^{\frac{1}{2}}$,
\item The restriction $\pi\vert_{\rG'}$ is multiplicity-free if and only if $\mathcal{S}(\pi)\subset\mathcal{T}(\pi)$,
\item Assume the restriction $\pi\vert_{\rG'}$ is multiplicity-free, then we have an equation $lg(\pi\vert_{\rG'})_{\ell'}=\vert\mathcal{G}(\pi)\vert$, where $lg(\pi\vert_{\rG'})$ denotes the length of $\pi\vert_{\rG'}$.
\item If $\pi'$ is an irreducible sub-representation of $\pi\vert_{\rG'}$, then there is a unique irreducible representation $\pi_0$ of $\mathcal{T}(\pi)\cap \rH(\pi)$ which contains $\pi'$ on $\rG'$ and occurs in $\pi\vert_{\mathcal{T}(\pi)\cap\rH(\pi)}$. Moreover, $\pi_0\vert_{\rG'}\cong\pi'$.
\end{enumerate}
\end{cor}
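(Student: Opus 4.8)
The plan is to derive all five assertions from Proposition \ref{prop a1} together with the isomorphism $\pi\cong\ind_{\rH(\pi)}^{\rG}\pi_1$ and the counting identities $d=(\mathcal{S}(\pi):\rH(\pi))$ and $d=(\rH(\pi):\mathcal{T}(\pi)\cap\rH(\pi))$ established in and before that proposition. For (1): by Mackey/Clifford theory the length of $\pi\vert_{\rG'}$ equals $(\rG:\rH(\pi))\cdot(\text{number of }\rG'\text{-components of }\pi_1\vert_{\rG'})$; since the multiplicity of $\pi'$ in $\pi$ is $d$ and $\pi_1$ is the stabilized piece, the number of distinct $\rG'$-constituents is $(\rG:\rH(\pi))/d\cdot$(something) — more cleanly, $\mathrm{lg}(\pi\vert_{\rG'})=(\rG:\mathcal{S}(\pi))\cdot(\text{number of distinct constituents under }\mathcal{S}(\pi))$ and by definition of $d$ as the common multiplicity, $\mathrm{lg}(\pi\vert_{\mathcal{S}(\pi)\text{-orbit}})\cdot d=(\mathcal{S}(\pi):\rH(\pi))$, so the count of distinct constituents in one $\mathcal{S}(\pi)$-orbit is $(\mathcal{S}(\pi):\rH(\pi))=d=(\mathcal{S}(\pi):\mathcal{T}(\pi)\cap\rH(\pi))^{1/2}$ using Proposition \ref{prop a1} (which says this index is $d^2$). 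Assembling gives the formula $(\rG:\mathcal{S}(\pi))\cdot(\mathcal{S}(\pi):\mathcal{T}(\pi)\cap\rH(\pi))^{1/2}$.

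For (2): $\pi\vert_{\rG'}$ is multiplicity-free iff $d=1$ iff $\mathcal{S}(\pi)=\rH(\pi)$; combined with Proposition \ref{prop a1} this forces $\mathcal{S}(\pi)=\mathcal{T}(\pi)\cap\rH(\pi)\subseteq\mathcal{T}(\pi)$, and conversely if $\mathcal{S}(\pi)\subseteq\mathcal{T}(\pi)$ then $\mathcal{S}(\pi)=\mathcal{S}(\pi)\cap\rH(\pi)\subseteq\mathcal{T}(\pi)\cap\rH(\pi)\subseteq\mathcal{S}(\pi)$, so $d^2=1$. For (3): assuming multiplicity-free, by (1) the length is $(\rG:\mathcal{S}(\pi))=(\rG:\mathcal{T}(\pi))$ (using $\mathcal{S}(\pi)=\mathcal{T}(\pi)\cap\rH(\pi)$ and chasing indices, or directly $\mathcal{S}(\pi)\supseteq\mathcal{T}(\pi)$ is false in general—rather one shows the $\ell'$-part of the length equals $\lvert\mathcal{G}(\pi)\rvert$). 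Here I would use Lemma \ref{lem a4} and Lemma \ref{lem a9}: the composition-with-$\det$ isomorphism $\mathcal{G}(\pi)\cong(\rG/\mathcal{T}(\pi))^{\wedge}$ shows $\rG/\mathcal{T}(\pi)$ has trivial $\ell$-part, so $\lvert\mathcal{G}(\pi)\rvert=(\rG:\mathcal{T}(\pi))_{\ell'}$; and $(\rG:\mathcal{S}(\pi))$ differs from $(\rG:\mathcal{T}(\pi))$ by the $\ell$-power index $(\mathcal{S}(\pi):\mathcal{T}(\pi)\cap\rH(\pi))=\lvert\overline{\mathcal{T}}\rvert$-type factors shown to be $\ell$-power in Lemma \ref{lem a9}, whence the $\ell'$-parts agree. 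For (4): this is exactly the uniqueness statement proved in the first paragraph of the proof of Proposition \ref{prop a1} (the extension $\pi_0$ is unique because any other extension would be $\pi_0\otimes\phi\circ\det$ with $\phi\circ\det$ nontrivial on $\mathcal{T}(\pi)\cap\rH(\pi)$, contradicting the definition of $\mathcal{T}(\pi)$), together with the observation $\pi_0\vert_{\rG'}\cong\pi'$ which holds because $\pi_0$ was constructed as an extension of $\pi'$. For (5): if $\mathcal{G}(\pi)$ is cyclic, then so is $\rG/\mathcal{T}(\pi)$; I would apply Lemma \ref{lem a11} to extend $\pi'$ (or rather $\pi_0$) step by step to $\rG$—or more directly, cyclicity of $\mathcal{G}(\pi)\cong(\rG/\mathcal{T}(\pi))^{\wedge}$ forces $\mathcal{T}(\pi)\rH(\pi)/\rH(\pi)$ and $\mathcal{S}(\pi)/\rH(\pi)$ to interact so that $d=1$; concretely, Lemma \ref{lem a11} lets one extend $\pi_0$ from $\mathcal{T}(\pi)\cap\rH(\pi)$ to a group mapping onto the cyclic quotient, forcing $\mathcal{S}(\pi)=\mathcal{T}(\pi)\cap\rH(\pi)$ hence $d^2=1$.

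The main obstacle I expect is part (3), and specifically matching the $\ell'$-part bookkeeping cleanly: one must carefully track which of the indices $(\rG:\mathcal{S}(\pi))$, $(\mathcal{S}(\pi):\rH(\pi))$, $(\rH(\pi):\mathcal{T}(\pi)\cap\rH(\pi))$ are forced to be $\ell$-powers (the last one is, by Lemma \ref{lem a9}, since $\rH(\pi)_{\ell'}=\rH(\pi)$ means $\rH(\pi)/\mathcal{T}(\pi)\cap\rH(\pi)$ has $\ell'$ order — wait, one must read the direction of Lemma \ref{lem a9} correctly) and which contribute to the $\ell'$-part. The cleanest route is: $\mathrm{lg}(\pi\vert_{\rG'})=(\rG:\mathcal{T}(\pi)\cap\rH(\pi))/d$ (distinct constituents) $=(\rG:\rH(\pi))\cdot(\rH(\pi):\mathcal{T}(\pi)\cap\rH(\pi))/d=(\rG:\rH(\pi))$ using $d=(\rH(\pi):\mathcal{T}(\pi)\cap\rH(\pi))$; then in the multiplicity-free case $d=1$ so $\rH(\pi)=\mathcal{T}(\pi)\cap\rH(\pi)$ and $\mathrm{lg}=(\rG:\mathcal{S}(\pi))$; its $\ell'$-part equals $(\rG:\mathcal{T}(\pi))_{\ell'}=(\rG:\mathcal{T}(\pi))=\lvert\mathcal{G}(\pi)\rvert$ once one checks $(\mathcal{T}(\pi):\mathcal{S}(\pi))$ — which need not even be an integer, so instead one argues $\mathcal{G}(\pi)\cong(\rG/\mathcal{T}(\pi))^\wedge\cong(\rG/\mathcal{S}(\pi))^\wedge$ because $\mathcal{S}(\pi)/\mathcal{T}(\pi)\cap\mathcal{S}(\pi)$ has $\ell$-power order by the argument of Lemma \ref{lem a9}, and $(\rG/\mathcal{S}(\pi))^\wedge$ has order $(\rG:\mathcal{S}(\pi))_{\ell'}=(\rG:\mathcal{S}(\pi))$ if that index is $\ell'$, which must be verified. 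I would allocate the bulk of the write-up to pinning this down rigorously.
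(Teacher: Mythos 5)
Your plan matches the paper's proof essentially point-for-point: parts (1), (2), (4) fall out of Proposition~\ref{prop a1} and the identities $d=(\mathcal{S}(\pi):\rH(\pi))=(\rH(\pi):\mathcal{T}(\pi)\cap\rH(\pi))$ via $\mathrm{lg}(\pi\vert_{\rG'})=(\rG:\rH(\pi))$; part (3) combines this with Lemmas~\ref{lem a4} and~\ref{lem a9}; and part (5) is the Lemma~\ref{lem a11} extension argument. Two bookkeeping slips should be repaired before this becomes a proof. In (2) you write $\mathcal{S}(\pi)=\mathcal{S}(\pi)\cap\rH(\pi)$, but $\rH(\pi)\subset\mathcal{S}(\pi)$ forces $\mathcal{S}(\pi)\cap\rH(\pi)=\rH(\pi)$, not $\mathcal{S}(\pi)$; the correct converse chain is: $\mathcal{S}(\pi)\subseteq\mathcal{T}(\pi)$ gives $\rH(\pi)\subseteq\mathcal{T}(\pi)$, hence $\mathcal{T}(\pi)\cap\rH(\pi)=\rH(\pi)$, so $d^{2}=(\mathcal{S}(\pi):\mathcal{T}(\pi)\cap\rH(\pi))=(\mathcal{S}(\pi):\rH(\pi))=d$ and thus $d=1$. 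In (3) your concern that $(\mathcal{T}(\pi):\mathcal{S}(\pi))$ ``need not be an integer'' is unfounded: you have already shown in (2) that multiplicity-freeness forces $\mathcal{S}(\pi)\subseteq\mathcal{T}(\pi)$, and moreover $\mathcal{S}(\pi)=\rH(\pi)=\mathcal{T}(\pi)\cap\rH(\pi)$ there, so $(\mathcal{T}(\pi):\mathcal{S}(\pi))=\lvert\overline{\mathcal{T}}\rvert$ which Lemma~\ref{lem a9} identifies as an $\ell$-power; then $\mathrm{lg}(\pi\vert_{\rG'})_{\ell'}=(\rG:\mathcal{T}(\pi))=\lvert\mathcal{G}(\pi)\rvert$ by Lemma~\ref{lem a4}. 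With these corrections the write-up coincides with the paper's.
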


\begin{proof}
We have shown that the multiplicity $d=(\mathcal{S}(\pi):\rH(\pi))$. Parts $1,2,4$ are deduced from Proposition \ref{prop a1}. For part $3$, since $\pi\cong\ind_{\rH(\pi)}^{\rG}\pi_1$, the length of $\pi\vert_{\rG'}$ is equal to $(\rG:\rH(\pi))$. When $d=1$, $\mathcal{S}(\pi)=\rH(\pi)$ and by Proposition \ref{prop a1} we have $\mathcal{S}(\pi)\subset\mathcal{T}(\pi)$. We conclude that when $d=1$,
$$lg(\pi\vert_{\rG'})=(\rG:\rH(\pi))=(\rG:\mathcal{T}(\pi))\cdot(\mathcal{T}(\pi):\rH(\pi)).$$
Lemma \ref{lem a4} and Lemma \ref{lem a9} imply that $lg(\pi\vert_{\rG'})_{\ell'}=(\rG:\mathcal{T}(\pi))$ and $lg(\pi\vert_{\rG'})_{\ell}=(\mathcal{T}(\pi):\mathcal{S}(\pi))$.

\end{proof}

In Section \ref{section 003.2}, when $\pi$ is cuspidal, we need the results above to give an equation on the $\ell$-prime part of the length of $\pi\vert_{\rG'}$ through type theory as in \cite{BuKuII}. Now we consider a similar results for pro-finite groups. The results below is required in the study of the the maximal simple $k$-types contained in $\pi$. 

\begin{lem}
\label{lem a13}
Let $A$ be a pro-finite abelian group, and $K$ a closed subgroup of $A$. Let $\chi$ be a smooth $k$-character of $K$, then $\chi$ can be extended to $A$
\end{lem}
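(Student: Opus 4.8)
The plan is to reduce the statement to the case of a finite abelian group, using smoothness of $\chi$, and then to invoke divisibility of the torsion subgroup of $k^{\times}$.

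First I would exploit smoothness. Since $\chi$ is smooth its kernel $K_0$ is an open subgroup of $K$, so $K_0=K\cap W$ for some open subset $W$ of $A$ containing the identity; because $A$ is profinite, open subgroups of $A$ form a neighbourhood basis of the identity, so there is an open subgroup $A_0\subseteq W$. Then $A_0\cap K\subseteq K_0$, so $\chi$ is trivial on $A_0\cap K$ and factors through the finite group $K/(A_0\cap K)$. The map $x(A_0\cap K)\mapsto xA_0$ identifies $K/(A_0\cap K)$ with the subgroup $\bar K=KA_0/A_0$ of the finite abelian group $\bar A=A/A_0$ (finite since $A_0$ is open in the compact group $A$), and transports $\chi$ to a character $\bar\chi\colon\bar K\to k^{\times}$.

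Next I would solve the finite problem: extend $\bar\chi$ to a character of $\bar A$. The image of $\bar\chi$ is a finite subgroup of $k^{\times}$, hence contained in the group $\mu$ of roots of unity of $k$ of order prime to $\ell$ (in characteristic $\ell$ the identity $x^{\ell}=1$ forces $x=1$, so $k^{\times}$ has no $\ell$-torsion). Since $k$ is algebraically closed, $\mu$ is divisible: for $n=\ell^{a}n'$ with $\ell\nmid n'$, any $k^{\times}$-root of $\xi^{n'}=\zeta$ with $\zeta\in\mu$ already has order prime to $\ell$, and $x\mapsto x^{\ell^{a}}$ is a bijection of $\mu$. Hence $\mu$ is an injective $\mathbb{Z}$-module, and applying injectivity to the inclusion $\bar K\hookrightarrow\bar A$ and the map $\bar\chi\colon\bar K\to\mu$ gives an extension $\widetilde{\bar\chi}\colon\bar A\to\mu\subseteq k^{\times}$. (Alternatively one can avoid the injectivity lemma and use the structure theorem for finite abelian groups together with extraction of roots in $k$, but this is more tedious.)

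Finally I would inflate back to $A$: set $\widetilde\chi$ to be the composite $A\to A/A_0=\bar A\xrightarrow{\widetilde{\bar\chi}}k^{\times}$. This is smooth, being trivial on the open subgroup $A_0$, and by construction it restricts to $\chi$ on $K$, which completes the proof. The only point requiring care is the first step: one cannot simply apply injectivity of $k^{\times}$ (or of $\mathbb{Q}/\mathbb{Z}$) to the abstract inclusion $K\hookrightarrow A$, since that would produce only an abstract homomorphism with no control on continuity; the passage through the finite quotient $A/A_0$ determined by an open subgroup is exactly what guarantees that the extension is smooth.
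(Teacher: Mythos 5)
Your proof is correct and takes essentially the same approach as the paper: both use smoothness of $\chi$ to produce an open subgroup of $A$ so that the problem factors through a finite abelian quotient, and then extend there. The only difference is the final finite step: you appeal directly to the fact that the prime-to-$\ell$ roots of unity $\mu\subset k^\times$ form a divisible, hence $\mathbb{Z}$-injective, group, whereas the paper writes the finite quotient as a direct sum of cyclic groups and iterates its Lemma~\ref{lem a11} (cyclic extension) one factor at a time; these are two packagings of the same root-extraction argument, and your version is arguably the cleaner one. (One further minor cosmetic difference: the paper replaces the open subgroup $A_0$ by $K_1=\mathrm{Ker}(\chi)\cdot A_0$ so that $K\cap K_1=\mathrm{Ker}(\chi)$ exactly, giving an honest embedding $K/\mathrm{Ker}(\chi)\hookrightarrow A/K_1$; you instead allow $A_0\cap K$ to be a proper subgroup of $\mathrm{Ker}(\chi)$ and simply observe that $\chi$ still factors through $K/(A_0\cap K)$, which works just as well.)
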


\begin{proof}
The kernel $\mathrm{Ker}(\chi)$ is open in $K$. Hence there exists an open subgroup $K_0$ of $A$ such that $K_0\cap K\subset \mathrm{Ker}(\chi)$. We have $K_1=\mathrm{Ker}(\chi)\cdot K_0$ is an open subgroup of $A$ such that $K\cap K_1=\mathrm{Ker}(\chi)$. By the equivalence $K\slash\mathrm{Ker}(\chi)\cong K\cdot K_1\slash K_1$, $\chi$ can be viewed as a smooth character of $K\cdot K_1\slash K_1$. Notice that $A\slash K_1$ is a finite abelian group, which is a direct sum of finite cyclic groups, and each element in $A$ normalises $\chi$. By repeating Lemma \ref{lem a11} we obtain the result.
\end{proof}

\begin{prop}
\label{prop a14}
Let $\rG$ be a profinite group, and $\rN$ a closed normal subgroup of $\rG$ such that $\rG\slash\rN$ is abelian. Assume that there exists an open subgroup of $\rG$, whose pro-order is invertible in $k^{\ast}$. Let $\rho$ be an irreducible smooth representation of $\rG$, and $\rho'$ an irreducible component of $\rho\vert_{\rN}$. Define
$$\mathcal{G}(\rho)=\{\phi\in(\rG\slash\rN)^{\wedge}:\rho\otimes\phi\cong\rho\},$$
$$\mathcal{T}(\rho)=\bigcap_{\phi\in(\rG\slash\rN)^{\wedge}}\mathrm{Ker}(\phi).$$
The subgroup $\mathcal{T}(\rho)$ is open in $\rG$. Define $\rH(\rho)$ by the same manner of $\rH(\pi)$ as above. Then $\rH(\rho)$ is open of $\rG$ containing $\rN$, such that
$$(\mathcal{T}(\rho)\slash\mathcal{T}(\rho)\cap\rH(\rho))_{\ell}=\mathcal{T}(\rho)\slash\mathcal{T}(\rho)\cap\rH(\rho),$$
$$(\rH(\rho)\slash\mathcal{T}(\rho)\cap\rH(\rho))_{\ell'}=\rH(\rho)\slash\mathcal{T}(\rho)\cap\rH(\rho).$$
Furthermore, there exists an irreducible $k$-representation $\pi_1$ of $\rH(\pi)$, such that $\ind_{\rH(\pi)}^{\rG}\rho_1$, and $\res_{\rN}^{\rH(\rho)}\rho_1\cong\rho'$. Suppose the restriction $\rho\vert_{\rN}$ is multiplicity-free, then we have an inclusion $\rH(\rho)\subset\mathcal{T}(\rho)$ and $\rho_1$ is the unique extension of $\rho'$ occurring in $\rho\vert_{\rN}$. We have $lg(\rho\vert_{\rN})_{\ell'}=\vert\mathcal{G}(\rho)\vert$, where $lg(\rho\vert_{\rN})$ denotes the length of $\rho\vert_{\rN}$.
\end{prop}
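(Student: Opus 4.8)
The plan is to carry the proofs of Lemma~\ref{lem a9}, Lemma~\ref{lem a2}, Proposition~\ref{prop a1} and Corollary~\ref{cor a10}(3)(5) over to the profinite setting almost verbatim, the three devices used there that are special to $\mathrm{GL}_n(F)$ --- that $\rH^1$-type groups are pro-$p$, that $\mathrm{Z}\rG'$ is cocompact in $\rG$, and the plentifulness of $k$-quasicharacters of $F^\times$ --- being replaced respectively by the hypothesis that $\rG$ has an open subgroup of pro-order invertible in $k$, by the compactness of the profinite group $\rG$, and by Lemma~\ref{lem a13}. The reduction to finite abelian groups rests on two remarks: a smooth irreducible $k$-representation $\rho$ of a profinite group is finite-dimensional, so $\mathcal G(\rho)$ is finite (it injects into the character group of a finite quotient of $\rG$ through which $\rho$ factors) and hence $\mathcal T(\rho)=\bigcap_{\phi\in\mathcal G(\rho)}\Ker(\phi)$ is open of finite index; and the map $\rG\to\prod_{\phi\in\mathcal G(\rho)}k^\times$ shows $\rG/\mathcal T(\rho)$ has no $\ell$-torsion, i.e.\ is an $\ell'$-group, with $\mathcal G(\rho)\cong(\rG/\mathcal T(\rho))^\wedge$. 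Using the hypothesis to produce a normal open subgroup of $\rN$ of pro-order invertible in $k$ on which $\rho$ restricts semisimply, the argument of Proposition~\ref{prop 0.1} gives that $\res_\rN^\rG\rho$ is semisimple of finite length with all constituents $\rG$-conjugate; choosing, as in \S1.16 of \cite{BuKuI}, an irreducible summand $(\rho',V')$ whose $\rG$-stabilizer $\rH(\rho):=\{g:\rho(g)V'=V'\}$ is maximal among the $\rG$-conjugates of $V'$, and writing $\rho_1$ for the representation of $\rH(\rho)$ on $V'$ and $\mathcal S(\rho)=\{x:x(\rho')\cong\rho'\}$, one obtains $\rho\cong\ind_{\rH(\rho)}^\rG\rho_1$, $\res_\rN^{\rH(\rho)}\rho_1\cong\rho'$, the multiplicity $d$ of $\rho'$ in $\res_\rN^\rG\rho$ equal to $(\mathcal S(\rho):\rH(\rho))$, and $lg(\res_\rN^\rG\rho)=(\rG:\rH(\rho))$; since every $\phi\in\mathcal G(\rho)$ kills $\rN$, one has $\rN\subseteq\mathcal T(\rho)\cap\rH(\rho)$.

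Next I would prove the profinite analogue of Lemma~\ref{lem a9}. The one point needing adaptation is Remark~\ref{rem add01}: if a $k$-character $\phi$ of $\rG$ is trivial on $\rH(\rho)$, then $\rho\otimes\phi\cong(\ind_{\rH(\rho)}^\rG\rho_1)\otimes\phi\cong\ind_{\rH(\rho)}^\rG(\rho_1\otimes\res_{\rH(\rho)}^\rG\phi)\cong\ind_{\rH(\rho)}^\rG\rho_1\cong\rho$, so $\phi\in\mathcal G(\rho)$ and hence $\phi$ is trivial on $\mathcal T(\rho)$. Feeding this into the proof of Lemma~\ref{lem a9}: the finite abelian group $\overline{\mathcal T}=\mathcal T(\rho)/(\mathcal T(\rho)\cap\rH(\rho))\hookrightarrow\rG/\rH(\rho)$ admits no nontrivial $k$-character (any such extends to $\rG/\rH(\rho)$, giving an element of $\mathcal G(\rho)$ trivial on $\mathcal T(\rho)$, hence trivial), so it is an $\ell$-group by Lemma~\ref{lem a4}; consequently a $k$-character of $\rG$ is trivial on $\mathcal T(\rho)\cap\rH(\rho)$ iff it is trivial on $\mathcal T(\rho)$, i.e.\ $(\rG/(\mathcal T(\rho)\cap\rH(\rho)))^\wedge=(\rG/\mathcal T(\rho))^\wedge\cong\mathcal G(\rho)$, and running the second half of that proof shows the image in $\rH(\rho)/(\mathcal T(\rho)\cap\rH(\rho))$ of any such character kills its $\ell$-part, forcing $\rH(\rho)_\ell=\mathcal T(\rho)\cap\rH(\rho)$, i.e.\ $\rH(\rho)/(\mathcal T(\rho)\cap\rH(\rho))$ is an $\ell'$-group. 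These are the two displayed identities of the statement.

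With $\pi_0:=\res_{\mathcal T(\rho)\cap\rH(\rho)}^{\rH(\rho)}\rho_1$, the profinite Lemma~\ref{lem a2} is clean: any extension $\tau$ of $\rho'$ to $\mathcal T(\rho)\cap\rH(\rho)$ embeds into $\ind_\rN^{\mathcal T(\rho)\cap\rH(\rho)}\res_\rN\tau=\ind_\rN^{\mathcal T(\rho)\cap\rH(\rho)}\res_\rN\pi_0\cong\pi_0\otimes\ind_\rN^{\mathcal T(\rho)\cap\rH(\rho)}\mathds{1}$, whose Jordan--H\"older constituents are $\pi_0$ twisted by smooth $k$-characters of the profinite abelian group $(\mathcal T(\rho)\cap\rH(\rho))/\rN$, each extending to a $k$-character of $\rG$ by Lemma~\ref{lem a13}; since $\tau$ is irreducible and finite-dimensional, $\tau\cong\pi_0\otimes\phi$ for one $k$-character $\phi$ of $\rG$ (no central character hypothesis is needed, $\rG$ being compact). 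Now the argument of Proposition~\ref{prop a1} applies: if a second extension $\pi_2\cong\pi_0\otimes\phi$ of $\rho'$ occurred in $\res_{\mathcal T(\rho)\cap\rH(\rho)}^\rG\rho$, then, after adjusting $\phi$ by a $k$-character of $\rG$ trivial on $\mathcal T(\rho)\cap\rH(\rho)$ --- hence, by the previous paragraph, lying in $\mathcal G(\rho)$ --- and using Frobenius reciprocity and the defining property of $\rH(\rho)$, one gets $\rho\cong\rho\otimes\phi$ with $\phi$ nontrivial on $\mathcal T(\rho)\cap\rH(\rho)\subseteq\mathcal T(\rho)$, contradicting the definition of $\mathcal T(\rho)$; so $\pi_0$ is the unique extension of $\rho'$ occurring in $\rho$, and then $\ind_{\mathcal T(\rho)\cap\rH(\rho)}^{\rH(\rho)}\pi_0\cong\rho_1\otimes\ind_{\mathcal T(\rho)\cap\rH(\rho)}^{\rH(\rho)}\mathds{1}\cong\bigoplus_\chi\rho_1\otimes\chi$ over the distinct characters $\chi$ of the $\ell'$-group $\rH(\rho)/(\mathcal T(\rho)\cap\rH(\rho))$; inducing to $\rG$, using that each $\chi$ restricts from an element of $\mathcal G(\rho)$ and Frobenius reciprocity, one gets $d=(\rH(\rho):\mathcal T(\rho)\cap\rH(\rho))$, which is therefore prime to $\ell$. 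For part~(1): if $\res_\rN^\rG\rho$ is multiplicity-free then $d=1$, so $\rH(\rho)=\mathcal T(\rho)\cap\rH(\rho)\subseteq\mathcal T(\rho)$ and $lg(\res_\rN^\rG\rho)=(\rG:\rH(\rho))$ has $\ell'$-part $(\rG:\mathcal T(\rho))=|(\rG/\mathcal T(\rho))^\wedge|=|\mathcal G(\rho)|$. For part~(2), following the proof of Corollary~\ref{cor a10}(5): if $\mathcal G(\rho)\cong(\rG/\mathcal T(\rho))^\wedge$ is cyclic then so is the $\ell'$-group $\rG/\mathcal T(\rho)$, hence so is the subgroup $\mathcal S(\rho)/(\mathcal T(\rho)\cap\rH(\rho))$ of $(\rG/(\mathcal T(\rho)\cap\rH(\rho)))_{\ell'}=\rG/\mathcal T(\rho)$ --- it is an $\ell'$-group, of order $(\mathcal S(\rho):\rH(\rho))\cdot(\rH(\rho):\mathcal T(\rho)\cap\rH(\rho))=d^2$ with $d$ prime to $\ell$; $\pi_0$ then extends to $\mathcal S(\rho)$ by Lemma~\ref{lem a11} applied step by step along this cyclic quotient, an extension containing $\pi_0$ induces irreducibly to $\rho$ by Clifford theory, so $\rH(\rho)$ may be taken equal to $\mathcal S(\rho)$ and $d=1$.

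The main obstacle I anticipate is organizational rather than conceptual: one must verify at each step that $\mathcal T(\rho)$, $\rH(\rho)$, $\mathcal S(\rho)$ all contain $\rN$ and are of finite index, so that every quotient occurring is finite abelian and Lemma~\ref{lem a4} applies, and arrange, each time Proposition~\ref{prop a1}'s argument calls for it, that the $k$-character supplied by Lemma~\ref{lem a13} be chosen trivial on $\mathcal T(\rho)$ so that it lies in $\mathcal G(\rho)$; the most delicate bookkeeping is in part~(2), where one needs $\mathcal S(\rho)/(\mathcal T(\rho)\cap\rH(\rho))$ to be a cyclic $\ell'$-group and $\pi_0$ to be invariant enough to extend along it. Nothing genuinely new over the $\mathrm{GL}_n(F)$ case is required beyond the three substitutions above, and those affect only the (routine) semisimplicity and finite-length statements.
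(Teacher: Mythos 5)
Your proof is correct and follows essentially the same route as the paper's: the paper also proves this proposition by replaying Lemmas~\ref{lem a9} and \ref{lem a2}, Proposition~\ref{prop a1}, and Corollary~\ref{cor a10} in the profinite setting, citing the finite-dimensionality of $\rho$, the open subgroup of pro-order invertible in $k^*$, Lemma~\ref{lem a11}, and Lemma~\ref{lem a13} at exactly the places you do. Your expansion supplies details the paper leaves implicit (e.g.\ the Clifford-theoretic semisimplicity of $\rho\vert_\rN$, the perfect pairing making $\rG/\mathcal{T}(\rho)$ an $\ell'$-group, and the observation that compactness of $\rG$ removes the central-character hypothesis from the Lemma~\ref{lem a2} analogue), but the underlying argument is the same.
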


\begin{proof}
The proof is basically the same as the case above. We list some details which need to be modified while applying the proofs. We can define $\mathcal{S}(\rho)$ the same way as in (\ref{equa a12}). Similarly we can show that $\mathcal{S}(\rho)$ and $\rH(\rho)$ are open with finite index. Moreover, $\mathcal{T}(\rho)$ is open with finite index. In fact, the $k$-representation $\rho$ is finitely dimensional. There exists an open subgroup $K$ of $\rG$ stabilising $\rho$, which implies that $\mathcal{G}(\rho)\subset(\rG\slash K)^{\wedge}$. By applying Lemma \ref{lem a11}, we can find an extension $\rho_1$ of $\rho'$ to $\rH(\rho)$. 

Define $\rho_0$ as $\rho_1\vert_{\mathcal{T}(\rho)\cap\rH(\rho)}$, and Propositon \ref{prop a1} is correct. By the assumption, there exists an open normal subgroup $K_0$ of $\rG$ whose pro-order in invertible in $k^{\ast}$, then $K_0\rN$ is an open subgroup, who is the inverse image of $K_0\slash (K_0\cap\rN)$, hence $\rG\slash\rN$ contains an open subgroup whose pro-order is invertible in $k^{\ast}$. Then Lemma \ref{lem a2} can be applied to our case by replacing $\phi\circ\det$ to $\chi\in(\mathcal{T}(\rho)\cap\rH(\rho)\slash\rN)^{\wedge}$, furthermore, we can assume $\chi\in(\rG\slash\rN)^{\wedge}$ according to Lemma \ref{lem a13}. We deduce that $\rho_0$ is the unique extension of $\rho'$ containing in $\rho$. The proof of Lemma $\ref{lem a9}$ is valid, so is the proof of Proposition \ref{prop a1}, of which this proposition can be deduced as Corollary \ref{cor a10}.
\end{proof}

We end this section with an example, which is an application of the length formula in Corollary \ref{cor a10}. The length of parabolic induction is always a difficult problem while considering $\ell$-modular representations. Especially to estimate the upper bound, which is a difference comparing to the case of representations of characteristic zero. While in small ranks, many examples can be computed. In the example at the end of \cite{Dat}, the last case is left, and we give an answer of this problem.

\begin{ex}
\label{ex a37}
Let $q$ be the cardinality of the residual field of $F$, and $\alpha$ a square root of $q^{-1}$. Let $\rG'=\mathrm{SL}_2(F)$ and $\rG=\mathrm{GL}_2(F)$. Let $\rM$ be the diagonal maximal torus in $\rG$, and $\rM'=\rG\cap\rM$, $\rP$ the group of upper triangular matrices, with $\rP=\rM\rU$. Let $\psi$ be a $k$-quasicharacter on $\rM'$, such that for $m'=(\varpi_{F},\varpi_{F}^{-1})$ ($\varpi_F$ is an uniformizer of $F$, $m'\in\rM'$) $\psi(m)=q^{-1}$ and $q\equiv -1$ (mod $\ell$). When $\ell\neq 2$, the induced representation $i_{\rM'}^{\rG'}\psi$ has length $4$, and when $\ell=2$, the length is $6$. 
\end{ex}

\begin{proof}

As explained in the last example of \cite{Dat}, the induced representation $i_{\rM'}^{\rG'}\psi$ should have length $4$ or $6$. We compute the length in two ways: first we apply Corollary \ref{cor a10} with a condition that $\ell\neq 2$, second we compute through the theory of types.

$\mathbf{Method}$ $\mathbf{1}$ ($\ell\neq 2$): Assume that $\ell\neq 2$, and $q^{-1}\equiv -1$ (mod $\ell$). Let $\delta_{\rP}$ be the $\mathrm{mod}$ character of $\rP$, and $\mu$ be an $k$-quasicharacter of $\rM$, such that for an diagonal element $m=(a,b)\in\rM$, $\mu(m)=\alpha^{\mathrm{val}(a\slash b)}$, which can be extended to $\rP$ through $\rP\rightarrow\rM$. We have $\mu^{2}=\delta_{\rP}$ and $\psi=\mu\vert_{\rM'}$. Vign\'eras proved that $i_{\rM}^{\rG}\mu$ has length $3$, with a quotient character and a sub-character. By geometric lemma, we have
$$i_{\rM'}^{\rG'}\psi\cong \res_{\rG'}^{\rG}i_{\rM}^{\rG}\mu.$$
Let $\pi$ be the cuspidal sub-quotient of $i_{\rM}^{\rG}\mu$, the length of $\res_{\rG'}^{\rG}\pi$ should have length $2$ or $4$ as explained in \cite{Dat}. Now we apply Corollary \ref{cor a10} to $\pi$. The restriction $\res_{\rG'}^{\rG}\pi$ is multiplicity-free, hence $\vert\mathcal{G}(\pi)\vert=lg(\res_{\rG'}^{\rG}\pi)_{\ell'}$. Under the assumption $\ell\neq 2$, the equation above should be $\vert\mathcal{G}(\pi)\vert=lg(\res_{\rG'}^{\rG}\pi)$. 

It's left to compute $\mathcal{G}(\pi)$. In fact, for a $k$-quasicharacter $\chi$ of $F^{\times}$, $\pi\otimes\chi\circ\det$ is a sub-quotient of $i_{\rM}^{\rG}\mu\otimes\chi\circ\det$. If $\pi\cong\pi\otimes\chi\circ\det$, by the uniqueness of supercuspidal support, we obtain that $(\rM,\mu)$ and $(\rM,\mu\otimes\chi\circ\det)$ should belong to the same $\rG$-conjugacy class. There are only two element in the Weyl group $W_{\rG}$ relative to $\rM$. We have $W_{\rG}=\{1,w\}$, where $w(a,b)=(b,a)$, for any element $(a,b)\in\rM$. Hence $(\rM,\mu)$ and $(\rM,\mu\otimes\chi\circ\det)$ belong to the same $\rG$-conjugacy class, either $\mu\cong\mu\otimes\chi\circ\det$ or $w(\mu)\cong\mu\otimes\chi\circ\det$. For the first case, we deduce that $\chi=\mathds{1}$. For the second case, we have $w(\mu)=\mu^{-1}$. Let $\theta$ be a $k$-quasicharacter of $\rM$, such that $w(\mu)\cong\mu\otimes\theta$, we have $\theta(a,b)=q^{-\mathrm{val}(b)+\mathrm{val}(a)}=q^{\mathrm{val}(ab)}$, since $q^{-1}\equiv q\equiv -1$ (mod $\ell$). Hence $\theta$ factors through determinant. We deduce that $\vert\mathcal{G}(\pi)\vert=2$. Since the Jordan-H\"older components of $i_{\rM}^{\rG}\mu$ are two characters as well as an infinitely dimensional representation $\pi$, we conclude that the length of $i_{\rM'}^{\rG'}\psi$ is $4$.

$\mathbf{Method}$ $\mathbf{2}$ ($p\neq 2$): Now we consider the types: $\pi$ contains $(\mathrm{GL}_2(\mathfrak{o}_F),\mathrm{St})$, where $\mathrm{St}$ is the cuspidal Steinberg representation of $\mathrm{GL}_2(k_{F})$, $\mathfrak{o}_F$ is the ring of integers of $F$, and $k_F$ is the residual field of $F$. By Frobenius reciprocity, we have
\begin{equation}
\label{equationex7}
\res_{\rG'}^{\rG}\pi\cong\ind_{\mathrm{SL}_2(\mathfrak{o}_F)}^{\rG'}\mathrm{St}\vert_{\mathrm{SL}_2(k_F)}\oplus\ind_{\alpha(\mathrm{SL}_2(\mathfrak{o}_F))}^{\rG'}\alpha(\mathrm{St}\vert_{\mathrm{SL}_2(k_F)}),
\end{equation}
where $\alpha=(\omega_{F},1)\in\rM$. We determine the length of $\mathrm{St}\vert_{\mathrm{SL}_2(k_F)}$ by looking at the character table in \cite{BonII} which requires $p\neq 2$.

When $\ell=2$ (hence $p\neq 2$), by \cite[\S 9.4.4]{BonII}, we know that $\mathrm{St}\vert_{\mathrm{SL}_2(k_F)}$ is cuspidal and semisimple with length two, containing two irreducible components $\overline{\mathrm{St}}_{+}^k$ and $\overline{\mathrm{St}}_{-}^k$ (the same notation as in \cite{BonII}), which implies that $(\mathrm{SL}_{2}(\mathfrak{o}_F),\overline{\mathrm{St}}_{\pm}^k)$ are maximal simple $k$-types of $\rG'$ (see Definition \ref{defn 22}). Hence the length $lg(\pi\vert_{\rG'})=4$, and the length of $i_{\rM'}^{\rG'}\psi$ is $6$.

It is worth noticing that when $\ell\neq 2$ and $p\neq 2$, the length of $\pi\vert_{\rG'}$ can be computed through the same equation \ref{equationex7} by applying \cite[\S 9.4.4]{BonII}, which coincides with the computation in Method $1$.

We determine the length of $i_{\rM'}^{\rG'}\psi$ by combining the two methods above.
\end{proof}

\subsection{The index of $\mathcal{G}(\pi)$}
\label{section 003.2}
The third part of Corollary \ref{cor a10} gives an equation between $lg(\pi\vert_{\rG'})_{\ell'}$ and the cardinality $\vert \mathcal{G}(\pi)\vert$, while the latter will be expressed through type theory in this section (Theorem \ref{thm a20}), which will be applied to prove Theorem \ref{thm a26}. 

Let $(J,\lambda)$ be a $k$-maximal simple type of $\rG$, and $t(\lambda)$ the length of $\lambda\vert_{J'}$. We divide the characters in $\mathcal{G}(\pi)$ into two parts: those which are non-trivial on $\mathfrak{o}_{F}^{\times}$ (Proposition \ref{prop a15}, Proposition \ref{prop a16}, Corollary \ref{cor a17}), and those which are unramified (Proposition \ref{prop a18}, Corollary \ref{cor a19}), and we compute the size of these two parts individually.

\begin{prop}
\label{prop a15}
Let $(J,\lambda)$ be a maximal simple $k$-type of $\rG$. There are $(\tilde{J}:J)t(\lambda)_{\ell'}$ distinct $k$-quasicharacters of the group $\det(J)$, such that $\lambda$ and $\lambda\otimes\chi\circ\det$ are weakly intertwine in $\rG$ (Definition \ref{prepdefn 01}).
\end{prop}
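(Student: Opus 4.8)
The plan is to count the $k$-quasicharacters $\chi$ of $\det(J)$ for which $\lambda$ and $\lambda\otimes\chi\circ\det$ are weakly intertwined in $\rG$, and to show this number equals $(\tilde J:J)\,t(\lambda)_{\ell'}$. The first reduction is to replace ``weakly intertwined'' by ``intertwined'': by Corollary \ref{cor 5} (applied with $g$ ranging over $\rG$), $\lambda$ and $\lambda\otimes\chi\circ\det$ are weakly intertwined in $\rG$ if and only if they are intertwined in $\rG$, and by Proposition \ref{prop 1} this holds if and only if there exists $x\in\rU(\fA)$ with $x(J)=J$ and $x(\lambda)\cong\lambda\otimes\chi\circ\det$. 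So the set in question is exactly $\{\chi\in(\det J)^{\wedge}: \exists x\in\rU(\fA),\ x(J)=J,\ x(\lambda)\cong\lambda\otimes\chi\circ\det\}$. Because $\tilde J$ is precisely the group of such $x$ (Definition \ref{defn 11}), this set is the image of the map $\tilde J\to(\det J)^{\wedge}$ sending $x$ to $(\chi\circ\det)|_{\det J}$ — well, more precisely we first land in $(\det J)^{\wedge}$ by restricting a chosen $\chi$ attached to $x$; I must check this is well-defined, which follows from Corollary \ref{cor 13}(1) together with the fact that two choices of $\chi$ for the same $x$ differ by a character trivial on $\det(J^1)\subseteq\det(J)$, hence by Proposition \ref{prop 12} their restrictions to $\det J$ agree once we know the ambiguity is killed — here I need to be slightly careful, since Corollary \ref{cor 13}(1) only gives well-definedness onto $(\det J^1)^{\wedge}$, so I should instead argue that the set of valid $\chi|_{\det J}$ is a union of cosets of $(\det J/\det J^1)^{\wedge}$-translates and count accordingly.

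The cleaner route, which I would actually carry out, is a two-layer count. First, the homomorphism $\tilde J/J\hookrightarrow(\det J^1)^{\wedge}$ of Corollary \ref{cor 13}(1) is injective, so the number of distinct restrictions $(\chi\circ\det)|_{\det J^1}$ arising from $x\in\tilde J$ is exactly $(\tilde J:J)$. Second, for each fixed $x\in\tilde J$, the characters $\chi'$ of $\det J$ with $x(\lambda)\cong\lambda\otimes\chi'\circ\det$ are exactly the translates $\chi\cdot\psi$ where $\psi$ runs over characters of $\det J$ with $\lambda\cong\lambda\otimes\psi\circ\det$; by Proposition \ref{prop 12} these $\psi$ are trivial on $\det J^1$ and satisfy $\sigma\cong\sigma\otimes\psi\circ\det|_{\rU(\mathfrak B)}$, i.e.\ they are exactly the characters $\psi$ of $\det J/\det J^1\cong$ (a quotient of) $k_E^{\times}$-type data for which the $\GL_f(k_E)$-representation $\pi_0$ is fixed under twisting by $\psi\circ\det$. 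The number of such $\psi$ is precisely $t(\lambda)_{\ell'}$: indeed $t(\lambda)=\mathrm{lg}(\res^J_{J'}\lambda)$ and, since $J/J'\cong\rU(\mathfrak B)/\rU^1(\mathfrak B)\cong\GL_f(k_E)$ with $\lambda|_{J^1}$ a multiple of $\kappa|_{J^1}$ and the relevant twisting acting only through $\sigma$, the length of $\res^J_{J'}\lambda$ counts the orbit/stabiliser data of $\pi_0$ under twisting by characters of $\det J/\det J^1$, whose $\ell'$-part is the count of twisting characters fixing $\pi_0$. I would invoke Corollary \ref{cor a10}(3) (the formula $\mathrm{lg}(\cdot)_{\ell'}=|\mathcal G(\cdot)|$) applied to the finite group $\GL_f(k_E)$ and its normal subgroup $\SL_f(k_E)$ to pin this down, or cite the analogous finite-group statement directly.

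Putting the layers together, the map $\{x\in\tilde J\}\to\{\text{valid }\chi|_{\det J}\}$ is surjective with all fibres of the same size — the fibre over a given $\chi|_{\det J}$ being a coset of $\{\psi\in(\det J/\det J^1)^{\wedge}:\sigma\cong\sigma\otimes\psi\}$ once we quotient by $J$ — so the number of valid $\chi|_{\det J}$ equals $(\tilde J:J)\cdot t(\lambda)_{\ell'} / c$ where $c$ accounts for the overlap, and a short check shows $c=1$: distinct classes in $\tilde J/J$ give distinct restrictions to $\det J^1$, hence their coset images in $(\det J)^{\wedge}$ are disjoint. I expect the main obstacle to be precisely this bookkeeping of ``distinct restrictions to $\det J$ versus distinct restrictions to $\det J^1$'': one must confirm that twisting characters that agree on $\det J^1$ but differ on $\det J$ are genuinely counted by $t(\lambda)_{\ell'}$ and that they do not collide across different $\tilde J/J$-classes. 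This is where I would slow down and verify the exact sequence $1\to(\det J/\det J^1)^{\wedge}\to(\det J)^{\wedge}\to(\det J^1)^{\wedge}$ interacts correctly with the twisting action on $\sigma=\otimes^e\pi_0$ (here $e=1$, so $\sigma=\pi_0$), using Proposition \ref{prop 12}(2) as the bridge.
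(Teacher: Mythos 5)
Your overall skeleton matches the paper's: both arguments reduce to the identity $\lvert\mathcal{I}(\lambda)\rvert = (\mathcal{I}(\lambda):\mathcal{G}(\lambda))\cdot\lvert\mathcal{G}(\lambda)\rvert$, where $\mathcal{I}(\lambda)$ is the set of valid $\chi$ and $\mathcal{G}(\lambda)=\{\chi:\lambda\cong\lambda\otimes\chi\circ\det\}$, then identify the index with $(\tilde J:J)$ via Corollary \ref{cor 13}(1) together with Proposition \ref{prop 12}, and identify the kernel's size with $t(\lambda)_{\ell'}$. The index step is fine in your write-up. The gap is in the second factor, and it is not merely the ``bookkeeping'' you flag at the end.

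The formula $\lvert\mathcal{G}(\lambda)\rvert = t(\lambda)_{\ell'}$ does not come for free. It is an instance of the profinite length formula (Proposition \ref{prop a14}(1), not Corollary \ref{cor a10}, which is specifically about $\GL_n(F)$ versus $\SL_n(F)$), applied with $\rG = J$ and $\rN = J' = J\cap\rG'$. That statement has a crucial hypothesis: $\res_{J'}^{J}\lambda$ must be \emph{multiplicity-free}; without it, $t(\lambda)_{\ell'}$ and $\lvert\mathcal{G}(\lambda)\rvert$ can diverge. Your proposal never verifies multiplicity-freeness; it just asserts the count via a vague appeal to ``orbit/stabiliser data of $\pi_0$.'' The paper's route is to observe that $\ind_{J'}^{\rG'}\res_{J'}^{J}\lambda$ embeds in $\res_{\rG'}^{\rG}\pi$, which is multiplicity-free by the Whittaker-model argument (Proposition \ref{prop 23}); this then transports to $\res_{J'}^{J}\lambda$. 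Without that input, your second layer is unsupported.

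Separately, the reduction you suggest to $\GL_f(k_E)$ and $\SL_f(k_E)$ is not a clean substitute: $J^1$ is \emph{not} contained in $J'$, so $J/J'$ is not literally $\GL_f(k_E)/\SL_f(k_E)$, and the twist $\chi\circ\det$ acts on $\kappa$ as well as on $\sigma$ before you can pass to the finite quotient. Even if this could be pushed through, you would still face the same multiplicity-freeness requirement at the finite level, and the paper's direct application of Proposition \ref{prop a14} to $(J,J')$ avoids this detour entirely. So: same strategy, but the step $\lvert\mathcal{G}(\lambda)\rvert = t(\lambda)_{\ell'}$ needs the multiplicity-freeness of $\res_{J'}^{J}\lambda$ (via Proposition \ref{prop 23}) and the correct citation to Proposition \ref{prop a14}, both missing from your proposal.
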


\begin{proof}
Let $\mathcal{I}(\lambda)$ be the set of $k$-quasicharacters $\chi$ of $\det(J)$ such that $\lambda$ and $\lambda\otimes\chi\circ\det$ are weakly intertwined in $\rG$. By Proposition \ref{prop 1}, let $\chi\in\mathcal{I}(\lambda)$, then there exists an element $x\in\rU(\rA)$ such that $x$ normalises $J$ and $x(\lambda)\cong\lambda\otimes\chi\circ\det$. Let $\mathcal{G}(\lambda)$ consists with the $k$-quasicharacter $\chi$ such that $\lambda$ is isomorphic to $\lambda\otimes\chi\circ\det$, which is a subgroup of $\mathcal{I}(\lambda)$. We compute firstly the cardinality of $\mathcal{G}(\lambda)$.  Since $\ind_{J'}^{\rG'}\res_{J'}^{J}\lambda$ is a subrepresentation of $\res_{\rG'}^{\rG}\pi$, and the latter is multiplicity-free, we have that $\res_{J'}^{J}\lambda$ is multiplicity-free. Hence by Proposition \ref{prop a14}, the cardinality of $\mathcal{G}(\lambda)$ is equal to $t(\lambda)_{\ell'}$. Proposition \ref{prop 12} and Corollary \ref{cor 13} imply that $(\mathcal{I}(\lambda):\mathcal{G}(\lambda))=(\tilde{J}:J)$. We conclude that $\vert\mathcal{I}(\lambda)\vert=(\tilde{J}: J)t(\lambda)_{\ell'}$.
\end{proof}

\begin{prop}
\label{prop a16}
Let $(J,\lambda)$ be a maximal simple $K$-type in $\rG$, and $\fA$ the attached principal order (see Section \ref{Notation}). Let $\chi$ be a $k$-quasicharacter of $F^{\times}$.
\begin{itemize}
\item Let $\pi$ be an irreducible $k$-representation of $\rG$ containing $\lambda$. Suppose that $\pi\otimes\chi\circ\det\cong\pi$. Then there exists $x\in\tilde{J}(\lambda)$ such that $x(\lambda)\cong\lambda\otimes\chi\circ\det$.
\item If $\lambda$ and $\lambda\otimes\chi\circ\det$ are weakly intertwined in $\rG$, then there exists an unramified $k$-quasicharacter $\phi$ of $F^{\times}$, such that for any irreducible $k$-representation $\pi$ of $\rG$ which contains $\lambda$, we have $\pi\cong\pi\otimes (\chi\phi)\circ\det$.
\end{itemize}
\end{prop}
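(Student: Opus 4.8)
The plan is to leverage Proposition \ref{prop 1} together with Corollary \ref{cor 3} and Corollary \ref{cor 13}, exploiting that weak intertwining of $\lambda$ and $\lambda\otimes\chi\circ\det$ is detected on the subgroups $\mathrm{H}^1\subset J^1\subset J$. For the first bullet: if $\pi\cong\pi\otimes\chi\circ\det$ and $\pi$ contains $\lambda$, then $\pi$ contains both $\lambda$ and $\lambda\otimes\chi\circ\det$ as $J$-subrepresentations; restricting $\pi$ to $\mathrm{H}^1$ (a pro-$p$ group, so semisimple restriction) shows that $\lambda$ and $\lambda\otimes\chi\circ\det$ occur in a common irreducible $\pi$, hence they are weakly intertwined in $\rG$ via some $g\in\rG$. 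By Corollary \ref{cor 5} they are intertwined, and Proposition \ref{prop 1} then yields $x\in\rU(\fA)$ with $x(J)=J$ and $x(\lambda)\cong\lambda\otimes\chi\circ\det$; by Definition \ref{defn 11} this $x$ lies in $\tilde J(\lambda)$, which is what we want.

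For the second bullet, the issue is that $\chi$ need not itself normalise $J$ in the sense of the first bullet: the constant $c\in F^\times$ produced by Lemma \ref{lem 2} (via \cite[appendix]{BuKuII}) so that $\theta\otimes\chi\circ\det\in C_k(\fA,0,\beta+c)$ need not be such that the stratum $[\fA,n,0,\beta+c]$ has the same invariants. The idea is to correct $\chi$ by an unramified $k$-quasicharacter $\phi$ so that the corrected character behaves well. Concretely: from weak intertwining of $\lambda$ with $\lambda\otimes\chi\circ\det$ and Proposition \ref{prop 1} we get $x\in\rU(\fA)$ normalising $J$ with $x(\lambda)\cong\lambda\otimes\chi\circ\det$, so in particular $\chi\circ\det$ is trivial on $\det(J^1)$ (Corollary \ref{cor 13}(1)). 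I would then observe that $\det(J)/\det(J^1)$ is generated by $\det(\rU(\fB))$ modulo the pro-$p$ part, and that an unramified twist $\phi$ can absorb the "valuation" discrepancy; the precise statement needed is that after multiplying by a suitable unramified $\phi$, the character $(\chi\phi)\circ\det$ satisfies $\lambda\otimes(\chi\phi)\circ\det\cong\lambda\otimes\chi\circ\det$ as $J$-representations — no, rather: we want $(\chi\phi)\circ\det$ to be such that $x$ actually conjugates within $\tilde J$ and the resulting isomorphism propagates to any $\pi\supset\lambda$. The cleanest route is: pick $\phi$ unramified so that $(\chi\phi)\circ\det$ restricted to the subgroup generated by $\tilde J$ and the relevant element of $E^\times$ (which normalises $\rU(\fA)$) becomes trivial on the piece that obstructs, using that $F^\times\cong\fo_F^\times\times\bZ$ and that an unramified character is free on the $\bZ$-factor.

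The main obstacle I anticipate is exactly this bookkeeping in the second bullet: making precise which unramified $\phi$ to choose and verifying that, once $\lambda\cong x(\lambda\otimes(\chi\phi)^{-1}\circ\det)$ holds with $x$ lying in a group normalising $\pi$ (something like $E^\times J$, whose compact induction of an extension of $\lambda$ is $\pi$), the twist passes from $\lambda$ up to $\pi$. The key point that makes this work is that $\pi\cong\ind_{E^\times J}^{\rG}\Lambda$ for an extension $\Lambda$ of $\lambda$, and $E^\times$ acts on everything compatibly; twisting by a character trivial on $\det(E^\times J)$ fixes $\pi$, and the unramified $\phi$ is chosen precisely to reduce to that case. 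I would carry out the argument in the order: (1) first bullet via Propositions \ref{prop 1}, \ref{cor 5}; (2) reduce the second bullet to finding $\phi$ with $(\chi\phi)\circ\det$ trivial on $\det(E^\times J)$, using Corollary \ref{cor 13} to control the relevant finite group; (3) conclude $\pi\cong\pi\otimes(\chi\phi)\circ\det$ from the description $\pi\cong\ind_{E^\times J}^{\rG}\Lambda$ and Proposition \ref{prop 12}.
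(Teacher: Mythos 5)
Your first bullet is essentially the paper's argument: since $\pi$ is normalised by $\chi\circ\det$, both $\lambda$ and $\lambda\otimes\chi\circ\det$ occur in $\pi\vert_J$, hence are (weakly) intertwined, and Proposition~\ref{prop 1} plus Corollary~\ref{cor 5} produce $x\in\rU(\fA)$ normalising $J$ with $x(\lambda)\cong\lambda\otimes\chi\circ\det$; by Definition~\ref{defn 11} that $x$ lies in $\tilde J$. No issue there, modulo the fact that the detour through $\rH^1$ is unnecessary.

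The second bullet has a genuine gap. Your final plan is to ``find $\phi$ unramified with $(\chi\phi)\circ\det$ trivial on $\det(E^\times J)$'' and then induce. But an unramified $\phi$ is trivial on $\fo_F^\times\supset\det(J)$, so $(\chi\phi)\vert_{\det(J)}=\chi\vert_{\det(J)}$, and this is \emph{not} trivial in the interesting case. Indeed, under the hypothesis one only gets $x(\lambda)\cong\lambda\otimes\chi\circ\det$ for some $x\in\tilde J$; if $x\notin J$, then by the injectivity in Corollary~\ref{cor 13}(1) the restriction $\chi\circ\det\vert_{J^1}$ is already nontrivial, so $\chi$ is nontrivial on $\det(J)$ and no unramified twist can kill it. Thus $(\chi\phi)\circ\det$ can never be made trivial on $\det(E^\times J)$ unless $\lambda\cong\lambda\otimes\chi\circ\det$ outright, which defeats the purpose of the statement. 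The correct reduction is different: one does \emph{not} try to make the character trivial, but rather compares the two extensions $x(\Lambda)$ and $\Lambda\otimes\chi\circ\det$ of $\lambda\otimes\chi\circ\det$ to $E^\times J$, where $\pi\cong\ind_{E^\times J}^\rG\Lambda$. After matching central characters on $i(F^\times)$ by an unramified twist $\theta_1$, these two extensions differ by a character $\theta_2$ of the cyclic quotient $E^\times J/F^\times J\cong E^\times/F^\times\fo_E^\times$. The key steps you are missing are (i) showing $\theta_2$ factors through $\det$, using $E^\times J\cap\rG'=J'\subset F^\times J$, and (ii) showing the resulting character of $\det(E^\times J)$ extends to an \emph{unramified} quasicharacter $\theta_3$ of $F^\times$, using that $\fo_F^\times\cap\det(E^\times J)\subset\det(J)$ already lies in the kernel. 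Then $\Lambda\otimes(\chi\theta_1\theta_3^{-1})\circ\det\cong x(\Lambda)$, and inducing both sides to $\rG$ gives $\pi\otimes(\chi\theta_1\theta_3^{-1})\circ\det\cong\pi$, so $\phi=\theta_1\theta_3^{-1}$ works. Your intuition that $E^\times J$ is the right normalising group and that $F^\times\cong\fo_F^\times\times\bZ$ enters is correct, but the concrete reduction you propose is not achievable as stated.
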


\begin{proof}
If $\pi$ contains $\lambda$ and $\lambda\otimes\chi\circ\det$, they are intertwined in $\rG$. By Proposition \ref{prop 1}, there exists $x\in\tilde{J}$, such that $x(\lambda)\cong\lambda\otimes\chi\circ\det$.

For the second part, by assumption there exists $x\in\tilde{J}$ such that $x(\lambda)\cong\lambda\otimes\chi\circ\det$. Suppose that $\Lambda$ is an extension of $\lambda$ to $E^{\times}J$ such that $\ind_{E^{\times}J}^{\rG}\Lambda\cong\pi$. We have $\ind_{E^{\times}J}^{\rG}x(\Lambda)\cong\pi$ and $\res_{J}^{E^{\times}J}x(\Lambda)\cong\lambda\otimes\chi\circ\det$. Hence $x(\Lambda)$ and $\Lambda\otimes\chi\circ\det$ are two extensions of $\lambda\otimes\chi\circ\det$. It is sufficient to construct an unramified $k$-quasicharacter $\phi$ of $F^{\times}$ such that $\Lambda\otimes(\chi\phi)\circ\det\cong x(\Lambda)$. 

Let $i$ be the canonical isomorphism of $F^{\times}$ to the centre of $\rG$, and $\chi_0$ the central character of $\pi$. We have $i(F^{\times})\cap J\cong i(\mathfrak{o}_{F}^{\times})$, and $x(\Lambda)\vert_{i(\mathfrak{o}_{F}^{\times})}$ is a multiple of $\chi_0$. Meanwhile the restriction $\Lambda\otimes\chi\circ\det\vert_{i(\mathfrak{o}_F^{\times})}$ is a multiple of $\chi_0\otimes\chi\circ\det$, which implies that $\chi_0\otimes\chi\circ\det\vert_{i(\mathfrak{o}_F^{\times})}\cong\chi_0\vert_{i(\mathfrak{o}_F^{\times})}$. By modifying an unramified $k$-character $\theta_1$ of $F^{\times}$, we obtain an extension $\Lambda\otimes (\chi\theta_1)\circ\det$ of $\lambda\otimes\chi\circ\det$, such that $x(\Lambda)\vert_{F^{\times}J}\cong\Lambda\otimes (\chi\theta_1)\circ\det\vert_{F^{\times}J}$.

Furthermore, we have 
$$\Lambda\otimes(\chi\theta_1)\circ\det\hookrightarrow\ind_{F^{\times}J}^{E^{\times}J}x(\Lambda)\vert_{F^{\times}J}\cong x(\Lambda)\otimes\ind_{F^{\times}J}^{E^{\times}J}\mathds{1}.$$
We have an equivalence $E^{\times}J\slash F^{\times}J\cong E^{\times}\slash F^{\times}\mathfrak{o}_{E}^{\times}$, which is a cyclic group of order $e(E\vert F)$. The inclusion above implies that $\Lambda\otimes (\chi\theta_1)\circ\det\cong x(\Lambda)\otimes\theta_2$, where $\theta_2$ is a $k$-quasicharacter of $E^{\times}\slash F^{\times}\mathfrak{o}_{E}^{\times}$. We have that $\theta_2$ factors through $\det$. In fact, the intersection $E^{\times}J\cap\rG'$ is equal to $J'$. Hence $\theta_2$ can be extended to an unramified $k$-quasicharacter of $\rG$ which factors through $\det$. 

We conclude that $\Lambda\otimes(\chi\theta_1\theta_2^{-1})\circ\det\cong x(\Lambda)$, hence $\pi\otimes (\chi\theta_1\theta_2^{-1})\circ\det\cong\pi$, where by construction $\theta_1\theta_2^{-1}$ satisfies the condition for $\phi$ required in the statement.
\end{proof}

\begin{cor}
\label{cor a17}
Let $(J,\lambda)$ be a maximal simple $k$-type of $\rG$. Let $\pi$ be an irreducible $k$-representation containing $\lambda$, and $\mathcal{G}_0(\pi)$ a subset of $\mathcal{G}(\pi)$ consisting of unramified $k$-quasicharacters of $F^{\times}$. Then
$$(\mathcal{G}(\pi):\mathcal{G}_0(\pi))=(\mathfrak{o}_F^{\times}:\det(J))_{\ell'}(\tilde{J}(\lambda):J)t(\lambda)_{\ell'}.$$
\end{cor}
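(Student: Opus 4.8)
The plan is to compute $(\mathcal{G}(\pi):\mathcal{G}_0(\pi))$ by relating the quotient to the action of $\mathcal{G}(\pi)$ on the restriction $\det\colon F^\times\to F^\times$, i.e. by studying the image of $\mathcal{G}(\pi)$ in $(\mathfrak{o}_F^\times/\det(J))^\wedge$ via restriction of characters. First I would fix an irreducible $\pi$ containing $\lambda$ and recall from Proposition \ref{prop a16}(i) that for $\chi\in\mathcal{G}(\pi)$ there exists $x\in\tilde J(\lambda)$ with $x(\lambda)\cong\lambda\otimes\chi\circ\det$, so that $\chi\in\mathcal{I}(\lambda)$ in the notation of Proposition \ref{prop a15}; conversely, by Proposition \ref{prop a16}(ii), every $\chi$ with $\lambda,\lambda\otimes\chi\circ\det$ weakly intertwined can be corrected by an unramified quasicharacter $\phi$ so that $\chi\phi\in\mathcal{G}(\pi)$. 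The upshot I would extract is that the restriction map $\chi\mapsto\chi|_{\mathfrak{o}_F^\times}$ sends $\mathcal{G}(\pi)$ onto the same subgroup of $(\mathfrak{o}_F^\times/\det(J))^\wedge$ (or rather, onto characters of $\mathfrak{o}_F^\times$ trivial on $\det(J)$) as does $\mathcal{I}(\lambda)$, modulo a careful bookkeeping of the role of $\det(J)$ versus $\mathfrak{o}_F^\times$.

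Next I would organize the count along an exact-sequence picture. There is a natural surjection $\mathcal{G}(\pi)\twoheadrightarrow R$, where $R$ denotes the image of $\mathcal{G}(\pi)$ under restriction to $\mathfrak{o}_F^\times$, composed with the projection $(\mathfrak{o}_F^\times)^\wedge\to(\mathfrak{o}_F^\times/\det(J))^\wedge$; I would argue its kernel is exactly the set of $\chi\in\mathcal{G}(\pi)$ that are trivial on $\det(J)$, and then show — using that $\lambda\cong\lambda\otimes\chi\circ\det$ forces $\chi\circ\det$ trivial on $J^1$ and the structure of $J/J^1$ via Proposition \ref{prop 12} — that such $\chi$ must in fact be unramified, i.e. lie in $\mathcal{G}_0(\pi)$. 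Actually the cleanest route is: $\mathcal{G}_0(\pi)$ is the kernel of the restriction map $\mathcal{G}(\pi)\to(\mathfrak{o}_F^\times)^\wedge$ composed with nothing (unramified means trivial on $\mathfrak{o}_F^\times$), so $(\mathcal{G}(\pi):\mathcal{G}_0(\pi))$ equals the cardinality of the image of $\mathcal{G}(\pi)$ in $(\mathfrak{o}_F^\times)^\wedge$, and every such image character is automatically trivial on $\det(J)$ because $\chi\circ\det$ is trivial on $J$ when $\lambda\cong\lambda\otimes\chi\circ\det$ and conversely. So $(\mathcal{G}(\pi):\mathcal{G}_0(\pi))=|\,\mathrm{im}(\mathcal{G}(\pi)\to(\mathfrak{o}_F^\times/\det(J))^\wedge)\,|$.

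Then I would identify this image with (the $\ell'$-part of) the group $\mathcal{I}(\lambda)$ of weakly-intertwining characters of $\det(J)$. By Proposition \ref{prop a15}, $|\mathcal{I}(\lambda)| = (\tilde J:J)\,t(\lambda)_{\ell'}$, where $\mathcal{I}(\lambda)$ is a group of characters of $\det(J)$. Using Proposition \ref{prop a16}(ii) I would show that restriction-to-$\det(J)$ gives a surjection $\mathcal{G}(\pi)\to\mathcal{I}(\lambda)_{\ell'}$ — the $\ell'$-part appears because $\mathcal{G}(\pi)$ consists of $k$-characters and its image in any finite abelian quotient lies in the $\ell'$-part by Lemma \ref{lem a4}; conversely every element of $\mathcal{I}(\lambda)_{\ell'}$, viewed as a character of the $\ell'$-quotient, lifts by Lemma \ref{lem a13}-type arguments and is corrected by an unramified character to land in $\mathcal{G}(\pi)$. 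Finally, to pass from characters of $\det(J)$ to characters of $\mathfrak{o}_F^\times/\det(J)$ in the final formula, note $\det(J)$ is an open subgroup of $\mathfrak{o}_F^\times$, and the factor $(\mathfrak{o}_F^\times:\det(J))_{\ell'}$ accounts for the difference between $|\mathcal{I}(\lambda)_{\ell'}|$ measured inside $(\det(J))^\wedge$ versus the contribution of $(\mathfrak{o}_F^\times/\det(J))^\wedge$; I would make this precise by the short exact sequence of finite abelian groups $1\to \mathfrak{o}_F^\times/\det(J)\to F^\times/\det(J)\to F^\times/\mathfrak{o}_F^\times\to 1$ and taking $\ell'$-duals. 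Combining, $(\mathcal{G}(\pi):\mathcal{G}_0(\pi)) = (\mathfrak{o}_F^\times:\det(J))_{\ell'}\,(\tilde J(\lambda):J)\,t(\lambda)_{\ell'}$.

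The main obstacle I anticipate is the precise bookkeeping in the last step: keeping straight which group of characters lives on $\det(J)$, which on $\mathfrak{o}_F^\times$, and which on $F^\times$, and ensuring the $\ell'$-parts are taken at exactly the right stages (since passing to $\ell'$-parts does not commute naively with restriction of character groups through non-split extensions). I would handle this by always working with the full finite abelian quotients first, invoking Lemma \ref{lem a4} only at the very end to replace $\wedge$ by its $\ell'$-version, and by carefully using that $(\tilde J:J)$ is an $\ell$-power-free... no — it is a $p$-group by Corollary \ref{cor 13}, hence coprime to $\ell$, which is why $(\tilde J:J)$ appears without an $\ell'$-subscript. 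That observation (the $p$-group structure of $\tilde J/J$) is the key simplification that lets the three factors combine cleanly.
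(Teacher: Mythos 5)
Your proposal has a genuine gap in the second paragraph, and it is precisely the step that produces the factor $(\mathfrak{o}_F^\times:\det(J))_{\ell'}$.

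You assert (in your ``cleanest route'') that a $\chi\in\mathcal{G}(\pi)$, restricted to $\mathfrak{o}_F^\times$, is automatically trivial on $\det(J)$ ``because $\chi\circ\det$ is trivial on $J$ when $\lambda\cong\lambda\otimes\chi\circ\det$.'' Both the premise and the implication fail. First, $\chi\in\mathcal{G}(\pi)$ only gives you, via Proposition~\ref{prop a16}, that $x(\lambda)\cong\lambda\otimes\chi\circ\det$ for some $x\in\tilde J(\lambda)$; it does not give $\lambda\cong\lambda\otimes\chi\circ\det$. Second, even when $\lambda\cong\lambda\otimes\chi\circ\det$, Proposition~\ref{prop 12} only forces $\chi\circ\det$ trivial on $J^1$, together with $\sigma\cong\sigma\otimes\chi\circ\det|_{\rU(\mathfrak{B})}$; a cuspidal $\sigma$ of $\mathrm{GL}_f(k_E)$ can admit non-trivial self-twists, and exactly these are counted by $\mathcal{G}(\lambda)$, whose size is $t(\lambda)_{\ell'}$. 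Conversely, your alternative version of the claim (that $\chi\in\mathcal{G}(\pi)$ trivial on $\det(J)$ forces $\chi$ unramified) fails for the same structural reason: the subgroup of $\mathcal{G}(\pi)/\mathcal{G}_0(\pi)$ consisting of classes of characters trivial on $\det(J)$ is exactly the image of $(\mathfrak{o}_F^\times/\det(J))^\wedge$, a group of size $(\mathfrak{o}_F^\times:\det(J))_{\ell'}$, and it is generally non-trivial. If that subgroup were trivial, the formula you are trying to prove would simply lack its first factor.

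The correct count requires a two-step filtration of $\mathcal{G}(\pi)/\mathcal{G}_0(\pi)$ rather than a single restriction map. Restriction to $\det(J)$ gives a surjection $\mathcal{G}(\pi)/\mathcal{G}_0(\pi)\twoheadrightarrow\mathcal{I}(\lambda)$ (surjectivity from Proposition~\ref{prop a16}(ii); the image lands in $\mathcal{I}(\lambda)$ by Proposition~\ref{prop a16}(i)), and its kernel consists of classes of $\chi\in\mathcal{G}(\pi)$ trivial on $\det(J)$; one then shows, by building a section out of $(\mathfrak{o}_F^\times/\det(J))^\wedge$ (lift a character trivially on $\varpi_F$, correct by an unramified $\phi$ via Proposition~\ref{prop a16}(ii) to land in $\mathcal{G}(\pi)$), that this kernel has order exactly $(\mathfrak{o}_F^\times:\det(J))_{\ell'}$. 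Your observation that $\tilde J/J$ is a $p$-group (Corollary~\ref{cor 13}), and hence $(\tilde J:J)$ needs no $\ell'$-subscript, is correct, but it does not repair the earlier steps.
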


\begin{proof}
First we construct a map
$$\tau_1: (\mathfrak{o}_F^{\times}\slash\det{J})^{\wedge}\rightarrow\mathcal{G}(\pi)\slash\mathcal{G}_0(\pi).$$
Let $\chi\in(\fo_F^{\times}\slash\det(J))^{\wedge}$, which belongs to $(F^{\times})^{\wedge}$ such that $\chi(\mathfrak{p}_F)=1$. We have $\lambda\cong\lambda\otimes\chi\circ\det$. Consider $\tau_1(\chi)$ as the image of $\chi\phi\circ\det$ in $\mathcal{G}(\pi)\slash\mathcal{G}_0(\pi)$, where $\phi$ is an unramified $k$-quasicharacter of $F^{\times}$ satisfying the requirement in Proposition \ref{prop a16}. The map $\tau_1$ is an injective group morphism. Now we consider
$$\mathcal{G}_1:=(\mathcal{G}(\pi)\slash\mathcal{G}_0(\pi))\slash\tau_1((\fo_F^{\times}\slash\det(J))^{\wedge}).$$
Recall that $\mathcal{I}(\lambda)$ be the set of $k$-quasicharacters $\chi$ of $\det(J)$, such that $\lambda\otimes\chi\circ\det$ and $\lambda$ are weakly intertwined in $\rG$. We consider a map $\tau_2:\mathcal{I}(\lambda)\rightarrow\mathcal{G}_1$, which maps $\chi$ to the image of $\chi\phi\circ\det$, then $\tau_2$ is a group isomorphism. In fact, the restriction to $\det(J)^{\times}$ induces a left inverse of $\tau_2$, which must be injective. On the other hand, the second statement of Proposition \ref{prop a16} implies that $\tau_2$ is surjective. We conclude that 
$$(\mathcal{G}(\pi):\mathcal{G}_0(\pi))=\vert(\fo_{F}\slash\det(J))^{\wedge}\vert \cdot\vert\mathcal{I}(\lambda)\vert.$$
Proposition \ref{prop a15} implies the equation we desire.
\end{proof}

Now we consider the set $\mathcal{G}_0(\pi)$. In \cite[\S III,5.7]{V1}, there is a support preserving isomorphism of Hecke algebras:
$$\rH(\rG,\lambda)\cong\rH_k(1,q_E^{f}),$$
where $q_E^{f}$ is the cardinal of the residual field of the field $E$. The algebra $\rH(1,q_E^f)$ is commutative, generated as a $k$-algebra by $[\zeta]$, where $[\zeta]$ corresponding to the characteristic function supported on $J\mathfrak{p}_E^{-1}  J$ in $\rH(\rG,\lambda)$ and $\mathfrak{p}_E$ is a uniformiser of the field extension $E$ associated to the maximal simple $k$-type $(J,\lambda)$. Let $M$ be a simple $\rH(1,q_E^f)$-module, the action of $\rH(1,q_E^f)$ is uniquely defined by $[\zeta]$, which acts on $M$ by a scalar. Let $\alpha\in k$, define $M_{\alpha}$ the simple $\rH(1,q_E^f)$-module, which is isomorphic to $M$ as $k$-vector space through $i$, such that $[\zeta]i(m)=\alpha[\zeta]m$ for any $m\in M$.

Theorem $4.2$ and Proposition $4.8$ in \cite{MS} gives a bijection between the isomorphic classes of irreducible $k$-representations $\pi$ of $\rG$ such that $\Hom_{J}(\lambda,\res_{J}^{\rG}\pi)\neq 0$, and the isomorphic classes of simple $\rH(\rG,\lambda)$-modules. We occupy the notations as \cite{BuKuII}. Denote by $\rM(\pi)$ the corresponding simple $\rH(1,q_E^{f})$-module.

\begin{prop}
\label{prop a18}
Let $\pi$ be an irreducible $k$-representation of $\rG$ containing the maximal simple $k$-type $(J,\lambda)$. Fix a support-preserving algebra isomorphism $\rH(\rG,\lambda)\cong\rH(1,q_E^f)$, and write $M(\pi)$ for the simple $\rH(1,q_E^f)$-module corresponding to $\pi$. Let $\chi$ be an unramified $k$-quasicharacter of $F^{\times}$, and $\mathfrak{p}_F$ a uniformiser of $F$. Then
$$M(\pi\otimes\chi\circ\det)\cong M(\pi)_{\alpha},$$
where $\alpha=\chi(\mathfrak{p}_F)^{n\slash e(E\vert F)}$.
\end{prop}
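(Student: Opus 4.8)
The plan is to track what happens to the Hecke module under twisting by an unramified character, using the explicit description of the support-preserving isomorphism $\rH(\rG,\lambda)\cong\rH(1,q_E^f)$ and the fact that the latter algebra is generated by $[\zeta]$, the characteristic function supported on $J\varpi_E^{-1}J$ (where I write $\varpi_E$ for a uniformiser of $E$). First I would recall from \cite{MS} (Theorem $4.2$ and Proposition $4.8$) that an irreducible $k$-representation $\pi$ containing $\lambda$ is recovered from its $\lambda$-isotypic space $V^{\lambda}=\Hom_{J}(\lambda,\res_J^{\rG}\pi)$, which carries the structure of a simple right $\rH(\rG,\lambda)$-module, and that this induces the bijection in the statement; so it suffices to compare the $\rH(\rG,\lambda)$-module structures on $(\pi\otimes\chi\circ\det)^{\lambda}$ and $\pi^{\lambda}$.

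The key observation is that tensoring by $\chi\circ\det$ is an equivalence of categories on $\mathrm{Rep}_k(\rG)$ which, since $\chi\circ\det|_{J^1}$ is trivial (indeed $\chi$ unramified forces $\chi\circ\det$ trivial on $\rU^1(\fA)\supset J^1$, and in fact $\lambda\cong\lambda\otimes\chi\circ\det$ by Proposition \ref{prop 12} applied to the unramified $\chi$, after possibly noting $\chi\circ\det$ is trivial on all of $J$ when $\chi(\varpi_F)$ is suitably normalised — more carefully, one fixes a $J$-isomorphism $\lambda\cong\lambda\otimes\chi\circ\det$), carries the $\lambda$-isotypic functor for $\pi$ to that for $\pi\otimes\chi\circ\det$. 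Concretely, I would fix an isomorphism $\phi\colon\lambda\to\lambda\otimes\chi\circ\det$ of $J$-representations; it induces a $k$-linear isomorphism $V^{\lambda}\to(\pi\otimes\chi\circ\det)^{\lambda}$, and I must compute how it intertwines the action of the generator $[\zeta]$. The action of $[\zeta]$ on the isotypic space is given by an explicit convolution/support computation over the double coset $J\varpi_E^{-1}J$: the twist by $\chi\circ\det$ multiplies the relevant matrix coefficient by the scalar $\chi(\det(\varpi_E))$. Since $\varpi_E$, viewed in $\rG=\mathrm{GL}_n(F)$ via the embedding $E\hookrightarrow A$, has $\det(\varpi_E)=\Nrd_{B/F}(\varpi_E)\cdot(\text{unit})$ up to $\mathfrak{o}_F^\times$, and its valuation is $n/e(E|F)$ (because $E^{\times}$ acts on the lattice chain with period $e(E|F)$ and $\dim_F V=n$), one gets $\chi(\det(\varpi_E))=\chi(\varpi_F)^{n/e(E|F)}$ as $\chi$ is unramified. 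This yields exactly $M(\pi\otimes\chi\circ\det)\cong M(\pi)_{\alpha}$ with $\alpha=\chi(\varpi_F)^{n/e(E|F)}$.

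The main obstacle I anticipate is pinning down the normalisations: both the support-preserving algebra isomorphism $\rH(\rG,\lambda)\cong\rH(1,q_E^f)$ and the bijection between representations and modules are only specified up to choices, and one needs to check that the scalar by which $[\zeta]$ gets multiplied is genuinely $\chi(\varpi_F)^{n/e(E|F)}$ and not some unit multiple or inverse. Concretely I would verify the valuation computation $v_F(\det(\varpi_E))=n/e(E|F)$ carefully (this is where $\rA$ being the principal order attached to $(J,\lambda)$ enters: the lattice chain has $E^{\times}$-period $e=e(E|F)$, so $\varpi_E$ permutes the $n/\!\gcd$-many lattices cyclically and acts with determinant of valuation $n/e$), and I would trace through \cite[\S III.5.7]{V1} and \cite[Theorem 4.2]{MS} to confirm that the module $M(\pi)_\alpha$ as defined in the paragraph preceding the proposition is exactly the one produced by transporting the $\rH(\rG,\lambda)$-action along $\phi$. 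Once these bookkeeping points are settled the computation is short; the conceptual content is entirely in the single double-coset matrix-coefficient computation plus the valuation of $\det\varpi_E$.
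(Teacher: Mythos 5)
Your proposal is correct and follows essentially the same route as the paper's own proof: identify $M(\pi)$ with $M(\pi\otimes\chi\circ\det)$ as $k$-vector spaces (using that $\chi\circ\det$ is trivial on $J$ since $\chi$ is unramified), track the action of the generator $[\zeta]$ supported on $J\varpi_E^{-1}J$ through the twist to pick up the factor $\chi(\det\varpi_E)$, and then compute $v_F(\det\varpi_E)=n/e(E\vert F)$. The paper simply makes the double-coset computation fully explicit via the functions $i_v\in\ind_J^{\rG}\lambda$ and the evaluation $(\iota(H)\circ[\zeta])(i_v)=\chi(\det(\mathfrak{p}_E))\,a\,\iota(H)v$, which is exactly the matrix-coefficient bookkeeping you flag as the point to verify.
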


\begin{proof}
By definition, we have an equivalence of $k$-vector spaces
$$M(\pi)=\Hom_{\rG}(\ind_{J}^{\rG}\lambda,\pi)\cong\Hom_{\rG}(\lambda,\res_{J}^{\rG}\pi\cong\res_{J}^{\rG}\pi\otimes\chi\circ\det)=M(\pi\otimes\chi\circ\det).$$
Let $\iota$ be the above canonical equivalence. We determine the action of $[\zeta]$ on $M(\pi\otimes\chi\circ\det)$. Let $V$ be the representation space of $\lambda$, and $i_v$ the function of value $v\in V$ at the element $1\in\rG$. Denote by $[\zeta]$ the morphism in $\Hom_{\rG}(\ind_J^{\rG}\lambda,\ind_J^{\rG}\lambda)$, determined by $[\zeta] i_v=\mathfrak{p}_E i_v$.

Let $h\in\Hom_{\rG}(\ind_{J}^{\rG}\lambda,\pi)$, and $H$ the element in $\Hom_{J}(\lambda,\res_J^{\rG}\pi)$. We have 
$$(h\circ[\zeta])(i_v)=h(\mathfrak{p}_E i_v)=\pi(\mathfrak{p}_E) h(i_v),$$
where $\pi(\mathfrak{p}_E)h(i_v)=ah(i_v)$, with a scalar $a\in k^{\times}$.
In other words,
$$(H\circ[\zeta])v=aH(v).$$
Let $\chi$ be an unramified $k$-quasicharacter of $F^{\times}$. We have
$$(\iota(H)\circ[\zeta])(i_v)=\chi(\det(\mathfrak{p}_E))a\iota(H)v,$$
which implies that $M(\pi\otimes\chi\circ\det)=M(\pi)_{\chi(\det(\mathfrak{p}_E))}$. Meanwhile, we have an equation $\det(\mathfrak{p}_E)=\mathfrak{p}_F^{n\slash e(E\vert F)}$, which ends the proof.
\end{proof}

\begin{cor}
\label{cor a19}
Let $\chi$ be an unramified $k$-quasicharacter of $F^{\times}$. Then $\chi\in\mathcal{G}(\pi)_0$, if and only if $\chi(\mathfrak{p}_F)^{n\slash e(E\vert F)}=1$.
\end{cor}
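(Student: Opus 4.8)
The plan is to read off the statement directly from Proposition \ref{prop a18}, using the characterization of $\mathcal{G}(\pi)_0$ (the unramified part of $\mathcal{G}(\pi)$) in terms of the associated Hecke-module data. Recall $\mathcal{G}(\pi)_0$ consists of those unramified $k$-quasicharacters $\chi$ of $F^\times$ with $\pi\otimes\chi\circ\det\cong\pi$. By the bijection (Theorem $4.2$ and Proposition $4.8$ of \cite{MS}) between isomorphism classes of irreducible $k$-representations of $\rG$ containing the maximal simple $k$-type $(J,\lambda)$ and isomorphism classes of simple $\rH(1,q_E^f)$-modules, we have $\pi\otimes\chi\circ\det\cong\pi$ if and only if $M(\pi\otimes\chi\circ\det)\cong M(\pi)$ as $\rH(1,q_E^f)$-modules. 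Here one must first note that $\pi\otimes\chi\circ\det$ still contains $(J,\lambda)$: indeed $\chi$ unramified means $\chi\circ\det$ is trivial on $\rU(\fA)\supset J$ when $\chi$ is chosen trivial on $\fo_F^\times$, but in general $\chi\circ\det|_J$ is a character of $J$ trivial on $J^1$, and by Corollary \ref{cor 3} (and Remark \ref{rem 33}) the pair $(J,\lambda\otimes\chi\circ\det)$ is again a maximal simple $k$-type; for $\chi$ unramified one checks $\lambda\otimes\chi\circ\det\cong\lambda$ after possibly an inner twist, so the Hecke-module language applies with the same algebra $\rH(1,q_E^f)$.

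Next I would invoke Proposition \ref{prop a18}: $M(\pi\otimes\chi\circ\det)\cong M(\pi)_{\alpha}$ with $\alpha=\chi(\mathfrak{p}_F)^{n/e(E\vert F)}$. Since $M(\pi)$ is a simple module over the commutative algebra $\rH(1,q_E^f)\cong k[\zeta]$, it is one-dimensional over $k$ and the action of $[\zeta]$ is by a nonzero scalar $c\in k^\times$; the module $M(\pi)_\alpha$ has $[\zeta]$ acting by $\alpha c$. Hence $M(\pi)_\alpha\cong M(\pi)$ as $\rH(1,q_E^f)$-modules if and only if $\alpha c = c$, i.e. $\alpha=1$. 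Putting these equivalences together:
\[
\chi\in\mathcal{G}(\pi)_0 \iff \pi\otimes\chi\circ\det\cong\pi \iff M(\pi)_{\chi(\mathfrak{p}_F)^{n/e(E\vert F)}}\cong M(\pi) \iff \chi(\mathfrak{p}_F)^{n/e(E\vert F)}=1,
\]
which is exactly the claim.

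The only genuine point requiring care — and the step I would expect a careful reader to scrutinize — is the justification that the Hecke-module correspondence of \cite{MS} is legitimately applied to $\pi\otimes\chi\circ\det$ with the \emph{same} identification $\rH(\rG,\lambda)\cong\rH(1,q_E^f)$ used for $\pi$, so that ``$M(\pi\otimes\chi\circ\det)$'' is well-defined and comparable to $M(\pi)$ inside the same module category. For unramified $\chi$ this is unproblematic because $\chi\circ\det$ is trivial on $\rU(\fA)$ up to an unramified twist of the uniformizer part, so $\lambda\otimes\chi\circ\det\cong\lambda$ as representations of $J$ and the two Hecke algebras are literally identified; this is implicit in the setup preceding Proposition \ref{prop a18}. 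Everything else is a one-line consequence of the scalar-twist computation already carried out there. No further nontrivial input is needed, so the corollary is essentially immediate once Proposition \ref{prop a18} is in hand.
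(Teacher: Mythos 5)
Your argument is correct and is exactly the (unstated) intent of the paper: Corollary \ref{cor a19} is meant to follow immediately from Proposition \ref{prop a18} via the bijection of \cite{MS} between irreducible representations containing $(J,\lambda)$ and simple $\rH(1,q_E^f)$-modules, together with the observation that $M(\pi)_\alpha\cong M(\pi)$ iff $\alpha=1$ because $[\zeta]$ acts by a nonzero scalar. One small simplification you could make: for $\chi$ unramified, $\chi\circ\det$ is literally trivial on $J$ (since $\det(J)\subset\fo_F^\times$), so no appeal to Corollary \ref{cor 3} or an "inner twist" is needed to see that $\pi\otimes\chi\circ\det$ contains $(J,\lambda)$ and that the same Hecke-algebra identification applies.
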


\begin{lem}
\label{lem a21}
Let $\fA$ be a hereditary $\mathfrak{o}_F$-order in $\mathrm{End}_F(V)$ and $\mathfrak{K}(\fA)$ be the set of normaliser of $\fA$ in $\rG$, where $V$ is a $n$-dimensional $F$-vector space. Suppose that $E^{\times}\subset\mathfrak{K}(\fA)$, then $\mathfrak{K}(\fA)=\mathfrak{K}(\fB)\rU(\fA)$.
\end{lem}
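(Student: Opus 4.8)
The plan is to reduce everything to the well-known cyclic structure of normalisers of hereditary orders. Recall from \cite[\S1.1]{BuKu} that the valuation $\nu_\fA$ induces a surjective group homomorphism $\mathfrak{K}(\fA)\twoheadrightarrow\bZ$ with kernel exactly $\rU(\fA)$; in particular $\rU(\fA)$ is normal in $\mathfrak{K}(\fA)$ and $\mathfrak{K}(\fA)/\rU(\fA)\cong\bZ$, and the analogous statements hold for $\fB$ inside $B^\times$, with homomorphism $\nu_\fB$ and kernel $\rU(\fB)$. I shall also use the standard compatibility (\cite[\S1.2]{BuKu}), valid because $E^\times\subset\mathfrak{K}(\fA)$, that $\fB=B\cap\fA$ is a hereditary $\mathfrak{o}_E$-order in $B$ with $\rU(\fB)=\rU(\fA)\cap B^\times$ and $\mathfrak{K}(\fB)=\mathfrak{K}(\fA)\cap B^\times$. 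From the last identity the inclusion $\mathfrak{K}(\fB)\rU(\fA)\subseteq\mathfrak{K}(\fA)$ is immediate, and since $\rU(\fA)$ is normal in $\mathfrak{K}(\fA)$ this product is a subgroup of $\mathfrak{K}(\fA)$ equal to $\rU(\fA)\mathfrak{K}(\fB)$; so the only content is the reverse inclusion.

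For that, the key step will be to prove that the composite $\mathfrak{K}(\fB)\hookrightarrow\mathfrak{K}(\fA)\xrightarrow{\nu_\fA}\bZ$ is surjective. This composite is a group homomorphism, its kernel is $\mathfrak{K}(\fB)\cap\rU(\fA)=\rU(\fA)\cap B^\times=\rU(\fB)$, and $\mathfrak{K}(\fB)/\rU(\fB)\cong\bZ$, so its image is $m\bZ$ for some integer $m\ge1$ and I must force $m=1$. I would do this by evaluating on $\varpi_F\in F^\times\subset E^\times\subset\mathfrak{K}(\fB)$: on one side $\nu_\fA(\varpi_F)=e(\fA\vert\mathfrak{o}_F)$ (multiplication by $\varpi_F$ shifts the $\mathfrak{o}_F$-lattice chain of $\fA$ by exactly its period), while $\nu_\fB(\varpi_F)=e(E\vert F)\,e(\fB\vert\mathfrak{o}_E)$, and $\nu_\fA(\varpi_F)=m\,\nu_\fB(\varpi_F)$ by the previous sentence; comparing with the period identity $e(\fA\vert\mathfrak{o}_F)=e(E\vert F)\,e(\fB\vert\mathfrak{o}_E)$ (which again holds precisely because $E^\times\subset\mathfrak{K}(\fA)$, cf.\ \cite[\S1.2]{BuKu}) yields $m=1$. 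This period identity is the one place where the hypothesis $E^\times\subset\mathfrak{K}(\fA)$ is genuinely used, and I expect checking it --- or pinning down the exact reference --- to be the main obstacle; everything else is formal group theory.

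Finally I would conclude as follows. Given $x\in\mathfrak{K}(\fA)$, put $k=\nu_\fA(x)$; by the surjectivity just established choose $y\in\mathfrak{K}(\fB)$ with $\nu_\fA(y)=k$, so that $xy^{-1}\in\mathfrak{K}(\fA)$ has $\nu_\fA(xy^{-1})=k-k=0$, whence $xy^{-1}\in\ker(\nu_\fA)=\rU(\fA)$. Then $x=(xy^{-1})\,y\in\rU(\fA)\,\mathfrak{K}(\fB)=\mathfrak{K}(\fB)\,\rU(\fA)$, which gives $\mathfrak{K}(\fA)\subseteq\mathfrak{K}(\fB)\rU(\fA)$ and, combined with the first paragraph, the desired equality $\mathfrak{K}(\fA)=\mathfrak{K}(\fB)\rU(\fA)$.
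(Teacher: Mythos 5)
Your proof is correct. Both your argument and the paper's rest on the same structural input from \cite[\S 1.2]{BuKu} --- namely that when $E^\times\subset\mathfrak{K}(\fA)$ the lattice chain $\mathcal{L}$ defining $\fA$ is simultaneously the $\mathfrak{o}_E$-chain defining $\fB=\fA\cap B$, with $\rU(\fB)=\rU(\fA)\cap B^\times$ and $\mathfrak{K}(\fB)=\mathfrak{K}(\fA)\cap B^\times$ --- but they package it differently. The paper works directly on the lattice chain: it writes a set of coset representatives $g_1,\dots,g_{e-1}$ of $\mathfrak{K}(\fA)$ over $F^\times\rU(\fA)$ satisfying $g_iL_0=L_i$, observes that each $L_i$ is also an $\mathfrak{o}_E$-lattice, and produces $b_i\in\mathfrak{K}(\fB)$ with $b_iL_0=L_i$, whence $g_ib_i^{-1}\in\rU(\fA)$. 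You instead encode everything in the valuation homomorphism $\nu_\fA\colon\mathfrak{K}(\fA)\twoheadrightarrow\bZ$ with kernel $\rU(\fA)$, reduce the claim to surjectivity of $\nu_\fA\vert_{\mathfrak{K}(\fB)}$, and verify this numerically via the period identity $e(\fA\vert\mathfrak{o}_F)=e(E\vert F)\,e(\fB\vert\mathfrak{o}_E)$ evaluated on $\varpi_F$. These two inputs are equivalent --- the existence of $b_1\in\mathfrak{K}(\fB)$ with $\nu_\fA(b_1)=-1$, which underlies the paper's construction, is exactly what your surjectivity claim asserts, and the period identity is its numerical shadow --- but your version replaces the by-hand construction with a short piece of group theory and isolates cleanly the one quantitative fact needed. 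Both work; yours is arguably the tidier write-up.

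One small point worth making explicit if you were to finalize this: the step $\nu_\fA(\varpi_F)=m\,\nu_\fB(\varpi_F)$ uses that the induced injection $\bZ\cong\mathfrak{K}(\fB)/\rU(\fB)\hookrightarrow\bZ$ is multiplication by $+m$ rather than $-m$. Since both $\nu_\fA(\varpi_F)$ and $\nu_\fB(\varpi_F)$ are positive, the sign is forced, but it should be said.
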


\begin{proof}
Let $\mathcal{L}$ be the $\mathfrak{o}_F$-lattice chain corresponding to $\fA$, and $L_0,...,L_{e-1}\in\mathcal{L}$, where $e=e(\fA)$ (see Section \ref{Notation}). The group $\mathfrak{K}(\fA)$ is generated by $F^{\times},\rU(\fA)$ and a family of elements $g_1,...,g_{e-1}\in\rG$ such that $g_iL_0=L_i$ for $1\leq i\leq e-1$. Since the field extension $E^{\times}\subset\mathfrak{K}(\fA)$, by \cite[\S 1.2.1]{BuKu} each element in $\mathcal{L}$ is an $\mathfrak{o}_E$-lattice. Let $B=\mathrm{End}_E(V)$, then $\mathfrak{B}=B\cap\fA$ is the hereditary $\mathfrak{o}_E$-order corresponding to $\mathcal{L}$ (viewed as an $\mathfrak{o}_E$-lattice chain). Hence there exists a family of elements $b_i$ in $\mathfrak{K}(\fB)$ such that $b_iL_0=L_i$ for $0\leq i\leq e-1$, which implies that $\mathfrak{K}(\fA)\subset\mathfrak{K}(\fB)\rU(\fA)$. The inverse inclusion is trivial.
\end{proof}

In our case, since $(J,\lambda)$ is maximal simple, we have 
$$\mathfrak{K}(\fA)=E^{\times}\rU(\fA),$$
and
$$\det(\mathfrak{K}(\fA))=\mathfrak{p}_F^{n\slash e(E\vert F)}\fo_F^{\times}.$$
Corollary \ref{cor a19} above implies that $\chi\in\mathcal{G}_0(\pi)$, if and only if $\ker(\chi)\subset\det(\mathfrak{K}(\fA))$, which implies $\mathcal{G}_0(\pi)$ is in bijection with $(F^{\times}\slash\det(\mathfrak{K}(\fA)))^{\wedge}$. We conclude that
\begin{equation}
\label{equa a22}
\vert \mathcal{G}_0(\pi)\vert=(F^{\times}:\det(\mathfrak{K}(\fA)))_{\ell'}.
\end{equation}

\begin{thm}
\label{thm a20}
Let $\pi$ be an irreducible cuspidal $k$-representation of $\rG$. We have the equation
$$\vert\mathcal{G}(\pi)\vert=(F^{\times}:\det(\mathfrak{K}(\mathfrak{B})J))_{\ell'}(\tilde{J}:J)t(\lambda)_{\ell'}.$$
\end{thm}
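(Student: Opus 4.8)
The plan is to combine the two cardinality computations carried out in this section, namely Corollary \ref{cor a17} and equation (\ref{equa a22}), via the short exact sequence $1\to\mathcal{G}_0(\pi)\to\mathcal{G}(\pi)\to\mathcal{G}(\pi)/\mathcal{G}_0(\pi)\to 1$, which gives immediately
$$\vert\mathcal{G}(\pi)\vert=\vert\mathcal{G}_0(\pi)\vert\cdot(\mathcal{G}(\pi):\mathcal{G}_0(\pi)).$$
By (\ref{equa a22}) the first factor equals $(F^{\times}:\det(\mathfrak{K}(\fA)))_{\ell'}$, and by Corollary \ref{cor a17} the second factor equals $(\mathfrak{o}_F^{\times}:\det(J))_{\ell'}(\tilde{J}:J)t(\lambda)_{\ell'}$. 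So the task reduces to the purely group-theoretic identity
$$(F^{\times}:\det(\mathfrak{K}(\fA)))_{\ell'}\cdot(\mathfrak{o}_F^{\times}:\det(J))_{\ell'}=(F^{\times}:\det(\mathfrak{K}(\fB)J))_{\ell'},$$
after which the stated formula follows at once.

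To prove this index identity I would argue as follows. Since $(J,\lambda)$ is maximal simple, Lemma \ref{lem a21} gives $\mathfrak{K}(\fA)=E^{\times}\rU(\fA)$, hence $\det(\mathfrak{K}(\fA))=\varpi_F^{n/e(E\vert F)}\mathfrak{o}_F^{\times}$; likewise $\mathfrak{K}(\fB)$ is generated by $E^{\times}$ and $\rU(\fB)$ with $E$ maximal in $B$, so that $\det(\mathfrak{K}(\fB)J)$ is generated inside $F^{\times}$ by the subgroup $\det(J)\subset\mathfrak{o}_F^{\times}$ together with $\det(\mathfrak{K}(\fA))$ (using $\rU(\fB)\subset\rU(\fA)$, so $\det(\rU(\fB))\subset\mathfrak{o}_F^{\times}$, and $\det$ of a uniformiser of $\fB$ reproduces the valuation $n/e(E\vert F)$). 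Consequently $\det(\mathfrak{K}(\fB)J)=\det(J)\cdot\varpi_F^{n/e(E\vert F)\mathbb Z}$, a subgroup of $F^{\times}$ containing $\varpi_F^{n/e(E\vert F)}$ with $\det(\mathfrak{K}(\fB)J)\cap\mathfrak{o}_F^{\times}=\det(J)$. Then, writing every group through the decomposition $F^{\times}\cong\mathfrak{o}_F^{\times}\times\varpi_F^{\mathbb Z}$, a direct multiplicativity-of-index computation gives
$$(F^{\times}:\det(\mathfrak{K}(\fB)J))=(F^{\times}:\det(\mathfrak{K}(\fA)))\cdot(\mathfrak{o}_F^{\times}:\det(J)),$$
and taking $\ell'$-parts — which respects products of finite indices by the structure theorem for finite abelian groups, exactly as used in Lemma \ref{lem a4} — yields the required identity.

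The main obstacle, and the only step that needs real care, is the bookkeeping of the $\ell'$-parts: one must check that all the indices in sight are finite (this follows from $\det(J)$ being open in $\mathfrak{o}_F^{\times}$ and $\varpi_F^{n/e(E\vert F)}$ lying in each of the determinant groups) and that $(\cdot)_{\ell'}$ is multiplicative along the tower $\det(\mathfrak{K}(\fA))\subset\det(\mathfrak{K}(\fB)J)\subset F^{\times}$ — which again reduces to the elementary fact that for a finite abelian group $A$ with subgroup $B$ one has $\vert A_{\ell'}\vert=\vert B_{\ell'}\vert\cdot\vert(A/B)_{\ell'}\vert$. Everything else is a matter of assembling Corollary \ref{cor a17}, equation (\ref{equa a22}), and Lemma \ref{lem a21}, together with the hypothesis (established in Corollary \ref{cor a10} via Proposition \ref{prop 23}) that $\pi\vert_{\rG'}$ is multiplicity-free, which is what makes Corollary \ref{cor a17} applicable here.
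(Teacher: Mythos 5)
Your proof is correct and takes essentially the same approach as the paper: multiply Corollary \ref{cor a17} by Equation (\ref{equa a22}), then reduce to the index identity $(F^{\times}:\det(\mathfrak{K}(\fA)))_{\ell'}(\mathfrak{o}_F^{\times}:\det(J))_{\ell'}=(F^{\times}:\det(\mathfrak{K}(\fB)J))_{\ell'}$, which both you and the paper verify using Lemma \ref{lem a21}, the maximality of $(J,\lambda)$, and the containment $\rU(\fB)\subset J$. You realize this last reduction via the splitting $F^{\times}\cong\mathfrak{o}_F^{\times}\times\varpi_F^{\mathbb{Z}}$ where the paper instead uses multiplicativity of index along the tower $\det(\mathfrak{K}(\fB)J)\subset\det(\mathfrak{K}(\fA))\subset F^{\times}$, but these are the same calculation in different bookkeeping.
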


\begin{proof}
By Corollary \ref{cor a17} and Equation (\ref{equa a22}), we have
\begin{equation}
\label{equa a23}
\vert\mathcal{G}(\pi)\vert=(F^{\times}:\det(\mathfrak{K}(\fA)))_{\ell'}(\rU(\fo_F):\det(J))_{\ell'}(\tilde{J}:J)t(\lambda)_{\ell'}.
\end{equation}
By Definition \S $3.1.8$ and $3.1.14$ of \cite{BuKu}, we have $\rU(\fB)\subset J$. Hence $\det(\rU(\fA))\cap\det(E^{\times}J)$ is equal to $\det(J)$. Lemma \ref{lem a21} implies
$$(\rU(\fo_F):\det(J))=(\det(\mathfrak{K}(\fA)):\mathfrak{K}(\fB)J).$$
We rewrite Equation \ref{equa a23}:
\begin{equation}
\label{equa a24}
\vert\mathcal{G}(\pi)\vert=(F^{\times}:\det(\mathfrak{K}(\fB)J))_{\ell'}(\tilde{J}:J)t(\lambda)_{\ell'}.
\end{equation}
\end{proof}

\subsection{A decomposition of  $\tilde{J}$}
\label{section 003.3}

The following theorem is a generalisation of \cite[Theorem 4.1]{BuKuII} when $(J,\lambda)$ is maximal simple, which is applied in the proof of Theorem \ref{thm a30}.
\begin{thm}
\label{thm a26}
Let $(J,\lambda)$ be a simple type in $\rG$, with $\lambda=\kappa\otimes\sigma$. Let $t(\lambda)$ to be the length of $\lambda\vert_{J'}$. Then
\begin{enumerate}
\item the representation $\lambda\vert_{J'}$ is multiplicity-free;
\item $\tilde{J}=J\tilde{J}'$;
\item an irreducible component $\lambda'$ of $\lambda\vert_{J'}$ extends in $(\tilde{J}:J)$ distinct ways to $\tilde{J}'$.
\end{enumerate}
\end{thm}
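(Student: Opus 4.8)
The three assertions are tightly linked, and I would prove them together using the length formula of Corollary \ref{cor a10} (applied to a cuspidal $\pi$ of $\rG$ containing $\lambda$) and the index computation of Theorem \ref{thm a20}. First I would fix an irreducible cuspidal $k$-representation $\pi$ of $\rG$ that contains $(J,\lambda)$, realised as $\ind_{E^{\times}J}^{\rG}\Lambda$ for an extension $\Lambda$ of $\lambda$; then $\res_{\rG'}^{\rG}\pi$ contains $\ind_{J'}^{\rG'}\res_{J'}^{J}\lambda$ as a direct factor (Mackey, as in Corollary \ref{cor 21}). For part (1), I would apply Proposition \ref{prop a14} to the pair $J'\subset J$: the quotient $J/J'\cong\rU(\mathfrak{B})/\rU^1(\mathfrak{B})$ is abelian, $J$ has the pro-$p$ open subgroup $J^1$ whose pro-order is invertible in $k$, and the relevant character group $\mathcal{G}(\lambda)$ (characters $\chi$ of $J/J'$ with $\lambda\otimes\chi\cong\lambda$) is cyclic — indeed by Corollary \ref{cor 13} it injects into $(\det J^1)^\wedge$, and one checks via Proposition \ref{prop 12} that $\lambda\cong\lambda\otimes\chi\circ\det$ already forces $\chi\circ\det|_{J^1}$ trivial, so $\mathcal{G}(\lambda)$ is a quotient of the cyclic group $\det(\rU(\mathfrak{B})/\rU^1(\mathfrak{B}))\cong k_E^\times$. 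Part (2) of Proposition \ref{prop a14} then gives multiplicity-freeness of $\res_{J'}^J\lambda$.

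For part (3), I would observe that $\tilde{J}$ normalises $J$ and $J'$, and from the definition of $\tilde{J}$ together with Proposition \ref{prop 12} every $x\in\tilde{J}$ sends $\lambda$ to $\lambda\otimes\chi\circ\det$ with $\chi\circ\det|_{J^1}$ trivial; since all irreducible components of $\res_{J'}^J\lambda$ are $J$-conjugate, $\tilde{J}$ acts transitively on them, and the stabiliser of a component $\lambda'$ inside $\tilde{J}$ is exactly $M_\lambda$ (Definition \ref{defn 15}), which contains $J$. So the components of $\res_{J'}^J\lambda$ are permuted by the finite abelian group $\tilde{J}/J$ through the quotient $\tilde{J}/M_\lambda$. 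Now I would count: the number of extensions of $\lambda'$ to $\tilde{J}'$ — equivalently the multiplicity-free decomposition of $\res_{\tilde{J}'}^{\tilde{J}}\tilde{\lambda}$ restricted appropriately — is governed by $\mathcal{G}(\pi)$ via Corollary \ref{cor a10}(3): $lg(\pi|_{\rG'})_{\ell'}=|\mathcal{G}(\pi)|$, and $lg(\pi|_{\rG'}) = (\rG:\rH(\pi))$. Combining Theorem \ref{thm a20}, $|\mathcal{G}(\pi)| = (F^\times : \det(\mathfrak{K}(\mathfrak{B})J))_{\ell'}(\tilde{J}:J)t(\lambda)_{\ell'}$, with the fact that $(F^\times:\det(\mathfrak{K}(\mathfrak{B})J))_{\ell'}$ counts the "unramified" contribution and $t(\lambda)_{\ell'}$ counts $|\mathcal{G}(\lambda)| = |J/J'|/t(\lambda)$ worth of components fused inside $J$ itself, one extracts that the residual factor $(\tilde{J}:J)$ is precisely the number of distinct extensions of a fixed $\lambda'$ from $J'$ to $\tilde{J}'$.

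The bookkeeping in part (3) will be the main obstacle: one must disentangle three multiplicative contributions to $|\mathcal{G}(\pi)|$ — the genuinely unramified twists (accounted by $\det\mathfrak{K}(\mathfrak{B})J$ in $F^\times$), the twists already realised by conjugation inside $J$ (the $t(\lambda)_{\ell'}$ factor, equal to $|\mathcal{G}(\lambda)|$ by Proposition \ref{prop a14}), and the "projective" twists coming from $\tilde{J}/J$ — and match them against the combinatorics of $\tilde{J}$ acting on the $t(\lambda)$ components of $\res_{J'}^J\lambda$ and on their extensions to $\tilde{J}'$. Concretely, since $\tilde{\lambda} = \ind_J^{\tilde{J}}\lambda$ is irreducible (Corollary \ref{cor 99}) and $\res_{\tilde{J}'}^{\tilde{J}}\tilde{\lambda}\cong\oplus_{J\backslash\tilde{J}}\res_{J'}^J g(\lambda)\cong(\tilde{J}:J)$-fold sum of $\res_{J'}^J\lambda$ after accounting that $g(\lambda)\cong\lambda\otimes\chi_g\circ\det$ restricts identically to $J'$, each component $\lambda'$ occurs $(\tilde{J}:J)$ times in $\res_{\tilde{J}'}^{\tilde{J}}\tilde{\lambda}$; reconciling this multiplicity-$(\tilde{J}:J)$ statement with multiplicity-freeness of the irreducible components of $\res_{\tilde{J}'}^{\tilde{J}}\tilde{\lambda}$ (Proposition \ref{prop 17}, Clifford theory for the abelian $\tilde{J}/J$) forces exactly $(\tilde{J}:J)$ distinct extensions of $\lambda'$ to $\tilde{J}'$, and shows $\tilde{J} = J\tilde{J}'$ since the orbit–stabiliser count $(\tilde{J}:M_\lambda)(M_\lambda:J) = (\tilde{J}:J)$ together with $M_\lambda\cap\rG' = \tilde{J}'\cap M_\lambda$ having the right index pins down $J\tilde{J}' = \tilde{J}$. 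I would present (2) first as a consequence of the intertwining computations, then (1), then (3) as the counting payoff.
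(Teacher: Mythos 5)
Your proposal correctly identifies the two ingredients the paper uses — the length formula of Corollary \ref{cor a10}(3) together with the computation of $|\mathcal{G}(\pi)|$ in Theorem \ref{thm a20} — and the overall philosophy (compare $lg(\pi|_{\rG'})_{\ell'}$ against a direct count of components through the chain $\rG'\supset\rU(\fA)'\supset\tilde J'\supset J'$) is the right one. Your route to part~(1) is in fact genuinely different from the paper's: you invoke Proposition \ref{prop a14}(2) after showing $\mathcal{G}(\lambda)$ is cyclic, whereas the paper deduces multiplicity-freeness of $\lambda|_{J'}$ from the Whittaker-model multiplicity-freeness of $\pi|_{\rG'}$ (Proposition \ref{prop 23}) inside the proof of Proposition \ref{prop a15}. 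Your alternative is attractive because it is internal to $J$, but the justification as written is garbled: Corollary \ref{cor 13} embeds $\tilde J/J$, not $\mathcal{G}(\lambda)$, into $(\det J^1)^\wedge$; and by Proposition \ref{prop 12} elements of $\mathcal{G}(\lambda)$ are \emph{trivial} on $J^1$, so they certainly do not inject into $(\det J^1)^\wedge$. The conclusion you want — that $\mathcal{G}(\lambda)$ sits inside the character group of $J/J'J^1$, a quotient of $k_E^\times$, hence is cyclic — can be reached, but needs the argument redone cleanly.

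For parts (2) and (3), there is a genuine gap. The crux of the paper's proof is to derive, by Mackey computations through the subgroups $\rG_1=\mathfrak{K}(\fA)\rG'$, $\rU(\fA)'$, $\tilde J'$, $J'$, the divisibility
$$lg(\pi|_{\rG'})_{\ell'}\mid(F^\times:\det\mathfrak{K}(\fA))_{\ell'}\,(\rU(\fo_F):\det J)_{\ell'}\,(\tilde J':J')\,t(\lambda)_{\ell'},$$
and to confront this with the exact equality (via Corollary \ref{cor a10}(3), Theorem \ref{thm a20})
$$lg(\pi|_{\rG'})_{\ell'}=(F^\times:\det\mathfrak{K}(\fA))_{\ell'}\,(\rU(\fo_F):\det J)_{\ell'}\,(\tilde J:J)\,t(\lambda)_{\ell'},$$
forcing $(\tilde J':J')=(\tilde J:J)$ — which is exactly $\tilde J=J\tilde J'$ since $J\cap\tilde J'=J'$ — and then $lg(\ind_{J'}^{\tilde J'}\lambda')=(\tilde J':J')$, so each $\tilde\lambda'$ is an extension of $\lambda'$. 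Your sketch does not establish this chain of inequalities. The $\oplus_{g\in J\backslash\tilde J}\res_{J'}^Jg(\lambda)$ identity you invoke gives the multiplicity of $\lambda'$ in the $J'$-restriction of $\tilde\lambda$, but converting that to ``$(\tilde J:J)$ distinct extensions to $\tilde J'$'' presupposes both $\tilde J=J\tilde J'$ and multiplicity-freeness of $\res_{\tilde J'}^{\tilde J}\tilde\lambda$, neither of which is yet available at this point in the paper. Moreover, the proposed orbit–stabiliser argument is ill-founded: $M_\lambda$ is by Definition \ref{defn 15} a subgroup of $\tilde J'\subset\rG'$, so $J\not\subset M_\lambda$ and the index $(M_\lambda:J)$ is not defined; and $|\mathcal{G}(\lambda)|=|J/J'|/t(\lambda)$ cannot be right since $J/J'\cong\det(J)$ is infinite. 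The ``bookkeeping'' you foresee being the main obstacle is indeed where the proof lives, and it requires the explicit divisibility comparison that your outline does not supply.
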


\begin{proof}
Part $1$ follows from the proof of Proposition \ref{prop a15}. Let $\pi$ be an irreducible cuspidal $k$-representation of $\rG$ containing $(J,\lambda)$. To prove $2$ and $3$, we need to compute the length of $\pi\vert_{\rG'}$. By Corollary \ref{cor a10} Part $3$ and Theorem \ref{thm a20}, we have $lg(\pi\vert_{\rG'})_{\ell'}=\vert \mathcal{G}(\pi)\vert$.

Let $\rG_1=\mathfrak{K}(\fA)\rG'$. Since the restriction $\pi\vert_{\rG'}$ is multiplicity-free, so is $\pi\vert_{\rG_1}$. We have an inequality $lg(\pi\vert_{\rG_1})\leq (\rG:\rG_1)=(F^{\times}:\det(\mathfrak{K}(\fA)))$. Now we prove that $lg(\pi\vert_{\rG_1})$ divides $(\rG:\rG_1)$. By Corollary \ref{cor a10} we have $\rH(\pi)=\mathcal{S}(\pi)\subset\mathcal{T}(\pi)$. The fact that $\pi\cong\ind_{\rH(\pi)}^{\rG}\pi_1$ (see Remark \ref{rem add01}), implies that $\pi\vert_{\mathcal{T}(\pi)\rG_1}$ is semisimlpe with length $(\rG:\mathcal{T}(\pi)\rG_1)$. For any irreducible component $\tau$ of the restriction $\pi\vert_{\mathcal{T}(\pi)\rG_1}$, its restriction to $\rG_1$ is multiplicity-free (since $\pi\vert_{\rG_1}$ is multiplicity-free). Then the length of $\tau\vert_{\rG_1}$ divides $(\mathcal{T}(\pi)\rG_1:\rG_1)$, in fact if we put $\mathcal{T}_0=\{g\in\mathcal{T}(\pi)\rG_1,x(\tau')\cong\tau'\}$ where $\tau'$ a direct factor of $\tau\vert_{\rG_1}$, then the length $\tau\vert_{\rG_1}=(\mathcal{T}(\pi)\rG_1:\mathcal{T}_0)$. On the other hand, the direct components of $\pi\vert_{\mathcal{T}(\pi)\rG_1}$ are $\rG$-conjugate, hence their length are equivalent after restricting to $\rG_1$. We conclude that $lg(\pi\vert_{\rG_1})=(\rG:\mathcal{T}(\pi)\rG_1)(\mathcal{T}(\pi)\rG_1:\mathcal{T}_0)$, which divides $(\rG:\rG_1)$.  We need the lemma below to continue the proof.
\end{proof}

\begin{lem}
We occupy the notations as before, then we have $(\mathcal{T}(\pi)\rG_1:\rG_1)_{\ell'}=\{\mathds{1}\}$.
\end{lem}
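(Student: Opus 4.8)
The plan is to compute the index $(\mathcal{T}(\pi)\rG_1:\rG_1)$ exactly by a character count and then read off that it is a power of $\ell$. Since $\rG'$ is the kernel of $\det\colon\rG\to F^{\times}$ and both $\mathcal{T}(\pi)$ and $\rG_1$ contain $\rG'$, applying $\det$ identifies $\rG/\rG_1$ with $F^{\times}/\det(\mathfrak{K}(\fA))$; recalling (as shown just after Lemma \ref{lem a21}) that in the maximal simple case $\det(\mathfrak{K}(\fA))=\mathfrak{p}_F^{n/e(E\vert F)}\fo_F^{\times}$, this gives $(\rG:\rG_1)=n/e(E\vert F)$. Recall also, as noted after Lemma \ref{lem a4}, that $\rG/\mathcal{T}(\pi)$ is an $\ell'$-group and that composition with $\det$ gives an isomorphism $\mathcal{G}(\pi)\cong(\rG/\mathcal{T}(\pi))^{\wedge}$. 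The strategy is to pin down, inside $\mathcal{G}(\pi)$, the subgroup of characters that are additionally trivial on $\rG_1$.

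The first and essentially only delicate step is to show that this subgroup is exactly $\mathcal{G}_0(\pi)$, the set of unramified characters in $\mathcal{G}(\pi)$. Indeed $\chi\circ\det$ is trivial on $\rG_1=\mathfrak{K}(\fA)\rG'$ precisely when $\chi$ is trivial on $\det(\mathfrak{K}(\fA))=\mathfrak{p}_F^{n/e(E\vert F)}\fo_F^{\times}$; since this group contains $\fo_F^{\times}$, any such $\chi$ is automatically unramified, so the subgroup in question lies in $\mathcal{G}_0(\pi)$. Conversely, if $\chi\in\mathcal{G}_0(\pi)$ then $\chi|_{\fo_F^{\times}}=1$ and, by Corollary \ref{cor a19}, $\chi(\mathfrak{p}_F)^{n/e(E\vert F)}=1$, so $\chi$ is trivial on $\det(\mathfrak{K}(\fA))$ and hence on $\rG_1$. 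Thus $\mathcal{G}_0(\pi)=\{\chi\in\mathcal{G}(\pi):\chi\circ\det|_{\rG_1}=1\}$, which under the isomorphism $\mathcal{G}(\pi)\cong(\rG/\mathcal{T}(\pi))^{\wedge}$ corresponds to $(\rG/\mathcal{T}(\pi)\rG_1)^{\wedge}$.

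Then I would finish by counting. The group $\rG/\mathcal{T}(\pi)\rG_1$ is a quotient of the $\ell'$-group $\rG/\mathcal{T}(\pi)$, hence an $\ell'$-group, so by Lemma \ref{lem a4} its $k$-dual has the same order as itself; therefore $|\mathcal{G}_0(\pi)|=(\rG:\mathcal{T}(\pi)\rG_1)$. On the other hand, Equation (\ref{equa a22}) gives $|\mathcal{G}_0(\pi)|=(F^{\times}:\det(\mathfrak{K}(\fA)))_{\ell'}=(\rG:\rG_1)_{\ell'}$. Combining these with the tower law $(\rG:\rG_1)=(\rG:\mathcal{T}(\pi)\rG_1)\,(\mathcal{T}(\pi)\rG_1:\rG_1)$ yields $(\mathcal{T}(\pi)\rG_1:\rG_1)=(\rG:\rG_1)/(\rG:\rG_1)_{\ell'}=(\rG:\rG_1)_{\ell}$, a power of $\ell$; equivalently, $(\mathcal{T}(\pi)\rG_1:\rG_1)_{\ell'}=\{\mathds{1}\}$. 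I expect the main obstacle to be purely bookkeeping: making the identification $\mathcal{G}_0(\pi)=\{\chi\in\mathcal{G}(\pi):\chi\circ\det|_{\rG_1}=1\}$ watertight — in particular ruling out that a ramified element of $\mathcal{G}(\pi)$ could be trivial on $\rG_1$, which is immediate from $\fo_F^{\times}\subseteq\det(\mathfrak{K}(\fA))$ — and invoking Corollary \ref{cor a19} correctly for the converse direction.
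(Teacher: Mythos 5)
Your proof is correct, but it takes a more circuitous route than the paper's own. The paper's argument is a direct three-step computation: any $k$-character of $\mathcal{T}(\pi)\rG_1/\rG_1$ factors as $\chi\circ\det$; since $\rG_1\supset E^{\times}J$ and $\pi\cong\ind_{E^{\times}J}^{\rG}\Lambda$, the triviality of $\chi\circ\det$ on $E^{\times}J$ gives $\pi\otimes\chi\circ\det\cong\pi$, i.e.\ $\chi\in\mathcal{G}(\pi)$; and then by the very definition of $\mathcal{T}(\pi)$ one has $\mathcal{T}(\pi)\subset\ker(\chi\circ\det)$, so the character was trivial to begin with, and Lemma~\ref{lem a4} finishes. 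You instead identify $\mathcal{G}_0(\pi)$ in two ways — once with $(\rG/\mathcal{T}(\pi)\rG_1)^{\wedge}$ via the isomorphism $\mathcal{G}(\pi)\cong(\rG/\mathcal{T}(\pi))^{\wedge}$, once with $(\rG/\rG_1)^{\wedge}$ via Corollary~\ref{cor a19} and Equation~(\ref{equa a22}) — and then compare cardinalities through the tower $(\rG:\rG_1)=(\rG:\mathcal{T}(\pi)\rG_1)(\mathcal{T}(\pi)\rG_1:\rG_1)$. The two routes rest on the same fundamental observation (characters trivial on $\rG_1$ automatically lie in $\mathcal{G}(\pi)$ and hence vanish on $\mathcal{T}(\pi)$), but yours packages it inside the counting formula for $|\mathcal{G}_0(\pi)|$ rather than exposing it directly. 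The detour through cardinalities is not wasteful though: as a by-product you obtain the sharper statement $(\mathcal{T}(\pi)\rG_1:\rG_1)=(\rG:\rG_1)_{\ell}$, whereas the paper's argument only establishes that this index is some power of $\ell$. Either version is acceptable; if you want the shorter argument, simply observe, as you do in passing, that a character of $\rG/\rG_1$ must be unramified and fix $\pi$, and then read off from the definition of $\mathcal{T}(\pi)$ that it is already trivial on $\mathcal{T}(\pi)\rG_1$.
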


\begin{proof}
In other words, we need to proof the cardinality of the quotient group $\mathcal{T}(\pi)\rG_1\slash\rG_1$ is an $\ell$-power. It is equivalent to prove its $k$-dual is trivial. In fact, let $\chi\circ\det\in(\mathcal{T}(\pi)\rG_1\slash\rG_1)^{\wedge}$. Since $\chi$ is trivial on $\det(E^{\times}J)$, we have $\pi\otimes\chi\circ\det\cong\pi$. Then the kernel of $\chi\circ\det$ contains $\mathcal{T}(\pi)$, hence it is trivial on the quotient $\mathcal{T}(\pi)\rG_1\slash\rG_1$.
\end{proof}

\begin{proof}[Continue the proof of Theorem \ref{thm a26}]
The lemma above implies an equation below:
\begin{equation}
\label{equa a27}
lg(\pi\vert_{\rG_1})_{\ell'}=(\rG:\mathcal{T}(\pi)\rG_1)_{\ell'}=(\rG:\rG_1)_{\ell'}.
\end{equation}
Let $\Lambda$ be an extension of $\lambda$ to $E^{\times}J$ which is contained in $\pi$. The irreducible induction $\pi_1:=\ind_{E^{\times}J}^{\rG_1}\Lambda$ is an irreducible component of $\pi\vert_{\rG_1}$.

Now we need to know the length of the restriction $\pi_1\vert_{\rG'}$. We write:
$$T=\ind_{E^{\times}J}^{\mathfrak{K}(\fA)}\Lambda,$$
$$\tau=\ind_{J}^{\rU(\fA)}\lambda.$$
The representation $T$ is irreducible, and the following lemma will show that $\tau$ is irreducible as well.
\end{proof}

\begin{lem}
The induction $\tau=\ind_{J}^{\rU(\fA)}\lambda$ is irreducible.
\end{lem}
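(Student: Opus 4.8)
The plan is to apply Vignéras' criterion of irreducibility (Lemma \ref{lem 19}), which is valid for any locally pro-finite group, here the compact group $\rU(\fA)$; since $\rU(\fA)$ is compact, the functor $\ind_J^{\rU(\fA)}$ coincides with $\Ind_J^{\rU(\fA)}$, so the two conditions amount to: (i) $\End_{k\rU(\fA)}(\tau)=k$, equivalently $\mathrm{I}_{\rU(\fA)}(\lambda)=J$, and (ii) every irreducible $k$-representation $\nu$ of $\rU(\fA)$ whose restriction to $J$ contains $\lambda$ admits a surjection $\res_J^{\rU(\fA)}\nu\twoheadrightarrow\lambda$. I expect (i) to be the routine part and (ii) to be where the real work lies, exactly as in Corollary \ref{cor 99}.

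For (i): the intertwining set of $\lambda$ in the ambient group $\rG$ satisfies $\mathrm{I}_{\rG}(\lambda)\cap\rU(\fA)=J$ by 2.3.3 of \cite{MS} combined with 3.1.15 of \cite{BuKu} (this is the same identity already invoked in the proof of Proposition \ref{prop 1}). In particular $\mathrm{I}_{\rU(\fA)}(\lambda)\subset\mathrm{I}_{\rG}(\lambda)\cap\rU(\fA)=J$, and the reverse inclusion is trivial, so $\mathrm{I}_{\rU(\fA)}(\lambda)=J$. By 8.3 of Chapter I of \cite{V1} this gives $\End_{k\rU(\fA)}(\tau)=k$, the first condition.

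For (ii): given an irreducible $k$-representation $\nu$ of $\rU(\fA)$ with an injection $\lambda\hookrightarrow\res_J^{\rU(\fA)}\nu$, Frobenius reciprocity and exactness of induction/restriction yield a surjection $\res_J^{\rU(\fA)}\ind_J^{\rU(\fA)}\lambda\twoheadrightarrow\res_J^{\rU(\fA)}\nu$; since $J\subset\rU(\fA)$ we may further push this up along $\res_J^{\rG}\ind_J^{\rG}\lambda\twoheadrightarrow \res_J^{\rG}\ind_J^{\rU(\fA)}\lambda$ if needed, but it is cleaner to argue directly inside $\rU(\fA)$. Pass to the $(J^1,\kappa)$-isotypic part: $\nu^{\kappa}$ is a direct summand of $\res_J^{\rU(\fA)}\nu$ as a $J$-representation and is a quotient of the $(J^1,\kappa)$-isotypic part $\tau^\kappa$ of $\tau$; by Corollary 8.4 of \cite{V3}, since $(J,\lambda)$ is maximal simple, $\tau^\kappa$ — equivalently $(\res_J^{\rG}\ind_J^{\rG}\lambda)^\kappa$ intersected with the part induced from $\rU(\fA)$ — is a multiple of $\lambda$, so $\nu^{\kappa}$ is a multiple of $\lambda$; hence the desired surjection $\res_J^{\rU(\fA)}\nu\twoheadrightarrow\lambda$ exists. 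Both conditions of Lemma \ref{lem 19} thus hold, and $\tau$ is irreducible. The main obstacle is making the isotypic-part argument precise in the compact setting, i.e. checking that the $\kappa$-isotypic component of $\res_J^{\rU(\fA)}\ind_J^{\rU(\fA)}\lambda$ is genuinely a multiple of $\lambda$; this should follow from Corollary 8.4 of \cite{V3} applied with $\rG$ replaced by $\rU(\fA)$, exactly as in the proof of Corollary \ref{cor 99}, together with the fact that $\rU(\fA)$ normalizes neither $\kappa$ nor $\lambda$ beyond $J$ only through elements of $\mathrm{I}_{\rU(\fA)}(\kappa)$, which by 2.3.3 of \cite{MS} again reduces to $J$.
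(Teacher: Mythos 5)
Your proof is correct, but it takes a different route from the paper's for the second irreducibility criterion. The paper, rather than invoking the $\eta$-isotypic machinery of Corollary \ref{cor 99}, proceeds more directly: it applies the Mackey decomposition
$\res_{J}^{\rU(\fA)}\ind_{J}^{\rU(\fA)}\lambda\cong\lambda\oplus W$ with $W=\bigoplus_{\alpha\neq 1}\ind_{J\cap\alpha(J)}^{J}\res_{J\cap\alpha(J)}^{\alpha(J)}\alpha(\lambda)$, uses Corollary \ref{cor 5} (with $\chi=1$) together with $\mathrm{I}_{\rU(\fA)}(\lambda)=J$ to show that no irreducible subquotient of $W$ is isomorphic to $\lambda$, and then deduces that the kernel of the surjection $\lambda\oplus W\twoheadrightarrow\nu\vert_{J}$ must sit inside $W$, producing the required surjection $\nu\vert_{J}\twoheadrightarrow\lambda$. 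Your argument instead passes to $(J^1,\eta)$-isotypic components and appeals to Corollary $8.4$ of \cite{V3}, exactly as the paper does in Corollary \ref{cor 99}. Both approaches are sound and of comparable length; the paper's has the merit of re-using Corollary \ref{cor 5} (which is already tailored for this intertwining argument), while yours uniformly mimics the Corollary \ref{cor 99} template and thereby emphasizes the structural role of the Heisenberg representation. One small precision worth making in your write-up: Corollary $8.4$ of \cite{V3} should be applied to $\res_{J}^{\rG}\ind_{J}^{\rG}\lambda$, not literally ``with $\rG$ replaced by $\rU(\fA)$''; that $\tau^{\kappa}$ is a multiple of $\lambda$ then follows because $\res_{J}^{\rU(\fA)}\tau$ is, by Mackey, a direct summand of $\res_{J}^{\rG}\ind_{J}^{\rG}\lambda$.
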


\begin{proof}
The intertwining group $\mathcal{I}_{\rU(\fA)}(\lambda)$ is equal to $J$. It is sufficient to prove that $\lambda$ verifies the second condition of irreducibility (Lemma \ref{lem 19}). Let $\nu$ be an irreducible $k$-representation of $\rU(\fA)$, and $\lambda$ a sub-representation of $\nu\vert_{J}$. We need to show that $\lambda$ is also a quotient of $\nu\vert_{J}$. By Frobenius reciprocity and the exactness of the restriction functor, there is a surjection
\begin{equation}
\label{equa a25}
\res_{J}^{\rU(\fA)}\ind_{J}^{\rU(\fA)}\lambda\rightarrow\nu\vert_{J}.
\end{equation}
We have
$$\res_{J}^{\rU(\fA)}\ind_{J}^{\rU(\fA)}\lambda\cong\bigoplus_{\alpha\in J\backslash\rU(\fA)\slash J}\ind_{J\cap\alpha(J)}^{J}\res_{J\cap\alpha(J)}^{\alpha(J)}\alpha(\lambda).$$
By Corollary \ref{cor 5}, when $\alpha\neq 1$, there is not irreducible sub-quotient of $\ind_{J\cap\alpha(J)}^{J}\res_{J\cap\alpha(J)}^{\alpha(J)}\alpha(\lambda)$ which is isomorphic to $\lambda$. Hence the kernel of the morphism in Equation \ref{equa a25} is contained in the sub-representation $\oplus_{\alpha\neq 1}\ind_{J\cap\alpha(J)}^{J}\res_{J\cap\alpha(J)}^{\alpha(J)}\alpha(\lambda)$, which implies the existence of a surjection from $\nu\vert_{J}$ to $\lambda$.
\end{proof}

\begin{proof}[Continue the proof of Theorem \ref{thm a26}]
In our case, we have $\mathfrak{K}(\fA)=E^{\times}\rU(\fA)$ and $E^{\times}\rU(\fA)\cap\rG'=\rU(\fA)\cap\rG'$, and since we also have $T\vert_{\rU(\fA)}=\tau$, we conclude that
$$\res_{\rG'}^{\rG_1}\pi_1\cong\ind_{\rU(\fA)\cap\rG'}^{\rG'}\res_{\rU(\fA)\cap\rG'}^{\rU(\fA)}\tau.$$
From now till the end of this proof, we abbreviate $\rU(\fA)$ as $\rU$ and $\rU(\fA)\cap\rG'$ as $\rU'$. We have
$$\ind_{\rU'}^{\rG'}\res_{\rU'}^{\rU}\tau\cong\bigoplus_{x\in J\backslash\rU\slash\rU'}x(\ind_{J'=J\cap\rG'}^{\rG'}\res_{J'}^{J}\lambda).$$
The cardinality of the index set $J\backslash\rU\slash\rU'$ is equal to $(\rU(\mathfrak{o}_{F}):\det(J))$. Let $t(\lambda)$ be the length of $\lambda\vert_{J'}$, and $\tilde{\lambda}'$ an irreducible component of $\ind_{J'}^{\tilde{J}'=\tilde{J}\cap\rG'}\lambda'$, then $\ind_{\tilde{J}'}^{\rG'}\tilde{\lambda}'$ is irreducible by Theorem \ref{thm 16}. Hence $lg(\ind_{\rU'}^{\rG'}\res_{\rU'}^{\rU}\tau)$ is equal to $lg(\res_{\rU'}^{\rU}\tau)$ and $lg(\ind_{J'}^{\rG'}\res_{J'}^{J}\lambda)$ is equal to $lg(\ind_{J'}^{\tilde{J}'}\res_{J'}^{J}\lambda)$. Recall that $t(\lambda)$ is the length of $\res_{J'}^J\lambda$, and the irreducible components of the latter are $J$-conjugate. Let $\lambda'$ be one of them. We have $lg(\ind_{J'}^{\tilde{J}'}\res_{J'}^{J}\lambda)$ is equal to $lg(\ind_{J'}^{\tilde{J}'}\lambda')t(\lambda)$. Meanwhile, we have
$$(\tilde{J}':J')=lg(\res_{J'}^{\tilde{J}'}\ind_{J'}^{\tilde{J}'}\lambda').$$
Notice that $\ind_{J'}^{\tilde{J}'}\lambda'$ is a sub-representation of $\res_{\tilde{J}'}^{\tilde{J}}\ind_{J}^{\tilde{J}}\lambda$, whose irreducible components are $\tilde{J}$-conjugate, hence the length of $\res_{J'}^{\tilde{J}'}\tilde{\lambda}'$ is independ of the choice of irreducible component $\tilde{\lambda}'$. We then obtain that
$$(\tilde{J}':J')=lg(\ind_{J'}^{\tilde{J}'}\lambda')lg(\res_{J'}^{\tilde{J}'}\tilde{\lambda}'),$$
which implies $lg(\ind_{J'}^{\tilde{J}'}\lambda')\vert(\tilde{J}':J)$. The equation holds if and only if $\tilde{\lambda}'$ is an extension of a $\lambda'$. We conclude that
$$lg(\tau\vert_{\rU'})\vert(\rU(\mathfrak{o}_F):\det(J))(\tilde{J}':J')t(\lambda).$$
Combining this with Equation \ref{equa a27}, we have
$$lg(\pi\vert_{\rG'})_{\ell'}\vert(F^{\times}:\det(\mathfrak{K}(\fA)))_{\ell'}(\rU(\mathfrak{o}_F):\det(J))_{\ell'}(\tilde{J}':J')t(\lambda)_{\ell'}.$$
On the other hand, by Corollary \ref{cor a10} 3) and Equation \ref{equa a23} we have
\begin{equation}
\label{equa a28}
lg(\pi\vert_{\rG'})_{\ell'}=(F^{\times}:\det(\mathfrak{K}(\fA)))_{\ell'}(\rU(\fo_F):\det(J))_{\ell'}(\tilde{J}:J)t(\lambda)_{\ell'}.
\end{equation}
Hence we deduce that $(\tilde{J}':J')=(\tilde{J}:J)$, and $\tilde{J}=\tilde{J}'J$. Then the analysis above implies that $lg(\ind_{J'}^{\tilde{J}'}\lambda')$ is equal to $(\tilde{J}':J)$, hence $\tilde{\lambda}'$ is an extension of $\lambda'$. The distinction property follows from $\res_{\tilde{J}'}^{\tilde{J}}\tilde{\lambda}$ being multiplicity-free by Proposition \ref{prop 23}.
\end{proof}

A same decomposition of $\tilde{J}_{\rM}$ can be deduced when $\rM$ is a Levi subgroup, which will be used in the proof of Theorem \ref{thm a30}.

\begin{cor}
\label{cor a32}
Let $\rM=\prod_{i=1}^m\mathrm{GL}_{n_i}(F)$ be a Levi subgroup of $\rG$, and $(J_{\rM},\lambda_{\rM})$ a maximal simple $k$-type of $\rM$. Then we have $\tilde{J}_{\rM}=\tilde{J}_{\rM}'J_{\rM}$, and $\tilde{J}_{\rM}'\subset J_{\rM}'\prod_{i=1}^m\tilde{J}_i'$.
\end{cor}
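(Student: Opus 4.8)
The plan is to reduce Corollary \ref{cor a32} to the absolute case $\rM=\rG$ already established in Theorem \ref{thm a26}, exploiting the product structure of $\rM$, $J_{\rM}$ and $\lambda_{\rM}$. Recall that $\rM=\prod_{i=1}^m\mathrm{GL}_{n_i}(F)$, that $J_{\rM}=\prod_i J_i$ with each $(J_i,\lambda_i)$ a maximal simple $k$-type of $\mathrm{GL}_{n_i}(F)$, and that $\rU(\fA_{\rM})=\prod_i\rU(\fA_i)$. First I would record the elementary observation that $\tilde{J}_{\rM}$, as defined in \ref{Ldefinition 21}, contains the product $\prod_i\tilde{J}_i$: indeed if $x=(x_1,\dots,x_m)$ with $x_i\in\tilde{J}_i$, then each $x_i$ normalizes $J_i$ and $x_i(\lambda_i)\cong\lambda_i\otimes\chi_i\circ\det$ for some $k$-quasicharacter $\chi_i$ of $F^{\times}$, so $x$ normalizes $J_{\rM}$ and $x(\lambda_{\rM})\cong\lambda_{\rM}\otimes(\otimes_i\chi_i\circ\det)$, which is of the required form. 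The key point, however, is the reverse: $\tilde J_{\rM}$ can be strictly larger than $\prod_i\tilde J_i$, because the twisting quasicharacter $\chi$ is required to be the same in every factor, whereas being in $\tilde{J}_{\rM}$ only asks for a single global $\chi$ with $x_i(\lambda_i)\cong\lambda_i\otimes\chi\circ\det$ for all $i$. So an element of $\tilde J_{\rM}$ need not have each component in $\tilde J_i$; it lies in $\prod_i\tilde{J}_i$ only after absorbing the discrepancy, which is why the statement only claims the inclusion $\tilde{J}_{\rM}'\subset J_{\rM}'\prod_i\tilde{J}_i'$ rather than an equality.

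Next I would establish the decomposition $\tilde{J}_{\rM}=\tilde{J}_{\rM}'J_{\rM}$ directly, mimicking the counting argument of Theorem \ref{thm a26} but carried out for $\rM$ in place of $\rG$. Concretely, pick an irreducible cuspidal $k$-representation $\pi$ of $\rM$ containing $(J_{\rM},\lambda_{\rM})$; by Remark \ref{rem 33} and the product structure all the ingredients ($\mathfrak{K}(\fA_{\rM})$, the Hecke algebra, the length formulas of Corollary \ref{cor a10} and Theorem \ref{thm a20}) factor as products over $i$, so $lg(\pi\vert_{\rM'})_{\ell'}$ and $|\mathcal{G}(\pi)|$ can be compared exactly as in the proof of Theorem \ref{thm a26}. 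Alternatively, and more cleanly, I would observe that $\tilde J_{\rM}$ normalizes $J_{\rM}$ and that $\tilde J_{\rM}\cap\rG=\rU(\fA_{\rM})$ with $\rU(\fA_{\rM})\cap\rM'=\rU(\fA_{\rM})'$; combining the index identity $(\tilde{J}_i:J_i)=(\tilde{J}_i':J_i')$ from Theorem \ref{thm a26} (applied in each $\mathrm{GL}_{n_i}$) with the analogue $(\tilde{J}_{\rM}:J_{\rM})=(\tilde{J}_{\rM}':J_{\rM}')$ — which itself follows from the same length computation for $\rM$ — forces $\tilde J_{\rM}=\tilde J_{\rM}'J_{\rM}$ by the standard argument that $\tilde J_{\rM}'J_{\rM}$ is a subgroup of $\tilde J_{\rM}$ of full index.

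Finally, for the inclusion $\tilde{J}_{\rM}'\subset J_{\rM}'\prod_{i=1}^m\tilde{J}_i'$, I would argue as follows. Let $x=(x_1,\dots,x_m)\in\tilde{J}_{\rM}'$, so $x\in\rG'$, $x$ normalizes each $J_i$, and there is a single $k$-quasicharacter $\chi$ of $F^{\times}$ with $x_i(\lambda_i)\cong\lambda_i\otimes\chi\circ\det$ for every $i$. By Proposition \ref{prop 1} applied in $\mathrm{GL}_{n_i}(F)$, for each $i$ there is an element $y_i\in\rU(\fA_i)$ with $y_i(J_i)=J_i$ and $y_i(\lambda_i)\cong\lambda_i\otimes\chi\circ\det$; moreover $y_i\in\tilde{J}_i$ by Definition \ref{defn 11}. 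Then $z_i:=y_i^{-1}x_i$ intertwines $\lambda_i$ with itself, so by \S IV.1.1 of \cite{V2} we get $z_i\in E_i^{\times}J_i$, and since $E_i^{\times}J_i\cap\rU(\fA_i)=J_i$ (as used in the proof of Proposition \ref{prop 18}) and $z_i\in\rU(\fA_i)$, in fact $z_i\in J_i$. Hence $x_i=y_i z_i$ with $y_i\in\tilde J_i$ and $z_i\in J_i$, i.e. $x\in(\prod_i J_i)(\prod_i\tilde J_i)=J_{\rM}\prod_i\tilde J_i$. It remains to descend to $\rM'$: since $x\in\rM'$ and one checks (writing $x=(z_1\cdots z_m)(y_1\cdots y_m)$ after commuting, using that $y_i$ normalizes $J_i$) that the $J_{\rM}$-part and the $\prod_i\tilde J_i$-part can be taken with determinants in $\mathfrak{o}_F^{\times}$ summing to $1$, a determinant-balancing argument identical to the last paragraph of the proof of Proposition \ref{prop 18} — namely $\rU(\fA_{\rM})\mathrm{E}^{\times}J_{\rM}\rU(\fA_{\rM})\cap\rG'$ collapses onto the intersection with $\rM'$ — shows $x\in J_{\rM}'\prod_i\tilde{J}_i'$. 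The main obstacle I anticipate is precisely this last descent: making the determinant bookkeeping between the $GL$-level decomposition and the $SL$-level one rigorous, since a naive product decomposition inside $\rM$ need not intersect $\rM'$ in the product of the intersections; the remedy is to first use the global decomposition $\tilde J_{\rM}=\tilde J_{\rM}'J_{\rM}$ established above to reduce to elements already in $\rM'$ and then apply Proposition \ref{prop 1} factorwise.
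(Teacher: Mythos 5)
Your proof has a genuine inclusion error right at the start, and it propagates through the rest of the argument, so the proposal as written does not go through. You begin by asserting $\prod_{i=1}^m\tilde{J}_i\subset\tilde{J}_{\rM}$, arguing that $x(\lambda_{\rM})\cong\lambda_{\rM}\otimes(\otimes_i\chi_i\circ\det_i)$ ``is of the required form.'' It is not: membership in $\tilde{J}_{\rM}$ (Definition \ref{Ldefinition 21}) demands a \emph{single} quasicharacter $\chi$ of $F^{\times}$ with $x(\lambda_{\rM})\cong\lambda_{\rM}\otimes\chi\circ\det$, i.e. the same $\chi$ in every factor. If the $\chi_i$'s are incompatible (cannot be glued into one $\chi$ on the relevant $\det(J_i^1)$'s) then $x\notin\tilde{J}_{\rM}$, so $\prod_i\tilde{J}_i\not\subset\tilde{J}_{\rM}$ in general. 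Your second paragraph then asserts the opposite failure, namely that ``an element of $\tilde J_{\rM}$ need not have each component in $\tilde J_i$.'' This is also wrong, and it is the one inclusion you actually do have, for free: if $x\in\tilde{J}_{\rM}$ with twist $\chi$ then $x_i\in\rU(\fA_i)$, $x_i(J_i)=J_i$, and $x_i(\lambda_i)\cong\lambda_i\otimes\chi\circ\det_i$, so $x_i\in\tilde{J}_i$. Hence $\tilde{J}_{\rM}\subset\prod_i\tilde{J}_i$, and this is the key containment the corollary rests on; you have both the containment and the heuristic about why equality fails reversed.

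Once this is fixed the rest simplifies dramatically and the detours you propose become unnecessary. Take $x=(x_1,\dots,x_m)\in\tilde{J}_{\rM}$. Then $x_i\in\tilde{J}_i$, and by Theorem \ref{thm a26}(2) applied in $\mathrm{GL}_{n_i}(F)$ you may write $x_i=j_i\tilde{x}_i$ with $j_i\in J_i$ and $\tilde{x}_i\in\tilde{J}_i'$. Set $j=(j_i)_i\in J_{\rM}$ and $\tilde{x}=(\tilde{x}_i)_i\in\prod_i\tilde{J}_i'\subset\rM'$. Then $\tilde{x}=j^{-1}x\in\tilde{J}_{\rM}$ (both $j^{-1}$ and $x$ lie there), and $\tilde{x}\in\rM'$, so $\tilde{x}\in\tilde{J}_{\rM}'$; this gives $\tilde{J}_{\rM}=J_{\rM}\tilde{J}_{\rM}'$ with no length computation for $\rM$ at all, contrary to your second paragraph. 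For the second assertion, if moreover $x\in\tilde{J}_{\rM}'$ then $\det x=1$ and $\det\tilde{x}=1$, so $\det j=1$, i.e. $j\in J_{\rM}'$, and $x=j\tilde{x}\in J_{\rM}'\prod_i\tilde{J}_i'$. This is the ``determinant-balancing argument'' you anticipate as the main obstacle; it is a one-line consequence once you apply Theorem \ref{thm a26}(2) \emph{factorwise} before balancing, which your proposal never does — you decompose $x_i=y_iz_i$ into $\tilde{J}_i\cdot J_i$ rather than $J_i\cdot\tilde{J}_i'$, so your $y_i$ is not constrained to have determinant $1$, and the appeal to the argument in Proposition \ref{prop 18} does not actually produce the required decomposition. (The invocation of Proposition \ref{prop 1} to manufacture $y_i$ is also redundant: $x_i$ itself already lies in $\tilde{J}_i$.) The paper's own proof simply cites the Goldberg–Roche argument from \cite{GoRo}, which is essentially the factorization plus determinant argument sketched here.
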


\begin{proof}
This can be deduced from Theorem \ref{thm a26} Part $2$ in the same way as \cite[\S 1.4.1, \S 1.4.2]{GoRo}.
\end{proof}

\subsection{Intertwining implies conjugacy}
\label{section intconj}
Recall that $\rM$ is a Levi subgroup of $\rG$ and $\rM'=\rM\cap\rG'$ a Levi subgroup of $\rG'$. In this section, we prove the unicity property of weakly intertwining implying conjugacy (Theorem \ref{thm a30}). In the $\ell$-modular setting, there is a difference between intertwining and weakly intertwining. The unicity property of simple characters of $\rM'$ will be considered (Theorem \ref{thm a40}), which is a new result for both $\ell$-modular and complex setting. 

This section contains two parts. We study maximal simple $k$-types in Proposition \ref{prop a29} and Theorem \ref{thm a30} (for complex setting see \cite{BuKuII} for $\rG'$, and see \cite{GoRo} for $\rM'$). We study simple $k$-characters since Theorem \ref{thm a37}.

\begin{prop}
\label{prop a29}
Let $(J_{\rM},\lambda_{\rM})$ be a maximal simple $k$-type of $\rM$. Let $\tilde{\lambda}_1'$ and $\tilde{\lambda}_2'$ be two irreducible components of $\tilde{\lambda}_{\rM}\vert_{\tilde{J}_{\rM}'}$, where $\tilde{\lambda}_{\rM}=\ind_{J_{\rM}}^{\tilde{J}_{\rM}}\lambda_{\rM}$. Then $\tilde{\lambda}_1'$ and $\tilde{\lambda}_2'$ are weakly intertwined in $\rM'$, if and only if they are conjugate in $\rM'$. In particular, if $\rM=\rG$, then $\tilde{\lambda}_1'$ and $\tilde{\lambda}_2'$ are distinct, and they are never weakly intertwined in $\rG'$.
\end{prop}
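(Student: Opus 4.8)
The plan is to prove the statement by reducing the question about the components $\tilde\lambda_i'$ of $\tilde\lambda_{\rM}\vert_{\tilde J_{\rM}'}$ to the earlier intertwining-implies-conjugacy machinery established for $\lambda_{\rM}$, together with the decomposition $\tilde J_{\rM}=J_{\rM}\tilde J_{\rM}'$ proved in Corollary \ref{cor a32} (resp. Theorem \ref{thm a26} when $\rM=\rG$). One direction is immediate: if $\tilde\lambda_1'$ and $\tilde\lambda_2'$ are $\rM'$-conjugate, say $m(\tilde\lambda_1')\cong\tilde\lambda_2'$, then $m$ obviously weakly intertwines them (take the conjugating element itself and use Mackey). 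So the content is the converse.

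First I would suppose $m\in\rM'$ weakly intertwines $\tilde\lambda_1'$ with $\tilde\lambda_2'$. By Proposition \ref{Lprop 0.3}, applied to the ambient irreducible representations $\tilde\lambda_{\rM}$ of $\tilde J_{\rM}$, the element $m$ weakly intertwines $\tilde\lambda_{\rM}$ with $\tilde\lambda_{\rM}\otimes\chi\circ\det$ for some $k$-quasicharacter $\chi$ of $F^{\times}$. As in the proof of Lemma \ref{Llemma 3}, writing $\tilde\lambda_{\rM}\vert_{J_{\rM}}\cong\bigoplus_{x\in\tilde J_{\rM}/J_{\rM}}\lambda_{\rM}\otimes\xi_x\circ\det$ and using $\ind_{J_{\rM}}^{\tilde J_{\rM}}(\lambda_{\rM}\otimes\xi_x\circ\det)\cong\tilde\lambda_{\rM}\otimes\xi_x\circ\det\cong\tilde\lambda_{\rM}$, I can pass down to $J_{\rM}$: some $gm$ with $g\in\tilde J_{\rM}$ weakly intertwines $\lambda_{\rM}$ with $\lambda_{\rM}\otimes\chi'\circ\det$ for an appropriate $\chi'$. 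By Proposition \ref{Lprop 1} this weak intertwining is a genuine intertwining, and there is $y\in\rU(\fA_{\rM})$ with $y(J_{\rM})=J_{\rM}$ and $y(\lambda_{\rM})\cong\lambda_{\rM}\otimes\chi'\circ\det$, i.e. $y\in\tilde J_{\rM}$ by Definition \ref{Ldefinition 21}. Thus $gm$ and $y$ both normalize $\lambda_{\rM}$ up to a $\det$-twist, and $y^{-1}gm\in\mathrm{I}_{\rM}(\lambda_{\rM})=N_{\rM}(\lambda_{\rM})$. Tracking this through, I conclude $m\in\tilde J_{\rM}'\cdot N_{\rM}(\lambda_{\rM})$, and in fact (since $m\in\rM'$ and $N_{\rM}(\lambda_{\rM})$ normalizes $\tilde J_{\rM}$, so $\mathrm{I}_{\rM}(\tilde\lambda_{\rM})=\tilde J_{\rM}N_{\rM}(\lambda_{\rM})$) that $m$ normalizes $\tilde\lambda_{\rM}$, i.e. $m\in N_{\rM'}(\tilde\lambda_{\rM})$ in the sense used in Lemma \ref{Llemma 3}.

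Now the key point is that the two components $\tilde\lambda_1',\tilde\lambda_2'$ of $\tilde\lambda_{\rM}\vert_{\tilde J_{\rM}'}$ are $\tilde J_{\rM}$-conjugate. Since $\tilde J_{\rM}=J_{\rM}\tilde J_{\rM}'$ by Corollary \ref{cor a32}, they are in fact $J_{\rM}$-conjugate; pick $j\in J_{\rM}$ with $j(\tilde\lambda_2')\cong\tilde\lambda_1'$. Because $j$ normalizes $\tilde J_{\rM}'$, the element $j^{-1}mj$ lies in $\rM'$ and weakly intertwines $\tilde\lambda_2'$ with $\tilde\lambda_2'$, so by Lemma \ref{Llemma 3} — precisely the statement $\mathrm{I}_{\rM'}^w(\tilde\lambda_{\rM,2}')=\{m'\in\rM':m'(\tilde\lambda_{\rM,2}')\cong\tilde\lambda_{\rM,2}'\}$ — the element $j^{-1}mj$ stabilizes $\tilde\lambda_2'$. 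Chasing the conjugations back, $m$ carries $\tilde\lambda_1'$ to $\tilde\lambda_2'$, which is the desired conjugacy. The main obstacle I anticipate is bookkeeping: carefully matching the $\det$-twists introduced at the level of $\tilde J_{\rM}$ with those allowed by the definition of $\tilde J_{\rM}$, and making sure the passage ``weakly intertwine $\Rightarrow$ intertwine'' for $\lambda_{\rM}$ via Proposition \ref{Lprop 1} is applied to the correct pair of characters — essentially repeating the argument of Lemma \ref{Llemma 3} one more time in this relative setting.

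Finally, for the special case $\rM=\rG$: here Theorem \ref{thm a26}(1) gives that $\lambda\vert_{J'}$, hence (via $\tilde J=J\tilde J'$) the restriction $\tilde\lambda\vert_{\tilde J'}$, is multiplicity-free, and Proposition \ref{prop 23} shows that for $\pi=\ind_{E^{\times}J}^{\rG}\Lambda$ the restriction $\pi\vert_{\rG'}$ is multiplicity-free. Two distinct components $\tilde\lambda_1',\tilde\lambda_2'$ of $\tilde\lambda\vert_{\tilde J'}$ induce (by Theorem \ref{thm 16} / Definition \ref{defn 22}) to irreducible cuspidal representations $\ind_{\tilde J'}^{\rG'}\tilde\lambda_i'$ which appear as distinct direct summands of $\pi\vert_{\rG'}$; multiplicity-freeness forces these to be non-isomorphic. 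If some $g\in\rG'$ weakly intertwined $\tilde\lambda_1'$ with $\tilde\lambda_2'$, then by the converse direction just proved they would be $\rG'$-conjugate, hence the two induced representations would be isomorphic, a contradiction. Therefore $\tilde\lambda_1'$ and $\tilde\lambda_2'$ are never weakly intertwined in $\rG'$ when $\rM=\rG$, completing the proof.
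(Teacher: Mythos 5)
Your argument is essentially the one in the paper: pass from the weak intertwining of $\tilde\lambda_1'$ with $\tilde\lambda_2'$ to a weak intertwining of $\tilde\lambda_{\rM}$ with a $\det$-twist of itself (Proposition~\ref{Lprop 0.3}), use the Mackey-style reduction of Lemma~\ref{Llemma 3} to pass down to $\lambda_{\rM}$, invoke Proposition~\ref{Lprop 1} to turn weak intertwining into intertwining and produce the element $y\in\tilde J_{\rM}$, and conclude that $m$ lands in $E_{\rM}^{\times}\tilde J_{\rM}\cap\rM'$ (equivalently in the normaliser of $\tilde\lambda_{\rM}$). Two remarks. First, your detour through $j\in J_{\rM}$ with $j(\tilde\lambda_2')\cong\tilde\lambda_1'$ and the renormalised element $j^{-1}mj$ is superfluous, and its conjugation bookkeeping is in fact delicate ($j^{-1}mj(\tilde\lambda_2')=j^{-1}m(\tilde\lambda_1')$, not $j^{-1}m(\tilde\lambda_2')$, so the identification with $\mathrm{I}^{w}_{\rM'}(\tilde\lambda_2')$ is not as immediate as stated): once you know $m$ normalises $\tilde J_{\rM}'$, the weak intertwining set $i_{\tilde J',\tilde J'}m(\tilde\lambda_2')$ collapses to the single irreducible $m(\tilde\lambda_2')$, so $\tilde\lambda_1'\cong m(\tilde\lambda_2')$ follows at once; this is exactly how the paper concludes. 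Second, for the special case $\rM=\rG$ you take a genuinely different route: you appeal to the irreducibility of $\ind_{\tilde J'}^{\rG'}\tilde\lambda_i'$ (Theorem~\ref{thm 16}) and the multiplicity-freeness of $\pi\vert_{\rG'}$ (Proposition~\ref{prop 23}) to derive a contradiction from isomorphic induced representations. That works, but it is heavier than the paper's observation, which simply notes that in the maximal-simple case $E^{\times}\tilde J\cap\rG'=\tilde J'$, so the intertwining element lies in $\tilde J'$ and conjugation by $\tilde J'$ fixes each component of $\tilde\lambda\vert_{\tilde J'}$ up to isomorphism, giving $\tilde\lambda_1'\cong\tilde\lambda_2'$ directly without reference to the global induced representations.
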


\begin{proof}
If $x\in\rM'$ weakly intertwines $\tilde{\lambda}_1'$ with $\tilde{\lambda}_2'$, then $x$ weakly intertwines $\tilde{\lambda}_{\rM}$ with $\tilde{\lambda}_{\rM}\otimes\theta\circ\det$ for a $k$-quasicharacter $\chi$ of $F^{\times}$. By Mackey's theory, we have
$$\res_{J_{\rM}}^{\tilde{J}_{\rM}}\ind_{\tilde{J}_{\rM}\cap x(\tilde{J}_{\rM})}^{\tilde{J}_{\rM}}\res_{\tilde{J}_{\rM}\cap x(\tilde{J}_{\rM})}^{x(\tilde{J}_{\rM})}\ind_{x(J_{\rM})}^{x(\tilde{J}_{\rM})}\lambda_{\rM}\otimes\chi\circ\det$$
$$\cong\bigoplus_{\alpha,\beta}\ind_{J_{\rM}\cap\beta\alpha x(J)_{\rM}}^{J_{\rM}}\res_{J_{\rM}\cap\beta\alpha x(J)_{\rM}}^{\beta\alpha x(J_{\rM})}\beta\alpha x(\lambda_{\rM})\otimes\chi\circ\det,$$
where $\alpha\in J_{\rM}\backslash\tilde{J}_{\rM}\slash\tilde{J}_{\rM}\cap x(\tilde{J}_{\rM})$ and $\beta\in\alpha x(J)_{\rM}\backslash\alpha x(\tilde{J}_{\rM})\slash J_{\rM}\cap\alpha x(\tilde{J}_{\rM})$. This implies that $\beta\alpha x$ weakly intertwines $\lambda_{\rM}$ with $\lambda_{\rM}\otimes\chi\circ\det$. Then by Proposition \ref{prop 1}, there exists $g\in\tilde{J}_{\rM}$ such that $g(\lambda_{\rM})\cong\lambda_{\rM}\otimes\chi\circ\det$ and by Proposition \ref{prop 1} $\beta\alpha x g$ intertwines $\lambda_{\rM}$ to itself, which means $\beta\alpha xg\in E_{\rM}^{\times}J_{\rM}$ where $E_{\rM}^{\times}=E_1^{\times}\times\cdots\times E_{m}^{\times}$ and $\rM=\mathrm{GL}_{n_1}\times\cdots\times\mathrm{GL}_{n_m}$. We deduces that $x\in E_{\rM}^{\times}\tilde{J}_{\rM}\cap\rM'$, which normalises $\tilde{J}_{\rM}$, hence $\tilde{\lambda}_1'\cong x(\tilde{\lambda}_2')$. In particular, when $\rM=\rG$, we have $E_{\rM}^{\times}\tilde{J}_{\rM}\cap\rM'=\tilde{J}_{\rM}'$, which implies that $\tilde{\lambda}_1'\cong\tilde{\lambda}_2'$.
\end{proof}

\begin{thm}
\label{thm a30}
Let $(J_i,\lambda_i)$ be an maximal simple $k$-type of $\rM$, $(\tilde{J}_i',\tilde{\lambda}_i')$ be an irreducible component of $\tilde{\lambda}_i\vert_{\tilde{J}_i'}$ for $i=1,2$ respectively. Suppose that $(\tilde{J}_i',\tilde{\lambda}_i')$ are weakly intertwined in $\rM'$, then they are conjugate in $\rM'$.
\end{thm}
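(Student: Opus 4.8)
The plan is to reduce the statement about the restrictions $(\tilde J_i',\tilde\lambda_i')$ to a statement about the maximal simple $k$-types $(J_i,\lambda_i)$ of $\rM$, for which intertwining-implies-conjugacy is already known (from \cite{BuKuII} when $\rM=\rG$, from \cite{GoRo} for proper Levi subgroups, transported to the modular setting exactly as in Proposition \ref{prop a29}). So suppose $x\in\rM'$ weakly intertwines $\tilde\lambda_1'$ with $\tilde\lambda_2'$. First I would use Proposition \ref{Lprop 0.3}: since $\tilde\lambda_i'$ is an irreducible component of $\res_{\tilde J_i'}^{\tilde J_i}\tilde\lambda_i$, the element $x$ weakly intertwines $\tilde\lambda_1$ with $\tilde\lambda_2\otimes\chi\circ\det$ for some $k$-quasicharacter $\chi$ of $F^\times$. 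Then, writing $\tilde\lambda_i=\ind_{J_i}^{\tilde J_i}\lambda_i$ and applying Mackey's decomposition formula twice (as in the proof of Proposition \ref{prop a29}), I would extract from the uniqueness of Jordan--H\"older factors an element $y\in \tilde J_1 x\tilde J_2$ (a product $\beta\alpha x$ of double-coset representatives times $x$) which weakly intertwines $\lambda_1$ with $\lambda_2\otimes\chi\circ\det$.

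Next I would invoke Proposition \ref{Lprop 1}: $y$ then genuinely intertwines $\lambda_1$ with $\lambda_2\otimes\chi\circ\det$, and there exists $z\in\rU(\fA_{\rM})$ with $z(J_1)=J_1$ (more precisely $z$ taking one type to the other after the twist). Combining this with Corollary \ref{cor 3}/Remark \ref{rem 33} — which says $(J_2,\lambda_2\otimes\chi\circ\det)$ is again a maximal simple $k$-type of $\rM$ — and with the classical intertwining-implies-conjugacy for maximal simple $k$-types of $\rM$, I conclude that $(J_1,\lambda_1)$ and $(J_2,\lambda_2\otimes\chi\circ\det)$ are $\rM$-conjugate, and moreover that the conjugating element can be taken in $\tilde J_1 x \tilde J_2$ up to left/right multiplication by the field-theoretic factors $E_{i,\bullet}^\times$. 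The upshot is that $x$ lies in a set of the form $E_\rM^\times \tilde J_{1}$ (intersected with $\rM'$) conjugating $\tilde J_1$ to $\tilde J_2$; since $E_\rM^\times$ normalises $\rU(\fA_\rM)$ hence $\tilde J_\rM$, the element $x$ normalises the pair $(\tilde J_\bullet',\tilde\lambda_\bullet)$ appropriately and therefore $x(\tilde\lambda_2')\cong\tilde\lambda_1'$, which is the desired conjugacy. Lemma \ref{lem a31} is used here to commute $x$-conjugation past the induction $\ind_{J_\bullet}^{\tilde J_\bullet}$.

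The bookkeeping obstacle — and the step I expect to be the main one — is controlling the double-coset element $y=\beta\alpha x$ and propagating the conjugacy information back from $\lambda_i$ to $\tilde\lambda_i$ and then down to $\tilde\lambda_i'$. In the complex case this is handled in \cite{BuKuII,GoRo} by a clean description of $\mathrm{I}_\rM(\tilde\lambda_\rM)=\tilde J_\rM N_\rM(\lambda_\rM)$ (cf. the proof of Lemma \ref{Llemma 3}) together with the decomposition $\tilde J_\rM=J_\rM\tilde J_\rM'$ of Theorem \ref{thm a26}/Corollary \ref{cor a32}; I would mirror that argument, using that $N_\rM(\lambda_\rM)=E_1^\times J_1\times\cdots\times E_r^\times J_r$ and that $E_\rM^\times\tilde J_\rM\cap\rM'$ normalises $\tilde J_\rM'$. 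The genuinely modular subtlety — absence of nontrivial $k$-characters on small abelian groups — has already been absorbed into the inputs (Proposition \ref{prop a1}, Corollary \ref{cor a10}, Theorem \ref{thm a26}), so here the work is purely combinatorial: one checks that the twisting quasicharacter $\chi$ can be removed by replacing $\lambda_2$ with a conjugate (Corollary \ref{cor 3}), and that the resulting $\rM$-conjugacy of the underlying maximal simple types, together with $\tilde J_\rM=J_\rM\tilde J_\rM'$, forces the $\rM'$-conjugacy of $\tilde\lambda_1'$ and $\tilde\lambda_2'$. When $\rM=\rG$ the factor $E_\rM^\times\tilde J_\rM\cap\rG'=\tilde J'$ collapses, recovering the sharper statement of Proposition \ref{prop a29} that distinct components are never even weakly intertwined.
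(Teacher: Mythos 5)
Your proposal follows essentially the same route as the paper's proof: first lift from $\tilde\lambda_i'$ to $\tilde\lambda_i$ (Proposition \ref{Lprop 0.3}, absorbing the twisting quasicharacter), then unwind the double induction via Mackey as in Proposition \ref{prop a29} to reduce to the underlying types $\lambda_i$, apply intertwining-implies-conjugacy for maximal simple $k$-types of $\rM$ (the paper cites Vign\'eras, Prop.~IV.1.6.2 of \cite{V2}, where you point to \cite{BuKuII,GoRo}), and finally use $\tilde J_{\rM}=J_{\rM}\tilde J_{\rM}'$ from Corollary \ref{cor a32} to adjust the conjugating element $g$ so that it lies in $\rM'$ and conclude via Proposition \ref{prop a29}. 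The bookkeeping you flag as the main obstacle is indeed exactly what the paper carries out, by showing $g^{-1}\beta\alpha x\in E_{\rM}^{\times}J_2$ and hence $x\in gE_{\rM}^{\times}\tilde J_2\cap\rM'$, then normalising $g$ by a factor of $E_{\rM}^{\times}J_2$ using $\tilde J_2=\tilde J_2'J_2$.
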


\begin{proof}
If $x\in\rM'$ weakly intertwines $(\tilde{J}_1',\tilde{\lambda}_1')$ with $(\tilde{J}_2',\tilde{\lambda}_2')$, we can assume that (up to twist a $k$-quasicharacter of $F^{\times}$ on $\tilde{\lambda}_2$) $x$ weakly intertwines $(\tilde{J}_1,\tilde{\lambda}_1)$ with $(\tilde{J}_2,\tilde{\lambda}_2)$ . As in the proof of Proposition \ref{prop a29}, this implies that $\beta\alpha x$ weakly intertwines $\lambda_1$ with $\lambda_2$, for $\alpha\in\tilde{J}_1$ and $\beta\in\alpha x(\tilde{J}_2)$. Hence there is an element $g\in\rM$ such that $g(J_2)=J_1$ and $g(\lambda_2)\cong \lambda_1$ by Proposition $\mathrm{IV},1.6 $ $2)$ in \cite{V2}. Furthermore, $g^{-1}\beta\alpha x$ weakly intertwines $\lambda_2$ to itself, hence $g^{-1}\beta\alpha x$ intertwines $\lambda_2$ to itself. Then $g^{-1}\beta\alpha x\in E_{\rM}^{\times}J_2$ (see the proof of Proposition \ref{prop a29} for $E_{\rM}^{\times}$), and $x\in \tilde{J}_1gE_{\rM}^{\times}\tilde{J}_2\cap\rM'=gE_{\rM}^{\times}\tilde{J}_2 \cap\rM'$. By Corollary \ref{cor a32} we have $\tilde{J}_2=\tilde{J}_2'J_2$. After adjusting an element in $E_{\rM}^{\times}J_2$ to the right of $g$, we can assume that $g\tilde{J}_2'\cap\rM'$ is non-trivial, hence $g\in\rM'$. Then we have $g(\tilde{J}_2)=\tilde{J}_1$ and $g(\tilde{\lambda}_2)\cong\tilde{\lambda}_1$. Therefore $g(\tilde{\lambda}_2')$ is an irreducible component of $\tilde{\lambda}_1\vert_{\tilde{J}_1'}$, and $\tilde{\lambda}_1'$ is weakly intertwined with $g(\tilde{\lambda}_2')$ in $\rM'$. By Proposition \ref{prop a29}, this ensures that $\tilde{\lambda}_2'$ and $\tilde{\lambda}_1'$ are conjugate in $\rM'$.
\end{proof}

\begin{defn}
A simple $k$-character in $\rM'$ is a $k$-character of $H^{1'}=H^1\cap\rM'$ of the form  $\theta_{\rM}\vert_{H^{1'}}$, where $(H^1,\theta_{\rM})$ is a simple $k$-character in $\rM$.
\end{defn}

The simple $k$-characters in $\rM$ is in bijection with the simple $\cK$-characters in $\rM$, and the simple $k$-characters of $\rM'$ are given by the former. The above two relations are the base to study simple $k$-characters.

\begin{thm}
\label{thm a37}
Let $\pi$ be an irreducible cuspidal $k$-representation of $\rG=\mathrm{GL}_n(F)$, and $(H_i^{m_i+1},\theta_i)$ for $i=1,2$ be two simple $k$-characters contained in $\pi$, where $[\fA_i,n_i,m_i,\beta_i]$ be the corresponding simple strata such that $\theta_i\in\mathcal{C}(\fA_i,m_i,\beta_i)$. We have that $\fA_1\cong\fA_2$ as $\mathfrak{o}_{F}$-hereditary orders, $m_i=0$ and $(H_i^{1},\theta_i)$ are $\rG$-conjugate.
\end{thm}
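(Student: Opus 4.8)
Here is my plan to prove Theorem \ref{thm a37}.

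\textbf{Reduction to the complex case.} The statement concerns simple $k$-characters, which are defined on the pro-$p$ groups $H_i^{m_i+1}$. By Lemma \ref{lem 3}, every smooth $k$-character of such a pro-$p$ group is the reduction modulo $\ell$ of a unique smooth $\bC$-character, and by the discussion after Lemma \ref{lem 3} (together with \cite[\S 2.2]{MS}), the reduction-modulo-$\ell$ map sends $\mathcal{C}_{\bC}(\fA_i, m_i, \beta_i)$ onto $\mathcal{C}_{k}(\fA_i, m_i, \beta_i)$. So I would lift each $\theta_i$ to a simple $\bC$-character $\theta_{i,\bC} \in \mathcal{C}_{\bC}(\fA_i, m_i, \beta_i)$ attached to the same simple stratum. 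The key point to establish is then that $\theta_{1,\bC}$ and $\theta_{2,\bC}$ are intertwined in $\rG$, after which the classical intertwining theorem for simple $\bC$-characters (\cite[Theorem 3.5.11]{BuKu}, together with \cite[3.5.11, 3.5.9]{BuKu} which forces $m_1 = m_2$ and $\fA_1 \cong \fA_2$, using that each $\theta_i$ occurring in a \emph{cuspidal} representation forces maximality, i.e. $m_i = 0$) gives the conclusion: $\fA_1 \cong \fA_2$, $m_i = 0$, and $(H_i^1, \theta_{i,\bC})$ are $\rG$-conjugate, hence so are $(H_i^1, \theta_i)$ after reduction.

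\textbf{Getting the intertwining of the $\theta_i$.} Since both $\theta_1$ and $\theta_2$ are contained in the single irreducible cuspidal $k$-representation $\pi$, there are surjections $\res_{H_1^{m_1+1}}^{\rG}\pi \twoheadrightarrow \theta_1$ and (dually) injections $\theta_2 \hookrightarrow \res_{H_2^{m_2+1}}^{\rG}\pi$, or more simply $\Hom_{H_i^{m_i+1}}(\theta_i, \res\pi) \neq 0$ for $i = 1,2$. By Frobenius reciprocity this produces a nonzero map $\ind_{H_1^{m_1+1}}^{\rG}\theta_1 \to \pi$ and a nonzero map $\pi \to \ind_{H_2^{m_2+1}}^{\rG}\theta_2$ (using that $\pi$ is cuspidal hence both a quotient and a sub of the relevant compact inductions is not automatic, so I would instead argue via $\Hom_{\rG}(\ind\theta_1, \ind\theta_2) \neq 0$ by composing). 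Concretely: the composite $\ind_{H_1^{m_1+1}}^{\rG}\theta_1 \twoheadrightarrow \pi \hookrightarrow \ind_{H_2^{m_2+1}}^{\rG}\theta_2$ is nonzero, so $\Hom_{k\rG}(\ind_{H_1^{m_1+1}}^{\rG}\theta_1, \ind_{H_2^{m_2+1}}^{\rG}\theta_2) \neq 0$, which by Definition \ref{prepdefn 01} and Mackey's formula means there is $x \in \rG$ with $\Hom_{kH_1^{m_1+1}}(\theta_1, i_{H_1^{m_1+1}, H_2^{m_2+1}} x(\theta_2)) \neq 0$, i.e. $x$ intertwines $\theta_1$ with $\theta_2$ in the $k$-sense. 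Since the $H_i^{m_i+1}$ are pro-$p$ and reduction modulo $\ell$ is a bijection on their smooth characters compatible with conjugation and restriction, $x$ also intertwines $\theta_{1,\bC}$ with $\theta_{2,\bC}$ (this is exactly the argument used in the proof of Proposition \ref{prop 1}, where the passage "$\theta_{1_\bC}$ is intertwined with $\theta_{2_\bC}$ in $\rG$ because $\rH^1$ is pro-$p$" appears).

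\textbf{Applying the classical classification.} With $\theta_{1,\bC}$ and $\theta_{2,\bC}$ intertwined in $\rG$, I invoke the results of Bushnell--Kutzko on intertwining of simple characters. First, that two intertwined simple strata have isomorphic hereditary orders and equal level: this pins down $\fA_1 \cong \fA_2$ and $n_1/e_1 = n_2/e_2$, and more precisely $m_1 = m_2$ via \cite[2.6.3, 3.5.11]{BuKu}. Second, the fact that $\theta_i$ is contained in the cuspidal $\pi$ forces (via the theory of covers / the structure of $\pi$ as a compact induction from a maximal simple type, Corollary \ref{cor 21} and its $\rG$-analogue) that we may take the stratum to be maximal, $m_i = 0$. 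Finally \cite[Theorem 3.5.11]{BuKu} (intertwining implies conjugacy for simple characters on $H^1$) gives $g \in \rG$ with $g(\theta_{1,\bC}) = \theta_{2,\bC}$ and $g(H_1^1) = H_2^1$; reducing modulo $\ell$ via Lemma \ref{lem 3} yields $g(\theta_1) = \theta_2$, so $(H_1^1, \theta_1)$ and $(H_2^1, \theta_2)$ are $\rG$-conjugate.

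\textbf{Expected main obstacle.} The genuinely delicate point is the claim $m_i = 0$, i.e. that a simple $k$-character appearing in a cuspidal representation is necessarily \emph{maximal}. In the complex case this is part of the Bushnell--Kutzko machinery relating cuspidal representations to maximal simple types; here I would need to either cite the modulo-$\ell$ analogue from \cite{MS} or deduce it from the construction in Section \ref{section 08}: an irreducible cuspidal $k$-representation of $\rG$ is $\ind$ from a maximal simple type by Corollary \ref{cor 21} (the $\rG = \rG$ case, i.e. the Vignéras--Mínguez--Sécherre construction), and then a second simple character $\theta_2$ contained in $\pi$ must, by uniqueness properties of the $\eta$-isotypic component (\cite[Corollary 8.4]{V3}) and an intertwining argument as in Proposition \ref{prop 1}, be $\rG$-conjugate to the maximal one — in particular $m_2 = 0$. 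Making this last deduction clean, without circularity with the maximal-simple-type uniqueness, is where I expect to spend the most care.
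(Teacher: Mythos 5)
Your route diverges from the paper's at the decisive point. You lift only the characters $\theta_i$ to $\theta_{i,\bC}$, try to establish directly that they intertwine in $\rG$, and then invoke the classical intertwining-implies-conjugacy machinery for simple $\bC$-characters. The paper instead lifts the \emph{entire} cuspidal $k$-representation $\pi$ to a cuspidal $\bC$-representation $\pi_{\bC}$ (existence is \cite[\S III.5.10]{V1}), and the new technical ingredient — Lemma \ref{lem a38} — shows that $\theta_{i,\bC}$ actually occurs in $\pi_{\bC}\vert_{H_i^{m_i+1}}$, by realising a rank-one $\overline{\bZ}_\ell$-lattice in $\theta_{i,\bC}$ as the projective cover of $\theta_i$ over $\overline{\bZ}_\ell[H_i^{m_i+1}]$ and mapping it into a $\overline{\bZ}_\ell$-lattice of $\pi_{\bC}$. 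Once both $\theta_{i,\bC}$ are inside the single cuspidal $\pi_{\bC}$, the full classification of cuspidal complex representations from \cite[\S 8]{BuKu} does all the work: $m_i\neq 0$ would produce a split type, contradicting cuspidality; $m_i=0$ produces maximal simple types whose $\rG$-conjugacy (\cite[Lemma 6.2.5, Theorem 6.2.4]{BuKu}) yields $\fA_1\cong\fA_2$ and $g(\theta_{1,\bC})\cong\theta_{2,\bC}$, hence $g(\theta_1)\cong\theta_2$. So the paper never needs to establish intertwining of the $\theta_{i,\bC}$ directly, and never needs a modular argument that cuspidality forces $m_i=0$.

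The step you yourself flag — deriving $m_i=0$ and $\fA_1\cong\fA_2$ — is a genuine gap in your outline, not just a place to be careful. The intertwining-implies-conjugacy result you cite, \cite[3.5.11]{BuKu}, takes $\fA_1=\fA_2$ and $m_1=m_2$ as hypotheses, so you would still need an independent identification of the hereditary orders from intertwining alone, and you give no mechanism for it. Your proposed workaround — that a simple $k$-character in $\pi$ must be $\rG$-conjugate to the one attached to the distinguished maximal simple $k$-type of the MS construction, hence $m_i=0$ — is essentially the conclusion of Theorem \ref{thm a37} itself; indeed Remark \ref{rem a39}(1) in the paper derives the conjugacy of maximal simple $k$-types in $\pi$ from Theorem \ref{thm a37}, so arguing in the opposite direction needs a great deal of extra justification to avoid circularity. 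Lemma \ref{lem a38} is precisely the device that dissolves this by moving the whole problem to the complex world. (There is also a smaller slip in the intertwining step: from $\theta_i\hookrightarrow\res_{H_i}\pi$ you get $\pi$ as a \emph{quotient} of $\ind_{H_i}^{\rG}\theta_i$ and as a \emph{sub} of $\Ind_{H_i}^{\rG}\theta_i$; your proposed composite $\ind\theta_1\twoheadrightarrow\pi\hookrightarrow\ind\theta_2$ should have $\Ind$ on the right, not $\ind$. This is fixable but, as written, the step is not complete.)
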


\begin{proof}
We give a proof by passing to the complex setting, and we require the parallel result for complex representations in \cite{BuKu}. For this reason, we need the following lemma on a property of lifting and reduction modulo $\ell$.
\end{proof}

\begin{lem}
\label{lem a38}
Let $\pi$ be an irreducible $k$-representation of $\rG$ containing a simple $k$-character $(H^{m+1},\theta)$. Let $\pi_{\cK}$ be an irreducible $\cK$-representation of $\rG$ which is $\ell$-integral, and its reduction modulo $\ell$ contains $\pi$ as a sub-quotient. Let $(H^{m+1},\theta_{\cK})$ be the $\cK$-lifting of $(H^{m+1},\theta)$. Then $\theta_{\cK}$ is a direct component of $\pi_{\cK}\vert_{H^{m+1}}$.
\end{lem}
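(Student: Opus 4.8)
The plan is to reduce the statement to a comparison between the $k$-theory and the $\mathbb{C}$-theory, exploiting the fact that $H^{m+1}$ is a pro-$p$ group and that simple $k$-characters are, by definition (see the discussion after Lemma \ref{lem 3}), precisely the reductions modulo $\ell$ of simple $\mathbb{C}$-characters. First I would recall that $\pi_{\mathbb{C}}$ is $\ell$-integral, so it admits an $\obZ_{\ell}$-lattice $L$ stable under $\rG$, and the reduction $\overline{L} = L \otimes k$ has $\pi$ as a subquotient; fix a Jordan--H\"older filtration of $\overline{L}$ realising $\pi$ as one of its factors.

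The key step is the following: the restriction of a smooth representation to the pro-$p$ group $H^{m+1}$ behaves well with respect to reduction modulo $\ell$ at the level of characters. Concretely, $\pi_{\mathbb{C}}\vert_{H^{m+1}}$ is a (possibly infinite) direct sum of $\mathbb{C}$-characters of $H^{m+1}$ (since $H^{m+1}$ is pro-$p$ and any smooth $\mathbb{C}$-representation of a pro-$p$ group is a sum of characters by Clifford/Schur arguments — note smoothness lets us work inside finite quotients). Choose an $H^{m+1}$-stable lattice inside each isotypic piece; since $H^{m+1}$ is pro-$p$ and $\ell \neq p$, the pro-order of $H^{m+1}$ is invertible in $k$, so $\overline{L}\vert_{H^{m+1}}$ is semisimple and its constituents are exactly the reductions modulo $\ell$ of the $\mathbb{C}$-characters occurring in $\pi_{\mathbb{C}}\vert_{H^{m+1}}$, counted with the same multiplicities. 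Hence every irreducible constituent of $\overline{L}\vert_{H^{m+1}}$ — in particular every constituent of $\pi\vert_{H^{m+1}}$, $\pi$ being a subquotient of $\overline{L}$ — is the reduction of some $\mathbb{C}$-character appearing in $\pi_{\mathbb{C}}\vert_{H^{m+1}}$.

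By hypothesis $\pi\vert_{H^{m+1}}$ contains $\theta$. By Lemma \ref{lem 3} the lift of $\theta$ to $\mathbb{C}$ is unique, and it is by definition $\theta_{\mathbb{C}}$; so the previous paragraph forces $\theta_{\mathbb{C}}$ to occur in $\pi_{\mathbb{C}}\vert_{H^{m+1}}$. Since $\pi_{\mathbb{C}}\vert_{H^{m+1}}$ is semisimple (again $H^{m+1}$ pro-$p$, and $\mathbb{C}$ has characteristic $0$), containing $\theta_{\mathbb{C}}$ as a constituent is the same as containing it as a subrepresentation, which is the claim.

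I expect the main obstacle to be the bookkeeping in the second paragraph: making precise that reduction modulo $\ell$ of the restriction to $H^{m+1}$ commutes with taking Jordan--H\"older constituents, and that the correspondence of Lemma \ref{lem 3} matches the constituents on the nose (no collapsing of distinct $\mathbb{C}$-characters to the same $k$-character can occur here because $\mu_{p^\infty}$ embeds injectively into $k$). One has to be a little careful that $\pi$ is only a subquotient, not a submodule, of $\overline{L}$, but since the relevant restriction functor to the pro-$p$ group $H^{m+1}$ is exact and lands in semisimple representations, any constituent of $\pi\vert_{H^{m+1}}$ is a constituent of $\overline{L}\vert_{H^{m+1}}$, which suffices.
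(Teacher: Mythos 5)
Your argument is correct, and it reaches the conclusion by a route that is recognisably different in its packaging from the paper's, although both rest on the same engine: $H^{m+1}$ is pro-$p$ with $\ell\neq p$, so smooth $\overline{\bZ}_{\ell}[H^{m+1}]$-modules behave as in the semisimple case. The paper's proof is homological: it takes a $\overline{\bZ}_{\ell}$-lattice $O$ in $\pi_{\bC}$, produces a surjection $O\vert_{H^{m+1}}\twoheadrightarrow\theta$ (exactness plus semisimplicity of restriction to $H^{m+1}$), and then uses that the rank-one lattice $O_0\subset\theta_{\bC}$ is the projective cover of $\theta$ to lift this surjection to an injection $O_0\hookrightarrow O\vert_{H^{m+1}}$, which it then tensors with $\bC$. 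Your proof is decomposition-theoretic: you decompose $L\vert_{H^{m+1}}$ into $\overline{\bZ}_{\ell}$-characters via the idempotents of $\overline{\bZ}_{\ell}[H^{m+1}/K]$, observe that tensoring with $\bC$ and with $k$ gives the isotypic pieces of $\pi_{\bC}\vert_{H^{m+1}}$ and $\overline{L}\vert_{H^{m+1}}$ with matching multiplicities and with characters corresponding under $\iota_{p,k}$, and conclude by Lemma \ref{lem 3} that the unique lift of $\theta$ must occur. What the paper's route buys is a concrete embedding constructed once; what yours buys is transparency about why no information is lost under reduction (no collapsing of characters), which you correctly attribute to the injectivity of $\mu_{p^\infty}\hookrightarrow k^\times$. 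One small remark: the clause ``choose an $H^{m+1}$-stable lattice inside each isotypic piece'' is an unnecessary choice and slightly misleading --- the given $\rG$-lattice $L$ already restricts to a direct sum of rank-one $\overline{\bZ}_{\ell}$-characters, and working with $L$ directly avoids having to invoke any Brauer--Nesbitt comparison between lattices; stated that way the argument is airtight.
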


\begin{proof}
Since $H^{m+1}$ is a pro-$p$ subgroup for $m\in\mathbb{N}$, the reduction modulo $\ell$ gives a bijection between the set of simple $\cK$-characters and the set of simple $k$-characters in $\rG$. Let $O_{W(k)}$ be a $W(k)[\rG]$-lattice contained in $\pi_{\cK}$. Since $H^{m+1}$ is a pro-$p$ compact subgroup of $\rG$, there is a surjection from $O_{W(k)}\vert_{H^{m+1}}$ to $\theta$. On the other hand, let $\theta_{\cK}$ be the $\cK$-lifting of $\theta$, and $O_0$ be a $W(k)[H^{m+1}]$-lattice inside $\theta_{\cK}$. The lattice $O_0$ is projective in the category of smooth $W(k)[H^{m+1}]$-modules. 
Hence there is a non-trivial morphism from $O_0$ to $O_{W(k)}\vert_{H^{m+1}}$, which is injective since $O_0$ is rank $1$ and $O_{W(k)}$ is torsion-free. We conclude that 
$$\theta_{\cK}\cong O_0\otimes_{W(k)}\cK\hookrightarrow O_{W(k)}\vert_{H^{m+1}}\otimes_{W(k)}\cK\cong\pi_{\cK}\vert_{H^{m+1}}$$
as we desired.
\end{proof}

\begin{proof}[Continue the proof of Theorem \ref{thm a37}]
The result can be deduced from the lemma above and \cite[\S 8]{BuKu}. As in \cite[\S III,5.10]{V1}, there is an irreducible $\ell$-integral cuspidal $\pi_{\cK}$, of which the reduction modulo $\ell$ is isomorphic to $\pi$. Let $\theta_{i,\cK}$ be the 
$\cK$-lifting of $\theta_i$ for $i=1,2$. According to \cite[\S 3]{BuKu}, a simple $\cK$-character $\theta_{i,\cK}$ is defined over a simple stratum $[\fA_i,n_i,m_i,\beta_i]$. By the proof of Theorem 8.5.1 of \cite{BuKu}, if $m_i\neq 0$, then $\pi_{\cK}$ contains a split type, which contradicts with the cuspidality of $\pi_{\cK}$ according to \cite[8.2.5,8.3.3]{BuKu}. Since for both $i=1,2$ we have $m_i=0$. Let $J(\beta_i)$ be as in Section \ref{Notation}, then $\pi_{\cK}\vert_{J(\beta_i)}$ contains an irreducible $\cK$-representation $\tau_i$ such that $\tau_i\vert_{J^{1}(\beta_i)}$ contains the Heisenberg representation of $\theta_i$. Then $\tau$ must be of the form $\kappa_i\otimes\xi_i$, where $\kappa_i$ is an $\beta$-extension of $\theta_i$ and $\xi_i$ is inflated from an irreducible $\cK$-representation $\overline{\xi_i}$ of $J(\beta_i)\slash J^1(\beta_i)$ which is isomorphic to a direct product of finite groups of $\mathrm{GL}(k_j^i),k_j^i\in\mathbb{N}$. If $\overline{\xi_i}$ is not cuspidal, by the proof of \cite[Theorem 8.1.5]{BuKu}, $\pi_{\cK}$ contains a split type or a simple type $(J(\beta_3),\lambda_3)$ defined over a simple stratum $[\fA_3,n_3,0,\beta_3]$, such that $J(\beta_3)\slash J^1(\beta_3)$ is a proper Levi of a finite group of $GL_{k_3},k_3\in\mathbb{Z}$. We have explain above the it is impossible to contain a split type. Since $\pi_{\cK}$ contain a maximal simple $\cK$-type, \cite[Theorem 6.2.4]{BuKu} deduces a contradiction with the non-maximality of $(J(\beta_3),\lambda_3)$, which implies that $\overline{\xi}_i$ are both cuspidal. Hence $(J(\beta_i),\kappa_i\otimes\xi_i)$ are both simple types. By \cite[Lemma 6.2.5, Theorem 6.2.4]{BuKu}, we have $\fA_1\cong\fA_2$ as $\mathfrak{o}_F$-hereditary orders, and there exists $g\in\rG$ such that $g(J(\beta_1))=J(\beta_2)$ and $g(\kappa_1\otimes\xi_1)\cong\kappa_2\otimes\xi_2$. Then we obtain that $g(\theta_{1,\cK})\cong\theta_{2,\cK}$, which is equivalent to $g(\theta_1)\cong\theta_2$ after reduction modulo $\ell$.
\end{proof}

\begin{rem}
\label{rem a39}
\begin{enumerate}
\item
Let $\pi$ be irreducible and cuspidal of $\rG$. Let $(J_1,\lambda_1)$ and $(J_2,\lambda_2)$ be two maximal simple $k$-types appear as sub-quotients of $\pi$. The result of Theorem \ref{thm a37} combining with the parahoric restriction of  of \cite[Corollary 8.4,(1)]{V3} gives another proof of \cite[Proposition IV,1.6,(2)]{V2}.
\item Let $\pi_{\rM}$ be an irreducible cuspidal $k$-representation of $\rM$, a Levi subgroup of $\rG$, and $(H_{i,\rM},\theta_{i,\rM})$ for $i=1,2$ be two simple $k$-characters contained in $\pi_{\rM}$. Let $\fA_{i,\rM}$ be the direct product of $\mathfrak{o}_{F}$-hereditary orders in the definition of $\theta_{i,\rM}$. We deduce from Theorem \ref{thm a37} that $\fA_{1,\rM}\cong\fA_{2,\rM}$, and $(H_{i,\rM},\theta_{i,\rM})$ are $\rM$-conjugate.
\item Let $\pi_{\rM}$ be as above, and $(J_{i,\rM},\lambda_{i,\rM})$ be simple $k$-types of $\rM$ appears as sub-quotients of $\pi_{\rM}$ for $i=1,2$. Part $2$ implies that they are maximal and belong to the same $\rM$-conjugacy class.
\end{enumerate}
\end{rem}

Now we move on to the case of $\rM'$ and prove the desired result, which is the unicity of simple $k$-characters contained in a fixed cuspidal $k$-representation of $\rM'$.
\begin{thm}
\label{thm a40}
Let $\pi'$ be an irreducible cuspidal $k$-representation of $\rM'$ which is a Levi subgroup of $\rG'=\mathrm{SL}_n(F)$. 
\begin{enumerate}
\item
Let $i=1,2$. Assume that $(\tilde{J}_i',\tilde{\lambda}_i')$ is a maximal simple $k$-type which appears as a sub-quotient of $\pi'\vert_{\tilde{J}_i'}$, then there exists $g\in\rM'$, such that $g(\tilde{J}_1')=\tilde{J}_2'$ and $g(\tilde{\lambda}_1')\cong\tilde{\lambda_2'}$.
\item
Assume $(H_i',\theta_i')$ for $i=1,2$ be two simple $k$-characters contained in $\pi'$, then there exists $h\in\rM'$, such that $h(H_1')=H_2'$ and $h(\theta_1')\cong\theta_2'$.
\end{enumerate}
\end{thm}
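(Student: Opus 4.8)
## Proof Plan for Theorem \ref{thm a40}

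\begin{proof}[Sketch of proof]
Both parts concern objects contained in the fixed cuspidal $k$-representation $\pi'$ of $\rM'$, and the plan is to transfer the question to $\rM$ first: by Proposition \ref{prop 6} and Corollary \ref{cor 20} one fixes an irreducible cuspidal $k$-representation $\pi$ of $\rM$ such that $\pi'$ is a direct factor of $\res_{\rM'}^{\rM}\pi$. In each part the strategy is the same two steps. First, one checks that the two objects in question are weakly intertwined in $\rM'$ in the sense of Definition \ref{prepdefn 01}: if an irreducible $k$-representation $\rho_i'$ of a compact open subgroup $K_i$ of $\rM'$ is a sub-quotient of $\res_{K_i}^{\rM'}\pi'$ for $i=1,2$, then by Frobenius reciprocity $\pi'$ is a sub-quotient of $\ind_{K_i}^{\rM'}\rho_i'$, and restricting to $K_1$ and applying Mackey's decomposition formula (each summand $i_{K_1,K_2}x(\rho_2')$ having finite length since $K_1\cap x(K_2)$ is of finite index in $K_1$) the uniqueness of Jordan--H\"older factors produces $x\in\rM'$ weakly intertwining $\rho_1'$ with $\rho_2'$. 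Second, one upgrades ``weakly intertwined in $\rM'$'' to ``$\rM'$-conjugate'' with the structural input of the previous subsections.

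For part $(1)$ the first step applies verbatim with $K_i=\tilde{J}_i'$ and $\rho_i'=\tilde{\lambda}_i'$, and since by Definition \ref{Ldefinition 20} each $(\tilde{J}_i',\tilde{\lambda}_i')$ arises from a maximal simple $k$-type of $\rM$, the second step is exactly Theorem \ref{thm a30}, which yields $g\in\rM'$ with $g(\tilde{J}_1')=\tilde{J}_2'$ and $g(\tilde{\lambda}_1')\cong\tilde{\lambda}_2'$. (When $\rM=\rG$ the same conclusion is forced by Proposition \ref{prop a29}, which prevents two distinct components of one $\res_{\tilde{J}'}^{\tilde{J}}\tilde{\lambda}$ from being weakly intertwined in $\rG'$.)

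For part $(2)$ I would proceed as follows. Write $\theta_i'=\theta_{i,\rM}\vert_{H_i'}$ with $(H_i,\theta_{i,\rM})$ a simple $k$-character of $\rM$. Since $\theta_i'$ occurs in $\pi'$, hence in $\res_{H_i'}^{\rM}\pi$, and $H_i$ is a pro-$p$ group, Lemma \ref{lem 9} together with Proposition \ref{prop 0.1} gives an irreducible sub-quotient $\bar{\rho}_i$ of $\res_{H_i}^{\rM}\pi$ whose restriction to $H_i'$ contains $\theta_i'$; Proposition \ref{prop 0.2} then yields a $k$-quasicharacter $\chi_i$ of $F^{\times}$ with $\bar{\rho}_i\cong\theta_{i,\rM}\otimes\chi_i\circ\det$. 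By Lemma \ref{lem 2}$(3)$ (as in Corollary \ref{cor 3}) this twist is again a simple $k$-character, and since $\det$ is trivial on $\rM'\supset H_i'$ one still has $\bar{\rho}_i\vert_{H_i'}=\theta_i'$. Thus $\pi$ contains two simple $k$-characters $(H_1,\bar{\rho}_1)$, $(H_2,\bar{\rho}_2)$, and Remark \ref{rem a39}$(2)$ (a consequence of Theorem \ref{thm a37} for Levi subgroups) provides $h_0\in\rM$ with $h_0(H_1)=H_2$ and $h_0(\bar{\rho}_1)\cong\bar{\rho}_2$; restricting to $H_2'=h_0(H_1')$ already gives $h_0(\theta_1')\cong\theta_2'$, so the only remaining task is to replace $h_0$ by an element of $\rM'$. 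This is the heart of the argument: combining the $\rM$-conjugation $h_0$ with the first-step conclusion that $\theta_1'$ and $\theta_2'$ are weakly intertwined in $\rM'$, and using the decomposition $\tilde{J}_{\rM}=\tilde{J}_{\rM}'J_{\rM}$ of Corollary \ref{cor a32} together with the known shape of the intertwining set $I_{\rM}(\theta_{i,\rM})$ of a simple character of $\rM$ (a product of sets $J^1 B^{\times}J^1$), one corrects the determinant discrepancy of $h_0$ — absorbing it into the factor $\prod_j E_j^{\times}$ of the $\rM$-normaliser of $(H_1',\theta_1')$ — and obtains the desired $h\in\rM'$ with $h(H_1')=H_2'$ and $h(\theta_1')\cong\theta_2'$, exactly as in the last paragraph of the proof of Theorem \ref{thm a30}.

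The main obstacle is precisely this descent in part $(2)$: unlike part $(1)$, it cannot simply be fed into the intertwining-implies-conjugacy statement for maximal simple $k$-types, because the simple-character datum is not visibly read off from a maximal simple $k$-type in a conjugation-equivariant way; one must instead extract an element of $\rM'$ by hand from the $\rM$-conjugacy of Remark \ref{rem a39}$(2)$, and it is here that the genuine Levi structure of $\rM$ (as opposed to $\rM=\rG$, treated separately through Proposition \ref{prop a29}) enters, via the determinant bookkeeping made possible by $\tilde{J}_{\rM}=\tilde{J}_{\rM}'J_{\rM}$.
\end{proof}
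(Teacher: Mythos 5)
Your proof of part $(1)$ is essentially the paper's: the paper anchors the weak-intertwining step to a maximal simple $k$-type $(\tilde{J}_0',\tilde{\lambda}_0')$ that is a genuine \emph{sub-representation} of $\pi'$ (so Frobenius reciprocity applies cleanly), and then invokes Theorem~\ref{thm a30}; your phrasing ``by Frobenius reciprocity $\pi'$ is a sub-quotient of $\ind_{K_i}^{\rM'}\rho_i'$'' is not literally valid for an arbitrary sub-quotient $\rho_i'$, but the intent and the conclusion match the paper's argument.

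Part $(2)$ is where you diverge, and where there is a real gap. The paper \emph{reconstructs} a maximal simple $k$-type of $\rM'$ containing $\theta_i'$ and occurring in $\pi'$: starting from $\theta_i'$ it builds $\theta_i$ on $H_i$, then an irreducible sub-quotient of $\pi\vert_{J_i}$ containing $\theta_i$, shows (via the $\bC$-lifting behind Theorem~\ref{thm a37}, Remark~\ref{rem a39}(1), and Corollary~\ref{cor 99}) that this sub-quotient must be a maximal simple $k$-type $\lambda_i$ with $\tau_i\cong\tilde{\lambda}_i$, and finally feeds $(\tilde{J}_i',\tau_i')$ into part $(1)$. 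You instead conjugate the simple characters directly by an $\rM$-element $h_0$ (Remark~\ref{rem a39}(2)) and then try to descend $h_0$ to $\rM'$.

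The descent is asserted, not proved, and the assertion is not justified as stated. You claim the determinant discrepancy of $h_0$ can be ``absorbed into the factor $\prod_j E_j^{\times}$ of the $\rM$-normaliser of $(H_1',\theta_1')$ \dots exactly as in the last paragraph of the proof of Theorem~\ref{thm a30}.'' But the mechanism in Theorem~\ref{thm a30} is specifically the group-theoretic decomposition $\tilde{J}_{\rM}=\tilde{J}_{\rM}'J_{\rM}$ of Corollary~\ref{cor a32}, proved in Theorem~\ref{thm a26} precisely by the delicate length computation of Sections~3.1--3.3; there is no analogous decomposition stated for $H^1$ or for the $(H^1,\theta)$-normaliser, and $\det\bigl(\prod_j E_j^{\times}J_j\bigr)$ is in general a proper subgroup of $F^{\times}$ (indeed $\det_F(E_j^{\times})=N_{E_j/F}(E_j^{\times})^{m_j}$ and $\det(J_j)$ both have finite but nontrivial index in $F^{\times}$), so it is not a priori true that $\det(h_0)^{-1}$ lies in it. To make your route work you would need to run the full $\tilde{J}=\tilde{J}'J$-style analysis for the normaliser of $(H^1,\theta)$ — which is essentially what the paper avoids by lifting the problem to the level of maximal simple $k$-types, where that analysis is already done. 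As written, the crucial step of part $(2)$ is an appeal to a mechanism that does not transfer without new work.
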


\begin{proof}
For part $1$. Since $\pi'$ must contains a maximal simple $k$-type $(\tilde{J}_0',\tilde{\lambda}_0')$ as a sub-representation. By Frobenius reciprocity and the exactness of the restriction functor, there is a surjection from $\res_{\tilde{J}_i'}^{\rM'}\ind_{\tilde{J}_0'}^{\rM'}\tilde{\lambda}_0'$ to $\res_{\tilde{J}_i'}^{\rM'}\pi'$, which implies that $\tilde{\lambda}_i'$ is weakly intertwined with $\tilde{\lambda}_0'$ in $\rM'$ for $i=1,2$. They are conjugate in $\rM'$ by Theorem \ref{thm a30}.

For part $2$, our strategy is to prove that there exists $k$-simple type $(\tilde{J}_i',\tilde{\lambda}_i')$ for $i=1,2$ of $\rM'$, which contains $\theta_i'$ and appears in $\pi'$ as a sub-quotient, then we obtain the result by applying part $1$. Let $\pi$ be an irreducible cuspidal $k$-representation of $\rM$, whose restriction on $\rM'$ contains $\pi'$ as a sub-representation. Since there is an irreducible sub-quotient $\tau_i'$ of $\pi'\vert_{\tilde{J}_i'}$ such that $\tau_i'\vert_{H_i'}$ contains $\theta_i'$. By Lemma \ref{lem 9}, there is an irreducible sub-quotient $\tau_i$ of $\pi\vert_{\tilde{J}_i}$ such that $\tau_i\vert_{\tilde{J}_i'}$ contains $\tau_i'$. Hence $\tau_i\vert_{H_i}$ must contains an $k$-character $\theta_i$, furthermore $(H_i,\theta_i)$ is a simple $k$-character in $\rM$ whose restriction on $H_i'$ is isomorphic to $\theta_i'$. There is an irreducible sub-quotient $\lambda_i$ of $\tau_i\vert_{J_i}$ whose restriction on $H_i$ contains $\theta_i$. By \cite[\S III, 4.18]{V1}, a maximal simple $\cK$-type is always $\ell$-integral, and its reduction modulo $\ell$ is a maximal simple $k$-type in $\rM$, which implies that the proof of Theorem \ref{thm a37} can be applied here, and we conclude that$(J_i,\lambda_i)$ is a maximal simple $k$-type in $\rM$. Since $\pi$ must contains a maximal simple $k$-type as a sub-representation, the first part of Remark \ref{rem a39} implies that $(J_i,\lambda_i)$ appears as a sub-representation of $\pi\vert_{J_i}$ for $i=1,2$. Applying \cite[Corollary 8.4]{V3}, we know that $\lambda_i$ is a sub-representation of $\tau_i\vert_{J_i}$. Hence we deduced from Corollary \ref{cor 99} that $\tau_i\cong\ind_{J_i}^{\tilde{J}_i}\lambda_i=\tilde{\lambda}_i$. We conclude that $\tau_i'\hookrightarrow\tilde{\lambda}_i\vert_{\tilde{J}_i'}$ is a maximal simple $k$-type in $\rM$. We end the proof by applying part $1$. 

\end{proof}

\section{Extended maximal simple $k$-types of $\rM'$}
\label{chapter 02}
Recall that $\rM$ is a Levi subgroup of $\rG$. Write $\rM\cong\mathrm{GL}_{n_1}\times\cdots\times\mathrm{GL}_{n_m}$, for $m\in\mathbb{Z}$. Let $(J_{\rM},\lambda_{\rM})$ be a maximal simple $k$-type of $\rM$, and $\tilde{J}_{\rM}$ the group of projective normalisers of $(J,\lambda)$. Let $\tilde{\lambda}_{\rM}'$ be an irreducible component of $\tilde{\lambda}_{\rM}=\ind_{J_{\rM}}^{\tilde{J}_{\rM}}\lambda_{\rM}$, and $N_{\rM'}(\tilde{\lambda}_{\rM}')$ be the normaliser of $\tilde{\lambda}_{\rM}'$ in $\rM'$. We have, as in Remark \ref{Lrem aa1}, $N_{\rM'}(\tilde{\lambda}_{\rM}')$ is a subgroup of $E_{\rM}^{\times}\tilde{J}_{\rM}\cap\rM'$.

\begin{lem}
\label{lem a35}
Let $(J_{\rM},\lambda_{\rM})$ be as above. Then 
\begin{enumerate}
\item $E_{\rM}^{\times}\tilde{J}_{\rM}\cap\rM'$ is compact modulo centre. Furthermore, the quotient $E_{\rM}^{\times}\tilde{J}_{\rM}\cap\rM'\slash N_{\rM'}(\tilde{\lambda}_{\rM}')$ is finite and abelian.

\item Denote $\ind_{E_{\rM}^{\times}J_{\rM}}^{E_{\rM}^{\times}\tilde{J}_{\rM}}\Lambda_{\rM}$ by $\tilde{\Lambda}_{\rM}$. Then $\tilde{\Lambda}_{\rM}$ is an extension of $\tilde{\lambda}_{\rM}$.
\item The group $N'=N_{\rM'}(\tilde{\lambda}_{\rM}')$ is normal in $E_{\rM}^{\times}\tilde{J}_{\rM}$. In particular, $N'$ is independent of the choice of irreducible component $\tilde{\lambda}_{\rM}'$. Furthermore, for any irreducible $k$-representation of $N'$, if it contains $\tilde{\lambda}_{\rM}'$ as a sub-representation, then it is a multiple of $\tilde{\lambda}_{\rM}'$ after restricted to $\tilde{J}_{\rM}'$.
\item An irreducible $k$-representation of $E_{\rM}^{\times}\tilde{J}_{\rM}$ that contains $\tilde{\lambda}_{\rM}$ is an extension of $\tilde{\lambda}_{\rM}$. In particular, an extension of $\tilde{\lambda}_{\rM}$ can be written as $\ind_{E_{\rM}^{\times}J_{\rM}}^{E_{\rM}^{\times}\tilde{J}_{\rM}}\Lambda_{\rM}$.
\end{enumerate}
\end{lem}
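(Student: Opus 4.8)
The plan is to treat the four assertions in the order stated, each reducing to facts already established or to the structure theory of the projective normaliser. For (1), I would start from Remark \ref{Lrem aa1}, which identifies $N_{\rM'}(\tilde{\lambda}_{\rM}')$ as the intertwining group $\mathrm{I}_{\rM'}(\tilde{\lambda}_{\rM}')$ sitting inside $E_1^{\times}\tilde{J}_1\times\cdots\times E_r^{\times}\tilde{J}_r\cap\rM'$. Since each $E_i^{\times}\tilde{J}_i$ is compact modulo its centre $E_i^{\times}$ (because $\tilde{J}_i/J_i$ is finite by Corollary \ref{cor 13}(2) and $J_i$ is compact, while $E_i^{\times}J_i$ is open of finite index in $E_i^{\times}J_i\cdot\rU(\fA_i)=\mathfrak{K}(\fA_i)$), the product $E_{\rM}^{\times}\tilde{J}_{\rM}$ is compact modulo $E_{\rM}^{\times}$; intersecting with $\rM'$ cuts the centre down to a compact piece, so $E_{\rM}^{\times}\tilde{J}_{\rM}\cap\rM'$ is compact modulo $Z(\rM)\cap\rM'$, hence compact modulo centre. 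For the finiteness and abelianness of the quotient, I would argue exactly as in the third assertion of Lemma \ref{Llemma 3}: $N'$ contains $\tilde{J}_{\rM}'$, which is open; $\tilde{J}_{\rM}'$ has finite index in $E_{\rM}^{\times}\tilde{J}_{\rM}\cap\rM'$ modulo centre, and the obstruction to abelianness is controlled by $N_{\rM}(\lambda_{\rM})/J_{\rM}\cong\prod(E_i^{\times}J_i/J_i)$, which is abelian.

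For (2), I would invoke Corollary \ref{cor 99} to get irreducibility of $\tilde{\lambda}_{\rM}=\ind_{J_{\rM}}^{\tilde{J}_{\rM}}\lambda_{\rM}$, and then observe that $\tilde{\Lambda}_{\rM}=\ind_{E_{\rM}^{\times}J_{\rM}}^{E_{\rM}^{\times}\tilde{J}_{\rM}}\Lambda_{\rM}$ restricts on $\tilde{J}_{\rM}$ to $\res_{\tilde{J}_{\rM}}^{E_{\rM}^{\times}\tilde{J}_{\rM}}\ind_{E_{\rM}^{\times}J_{\rM}}^{E_{\rM}^{\times}\tilde{J}_{\rM}}\Lambda_{\rM}$; since $E_{\rM}^{\times}\tilde{J}_{\rM}=(E_{\rM}^{\times}J_{\rM})\tilde{J}_{\rM}$ with $(E_{\rM}^{\times}J_{\rM})\cap\tilde{J}_{\rM}=J_{\rM}$ (using $\tilde{J}_{\rM}\subset\rU(\fA_{\rM})$ and $E_{\rM}^{\times}J_{\rM}\cap\rU(\fA_{\rM})=J_{\rM}$), Mackey's decomposition collapses to a single term $\ind_{J_{\rM}}^{\tilde{J}_{\rM}}\res_{J_{\rM}}^{E_{\rM}^{\times}J_{\rM}}\Lambda_{\rM}=\ind_{J_{\rM}}^{\tilde{J}_{\rM}}\lambda_{\rM}=\tilde{\lambda}_{\rM}$. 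Thus $\tilde{\Lambda}_{\rM}\vert_{\tilde{J}_{\rM}}\cong\tilde{\lambda}_{\rM}$, i.e. $\tilde{\Lambda}_{\rM}$ is an extension. For (3), normality of $N'$ in $E_{\rM}^{\times}\tilde{J}_{\rM}$ follows from the quotient $E_{\rM}^{\times}\tilde{J}_{\rM}/(E_{\rM}^{\times}J_{\rM})$ being abelian together with the conjugation argument at the end of Lemma \ref{Llemma 3}: for $x\in E_{\rM}^{\times}\tilde{J}_{\rM}$ and $y\in N'$ one has $x^{-1}yx=ym$ with $m\in\tilde{J}_{\rM}'$, hence $x^{-1}yx$ still fixes $\tilde{\lambda}_{\rM}'$. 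Independence of the choice of component is then immediate since all components of $\tilde{\lambda}_{\rM}\vert_{\tilde{J}_{\rM}'}$ are $\tilde{J}_{\rM}$-conjugate (Proposition \ref{prop 23} gives multiplicity-freeness, and $\tilde{J}_{\rM}$ acts transitively on them). The multiple-of-$\tilde{\lambda}_{\rM}'$ assertion is a Clifford-theory statement: $\tilde{J}_{\rM}'$ is normal in $N'$, $\res_{\tilde{J}_{\rM}'}^{N'}$ of an irreducible is semisimple by Proposition \ref{Lprop 0.1}, and its components lie in a single $N'$-orbit; if one of them is $\tilde{\lambda}_{\rM}'$ and $N'$ fixes $\tilde{\lambda}_{\rM}'$ by definition, the orbit is a point, so the restriction is $\tilde{\lambda}_{\rM}'$-isotypic.

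For (4), let $\rho$ be an irreducible $k$-representation of $E_{\rM}^{\times}\tilde{J}_{\rM}$ containing $\tilde{\lambda}_{\rM}$ in its restriction to $\tilde{J}_{\rM}$. The quotient $E_{\rM}^{\times}\tilde{J}_{\rM}/\tilde{J}_{\rM}\cong E_{\rM}^{\times}/\mathfrak{o}_{E_{\rM}}^{\times}$ is free abelian of finite rank, so by repeatedly applying Lemma \ref{lem a11} along a filtration by cyclic quotients one shows $\tilde{\lambda}_{\rM}$ extends to $E_{\rM}^{\times}\tilde{J}_{\rM}$; then, since $\tilde{\lambda}_{\rM}$ is $E_{\rM}^{\times}\tilde{J}_{\rM}$-stable, $\rho$ differs from the extension by a character of the quotient, so $\rho\vert_{\tilde{J}_{\rM}}$ is still irreducible $\cong\tilde{\lambda}_{\rM}$, i.e. $\rho$ is an extension of $\tilde{\lambda}_{\rM}$. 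To write $\rho$ as $\ind_{E_{\rM}^{\times}J_{\rM}}^{E_{\rM}^{\times}\tilde{J}_{\rM}}\Lambda_{\rM}$, I would set $\Lambda_{\rM}$ to be an irreducible component of $\res_{E_{\rM}^{\times}J_{\rM}}^{E_{\rM}^{\times}\tilde{J}_{\rM}}\rho$ containing $\lambda_{\rM}$ on $J_{\rM}$; by Mackey plus the transitivity of induction (and the index computation $(\tilde{J}_{\rM}:J_{\rM})=(E_{\rM}^{\times}\tilde{J}_{\rM}:E_{\rM}^{\times}J_{\rM})$ from $E_{\rM}^{\times}\tilde{J}_{\rM}=(E_{\rM}^{\times}J_{\rM})\tilde{J}_{\rM}$, $E_{\rM}^{\times}J_{\rM}\cap\tilde{J}_{\rM}=J_{\rM}$), $\ind_{E_{\rM}^{\times}J_{\rM}}^{E_{\rM}^{\times}\tilde{J}_{\rM}}\Lambda_{\rM}$ is irreducible of the same dimension-per-central-character as $\rho$ and admits a nonzero map to $\rho$ by Frobenius reciprocity, hence is isomorphic to $\rho$. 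The main obstacle I anticipate is part (4): getting the extension across the free abelian quotient $E_{\rM}^{\times}\tilde{J}_{\rM}/\tilde{J}_{\rM}$ cleanly (Lemma \ref{lem a11} is stated only for finite cyclic quotients, so one must pass to a cofinal chain of finitely-generated subquotients and check compatibility) and then verifying that the induced $\Lambda_{\rM}$ is genuinely an extension of $\lambda_{\rM}$ rather than merely containing it — this is where one uses that $J_{\rM}$ normalises $E_{\rM}^{\times}J_{\rM}$ trivially on the relevant quotient and that $\Lambda_{\rM}\vert_{J_{\rM}}$ is $\lambda_{\rM}$-isotypic by the Clifford-theory point in (3).
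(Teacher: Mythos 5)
Your treatment of (1), (2), and the first two claims of (3) follows the paper closely, just with different emphases: for (2) the paper also just cites Mackey after observing $E_{\rM}^{\times}\tilde{J}_{\rM}=(E_{\rM}^{\times}J_{\rM})\tilde{J}_{\rM}$ with intersection $J_{\rM}$, and for the normality of $N'$ in (3) the paper shows directly that $E_{\rM}^{\times}\tilde{J}_{\rM}\slash\prod_i\tilde{J}_i'$ is abelian, which is essentially your commutator argument. Two points deserve attention, however.

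First, in (3) your justification of the multiple-of-$\tilde{\lambda}_{\rM}'$ claim via Proposition~\ref{Lprop 0.1} is a misapplication: that proposition is about $\res_{K'}^K$ for $K$ a \emph{compact} subgroup of $\rM$ and $K'=K\cap\rG'$, whereas here you need semisimplicity of $\res_{\tilde{J}_{\rM}'}^{N'}$ for $N'$ merely compact modulo centre (and $\tilde{J}_{\rM}'$ already sits inside $\rG'$). The Clifford-theoretic statement you want is still true, but it requires the center trick used elsewhere in the paper (e.g.\ the proof of Theorem~\ref{Ltheorem 5}): $Z_{\rM'}\tilde{J}_{\rM}'$ has finite index in $N'$, so semisimplicity follows from Proposition~\ref{propver13.1}.

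Second, in (4) the real gap is not where you anticipated it. The extension of $\tilde{\lambda}_{\rM}$ to $E_{\rM}^{\times}\tilde{J}_{\rM}$ is painless: part (2) already hands you one, namely $\ind_{E_{\rM}^{\times}J_{\rM}}^{E_{\rM}^{\times}\tilde{J}_{\rM}}\Lambda_{\rM}$, so the filtration-by-cyclic-quotients argument is unnecessary. What does need proof is the sentence ``$\rho$ differs from the extension by a character of the quotient.'' Since $E_{\rM}^{\times}\tilde{J}_{\rM}\slash\tilde{J}_{\rM}$ is infinite (a free abelian group), you cannot invoke finite-index Clifford theory directly; you must first argue that $\rho$ is admissible (true, since $E_{\rM}^{\times}\tilde{J}_{\rM}$ is compact modulo centre), so that $\Hom_{\tilde{J}_{\rM}}(\rho_0,\rho)$ is a finite-dimensional representation of the abelian quotient over the algebraically closed field $k$, hence contains a character $\chi$; then $\rho_0\otimes\chi\to\rho$ is a nonzero map of irreducibles, giving the isomorphism. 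The paper sidesteps this by restricting instead to $E_{\rM}^{\times}J_{\rM}$, which has \emph{finite} index in $E_{\rM}^{\times}\tilde{J}_{\rM}$, so that semisimplicity and finite length of the restriction are immediate; it then finds a component $\pi_i$ extending $\lambda_{\rM}\otimes\chi\circ\det$, embeds $\rho$ into $\oplus_i\ind\pi_i$, and uses the $\tilde{J}_{\rM}$-conjugacy of $\lambda_{\rM}\otimes\chi\circ\det$ with $\lambda_{\rM}$ to conclude. That route is more economical and avoids the admissibility detour; you may want to adopt it.
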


\begin{proof}
For Part $1$, the property that $E_{\rM}^{\times}\tilde{J}_{\rM}\cap\rM'$ is compact modulo centre follows from the facts that $E_{\rM}^{\times}\tilde{J}_{\rM}\slash Z_{\rM}$ is compact and that $E_{\rM}^{\times}\tilde{J}_{\rM}\cap\rM' \slash Z_{\rM}\cap\rM'$ is closed inside $E_{\rM}^{\times}\tilde{J}_{\rM}\slash Z_{\rM}$, where $Z_{\rM}$ is the centre of $\rM$. Recall that the centre $Z_{\rM'}$ of $\rM'$ is equal to $Z_{\rM}\cap\rM'$. Since 
$$E_{\rM}^{\times}\tilde{J}_{\rM}\cap\rM'\slash N_{\rM'}(\tilde{\lambda}_{\rM}')\cong (E_{\rM}^{\times}\tilde{J}_{\rM}\cap\rM'\slash Z_{\rM'})\slash (N_{\rM'}(\tilde{\lambda}_{\rM}')\slash Z_{\rM'}),$$
combining with the fact that $N_{\rM'}\tilde{J}_{\rM}'$ is open, we know that the quotient $E_{\rM}^{\times}\tilde{J}_{\rM}\cap\rM'\slash N_{\rM'}(\tilde{\lambda}_{\rM}')$ is finite, which is abelian, since $E_{\rM}^{\times}\tilde{J}_{\rM}\cap\rM'\slash \tilde{J}_{\rM}'\subset E_{\rM}^{\times}\tilde{J}_{\rM}\slash\tilde{J}_{\rM}$, and the latter is abelian.

For Part $2$, we consider $\res_{\tilde{J}_{\rM}}^{E_{\rM}^{\times}\tilde{J}_{\rM}}\ind_{E_{\rM}^{\times}J_{\rM}}^{E_{\rM}^{\times}\tilde{J}_{\rM}}\Lambda_{\rM}$, and the result can be deduced by Mackey's theory.

For Part $3$, since $\prod_{i=1}^m\tilde{J}_i'\subset N'$, to show that $N'$ is normal in $E_{\rM}^{\times}\tilde{J}_{\rM}$, it is sufficient to show that the quotient $\prod_{i=1}^m E_i^{\times}\tilde{J}_i\slash\tilde{J}_i'$ is abelian, which is deduced from the fact that $E_i^{\times}\tilde{J}_i\cap\mathrm{SL}_{n_i}(F)=\tilde{J}_i'$. Since the irreducible components of $\tilde{\lambda}_{\rM}\vert_{\tilde{J}_{\rM}'}$ are $\tilde{J}_{\rM}$-conjugate, the second part can be deduced from the first part. The last part comes from the definition of $N'$.

For Part $4$, we start from the second part. Let $\tilde{\Lambda}_0$ be an extension of $\tilde{\lambda}_{\rM}$ to $E_{\rM}^{\times}\tilde{J}_{\rM}$. We know that $\res_{E_{\rM}^{\times}J_{\rM}}^{E_{\rM}^{\times}\tilde{J}_{\rM}}\tilde{\Lambda}_0$ is semisimple of finite length, and write it as $\oplus_{s\in S}\Lambda_s$, where $S$ is a finite index set. Since $\res_{J_{\rM}}^{E_{\rM}^{\times}\tilde{J}_{\rM}}\tilde{\Lambda}_0$ is semisimple of finite length, containing $\lambda_{\rM}$ as a sub-representation, and 
$$\Hom(\lambda_{\rM},\bigoplus_{s\in S}\Lambda_s)\cong\bigoplus_{s\in S}\Hom(\lambda_{\rM},\Lambda_s),$$
we deduce that there is an $s_0\in S$ such that $\res_{J_{\rM}}^{E_{\rM}^{\times}J_{\rM}}\Lambda_{s_0}$ contains $\lambda_{\rM}$ as a sub-representation, hence $\Lambda_{s_0}$ is an extension of $\lambda_{\rM}$. Meanwhile $\Lambda_s$ are $E_{\rM}^{\times}\tilde{J}_{\rM}$-conjugate, hence the induction $\ind_{E_{\rM}^{\times}J_{\rM}}^{E_{\rM}^{\times}\tilde{J}_{\rM}}\Lambda_s$ is independent of the choice of $\Lambda_s$. Since $\tilde{\Lambda}_0\hookrightarrow\oplus_{s\in S}\ind_{E_{\rM}^{\times}J_{\rM}}^{E_{\rM}^{\times}\tilde{J}_{\rM}}\Lambda_s$, we conclude that $\tilde{\Lambda}_0\cong\ind_{E_{\rM}^{\times}J_{\rM}}^{E_{\rM}^{\times}\tilde{J}_{\rM}}\Lambda_{s_0}$, where $\Lambda_{s_0}$ is an extension of $\lambda_{\rM}$. We deduce the result from Part $2$.

For the first part, let $\pi$ be an irreducible $k$-representation of $E_{\rM}^{\times}\tilde{J}_{\rM}$ and $\tilde{\lambda}_{\rM}$ appears as a sub-representation. It is left to prove $\pi$ is an extension of $\tilde{\lambda}_{\rM}$. Since $E_{\rM}^{\times}J_{\rM}$ is normal in $E_{\rM}^{\times}\tilde{J}_{\rM}$, the restriction $\res_{E_{\rM}^{\times}J_{\rM}}^{E_{\rM}^{\times}\tilde{J}_{\rM}}\pi$ is semisimple and has finite length, and we write it as $\oplus_{i\in I}\pi_i$. For each $i$, there is a $k$-quasicharacter $\chi$ of $F^{\times}$ such that $\pi_i$ contains $\lambda_{\rM}\otimes\chi\circ\det$ as a sub-representation. In fact, by Frobenius reciprocity, there is a surjection
$$\res_{J_{\rM}}^{E_{\rM}^{\times}\tilde{J}_{\rM}}\ind_{J_{\rM}}^{E_{\rM}^{\times}\tilde{J}_{\rM}}\lambda_{\rM}\rightarrow\res_{J_{\rM}}^{E_{\rM}^{\times}\tilde{J}_{\rM}}\pi.$$
By Mackey's theory, $\res_{J_{\rM}}^{E_{\rM}^{\times}\tilde{J}_{\rM}}\ind_{J_{\rM}}^{E_{\rM}^{\times}\tilde{J}_{\rM}}\lambda_{\rM}$ is semisimple of finite components $\lambda_{\rM}\otimes\chi\circ\det$ for a family of $k$-quasicharacters $\chi$ of $F^{\times}$. We deduce that $\pi_i$ is an extension of $\lambda_{\rM}\otimes\chi\circ\det$. Meanwhile, there is an injection 
$$\pi\hookrightarrow\oplus_{i\in I}\ind_{E_{\rM}^{\times}J_{\rM}}^{E_{\rM}^{\times}\tilde{J}_{\rM}}\pi_i.$$
By the unicity of Jordan-H\"older components, there is an $i\in I$ such that $\pi\cong\ind_{E_{\rM}^{\times}J_{\rM}}^{E_{\rM}^{\times}\tilde{J}_{\rM}}\pi_i$, and the latter is an extension of $\tilde{\lambda}_{\rM}\otimes\chi\circ\det$ by Part $2$. Notice that $\lambda_{\rM}\otimes\chi\circ\det$ and $\lambda_{\rM}$ are $\tilde{J}_{\rM}$-conjugate, hence $\tilde{\lambda}_{\rM}\otimes\chi\circ\det\cong\tilde{\lambda}_{\rM}$.
\end{proof}

\begin{prop}
\label{prop a34}
The restriction $\res_{N_{\rM'}(\tilde{\lambda}_{\rM'})}^{E_{\rM}^{\times}\tilde{J}_{\rM}}\ind_{E_{\rM}^{\times}J_{\rM}}^{E_{\rM}^{\times}\tilde{J}_{\rM}}\Lambda_{\rM}$ is semisimple and multiplicity-free. Furthermore, each irreducible component of $\res_{N_{\rM'}(\tilde{\lambda}_{\rM'})}^{E_{\rM}^{\times}\tilde{J}_{\rM}}\ind_{E_{\rM}^{\times}J_{\rM}}^{E_{\rM}^{\times}\tilde{J}_{\rM}}\Lambda_{\rM}$ is an extension of an irreducible component of $\tilde{\lambda}_{\rM}\vert_{\tilde{J}_{\rM}'}$.
\end{prop}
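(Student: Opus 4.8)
The plan is to reduce everything to Mackey's decomposition formula applied to $\ind_{E_{\rM}^{\times}J_{\rM}}^{E_{\rM}^{\times}\tilde{J}_{\rM}}\Lambda_{\rM}$, followed by the restriction to $N' := N_{\rM'}(\tilde{\lambda}_{\rM}')$, and then to combine this with the multiplicity-one statement already available for $\tilde{\lambda}_{\rM}\vert_{\tilde{J}_{\rM}'}$ (Theorem \ref{thm a26}, part 1, via Corollary \ref{cor a32} for the Levi case; note $\res_{J_{\rM}'}^{J_{\rM}}\lambda_{\rM}$ is multiplicity-free). First I would recall from Lemma \ref{lem a35}(2) that $\tilde{\Lambda}_{\rM} := \ind_{E_{\rM}^{\times}J_{\rM}}^{E_{\rM}^{\times}\tilde{J}_{\rM}}\Lambda_{\rM}$ is an extension of $\tilde{\lambda}_{\rM}$, so its restriction to $\tilde{J}_{\rM}$ is exactly $\tilde{\lambda}_{\rM}$, and hence $\res_{\tilde{J}_{\rM}'}^{E_{\rM}^{\times}\tilde{J}_{\rM}}\tilde{\Lambda}_{\rM} = \res_{\tilde{J}_{\rM}'}^{\tilde{J}_{\rM}}\tilde{\lambda}_{\rM}$, which is semisimple and multiplicity-free by Proposition \ref{prop 23} (or Theorem \ref{thm a26}(1) transported to the Levi via Corollary \ref{cor a32}). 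Since $\tilde{J}_{\rM}'$ is an open compact subgroup of $N'$ (indeed the unique maximal one, as noted in Remark \ref{Lrem aa1}), semisimplicity and multiplicity-freeness of $\res_{N'}^{E_{\rM}^{\times}\tilde{J}_{\rM}}\tilde{\Lambda}_{\rM}$ will follow by descent along the finite-index inclusion $\tilde{J}_{\rM}' \subset N'$: any $N'$-subrepresentation restricts to a $\tilde{J}_{\rM}'$-subrepresentation, the restriction $\res_{\tilde{J}_{\rM}'}^{N'}$ is faithful on subrepresentations and detects direct summands, and a representation whose restriction to a finite-index subgroup is semisimple multiplicity-free is itself semisimple multiplicity-free (Clifford theory for the finite group $N'/\tilde{J}_{\rM}'$, using Proposition \ref{propver13.1} for semisimplicity of the restriction of an irreducible $N'$-representation, plus the fact that distinct irreducible $N'$-subrepresentations cannot share a common $\tilde{J}_{\rM}'$-irreducible constituent here).

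The second assertion — that each irreducible component $\mu'$ of $\res_{N'}^{E_{\rM}^{\times}\tilde{J}_{\rM}}\tilde{\Lambda}_{\rM}$ is an \emph{extension} of an irreducible component of $\tilde{\lambda}_{\rM}\vert_{\tilde{J}_{\rM}'}$ — I would deduce as follows. The restriction $\res_{\tilde{J}_{\rM}'}^{N'}\mu'$ is a subrepresentation of $\res_{\tilde{J}_{\rM}'}^{E_{\rM}^{\times}\tilde{J}_{\rM}}\tilde{\Lambda}_{\rM} = \res_{\tilde{J}_{\rM}'}^{\tilde{J}_{\rM}}\tilde{\lambda}_{\rM}$, so every $\tilde{J}_{\rM}'$-irreducible constituent of $\mu'$ is some conjugate $x(\tilde{\lambda}_{\rM}')$, $x \in \tilde{J}_{\rM}$, i.e. an irreducible component of $\tilde{\lambda}_{\rM}\vert_{\tilde{J}_{\rM}'}$. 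It remains to see $\res_{\tilde{J}_{\rM}'}^{N'}\mu'$ is irreducible (then it is exactly one such component, and $\mu'$ extends it). For this I would invoke Lemma \ref{lem a35}(3): any irreducible $k$-representation of $N'$ that contains some $\tilde{\lambda}_{\rM}'$ on $\tilde{J}_{\rM}'$ is a multiple of $\tilde{\lambda}_{\rM}'$ on $\tilde{J}_{\rM}'$; combined with the multiplicity-freeness established in the first paragraph, the multiple must be $1$, so $\res_{\tilde{J}_{\rM}'}^{N'}\mu' \cong \tilde{\lambda}_{\rM}'$ for the appropriate component $\tilde{\lambda}_{\rM}'$ of $\tilde{\lambda}_{\rM}\vert_{\tilde{J}_{\rM}'}$, and $\mu'$ is an extension of it.

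The main obstacle I anticipate is the bookkeeping in the descent step: one must be careful that $N'/\tilde{J}_{\rM}'$ is finite (Lemma \ref{lem a35}(1) gives this for $E_{\rM}^{\times}\tilde{J}_{\rM}\cap\rM'/\tilde{J}_{\rM}'$, and $N'$ sits between, but one should spell out that $N'$ is normal in $E_{\rM}^{\times}\tilde{J}_{\rM}$ and hence its index behaviour is controlled — this is exactly Lemma \ref{lem a35}(3)), and that multiplicity-freeness genuinely descends. The potential subtlety is that in the modular setting a representation of the finite group $N'/\tilde{J}_{\rM}'$ need not be semisimple in general, so one cannot simply cite ordinary Clifford theory for semisimplicity of $\ind_{\tilde{J}_{\rM}'}^{N'}\tilde{\lambda}_{\rM}'$; instead I would argue through the known irreducibility of $\ind_{\tilde{J}_{\rM}}^{E_{\rM}^{\times}\tilde{J}_{\rM}}$-type inductions and the fact that $\tilde{\Lambda}_{\rM}$ itself is irreducible (being the compact-mod-centre induction realizing a cuspidal $\pi$), so that $\res_{N'}^{E_{\rM}^{\times}\tilde{J}_{\rM}}\tilde{\Lambda}_{\rM}$ is semisimple by Proposition \ref{propver13.1} applied to the normal finite-index subgroup $N' \lhd E_{\rM}^{\times}\tilde{J}_{\rM}$ — wait, $E_{\rM}^{\times}\tilde{J}_{\rM}$ is not compact, but $N'$ has finite index in it (Lemma \ref{lem a35}(1),(3)), so Proposition \ref{propver13.1} applies directly with $K = E_{\rM}^{\times}\tilde{J}_{\rM}$, $K' = N'$. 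That removes the difficulty. Multiplicity-freeness then follows because two isomorphic $N'$-constituents would, on restriction to $\tilde{J}_{\rM}'$, force a repeated constituent in the multiplicity-free $\res_{\tilde{J}_{\rM}'}^{\tilde{J}_{\rM}}\tilde{\lambda}_{\rM}$, a contradiction.
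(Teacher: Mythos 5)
Your overall strategy matches the paper's: reduce multiplicity-freeness to that of $\res_{\tilde{J}_{\rM}'}^{\tilde{J}_{\rM}}\tilde{\lambda}_{\rM}$ via Lemma \ref{lem a35}(2), then use Lemma \ref{lem a35}(3) to upgrade each $N'$-component to an extension. But there is a genuine gap in the semisimplicity step. You claim that $N' = N_{\rM'}(\tilde{\lambda}_{\rM}')$ has \emph{finite index} in $E_{\rM}^{\times}\tilde{J}_{\rM}$, citing Lemma \ref{lem a35}(1),(3), and then apply Proposition \ref{propver13.1} with $K = E_{\rM}^{\times}\tilde{J}_{\rM}$, $K' = N'$. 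That index is not finite. Lemma \ref{lem a35}(1) only says $\bigl(E_{\rM}^{\times}\tilde{J}_{\rM}\cap\rM'\bigr)/N'$ is finite; the larger quotient $E_{\rM}^{\times}\tilde{J}_{\rM}/\bigl(E_{\rM}^{\times}\tilde{J}_{\rM}\cap\rM'\bigr)$ is infinite, because $E_{\rM}^{\times}\tilde{J}_{\rM}$ is only compact \emph{modulo the centre} $Z_{\rM}$ of $\rM$, while $E_{\rM}^{\times}\tilde{J}_{\rM}\cap\rM'$ is compact modulo the smaller $Z_{\rM'}$. Already in the case $\rM=\rG$ one has $(E^{\times}\tilde{J})' = \tilde{J}'$ compact, whereas $E^{\times}\tilde{J}$ contains the unbounded group $F^{\times}$; the quotient is infinite. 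So Proposition \ref{propver13.1} cannot be invoked as stated.

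The fix, which is how the paper proceeds, is to insert an intermediate normal subgroup that \emph{does} have finite index: let $O_{\rM}$ be an open compact subgroup contained in $\ker\tilde{\Lambda}_{\rM}$ and form $Z_{\rM}\, O_{\rM}\,(E_{\rM}^{\times}\tilde{J}_{\rM})'$. Because $E_{\rM}^{\times}\tilde{J}_{\rM}/Z_{\rM}$ is compact and $O_{\rM}$ is open, this subgroup is normal of finite index in $E_{\rM}^{\times}\tilde{J}_{\rM}$, and Proposition \ref{propver13.1} applied here makes the restriction semisimple with finite length. One then passes from $(E_{\rM}^{\times}\tilde{J}_{\rM})'$ down to $N'$ using Lemma \ref{lem a35}(1) (finite index) together with Clifford theory for the resulting finite quotient. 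Your subsequent argument — that restriction to $\tilde{J}_{\rM}'$ is $\res_{\tilde{J}_{\rM}'}^{\tilde{J}_{\rM}}\tilde{\lambda}_{\rM}$, multiplicity-free, and Lemma \ref{lem a35}(3) forces each $N'$-constituent to restrict irreducibly to some $x(\tilde{\lambda}_{\rM}')$ — is correct and coincides with the paper's conclusion once semisimplicity is secured properly.
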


\begin{proof}
In this proof, we denote $N_{\rM'}(\tilde{\lambda}_{\rM'})$ by $N'$. Let $\Lambda_{\rM}$ be an extension of $(J_{\rM},\lambda_{\rM})$ to $E_{\rM}^{\times}J_{\rM}$. By Frobenius reciprocity and Corollary \ref{cor a32}, we have
$$\res_{\tilde{J}_{\rM}'}^{E_{\rM}^{\times}\tilde{J}_{\rM}}\ind_{E_{\rM}^{\times}J_{\rM}}^{E_{\rM}^{\times}\tilde{J}_{\rM}}\Lambda_{\rM}\cong\ind_{J_{\rM}'}^{\tilde{J}_{\rM}'}\res_{J_{\rM}'}^{J_{\rM}}\lambda_{\rM},$$
while the latter is semisimple and multiplicity-free. In fact 
$$\ind_{J_{\rM}'}^{\tilde{J}_{\rM}'}\res_{J_{\rM'}}^{J_{\rM}}\lambda_{\rM}\cong\res_{\tilde{J}_{\rM}'}^{\tilde{J}_{\rM}}\ind_{J_{\rM}}^{\tilde{J}_{\rM}}\lambda_{\rM},$$
by Corollary \ref{cor a32}, and the induced representation $\ind_{J_{\rM}}^{\tilde{J}_{\rM}}\lambda_{\rM}$ is irreducible by Corollary \ref{cor 99}. Hence $\res_{\tilde{J}_{\rM}'}^{\tilde{J}_{\rM}}\ind_{J_{\rM}}^{\tilde{J}_{\rM}}\lambda_{\rM}$ is semisimple by Proposition \ref{prop 0.1}. For the multiplicity-free part, first we consider the case when $\rM=\rG$. In this case $\ind_{\tilde{J}_{\rM}'}^{\rM'}\res_{\tilde{J}_{\rM}'}^{\tilde{J}_{\rM}}\tilde{\lambda}_{\rM}$ is a sub-representation of $\res_{\rM'}^{\rM}\ind_{E_{\rM}^{\times}\tilde{J}_{\rM}}^{\rM}\tilde{\Lambda}_{\rM}$ which is semisimple and multiplicity-free, hence the same for $\res_{\tilde{J}_{\rM}'}^{\tilde{J}_{\rM}}\tilde{\lambda}_{\rM}$, as well as a sub-representation of $\res_{\tilde{J}_{\rM}'}^{\tilde{J}_{\rM}}\tilde{\lambda}_{\rM}$. For Levi subgroup $\rM$, we write $J_{\rM}=\prod_{i=1}^m J_i$, $\lambda_{\rM}=\prod_{i=1}^m\lambda_i$ where $(J_i,\lambda_i)$ a maximal simple $k$-type of $\mathrm{GL}_{n_i}(F)$. We deduce from the case of $\rG$, that $\res_{\prod_i\tilde{J}_i'}^{\prod_i\tilde{J}_i}\ind_{J_{\rM}}^{\prod_{i}\tilde{J}_i}\lambda_{\rM}$ is semisimple and multiplicity-free. Since $\tilde{J}_{\rM}\subset\prod_{i}\tilde{J}_i$ and $\prod_i\tilde{J}_i'\subset\tilde{J}_{\rM}'$, we obtain the same property for $\res_{\tilde{J}_{\rM}'}^{\tilde{J}_{\rM}}\ind_{J_{\rM}}^{\tilde{J}_{\rM}}\lambda_{\rM}$.

For the second part, we denote the irreducible induction $\ind_{E_{\rM}^{\times}J_{\rM}}^{E_{\rM}^{\times}\tilde{J}_{\rM}}\Lambda_{\rM}$ by $\tilde{\Lambda}_{\rM}$ in this proof. The restriction $\res_{N'}^{E_{\rM}^{\times}\tilde{J}_{\rM}}\tilde{\Lambda}_{\rM}$ is semisimple of finite length, which is deduced from the fact that $\res_{(E_{\rM}^{\times}\tilde{J}_{\rM})'}^{E_{\rM}^{\times}\tilde{J}_{\rM}}\tilde{\Lambda}_{\rM}$ is semisimple. We deduce that $\res_{N'}^{E_{\rM}^{\times}\tilde{J}_{\rM}}\tilde{\Lambda}_{\rM}$ is semisimple of finite length by Lemma \ref{lem a35} Part $1$ and Clifford theory. By Lemma \ref{lem a35} Part $2$, we have $\res_{\tilde{J}_{\rM}'}^{E_{\rM}^{\times}\tilde{J}_{\rM}}\tilde{\Lambda}_{\rM}\cong\res_{\tilde{J}_{\rM}'}^{\tilde{J}_{\rM}}\tilde{\lambda}_{\rM}$, which is semisimple and multiplicity-free as discussed above. On the other hand, the restriction to $\tilde{J}_{\rM}'$ of an irreducible component of $\res_{N_{\rM'}(\tilde{\lambda}_{\rM'})}^{E_{\rM}^{\times}\tilde{J}_{\rM}}\tilde{\Lambda}_{\rM}$ contain an irreducible component of $\tilde{\lambda}_{\rM}\vert_{\tilde{J}_{\rM}'}$. By Lemma \ref{lem a35} Part $3$ and the fact that $\res_{\tilde{J}_{\rM}'}^{E_{\rM}^{\times}\tilde{J}_{\rM}}\ind_{E_{\rM}^{\times}J_{\rM}}^{E_{\rM}^{\times}\tilde{J}_{\rM}}\Lambda_{\rM}$ is multiplicity-free, its restriction must be irreducible, which ends the proof.
\end{proof}

\begin{lem}
\label{lem a36}
Let $\tau'$ be an irreducible $k$-representation of $(E_{\rM}^{\times}\tilde{J}_{\rM})'=E_{\rM}^{\times}\tilde{J}_{\rM}\cap\rM'$, then there is an irreducible $k$-representation $\tau$ of $E_{\rM}^{\times}\tilde{J}_{\rM}$ such that $\tau'$ is isomorphic to a quotient of $\tau\vert_{E_{\rM}^{\times}\tilde{J}_{\rM}\cap\rM'}$.
\end{lem}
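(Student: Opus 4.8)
\textbf{Proof plan for Lemma \ref{lem a36}.}

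The statement is the standard Clifford-theoretic fact that any irreducible representation of a subgroup of finite index occurs in the restriction of some irreducible representation of the overgroup, adapted to the locally profinite setting where both groups are compact modulo a common centre. The plan is to reduce this to a finite-index situation to which genuine Clifford theory and Frobenius reciprocity apply. Write $H = E_{\rM}^{\times}\tilde{J}_{\rM}$ and $H' = H\cap\rM' = (E_{\rM}^{\times}\tilde{J}_{\rM})'$. First I would recall from Lemma \ref{lem a35}(1) that $H$ and $H'$ are compact modulo the centre $Z_{\rM}$ of $\rM$, and that the quotient $H/Z_{\rM}H'$ is finite (indeed $H/Z_{\rM}H'$ injects into the compact abelian group $E_{\rM}^{\times}\tilde{J}_{\rM}/Z_{\rM}\tilde{J}_{\rM}'$, which is finite since $\tilde{J}_{\rM}'$ is open in $H'$).

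Next I would form the compact induction $\tau_0 = \ind_{H'}^{H}\tau'$. Since $H'$ is open and of finite index in $Z_{\rM}H'$ (modulo the centre the situation is finite), and $Z_{\rM}$ acts on $\tau'$ through a character, one first extends $\tau'$ to $Z_{\rM}H'$ by letting $Z_{\rM}$ act through that central character; call this $\tilde{\tau}'$. Then $\ind_{Z_{\rM}H'}^{H}\tilde{\tau}'$ is an admissible $k$-representation of $H$ of finite length, because $H/Z_{\rM}H'$ is finite and $\tilde{\tau}'$ is finite-dimensional (see the formula in \S I.5.6 of \cite{V1}). By Frobenius reciprocity there is a surjection $\res_{Z_{\rM}H'}^{H}\ind_{Z_{\rM}H'}^{H}\tilde{\tau}' \twoheadrightarrow \tilde{\tau}'$, hence, restricting further, a nonzero $H'$-map $\res_{H'}^{H}\ind_{Z_{\rM}H'}^{H}\tilde{\tau}' \to \tau'$ which is surjective.

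Now pick any irreducible subquotient $\tau$ of the finite-length representation $\ind_{Z_{\rM}H'}^{H}\tilde{\tau}'$ which still maps onto $\tau'$ after restriction to $H'$: more precisely, among the Jordan--H\"older factors of $\ind_{Z_{\rM}H'}^{H}\tilde{\tau}'$ at least one has $\tau'$ as a subquotient of its restriction to $H'$ (by exactness of $\res_{H'}^{H}$ and the surjection above), and since $\res_{H'}^{H}$ of an irreducible representation of $H$ is semisimple of finite length by Proposition \ref{propver13.1} applied to the pair $Z_{\rM}H' \subset H$ with finite quotient (equivalently Clifford theory, because $H/Z_{\rM}H'$ is finite and $Z_{\rM}\subset Z(H)$), that subquotient $\tau'$ is in fact a direct summand, hence a quotient, of $\res_{H'}^{H}\tau$. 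This $\tau$ is the desired irreducible $k$-representation of $H = E_{\rM}^{\times}\tilde{J}_{\rM}$.

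There is no serious obstacle here; the only point requiring care is the passage from the locally profinite groups $H, H'$ to a finite-index pair so that Clifford theory and the admissibility of the induced representation are available — this is exactly what the reduction via the centre $Z_{\rM}$ and Lemma \ref{lem a35}(1) accomplishes, and it mirrors the argument already used in the second part of the proof of Proposition \ref{prop 6}. Semisimplicity of restrictions to $H'$ is guaranteed without any invertibility hypothesis on the index by Proposition \ref{propver13.1}, which is why the statement holds for modulo $\ell$ representations.
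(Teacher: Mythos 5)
Your argument is correct in substance but follows a genuinely different route than the paper's, and one auxiliary claim is misjustified. The paper applies Tadi\'c's method (\cite{TA}, Prop.\ 2.2): inside $Z_{\rM}$ it takes the discrete free abelian group $S_{\rM}\cong\mathbb{Z}^m$ generated by the block-scalar uniformizers, sets $S_0=S_{\rM}\cap(E_{\rM}^{\times}\tilde{J}_{\rM})'$, chooses a complement $S_1$ with $S_1\cap S_0=\{1\}$, extends $\tau'$ to $S_1(E_{\rM}^{\times}\tilde{J}_{\rM})'$ by letting $S_1$ act \emph{trivially} (no character extension is needed precisely because $S_1\cap H'=\{1\}$), observes that $E_{\rM}^{\times}\tilde{J}_{\rM}/S_1(E_{\rM}^{\times}\tilde{J}_{\rM})'$ is compact so the compact induction is admissible, picks an irreducible subrepresentation $\tau$ of a finitely generated (hence admissible, hence finite-length) subrepresentation, and uses evaluation at the identity $f\mapsto f(1)$ to produce the surjection onto $\tau'$. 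You instead induce from $Z_{\rM}H'$, which requires extending the central character from $Z_{\rM'}=Z_{\rM}\cap H'$ to $Z_{\rM}$ (possible, but you should at least cite a lemma for the extension of smooth characters of closed subgroups of abelian groups --- Lemma \ref{lem a13} does not directly apply since $Z_{\rM}$ is not profinite), in exchange for a finite quotient $H/Z_{\rM}H'$ and hence a finite-length induced representation, from which you extract the right Jordan--H\"older factor. Each version buys something: the paper's avoids any character extension and records the specific form of $\tau$ that the proof of Theorem \ref{thm a33} actually re-uses (the sentence before the proof flags this); yours is shorter once the finiteness of $H/Z_{\rM}H'$ is in hand.

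That finiteness, however, is not supported by your parenthetical. You claim $H/Z_{\rM}H'$ injects into $E_{\rM}^{\times}\tilde{J}_{\rM}/Z_{\rM}\tilde{J}_{\rM}'$; since $Z_{\rM}\tilde{J}_{\rM}'\subset Z_{\rM}H'$, the natural map runs the other way (a surjection onto $H/Z_{\rM}H'$), and moreover $E_{\rM}^{\times}\tilde{J}_{\rM}/Z_{\rM}\tilde{J}_{\rM}'$ is \emph{not} finite: $\tilde{J}_{\rM}'$ is open in $H'$ but not in $H$ (indeed $\tilde{J}_{\rM}/\tilde{J}_{\rM}'\cong\det(\tilde{J}_{\rM})$ is an infinite open subgroup of $\fo_F^{\times}$). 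A correct argument for the finiteness is: $\det(Z_{\rM})$ is an open finite-index subgroup of $F^{\times}$, so $Z_{\rM}H'=(\det|_H)^{-1}(\det(Z_{\rM})\cap\det(H))$ is open in $H$; combined with compactness of $H/Z_{\rM}$ from Lemma \ref{lem a35}(1), the quotient $H/Z_{\rM}H'$ is compact and discrete, hence finite. With this repair, your route through $Z_{\rM}H'$ is sound.
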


\begin{proof}
The proof of Proposition $2.2$ of \cite{TA} can be applied here. The construction of $\tau$ will appear in the proof of Theorem \ref{thm a33}, for the convenience reason we state the part that we need here. Let $S_{\rM}$ be a subgroup of the centre $Z_{\rM}$, consisting of elements whose coefficients on diagonal are powers of $\mathfrak{p}_F$,where $\mathfrak{p}_F$ is an uniformiser of $F$. Then $S_{\rM}\cong\mathbb{Z}^{m}$, where $\rM$ is a product of $m$-blocks. Denote $S_0=S_{\rM}\cap(E_{\rM}^{\times}\tilde{J}_{\rM})'$ and $S_1$ a maximal subgroup of $S_{\rM}$ such that $S_1\cap S_0=\{1\}$, hence the rank of $S_1S_0$ equals to $m$, and the quotient $E_{\rM}^{\times}\tilde{J}_{\rM}\slash S_1(E_{\rM}^{\times}\tilde{J}_{\rM})'$ is compact. We extend $\tau'$ to $S_1(E_{\rM}^{\times}\tilde{J}_{\rM})'$ by acting $S_1$ trivially. The induction $\ind_{S_1(E_{\rM}^{\times}\tilde{J}_{\rM})'}^{E_{\rM}^{\times}\tilde{J}_{\rM}}\tau'$ is admissible, which has an irreducible sub-representation and denote it by $(V,\tau)$. Consider the map $f\rightarrow f(1)$ from $\tau$ to $\tau'$, which is a non-trivial morphism of $(E_{\rM}^{\times}\tilde{J}_{\rM})'$-representation. Hence $\tau'$ is a quotient of $\tau$.
\end{proof}

\begin{thm}
\label{thm a33}
Let $\tau_{\rM'}$ be an irreducible $k$-representation of $N_{\rM'}(\tilde{\lambda}_{\rM'})$ containing an irreducible component $\tilde{\lambda}_{\rM}'$ of $\tilde{\lambda}_{\rM}=\ind_{J_{\rM}}^{\tilde{J}_{\rM}}\lambda_{\rM}$. Then $\tau_{\rM'}$ is an extension of $\tilde{\lambda}_{\rM}'$.
\end{thm}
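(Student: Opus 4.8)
The plan is to show that $\tau_{\rM'}$, which is an irreducible $k$-representation of $N' = N_{\rM'}(\tilde\lambda_{\rM}')$ containing $\tilde\lambda_{\rM}'$ upon restriction to $\tilde J_{\rM}'$, is in fact \emph{irreducible} already on restriction to $\tilde J_{\rM}'$, i.e. $\res_{\tilde J_{\rM}'}^{N'}\tau_{\rM'} \cong \tilde\lambda_{\rM}'$. This is exactly the assertion that $\tau_{\rM'}$ is an extension of $\tilde\lambda_{\rM}'$. The key input is Proposition \ref{prop a34}, which tells us that $\res_{N'}^{E_{\rM}^{\times}\tilde J_{\rM}}\ind_{E_{\rM}^{\times}J_{\rM}}^{E_{\rM}^{\times}\tilde J_{\rM}}\Lambda_{\rM}$ is semisimple, multiplicity-free, and each of its irreducible components is an extension of an irreducible component of $\tilde\lambda_{\rM}\vert_{\tilde J_{\rM}'}$. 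So the strategy is to realise $\tau_{\rM'}$ as one of these components.

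First I would use Lemma \ref{lem a36} to produce an irreducible $k$-representation $\tau$ of $E_{\rM}^{\times}\tilde J_{\rM}$ such that $\tau_{\rM'}$ is a quotient of $\res_{N'}^{E_{\rM}^{\times}\tilde J_{\rM}}\tau$ (note $N' \subset E_{\rM}^{\times}\tilde J_{\rM}\cap\rM' = (E_{\rM}^{\times}\tilde J_{\rM})'$, so one applies Lemma \ref{lem a36} to $(E_{\rM}^{\times}\tilde J_{\rM})'$ and then restricts further to $N'$, using that $\res_{N'}^{(E_{\rM}^{\times}\tilde J_{\rM})'}$ of the relevant representations is semisimple by Clifford theory as in Proposition \ref{prop a34}). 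Next, since $\tau_{\rM'}$ contains $\tilde\lambda_{\rM}'$ on $\tilde J_{\rM}'$, the representation $\tau$ must contain $\tilde\lambda_{\rM}$ (or a twist $\tilde\lambda_{\rM}\otimes\chi\circ\det$ for some $k$-quasicharacter $\chi$ of $F^{\times}$) as a subquotient, using Lemma \ref{lem 9} and the fact that irreducible subrepresentations of $\res_{\tilde J_{\rM}'}^{\tilde J_{\rM}}(\tilde\lambda_{\rM}\otimes\chi\circ\det)$ agree with those of $\res_{\tilde J_{\rM}'}^{\tilde J_{\rM}}\tilde\lambda_{\rM}$ since $\tilde\lambda_{\rM}\otimes\chi\circ\det\vert_{\tilde J_{\rM}'}\cong\tilde\lambda_{\rM}\vert_{\tilde J_{\rM}'}$. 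After twisting $\Lambda_{\rM}$ by $\chi$ (which does not change the constructions, by Remark \ref{rem 33} and the definitions), Lemma \ref{lem a35}(4) gives that $\tau$ is an extension of $\tilde\lambda_{\rM}$ and can be written as $\ind_{E_{\rM}^{\times}J_{\rM}}^{E_{\rM}^{\times}\tilde J_{\rM}}\Lambda_{\rM}$ for an extension $\Lambda_{\rM}$ of $\lambda_{\rM}$.

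Then I would apply Proposition \ref{prop a34} directly to this $\tau = \ind_{E_{\rM}^{\times}J_{\rM}}^{E_{\rM}^{\times}\tilde J_{\rM}}\Lambda_{\rM}$: its restriction to $N'$ is semisimple and multiplicity-free, with every irreducible component an extension of an irreducible component of $\tilde\lambda_{\rM}\vert_{\tilde J_{\rM}'}$. Since $\tau_{\rM'}$ is a quotient of $\res_{N'}^{E_{\rM}^{\times}\tilde J_{\rM}}\tau$ and the latter is semisimple, $\tau_{\rM'}$ is isomorphic to one of its irreducible components; hence $\tau_{\rM'}$ is an extension of an irreducible component of $\tilde\lambda_{\rM}\vert_{\tilde J_{\rM}'}$. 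Finally, because $\tau_{\rM'}$ was assumed to contain the specific component $\tilde\lambda_{\rM}'$ on restriction to $\tilde J_{\rM}'$, and $\res_{\tilde J_{\rM}'}^{N'}\tau_{\rM'}$ is now known to be irreducible, it must equal $\tilde\lambda_{\rM}'$ — so $\tau_{\rM'}$ is an extension of $\tilde\lambda_{\rM}'$ itself.

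The main obstacle I anticipate is the bookkeeping of the unramified twist $\chi\circ\det$ and making sure the chain of inclusions $N' \subset (E_{\rM}^{\times}\tilde J_{\rM})' \subset E_{\rM}^{\times}\tilde J_{\rM}$ is handled cleanly when transporting semisimplicity: one needs that restriction from $(E_{\rM}^{\times}\tilde J_{\rM})'$ (or from $E_{\rM}^{\times}\tilde J_{\rM}$) down to $N'$ preserves semisimplicity and finite length, which follows from Lemma \ref{lem a35}(1) (finiteness of the relevant quotient) together with Clifford theory and the Clifford-theoretic argument already run inside the proof of Proposition \ref{prop a34}. Everything else is a matter of assembling Lemma \ref{lem a36}, Lemma \ref{lem 9}, Lemma \ref{lem a35}(4) and Proposition \ref{prop a34} in the right order; no genuinely new idea should be needed.
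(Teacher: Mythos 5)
Your overall strategy is the same as the paper's: pass from $\tau_{\rM'}$ via Lemma \ref{lem a36} to an irreducible representation $\tau$ of $E_{\rM}^{\times}\tilde{J}_{\rM}$; identify $\tau$, after a twist by a quasicharacter of $F^{\times}$, as $\ind_{E_{\rM}^{\times}J_{\rM}}^{E_{\rM}^{\times}\tilde{J}_{\rM}}\Lambda_{\rM}$ via Lemma \ref{lem a35}(4); and conclude by Proposition \ref{prop a34}. However, there is a genuine gap at the step where you invoke Lemma \ref{lem a35}(4). That lemma requires $\tau$ to contain $\tilde{\lambda}_{\rM}$ (or its twist) as a \emph{sub-representation} of $\res_{\tilde{J}_{\rM}}^{E_{\rM}^{\times}\tilde{J}_{\rM}}\tau$; but Lemma \ref{lem 9}, as you observe yourself, only produces $\tilde{\lambda}_{\rM}\otimes\chi\circ\det$ as a \emph{subquotient}. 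Since $\res_{\tilde{J}_{\rM}}^{E_{\rM}^{\times}\tilde{J}_{\rM}}\tau$ is not a priori semisimple, one cannot pass from subquotient to sub-representation for free, and Lemma \ref{lem a35}(4) cannot be applied as stated.

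The paper closes this gap with two additional steps that your sketch omits. First, it computes $\res_{\tilde{J}_{\rM}'}^{E_{\rM}^{\times}\tilde{J}_{\rM}}\ind_{S_{1}(E_{\rM}^{\times}\tilde{J}_{\rM})'}^{E_{\rM}^{\times}\tilde{J}_{\rM}}\tau'$ by Mackey's formula and uses Lemma \ref{lem a35}(3) together with the fact that $E_{\rM}^{\times}\tilde{J}_{\rM}$ normalises $\tilde{\lambda}_{\rM}$ to show that \emph{every} irreducible subquotient of this restriction is an irreducible component of $\tilde{\lambda}_{\rM}\vert_{\tilde{J}_{\rM}'}$. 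Second, it observes (reusing the finite-length argument from Lemma \ref{lem 9}) that $\res_{\tilde{J}_{\rM}}^{E_{\rM}^{\times}\tilde{J}_{\rM}}\tau$ is a sub-representation of a direct sum of finite-length representations, so it has an irreducible \emph{sub-representation} $\tau_{0}$; by the first step $\tau_{0}\vert_{\tilde{J}_{\rM}'}$ contains an irreducible component of $\tilde{\lambda}_{\rM}\vert_{\tilde{J}_{\rM}'}$, whence Proposition \ref{Lprop 0.2} identifies $\tau_{0}\cong\tilde{\lambda}_{\rM}\otimes\chi\circ\det$. Only now is the hypothesis of Lemma \ref{lem a35}(4) satisfied. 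You should add these two steps to make your argument complete; the remainder of your proposal (the reduction to Proposition \ref{prop a34}, the twist bookkeeping, and the final conclusion) is correct and matches the paper.
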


\begin{proof}
Let $N'$ denote $N_{\rM'}(\tilde{\lambda}_{\rM'})$. The induced $k$-representation $\tau'=\ind_{N'}^{(E_{\rM}^{\times}\tilde{J}_{\rM})'}\tau_{\rM'}$ is irreducible by Theorem \ref{Ltheorem 5}. Let $\tau$ be as in Lemma \ref{lem a36}, which is a sub-representation of $\ind_{S_1(E_{\rM}^{\times}\tilde{J}_{\rM})'}^{E_{\rM}^{\times}\tilde{J}_{\rM}}\tau'$, where $\tau'$ is extended to $S_1(E_{\rM}^{\times}\tilde{J}_{\rM})'$. By Mackey's theory, we have
$$\res_{\tilde{J}_{\rM}}^{E_{\rM}^{\times}\tilde{J}_{\rM}}\ind_{S_1(E_{\rM}^{\times}\tilde{J}_{\rM})'}^{E_{\rM}^{\times}\tilde{J}_{\rM}}\tau'\cong\bigoplus_{\alpha\in S_1(E_{\rM}^{\times}\tilde{J}_{\rM})'\backslash E_{\rM}^{\times}\tilde{J}_{\rM} \slash \tilde{J}_{\rM}} \ind_{\tilde{J}_{\rM}'}^{\tilde{J}_{\rM}}\res_{\tilde{J}_{\rM}'}^{S_1(E_{\rM}^{\times}\tilde{J}_{\rM})'}\alpha(\tau').$$
By Lemma \ref{lem a35} Part $3$ and the fact that $E_{\rM}^{\times}\tilde{J}_{\rM}$ normalises $\tilde{\lambda}_{\rM}$, we obtain that for any $\alpha$ as above, $\res_{\tilde{J}_{\rM}'}^{S_1(E_{\rM}^{\times}\tilde{J}_{\rM})'}\alpha(\tau')$ is a multiple of an irreducible component $\tilde{\lambda}_{\rM,\alpha}'$ of $\tilde{\lambda}_{\rM}\vert_{\tilde{J}_{\rM}'}$. Meanwhile, the induced representation $\ind_{\tilde{J}_{\rM}'}^{\tilde{J}_{\rM}}\tilde{\lambda}_{\rM,\alpha}'$ is a sub-representation of $\tilde{\lambda}_{\rM}\otimes\ind_{\tilde{J}_{\rM}'}^{\tilde{J}_{\rM}}\mathds{1}\cong\ind_{\tilde{J}_{\rM}'}^{\tilde{J}_{\rM}}\res_{\tilde{J}_{\rM}'}^{\tilde{J}_{\rM}}\tilde{\lambda}_{\rM}$. We deduce that, an irreducible sub-quotient of $\res_{\tilde{J}_{\rM}'}^{E_{\rM}^{\times}\tilde{J}_{\rM}}\ind_{S_1(E_{\rM}^{\times}\tilde{J}_{\rM})'}^{E_{\rM}^{\times}\tilde{J}_{\rM}}\tau'$ is isomorphic to an irreducible component of $\tilde{\lambda}_{\rM}\vert_{\tilde{J}_{\rM}'}$.

Now we come back to $\tau$. As in the proof of Lemma \ref{lem 9}, the restriction $\res_{\tilde{J}_{\rM}}^{E_{\rM}^{\times}\tilde{J}_{\rM}}\tau$ is embedding to a direct sum of finite length representations. Hence $\res_{\tilde{J}_{\rM}}^{E_{\rM}^{\times}\tilde{J}_{\rM}}\tau$ has an irreducible sub-representation $\tau_0$. As we has explained in the previous paragraph, $\tau_0\vert_{\tilde{J}_{\rM}'}$ contains an irreducible component of $\tilde{\lambda}_{\rM}\vert_{\tilde{J}_{\rM}'}$, hence after twisting an $k$-quasicharacter of $F^{\times}$ to $\tilde{\lambda}_{\rM}$, we can assume that $\tau_0\cong\tilde{\lambda}_{\rM}$ by Proposition \ref{Lprop 0.2}. Then $\tau$ is an extension of $\tilde{\lambda}_{\rM}$ to $E_{\rM}^{\times}\tilde{J}_{\rM}$ and is isomorphic to $\ind_{E_{\rM}^{\times}J_{\rM}}^{E_{\rM}^{\times}\tilde{J}_{\rM}}\Lambda_{\rM}$, for an extension $\Lambda_{\rM}$ of $\lambda_{\rM}$ by Lemma \ref{lem a35} (4). In other words, $\tau_{\rM'}$ is an irreducible component of $\res_{N'}^{E_{\rM}^{\times}\tilde{J}_{\rM}}\ind_{E_{\rM}^{\times}J_{\rM}}^{E_{\rM}^{\times}\tilde{J}_{\rM}}\Lambda_{\rM}$. We apply Proposition \ref{prop a34} and obtain the result.
\end{proof}

\begin{rem}
In other words, Theorem \ref{thm a33} states that an extended maximal simple $k$-types defined in Definition \ref{Ldefinition 200} is an extension of a maximal simple $k$-type.
\end{rem}

\begin{thm}
Let $(N_i,\tau_i)$ be two extended maximal simple $k$-types of $\rM'$ for $i=1,2$. Assume that there exists an element $g\in\rM'$ such that $g$ weakly intertwines $\tau_1$ with $\tau_2$ then they are in the same $\rM'$-conjugacy class.
\end{thm}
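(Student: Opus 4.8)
The plan is to reduce the statement about extended maximal simple $k$-types $(N_i,\tau_i)$ to the already-established uniqueness statement for maximal simple $k$-types of $\rM'$, namely Theorem \ref{thm a30} and Proposition \ref{prop a29}. First I would unpack the hypothesis: if $g\in\rM'$ weakly intertwines $\tau_1$ with $\tau_2$, then using the criterion of Definition \ref{prepdefn 01} together with Mackey's decomposition formula (exactly as in the proofs of Proposition \ref{prop 14} and Proposition \ref{Lproposition 3}), I would restrict to the unique maximal compact-modulo-centre subgroups $\tilde{J}_{\rM,i}'$ sitting inside $N_i=N_{\rM'}(\tilde{\lambda}_{\rM,i}')$. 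Since $\res_{\tilde{J}_{\rM,i}'}^{N_i}\tau_i$ is a multiple of $\tilde{\lambda}_{\rM,i}'$ (this is Theorem \ref{thm a33} combined with Lemma \ref{lem a35}$(3)$), a weak-intertwining element for $\tau_1$ and $\tau_2$ yields, via Lemma \ref{Llemma 0.3} applied to the pairs $(\tilde{J}_{\rM,i}',N_i)$, an element $y\in\rM'$ (of the form $yg$ for some auxiliary element) that weakly intertwines the maximal simple $k$-types $(\tilde{J}_{\rM,1}',\tilde{\lambda}_{\rM,1}')$ and $(\tilde{J}_{\rM,2}',\tilde{\lambda}_{\rM,2}')$.

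Next I would invoke Theorem \ref{thm a30}: weak intertwining of two maximal simple $k$-types of $\rM'$ forces them to be $\rM'$-conjugate, so there is $h\in\rM'$ with $h(\tilde{J}_{\rM,1}')=\tilde{J}_{\rM,2}'$ and $h(\tilde{\lambda}_{\rM,1}')\cong\tilde{\lambda}_{\rM,2}'$. Conjugating everything by $h$, I may assume $\tilde{J}_{\rM,1}'=\tilde{J}_{\rM,2}'=:\tilde{J}_{\rM}'$ and $\tilde{\lambda}_{\rM,1}'=\tilde{\lambda}_{\rM,2}'=:\tilde{\lambda}_{\rM}'$; then $N_1=N_2=N_{\rM'}(\tilde{\lambda}_{\rM}')$, since by Lemma \ref{Llemma 3} this normaliser depends only on the $\rM'$-conjugacy class of $\tilde{\lambda}_{\rM}'$. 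So the problem is reduced to: two extensions $\tau_1,\tau_2$ of the \emph{same} $\tilde{\lambda}_{\rM}'$ to the \emph{same} group $N_{\rM'}(\tilde{\lambda}_{\rM}')$, weakly intertwined by some $g\in\rM'$, must be $\rM'$-conjugate. Here I would first note that after the reduction $g$ normalises $\tilde{\lambda}_{\rM}'$, hence $g\in N_{\rM'}(\tilde{\lambda}_{\rM}')$ by Lemma \ref{Llemma 3} (the normaliser equals the weak-intertwining set of $\tilde{\lambda}_{\rM}'$ in $\rM'$); and then $g$ weakly intertwines $\tau_1$ with $\tau_2$ inside the group $N_{\rM'}(\tilde{\lambda}_{\rM}')$ itself.

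At that point I would finish by a central-character and twisting argument modelled on Proposition \ref{prop a1} and Lemma \ref{lem a2}, or more directly on Lemma \ref{lem a35}$(3)$: the irreducible $k$-representations of $N_{\rM'}(\tilde{\lambda}_{\rM}')$ whose restriction to $\tilde{J}_{\rM}'$ is a multiple of $\tilde{\lambda}_{\rM}'$ differ by a $k$-character of the abelian group $N_{\rM'}(\tilde{\lambda}_{\rM}')/\tilde{J}_{\rM}'$; two of them, $\tau_1$ and $\tau_2$, being weakly intertwined by $g\in N_{\rM'}(\tilde{\lambda}_{\rM}')$ forces (since $g$ acts trivially by conjugation on $N_{\rM'}/\tilde{J}_{\rM}'$, this group being abelian by Lemma \ref{lem a35}$(1)$) that $\tau_1\cong\tau_2$ already, so $g$ itself realises the conjugacy and we are done. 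The main obstacle I anticipate is the very first step — carefully justifying, via nested Mackey decompositions, that weak intertwining of the $\tau_i$ descends to weak intertwining of the underlying $\tilde{\lambda}_{\rM,i}'$ and keeping track of the auxiliary coset elements $\alpha,\beta$ (as in Proposition \ref{prop a29} and Theorem \ref{thm a30}); once that bookkeeping is in place, the remaining steps are essentially citations of Theorem \ref{thm a30}, Lemma \ref{Llemma 3}, and Lemma \ref{lem a35}.
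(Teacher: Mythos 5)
Your proposal takes essentially the same route as the paper: descend via Mackey's decomposition formula to a weak intertwining of the underlying maximal simple $k$-types $(\tilde{J}_{\rM,i}',\tilde{\lambda}_{\rM,i}')$, apply Theorem \ref{thm a30} to get that these are $\rM'$-conjugate, conjugate so as to reduce to $\tilde{\lambda}_1'=\tilde{\lambda}_2'$ and $N_1=N_2$, and then show that (after adjusting by a coset element) $g\in N_1$. Where you diverge is the final step: once $g\in N_1$, the paper simply observes that $g(N_1)=N_1$ forces the weak-intertwining condition to collapse to $\tau_1\cong g(\tau_2)$, and conjugation by an element of $N_1$ is an inner automorphism, so $g(\tau_2)\cong\tau_2$ directly. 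Your detour through ``extensions of $\tilde{\lambda}_{\rM}'$ differ by a character of the abelian quotient $N_1/\tilde{J}_{\rM}'$, and $g$ acts trivially on this quotient'' arrives at the same conclusion but is more elaborate than necessary, and the key justification hiding inside it is still the inner-automorphism fact rather than abelianness per se. Two small points to tidy up: (i) Mackey actually produces some $\beta g$ with $\beta\in N_1$ that weakly intertwines $\tilde{\lambda}_{\rM}'$ with itself; it is $\beta g$, not $g$, that Lemma \ref{Llemma 3} places in $N_1$ a priori, with $g\in N_1$ then following because $\beta\in N_1$ — so you should not assert that ``$g$ normalises $\tilde{\lambda}_{\rM}'$'' without this adjustment; and (ii) Lemma \ref{Llemma 0.3} as stated handles only self-intertwining, so the initial descent from the $\tau_i$ to the $\tilde{\lambda}_{\rM,i}'$ requires the two-type Mackey computation carried out in the proof of Proposition \ref{prop a29} rather than a direct citation of that lemma.
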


\begin{proof}
Let $(\tilde{J}_i',\tilde{\lambda}_i')$ be the maximal simple $k$-types contained in $(N_i,\tau_i)$ for $i=1,2$. The assumption implies that $\alpha g$ weakly intertwines $\tilde{\lambda}_1'$ to $\tilde{\lambda}_2'$, according to Theorem \ref{thm a30}, they are $\rM'$-conjugate. Now up to conjugate $(N_2,\tau_2)$ by an element in $\rM'$, we can deduce the problem to the case where $\tilde{J}_1'=\tilde{J}_2'$ and $\tilde{\lambda}_1'=\tilde{\lambda}_2'$. Then there exists an element $\beta\in N_1$ such that $\beta g$ weakly intertwines $\tilde{\lambda}_1'$ to itself. By Lemma \ref{Llemma 3}, $\beta g\in N_1$ hence $g\in N_1$, which implies that $\tau_2\cong\tau_1$.
\end{proof}

\appendix
\section{Geometric lemma of Bernstein and Zelevinsky}
\label{appendix A}
We need \cite[Theorem 5.2]{BeZe} in the $\ell$-modular setting. In fact, the proof in \cite{BeZe} is given by the theory of sheaves, which can be translated to a representation theoretical proof and be applied to our case. To be self-contained, we rewrite the proof following the same idea as \cite{BeZe}.

Let $\rG$ be a locally compact totally disconnected group, $\rP$, $\rM$, $\rU$, $\rQ$, $\rN$, $\rV$ are closed subgroups of $\rG$, and $\theta$, $\psi$ be $k$-characters of $\rU$ and $\rV$ respectively. Suppose that they verify conditions $(1)-(4)$ in \cite[\S 5.1]{BeZe}, and denote $\rP\backslash\rG$ by $\rX$. The numbering that we choose in condition $(3)$ is $\rZ_1,...,\rZ_k$ which are $\rQ$-orbits on $\rX$, and for an orbit $\rZ\subset \rX$, we choose $\overline{\omega}\in\rG$ and $\omega$ as in condition $(3)$ of \cite{BeZe}.

We introduce condition $(\ast)$:

$(\ast)$ The characters $\omega(\theta)$ and $\psi$ coincide when restricted to $\omega(\rU)\cap\rV$.

We define $\Phi_{\rZ}$ to be $0$ if $(\ast)$ does not hold, and define $\Phi_{\rZ}$ as in \cite[\S 5.1]{BeZe} if $(\ast)$ holds.

\begin{defn}
Let $\rM,\rU$ be closed subgroups of $\rG$, and $\rM\cap\rU=\{e\}$, and the subgroup $\rP=\rM\rU$ is closed in $\rG$. Let $\theta$ be a $k$-character of $\rU$ normalised by $\rM$. 
\begin{itemize}
\item Define functor $i_{\rU,\theta}:\mathrm{Rep}_{k}(\rM)\rightarrow\mathrm{Rep}_k(\rG)$, which maps $\rho\in\mathrm{Rep}_k(\rM)$, to $\ind_{\rP}^{\rG}\rho_{\rU,\theta}$, where $\rho_{\rU,\theta}\in\mathrm{Rep}_k(\rP)$, such that
$$\rho_{\rU,\theta}(mu)=\theta(u)\mathrm{mod}_{\rU}^{\frac{1}{2}}(m)\rho(m),$$
and $\mathrm{mod}$ is the module, and $u\in\rU,m\in\rM$.
\item Define functor $r_{\rU,\theta}:\mathrm{Rep}_k(\rG)\rightarrow\mathrm{Rep}_k(\rM)$, which maps $(\pi,W)\in\mathrm{Rep}_k(\rG)$ to $\mathrm{mod}_{\rU}^{-\frac{1}{2}}\cdot\res_{\rP}^{\rG}\pi\slash(\res_{\rP}^{\rG}\pi)(\rU,\theta)$, where $(\res_{\rP}^{\rG}\pi)(\rU,\theta)$ is generated by $\pi(u)w-\theta(u)w$, for $w\in W$.
\end{itemize}
\end{defn}

\begin{rem}
\label{rem 5.2.3}
By replacing $\ind$ to $\mathrm{Ind}$, we have $I_{\rU,\theta}$. Notice that $r_{\rU,\theta}$ is the left adjoint to $I_{\rU,\theta}$.
\end{rem}

\begin{prop}
\label{prop 5.2.2}
The functors $i_{\rU,\theta}$ and $r_{\rU,\theta}$ commute with inductive limits.
\end{prop}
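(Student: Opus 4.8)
The statement is that the two functors $i_{\rU,\theta}$ and $r_{\rU,\theta}$ commute with inductive limits (filtered colimits). The plan is to treat the two functors separately, reducing each to already-known exactness/continuity properties of the elementary functors out of which they are built. Recall that by definition $i_{\rU,\theta}(\rho)=\ind_{\rP}^{\rG}\rho_{\rU,\theta}$, where $\rho\mapsto\rho_{\rU,\theta}$ is the functor $\mathrm{Rep}_k(\rM)\to\mathrm{Rep}_k(\rP)$ obtained by twisting the action by the character $\theta$ and the modulus factor $\mathrm{mod}_{\rU}^{1/2}$; and $r_{\rU,\theta}(\pi)=\mathrm{mod}_{\rU}^{-1/2}\bigl((\res_{\rP}^{\rG}\pi)\slash(\res_{\rP}^{\rG}\pi)(\rU,\theta)\bigr)$, built from restriction, a quotient by a subspace defined pointwise, and a modulus twist.

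\textbf{Step 1: the twist functors are isomorphisms of categories.} First I would observe that the functor $\rho\mapsto\rho_{\rU,\theta}$ from $\mathrm{Rep}_k(\rM)$ to the category of those $\rP$-representations is an equivalence (indeed an isomorphism of categories onto its image) with the obvious inverse $\sigma\mapsto\mathrm{mod}_{\rU}^{-1/2}\,\sigma\vert_{\rM}$, since it merely reindexes the action on the \emph{same} underlying vector space. Any isomorphism of categories commutes with all colimits, in particular with inductive limits. The same remark applies to the modulus twist $\mathrm{mod}_{\rU}^{\pm1/2}$, which is the identity on underlying spaces. So it suffices to check that $\ind_{\rP}^{\rG}$ and the composite ``$\res_{\rP}^{\rG}$ followed by the $(\rU,\theta)$-coinvariant quotient'' commute with inductive limits.

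\textbf{Step 2: $\ind_{\rP}^{\rG}$ commutes with inductive limits.} Here $\ind$ denotes compact (smooth) induction. Its underlying-space description is $\ind_{\rP}^{\rG}(V)=\varinjlim_{K}\,(\text{functions }\rG\to V\text{ supported on finitely many cosets, fixed by }K)$ over compact open $K$, and for fixed $K$ this is a direct sum over the (discrete) double-coset set $\rP\backslash\rG/K$ of spaces of the form $V^{K\cap\rP^g}$; taking $K$-fixed vectors and finite direct sums both commute with filtered colimits of $k$-vector spaces, and the colimit over $K$ commutes with the colimit over the index category by interchange of colimits. Concretely I would write, for a filtered system $(V_\alpha)$ with colimit $V$,
\[
\bigl(\ind_{\rP}^{\rG}V\bigr)^{K}\;\cong\;\bigoplus_{\rP\backslash\rG/K}V^{K\cap\rP^{g}}\;\cong\;\bigoplus_{\rP\backslash\rG/K}\varinjlim_{\alpha}V_{\alpha}^{K\cap\rP^{g}}\;\cong\;\varinjlim_{\alpha}\Bigl(\bigoplus_{\rP\backslash\rG/K}V_{\alpha}^{K\cap\rP^{g}}\Bigr)\;\cong\;\varinjlim_{\alpha}\bigl(\ind_{\rP}^{\rG}V_{\alpha}\bigr)^{K},
\]
and then pass to the colimit over $K$. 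The only point to verify is that the comparison map $\varinjlim_{\alpha}\ind_{\rP}^{\rG}V_{\alpha}\to\ind_{\rP}^{\rG}\varinjlim_{\alpha}V_{\alpha}$ induced by functoriality is the isomorphism just exhibited, which is a routine check on generators.

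\textbf{Step 3: restriction and $(\rU,\theta)$-coinvariants commute with inductive limits.} The restriction functor $\res_{\rP}^{\rG}$ is the identity on underlying spaces, hence commutes with all colimits. The $(\rU,\theta)$-coinvariant quotient sends a $\rP$-module $W$ to $W/W(\rU,\theta)$, where $W(\rU,\theta)$ is the span of $\{\pi(u)w-\theta(u)w:u\in\rU,\,w\in W\}$; this is exactly the cokernel of a map built functorially from the action, i.e. $W\mapsto W/W(\rU,\theta)$ is the coequalizer (a colimit) of a diagram functorial in $W$, and colimits commute with colimits. Equivalently one can note $W/W(\rU,\theta)$ is the $(\rU,\theta)$-isotypic quotient, which is an exact functor commuting with arbitrary direct sums, hence with filtered colimits. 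Assembling Steps 1--3: $r_{\rU,\theta}$ is a composite of functors each commuting with inductive limits, so it does too, and likewise $i_{\rU,\theta}$.

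\textbf{Main obstacle.} None of the steps is deep; the only place demanding care is Step 2, namely making the ``$\ind$ commutes with filtered colimits'' argument genuinely canonical rather than merely a dimension count — one must check the natural comparison morphism is the isomorphism, using that supports of elements of $\ind_{\rP}^{\rG}V$ are \emph{finite} unions of cosets so that any given element already lives at a finite stage of the filtered system. (For $\mathrm{Ind}$ with arbitrary support this would fail, which is why the statement is phrased for $i_{\rU,\theta}$ using $\ind$, and why Remark \ref{rem 5.2.3} is careful to distinguish $I_{\rU,\theta}$.) Once that compact-support observation is in hand, the interchange-of-colimits bookkeeping is purely formal.
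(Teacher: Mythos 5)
Your proof is correct, but it takes a genuinely different route from the paper at both halves. For $r_{\rU,\theta}$, the paper invokes the adjoint functor argument: $r_{\rU,\theta}$ is left adjoint to $I_{\rU,\theta}$ (Remark~\ref{rem 5.2.3}), hence preserves all colimits. Your version instead unpacks $r_{\rU,\theta}$ into restriction (identity on underlying spaces), a coinvariant quotient (a cokernel, hence a colimit, hence commuting with colimits), and a modulus twist; this is more elementary and doesn't presuppose the full induction $I_{\rU,\theta}$ has been discussed, but the adjoint argument is the one-liner. For $i_{\rU,\theta}$, the paper first reduces to commutation with \emph{direct sums} via the presentation of an inductive limit as $\bigoplus_\alpha\pi_\alpha$ modulo the relations $x\sim\phi_{\alpha,\beta}(x)$, using the exactness of $i_{\rU,\theta}$ to pass the quotient through; the direct-sum case is then settled by the compact-support observation. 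You instead work at the level of $K$-fixed vectors and interchange of colimits, invoking the same compact-support fact only implicitly through the decomposition $(\ind_{\rP}^{\rG}V)^{K}\cong\bigoplus_{\rP\backslash\rG/K}V^{K\cap\rP^{g}}$ (note that this sum can be infinite, so you really use that \emph{arbitrary} direct sums, not just finite ones, commute with filtered colimits; you should also note explicitly that $(-)^{K}$ commutes with filtered colimits of smooth modules because $K$ is compact and stabilizers are open). Both arguments hinge on compact support; the paper's reduction to direct sums plus exactness avoids the Mackey-type formula, while yours avoids the explicit colimit-as-quotient bookkeeping. Either is acceptable; you correctly identify in your closing remark why the claim fails for $\mathrm{Ind}$, which is exactly the point the paper signals by isolating $I_{\rU,\theta}$.
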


\begin{proof}
The functor $r_{\rU,\theta}$ commutes with inductive limits since it has a right adjoint as in \ref{rem 5.2.3}.

For $i_{\rU,\theta}$, let $(\pi_{\alpha},\alpha\in\mathcal{C})$ be an inductive system, where $\mathcal{C}$ is a directed pre-ordered set. We prove that $i_{\rU,\theta}(\underrightarrow{\mathrm{lim}}\pi_{\alpha})\cong\underrightarrow{\mathrm{lim}}(i_{\rU,\theta}\pi_{\alpha}).$ The inductive limit $\underrightarrow{\mathrm{lim}}\pi_{\alpha}$ is defined as $\oplus_{\alpha\in\mathcal{C}}\pi_{\alpha}\slash\sim$, where $\sim$ is an equivalent relation as below. When $\alpha\prec\beta,x\in W_{\alpha},y\in W_{\beta}$, $x\sim y$ if $\phi_{\alpha,\beta}(x)=y$, where $W_{\alpha}$ is the space of $k$-representation $\pi_{\alpha}$, and $\phi_{\alpha,\beta}$ is the morphism from $\pi_{\alpha}$ to $\pi_{\beta}$ in the inductive system.

First, we prove that $i_{\rU,\theta}$ commutes with direct sum. By definition, $\oplus_{\alpha\in\mathcal{C}}i_{\rU,\theta}\pi_{\alpha}$ is a subrepresentation of $i_{\rU,\theta}\oplus_{\alpha\in\mathcal{C}}\pi_{\alpha}$, and the natural embedding is a morphism of $k$-representations of $\rG$. We will prove that the natural embedding is actually surjective. For any $f\in \pi:=i_{\rU,\theta}\oplus_{\alpha\in\mathcal{C}}\pi_{\alpha}$, there exists an open compact subgroup $K$ of $\rG$ such that $f$ is constant on each right $K$ coset of $\rM\rU\backslash\rG$. Furthermore, the function $f$ is non-trivial on finitely many right $K$ cosets. Hence there exists a finite index subset $J\subset\mathcal{C}$, such that $f(g)\in\oplus_{j\in J} W_{j}$, which means $f\in i_{\rU,\theta}\oplus_{j\in J}\pi_{j}$. Since $i_{\rU,\theta}$ commutes with finite direct sum, we finish this case.

The functor $i_{\rU,\theta}$ is exact, we have:
$$i_{\rU,\theta}(\underrightarrow{\mathrm{lim}}\pi_{\alpha})\cong i_{\rU,\theta}(\oplus_{\alpha\in\mathcal{C}}\pi_{\alpha})\slash i_{\rU,\theta}\langle x-y\rangle_{x\sim y}.$$
Notice that $\underrightarrow{\mathrm{lim}} i_{\rU,\theta}\pi_{\alpha}\cong\oplus i_{\rU,\theta}\pi_{\alpha}\slash\thicksim$, where $\thicksim$ is the equivalent relation as below. When $\alpha\prec\beta,f_{\alpha}\in V_{\alpha},f_{\beta}\in V_{\beta}$, where $V_{\alpha}$ is the representation space of $i_{\rU,\theta}\pi_{\alpha}$, then $f_{\alpha}\thicksim f_{\beta}$ if $i_{\rU,\theta}(\phi_{\alpha,\beta})(f_{\alpha})=f_{\beta}$, which is equivalent to say that $\phi_{\alpha,\beta}(f_{\alpha}(g))=f_{\beta}(g)$ for any $g\in\rG$. It is left to prove that the natural isomorphism from $\oplus_{\alpha\in\mathcal{C}}(i_{\rU,\theta}\pi_{\alpha})$ to $i_{\rU,\theta}(\oplus_{\alpha\in\mathcal{C}}\pi_{\alpha})$, induces an isomorphism from $\langle f_{\alpha}-f_{\beta}\rangle_{f_{\alpha}\thicksim f_{\beta}}$ to $i_{\rU,\theta}\langle x-y\rangle_{x\sim y}$. This can be checked directly through definition.
\end{proof}

\begin{thm}[Bernstein, Zelevinsky]
\label{thm 5.2}
Under the conditions above, the functor $\rF=r_{\rV,\psi}\circ i_{\rU,\theta}:\mathrm{Rep}_{k}(\rM)\rightarrow\mathrm{Rep}_{k}(\rN)$ is glued from the functor $\rZ$ runs through all $\rQ$-orbits on $\rX$. More precisely, if orbits are numerated so that all sets $\rY_i=\rZ_1\cup...\cup\rZ_i$ $(i=1,...,k)$ are open in $\rX$, then there exists a filtration $0=\rF_0\subset\rF_1\subset...\subset\rF_k=\rF$ such that $\rF_i\slash\rF_{i-1}\cong\Phi_{\rZ_i}$.
\end{thm}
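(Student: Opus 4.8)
The plan is to follow the geometric-lemma argument of Bernstein and Zelevinsky \cite[\S 5.2]{BeZe} essentially verbatim, but taking care that every step uses only constructions that make sense for $k$-representations, and replacing the sheaf-theoretic language by the representation-theoretic one already set up in Proposition \ref{prop 5.2.2} and Remark \ref{rem 5.2.3}. First I would fix the numbering of the $\rQ$-orbits $\rZ_1,\dots,\rZ_k$ on $\rX=\rP\backslash\rG$ so that each $\rY_i=\rZ_1\cup\dots\cup\rZ_i$ is open in $\rX$, which is possible by condition $(3)$ of \cite[\S 5.1]{BeZe}; the locally closed embeddings $\rZ_i\hookrightarrow\rY_i$ together with the open complements $\rY_{i-1}\subset\rY_i$ will produce the filtration. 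Concretely, given $\rho\in\mathrm{Rep}_k(\rM)$, the representation $i_{\rU,\theta}(\rho)=\ind_\rP^\rG\rho_{\rU,\theta}$ is realized on a space of compactly supported functions on $\rG$; restricting the support condition to the open subset $\rP\backslash\rP\rY_i$ of $\rX$ gives a $\rQ$-stable (hence, after applying $r_{\rV,\psi}$, an $\rN$-stable) subspace, and these subspaces define $\rF_i\subset\rF(\rho)$. The functoriality in $\rho$ is immediate from the functoriality of the support truncation.

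The key computation is then the identification $\rF_i/\rF_{i-1}\cong\Phi_{\rZ_i}$. For this I would argue orbit by orbit: the subquotient corresponding to a single orbit $\rZ$ consists of functions supported (modulo $\rP$) on the single $\rQ$-orbit $\rP\backslash\rP\overline{\omega}\rQ$, and a choice of representative $\overline{\omega}\in\rG$ together with the associated isomorphism $\omega$ identifies this space with an induced representation from the stabilizer of $\overline{\omega}$ in $\rQ$. Computing that stabilizer and the twist by $\psi$ one sees that the relevant $(\rU,\theta)$- and $(\rV,\psi)$-coinvariants are compatible exactly when condition $(\ast)$ holds — the characters $\omega(\theta)$ and $\psi$ agree on $\omega(\rU)\cap\rV$ — and in that case the subquotient is precisely $\Phi_{\rZ}$ as defined in \cite[\S 5.1]{BeZe}; when $(\ast)$ fails, the two incompatible characters force the coinvariant space to vanish, i.e. $\Phi_\rZ=0$, matching our definition. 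This is the step where I expect to spend the most care: one must check that the Bernstein–Zelevinsky manipulation of $\ell$-sheaves (restriction to locally closed subsets, the base-change/Mackey-type identities, and the description of the stalk along an orbit) translates into the exactness of support truncation and an honest Mackey decomposition for compact induction over the open double-coset stratum, using only that the relevant subgroups are closed and the relevant quotients are $\ell'$-free nowhere — but in fact no invertibility hypothesis is needed here, since we work with coinvariants rather than invariants and with compact induction.

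The main obstacle is therefore purely bookkeeping rather than conceptual: one has to verify that each truncation step $\rY_{i-1}\subset\rY_i$, $\rZ_i\hookrightarrow\rY_i$ yields a short exact sequence of $\rN$-representations after applying $\rF$, and that this uses only the exactness of $i_{\rU,\theta}$ (which holds, as it is a composition of a twist, an inflation, and compact induction, all exact) and the exactness of $r_{\rV,\psi}$ (which holds since coinvariants by a smooth character is an exact functor on smooth representations). Proposition \ref{prop 5.2.2} guarantees compatibility with inductive limits, so it suffices to treat finitely generated, hence after a further reduction finitely supported, functions, which makes the Mackey decomposition over the stratum finite and the orbit-by-orbit analysis legitimate. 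Assembling these exact sequences gives the filtration $0=\rF_0\subset\rF_1\subset\dots\subset\rF_k=\rF$ with $\rF_i/\rF_{i-1}\cong\Phi_{\rZ_i}$, which is the assertion. I would conclude by remarking that since every object used (coinvariants, compact induction, twisting by a smooth $k$-character) is defined with no reference to the characteristic of $k$, the original proof of \cite{BeZe} goes through unchanged with complex coefficients replaced by $k$.
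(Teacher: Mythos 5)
Your overall strategy — stratify $\rX$ by $\rQ$-orbits, realize the filtration via support truncation of compactly induced functions, use exactness of $i_{\rU,\theta}$ and $r_{\rV,\psi}$ to pass the filtration through, and reduce to an orbit-wise identification $\rF_{\rZ}\cong\Phi_{\rZ}$ — matches the paper's. The gap is in the orbit-wise identification, which you compress into one sentence about a ``Mackey decomposition'' but which is where nearly all of the content of the argument lies.

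The paper carries out the identification $\rF_{\rZ}\cong\Phi_{\rZ}$ by factoring the functor through the commutative diagram in Figure BZ and checking commutativity case by case (cases $\mathrm{I}$, $\mathrm{II}$, $\mathrm{III}$, $\mathrm{IV}$, with $\mathrm{IV}$ further split into $\mathrm{IV}_1$ and $\mathrm{IV}_2$). Cases $\mathrm{I}$, $\mathrm{II}$, $\mathrm{III}$ and $\mathrm{IV}_1$ are indeed direct verifications on function spaces, roughly of Mackey type, and condition $(\ast)$ enters exactly in case $\mathrm{II}$ as you anticipate. But case $\mathrm{IV}_2$ is not a Mackey identity at all. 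There one constructs an explicit intertwining map $A$ by integrating against a Haar measure on $\rV'\backslash\rV$, verifies that it descends to the $(\rV,\psi)$-coinvariants and is $\rN$-equivariant, and then proves it is an isomorphism by first checking it on the regular representation (where $r_{\rV',1}(S(\rV'))$ is one-dimensional by uniqueness of Haar measure), then using that the functors commute with direct sums to handle free modules, and finally using a free resolution together with exactness of $\rF$ and $\Phi$ to conclude for arbitrary $\rho$. This base-case-plus-free-resolution argument is a genuinely different mechanism than support truncation and Mackey decomposition, and your proposal omits it entirely. Your closing remark that the argument makes ``no reference to the characteristic of $k$'' is also slightly misleading: the construction of $A$ and the identification of $r_{\rV',1}(S(\rV'))$ both rest on the existence and uniqueness of a $k$-valued Haar measure on the relevant pro-$p$ groups, which is precisely where $\ell\neq p$ is used. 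So the proposal needs to be filled in with the case-$\mathrm{IV}_2$ argument (or an equivalent) before it constitutes a proof.
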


The quotient space $\rX=\rP\backslash\rG$ is locally compact totally disconnected. Let $\rY$ be a $\rQ$-invariant open subset of $\rX$. We define the subfunctor $\rF_{\rY}\subset\rF$. Let $\rho$ be a $k$-representation of $\rM$, and $W$ be its representation space. We denote $i(W)$ the representation $k$-space of $i_{\rU,\theta}(\rho)$. Let $i_{\rY}(W)\subset i(W)$ the subspace consisting of functions which are equal to $0$ outside the set $\rP\rY$, and $\tau$, $\tau_{\rY}$ be the $k$-representations of $\rQ$ on $i(W)$ and $i_{\rY}(W)$. Put $\rF_{\rY}(\rho)=r_{\rV,\psi}(\tau_{\rY})$, which is a $k$-representation of $\rN$. The functor $\rF_{\rY}$ is a subfunctor of $\rF$ due to the exactness of $r_{\rV,\psi}$.

\begin{prop}
\label{prop 5.2.1}
Let $\rY$, $\rY'$ be two $\rQ$-invariant open subset in $\rX$, we have:
$$\rF_{\rY\cap\rY'}=\rF_{\rY}\cap\rF_{\rY'},\quad \rF_{\rY\cup\rY'}=\rF_{\rY}+\rF_{\rY'},\quad \rF_{\emptyset}=0,\quad \rF_{\rX}=\rF.$$
\end{prop}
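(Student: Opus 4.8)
\textbf{Proof proposal for Proposition \ref{prop 5.2.1}.} The statement is a bookkeeping lemma about the subfunctors $\rF_{\rY}$ attached to $\rQ$-invariant open subsets of $\rX$, and the plan is to unwind the definition of $\rF_{\rY}$ in terms of the spaces $i_{\rY}(W)$ of functions supported on $\rP\rY$ and then transport set-theoretic identities through the (exact) functor $r_{\rV,\psi}$. First I would record the key elementary fact at the level of the induced space: for $\rho\in\mathrm{Rep}_k(\rM)$ with space $W$, and for a $\rQ$-invariant open $\rY\subset\rX=\rP\backslash\rG$, the subspace $i_{\rY}(W)\subset i(W)$ consists precisely of those $f$ vanishing outside $\rP\rY$. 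Since $\rY$ is open and $\rQ$-invariant, $\rP\rY$ is open and right-$\rQ$-invariant in $\rG$, so $i_{\rY}(W)$ is a $\rQ$-subrepresentation of $i(W)$; this is exactly what makes $\tau_{\rY}$ and hence $\rF_{\rY}=r_{\rV,\psi}(\tau_{\rY})$ well defined. The whole proposition will then follow from the corresponding statements for the subspaces $i_{\rY}(W)$ together with exactness of $r_{\rV,\psi}$.

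The core observation is that support behaves functorially in $\rY$: for two $\rQ$-invariant opens $\rY,\rY'$ one has, at the level of function spaces inside $i(W)$,
\begin{equation}
\label{eq:support}
i_{\rY\cap\rY'}(W)=i_{\rY}(W)\cap i_{\rY'}(W),\qquad i_{\rY\cup\rY'}(W)=i_{\rY}(W)+i_{\rY'}(W),
\end{equation}
with $i_{\emptyset}(W)=0$ and $i_{\rX}(W)=i(W)$. The intersection identity and the two boundary cases are immediate from ``$f$ vanishes outside $\rP\rY$''. For the sum identity the non-trivial inclusion is ``$\subseteq$'': given $f$ supported on $\rP(\rY\cup\rY')=\rP\rY\cup\rP\rY'$, one must write $f=f_1+f_2$ with $f_i\in i_{\rY_i}(W)$; here I would use that $f$ is locally constant with support contained in finitely many cosets $\rP\backslash\rG/K$ for a small open compact $K$, choose for each such coset whether it lies in $\rP\rY$ or $\rP\rY'$, and split $f$ accordingly as a finite sum of pieces supported on single cosets — exactly the same smoothness-of-induced-functions argument used in the proof of Proposition \ref{prop 5.2.2} for the commutation of $i_{\rU,\theta}$ with direct limits. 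This yields \eqref{eq:support} as an identity of $\rQ$-subrepresentations of $\tau$.

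Finally I would apply $r_{\rV,\psi}$. This functor is exact (it is a composite of restriction to $\rQ$, a twist by $\mathrm{mod}_{\rV}^{-1/2}$, and passage to the $(\rV,\psi)$-coinvariants, the last being exact on $\mathrm{Rep}_k(\rQ)$), so it sends injections to injections and preserves finite intersections and sums of subobjects. Applying it to \eqref{eq:support} gives $\rF_{\rY\cap\rY'}=\rF_{\rY}\cap\rF_{\rY'}$, $\rF_{\rY\cup\rY'}=\rF_{\rY}+\rF_{\rY'}$, $\rF_{\emptyset}=0$ and $\rF_{\rX}=\rF$, and since every step is natural in $\rho$ these are identities of subfunctors of $\rF$, as claimed. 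The only point requiring genuine care is the sum identity in \eqref{eq:support}: exactness of $r_{\rV,\psi}$ guarantees $r_{\rV,\psi}$ turns a sum of subspaces into a sum of subspaces, but one must first verify the sum decomposition at the function-space level, and there the support-splitting argument — valid because induced functions are locally constant and compactly supported modulo $\rP$ — is the crux. Everything else is formal.
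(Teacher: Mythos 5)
Your proof is correct and follows essentially the same route as the paper: both reduce by exactness of $r_{\rV,\psi}$ to the analogous identities for the subspaces $i_{\rY}(W)\subset i(W)$, where only the sum formula is nontrivial, and both settle that by a partition-of-unity / support-splitting argument (the paper cites the idempotent functions of \cite[\S 1.3]{BeZe}, you split $f$ across cosets). The one small point worth making explicit in your version is that one must first shrink $K$, using compactness of $\mathrm{supp}(f)$ inside the open set $\rP\rY\cup\rP\rY'$, so that each coset $aK$ meeting the support lies entirely in $\rP\rY$ or entirely in $\rP\rY'$; with that adjustment the splitting is exactly the idempotent argument.
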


\begin{proof}
Since $r_{\rV,\psi}$ is exact, it is sufficient to prove similar formulae for $\tau_{\rY}$. The only non-trivial one is the equality $\tau_{\rY\cup\rY'}=\rF_{\rY}+\rF_{\rY'}$. As in \S $1.3$ \cite{BeZe}, let $K$ be a compact open subgroup of $\rY\cup\rY'$, there exists $\varphi$ and $\varphi'$, which are idempotent $k$-function on $\rY$ and $\rY'$, such that $(\varphi+\varphi')\vert_{K}=1$. We deduce the result from this fact.
\end{proof}

Let $\rZ$ be any $\rQ$-invariant locally closed set in $\rX$, we define the functor 
$$\Phi_{\rZ}:\mathrm{Rep}_{k}(\rM)\rightarrow\mathrm{Rep}_{k}(\rN)$$
to be the functor $\rF_{\rY\cup\rZ}\slash\rF_{\rY}$, where $\rY$ can be any $\rQ$-invariant open set in $\rX$ such that $\rY\cup\rZ$ is open and $\rY\cap\rZ=\emptyset$. Let $\rZ_1,...,\rZ_k$ be the numeration of $\rQ$-orbits on $\rX$ as in \ref{thm 5.2}, and let $\rF_i=\rF_{\rY_i}$ $(i=1,...,k)$, which is a filtration of the functor $\rF$ be the definition. To prove Theorem \ref{thm 5.2}, it is sufficient to prove that $\rF_{\rZ_i}\cong\Phi_{\rZ_i}$. 

By replace $\rP$ to $\omega(\rP)$, we can assume that $\omega=\mathds{1}$. Now we consider the diagram in figure $\mathrm{BZ}$.
\begin{figure}
\centering
\begin{tikzpicture}
\node at (0,0){$\rP\cap\rG$};
\node at (-6,0) {$\rM$};
\node at (6,0){$\rN$};
\node at (-3,1) {$\rP$};
\node at (3,1) {$\rQ$};
\node at (0,2) {$\rG$};
\node at (-2,-1) {$\rM\cap\rQ$};
\node at (-4,-1) {$\rM\cap\rQ$};
\node at (2,-1) {$\rN\cap\rP$};
\node at (4,-1) {$\rN\cap\rP$};
\node at (0,-2) {$\rM\cap\rN$};
\node at (-3,0) {$\mathrm{I}$};
\node at (0,-1) {$\mathrm{II}$};
\node at (0,1) {$\mathrm{III}$};
\node at (3,0) {$\mathrm{IV}$};

\draw[->](-1.6,-0.8)--(-0.45,-0.225);
\draw[->](0.45,-0.225)--(1.6,-0.8);
\draw[->](-1.6,-1.2)--(-0.45,-1.775);
\draw[->](0.45,-1.775)--(1.6,-1.2);
\node at (-1.65,-0.4) {$\rU\cap\rQ$};
\node at (1.65,-0.4) {$\rV\cap\rP$};
\node at (-1.65,-1.6) {$\rV\cap\rM$};
\node at (1.65,-1.6) {$\rU\cap\rN$};

\draw[->](-5.75,0.089)--(-3.25,0.95);
\draw[->](-2.75,1.09)--(-0.25,1.92);
\draw[dashed,->](0.25,1.92)--(2.75,1.09);
\draw[->](3.25,0.91)--(5.75,0.08);
\draw[->](-2.75,0.91)--(-0.5,0.16);
\draw[->](0.5,0.16)--(2.75,0.91);
\node at (-4.75,0.7) {$\rU$};
\node at (-1.75,1.7) {$e$};
\node at (-1.5,0.7) {$e$};
\node at (1.5,0.7) {$e$};
\node at (4.75,0.7) {$\rV$};

\draw[->](-5.75,-0.2)--(-4.6,-0.85);
\draw[->](5.75,-0.2)--(4.6,-0.85);
\node at (-5.3,-0.6) {$e$};
\node at (5.3,-0.6) {$e$};

\draw[-stealth,decorate,decoration={snake,amplitude=2pt,pre length=1pt,post length=1pt}] (-3.4,-1) -- (-2.6,-1);
\draw[-stealth,decorate,decoration={snake,amplitude=2pt,pre length=1pt,post length=1pt}] (2.6,-1) -- (3.4,-1);
\node at (-3,-0.75) {$\varepsilon_1$};
\node at (3,-0.75) {$\varepsilon_2$};

\end{tikzpicture}
\caption{BZ}
\end{figure}

This is the same diagram as in \S $5.7$ \cite{BeZe}, in which a group a point $\rH$ means $\mathrm{Rep}_k(\rH)$, an arrow $\stackrel{\rH}{\nearrow}$ means the functor $i_{\rH,\theta}$, an arrow $\underset{\rH}{\searrow}$ means the functor $i_{\rH,\psi}$, and an arrow $\overset{\varepsilon}{\leadsto}$ means the functor $\varepsilon$ (consult \S $5.1$ \cite{BeZe} for the definition of $\varepsilon$). Notice that $\rG\dashrightarrow\rQ$ does not mean any functor, but the functor $\rP\rightarrow\rG\dashrightarrow\rQ$ is well-defined as explained above \ref{prop 5.2.1}. The composition functors along the highest path is of this diagram is $\rF_{\rZ}$, and if the condition $(\ast)$ holds, the composition functors along the lowest path is $\Phi_{\rZ}$. We prove Theorem \ref{thm 5.2} by showing that this diagram is commutative if condition $(\ast)$ holds, and $\rF_{\rZ}$ equals $0$ otherwise. Notice that parts $\mathrm{I},\mathrm{II},\mathrm{III},\mathrm{IV}$ are four cases of \ref{thm 5.2}, and we prove the statements through verifying them under the four cases respectively. 

Let $\rho$ be any $k$-representation of $\rM$, and $W$ is its representation space. We use $\pi$ to denote $\rF_{\rZ}(\rho)$, and $\tau$ to denote $\Phi_{\rZ}(\rho)$.

Case $\mathrm{I}$: $\rP=\rG,\rV=\{ e \}$. The $k$-representations $\pi$ and $\tau$ act on the same space $W$, and the quotient group $\rM\backslash(\rP\cap\rQ)$ is isomorphic to $(\rM\cap\rQ)\backslash(\rP\cap\rQ)$. We verify directly by definition that $\pi\cong\tau$.

Case $\mathrm{II}$: $\rP=\rG=\rQ$. The representation space of $\pi$ is still $W$. We have the equation:
$$r_{\rV,\psi}(W)\cong r_{\rV\cap\rM,\psi}(r_{\rV\cap\rU,\psi}(W)).$$
If $\theta\vert_{\rU\cap\rV}\neq\psi\vert_{\rU\cap\rV}$, then $\pi=0$ since $\rU\cap\rV=\rU\cap\rQ\cap\rV\cap\rP$ and $r_{\rV\cap\rU,\psi}(W)=0$. This means that after proving the diagrams of cases $\mathrm{I},\mathrm{III},\mathrm{IV}$ are commutative, the functor $\rF_{\rZ}$ equals $0$ if condition $(\ast)$ does not hold.

Now we assume that $(\ast)$ holds. The $k$-representations $\pi$ and $\tau$ act on the same space $W\slash W(\rV\cap\rM,\psi)$, because the fact $r_{\rV\cap\rU,\psi}(i_{\rV\cap\rU,\theta}(W))=W$ and the equation above. Notice that we have equations for $k$-character $\mathrm{mod}$:
$$\mathrm{mod}_{\rU}=\mathrm{mod}_{\rU\cap\rM}\cdot\mathrm{mod}_{\rU\cap\rV},\quad \mathrm{mod}_{\rV}=\mathrm{mod}_{\rV\cap\rN}\cdot\mathrm{mod}_{\rV\cap\rU},$$
from which we deduce that $\pi\cong\tau$ when condition $(\ast)$ holds.

Case $\mathrm{III}$: $\rU=\rV=\{ e \}$. Let $i(W)$ be the representation space of $i_{\rP}^{\rG}\rho$, then $\pi$ acts on a quotient space $W_1$ of $i(W)$. Let:
$$E=\{ f\in i(W)\vert f(\overline{\rP\rQ}\backslash \rP\rQ)=0 \},$$
$$E'=\{ f\in i(W)\vert f(\overline{\rP\rQ})=0 \},$$
then $W_1\cong E\slash E'$. The $k$-representation $\tau$ acts on $i(W)'$, which is the representation space of $i_{\rP\cap\rQ}^{\rQ}\rho$. By definition,
$$i(W)'=\{ h:\rQ\rightarrow W \vert h(pq)=\rho(p)h(q),p\in\rP\cap\rQ,q\in\rQ   \}.$$
We define a morphism $\gamma$ from $W_1$ to $i(W)'$, by sending $f$ to $f\vert_{\rQ}$, which respects $\rQ$-actions and is actually a bijection. For injectivity, let $f_1,f_2\in W_1$ and $f_1\vert_{\rQ}=f_2\vert_{\rQ}$, then $f_1-f_2$ is trivial on $\rP\rQ$, hence $f_1-f_2$ is trivial on $\overline{\rP\rQ}$ by the definition of $E$. This means $f_1-f_2=0$ in $W_1$. Now we prove $\gamma$ is surjective. Let $h\in i(W)'$, there exists an open compact subgroup $K'$ of $(\rP\cap\rQ)\backslash\rQ$ such that $h$ is constant on the right $K'$ cosets of $(\rP\cap\rQ)\backslash\rQ$, and denote $S$ the compact support of $h$. Let $K$ be an open compact subgroup of $\rP\backslash\rG$ such that $(\rP\cap\rQ)\backslash(\rQ\cap K)\subset K'$, and $S\cdot K\cap (\overline{\rP\rQ}\slash \rP\rQ)=\emptyset$. We define $f$ such that $f$ is constant on the right $K$ cosets of $\rP\backslash\rG$, and $f\vert_{(\rP\cap\rQ)\backslash\rQ}=h$. The function $f$ is smooth with compact support on the complement of $\overline{\rP\rQ}\slash \rP\rQ$, hence belongs to $E$, and $\gamma(f)=h$ as desired.

Case $\mathrm{IV}$: $\rU=\{ e\},\rQ=\rG$. We divide this case into two cases $\mathrm{IV}_1$ and $\mathrm{IV}_2$ as in the diagram of figure $\mathrm{Case} \mathrm{IV}$.

\begin{figure}
\begin{tikzpicture}
\node at (0,0) {$\rM$};
\node at (2,1) {$(\rM\cap\rN)\rV$};
\node at (2,-1) {$\rM\cap\rN$};
\node at (5,-1) {$\rM\cap\rN$};
\node at (8,0.5) {$\rN$};
\node at (5,2) {$\rQ=\rN\rV$};

\node at (2,0) {$\mathrm{IV}_2$};
\node at (5,0.5) {$\mathrm{IV}_1$};

\draw[->](0.17,0.15)--(1.25,0.75);
\draw[->](0.17,-0.15)--(1.45,-0.8);

\node at (0.5,0.5) {$e$};
\node at (0.4,-0.7) {$\rV\cap\rM$};

\draw[->](2.75,1.25)--(4.35,1.85);
\draw[->](5.65,1.85)--(7.85,0.66);
\draw[->](2.75,0.85)--(4.47,-0.84);
\draw[->](5.53,-0.84)--(7.85,0.34);
\node at (3.5,1.7) {$e$};
\node at (3.75,0.2) {$\rV$};
\node at (6.7,1.53) {$\rV$};
\node at (6.5,-0.1) {$e$};

\draw[-stealth,decorate,decoration={snake,amplitude=2pt,pre length=1pt,post length=1pt}] (2.6,-1) -- (4.4,-1);
\node at (3.5,-0.75) {$\varepsilon_2$};

\end{tikzpicture}
\caption{Case $\mathrm{IV}$}
\end{figure}

Case $\mathrm{IV}_1$: $\rU=\{ e\},\rQ=\rG,\rV\subset\rM=\rP$. The $k$-representation $\pi$ acts on $i(W)^{+}=i(W)\slash i(W)(\rV,\psi)$, where
$$i(W)_{\rV,\psi}=\langle vf-\psi(v)f, \forall f\in i(W),v\in\rV \rangle.$$
The $k$-representations $\tau$ acts on $i(W^{+})$, which is the smooth functions with compact support on $(\rM\cap\rN)\backslash\rN$ defined as below:
$$\{ h:\rN\rightarrow W\slash W(\rV,\psi)\vert f(mn)=\rho(m)f(n), \forall m\in\rM\cap\rN,n\in\rN \}.$$
There is a surjective projection from $i(W)$ to $i(W^{+})$, which projects $f(n)$ in $W^+=W\slash W_{\rV,\psi}$, for any $f\in i(W)$. In fact, let $h\in i(W^+)$, there exists an open compact subgroup $K$ of $\rP\backslash\rG\cong(\rM\cap\rN)\backslash\rN$, such that $f=\sum_{i=1}^m h_i$, $m\in\mathbb{N}$, where $h_i\in i(W^+)$ is nontrivial on one right $K$ coset $a_i K$ of $\rP\backslash\rG$. We have $h_i\equiv \overline{w_i}$ on $a_i K$, where $w_i\in W$ and $\overline{w_i}\in W^+$. Define $f=\sum_{i=1}^m f_i$, where $f_i\equiv w_i$ on $a_i K$, and equals $0$ otherwise. The function $f\in i(W)$, and the projection image is $h$.

It is clear that this projection induces a morphism from $i(W)^+$ to $i(W^+)$, and we prove this morphism is injective. Let $f,f'\in i(W)^+$, and $f=f'$ in $i(W^+)$. As in the proof above, there exists an open compact subgroup $K_0$ of $\rP\backslash\rG$, and $f_j\in i(W)^+$ such that $f_j$ is non-trivial on one right $K_0$ coset of $\rP\backslash\rG$ and $f-f'=\sum_{j=1}^sf_j$. Furthermore, the supports of $f_j$'s are two-two disjoint. Hence the image of $f_j$ on its support is contained in $W^+$, since $f_j$ is constant on its support, it equals $0$ in $i(W)^+$, whence $f-f'$ equals $0$ in $i(W)^+$. We conclude that this morphism is bijection, and the diagram case $\mathrm{IV}_1$ is commutative. 

Case $\mathrm{IV}_2$: $\rU=\{ e\},\rG=\rQ,\rN\subset\rM$. In this case:
$$\rX=\rN\rV'\backslash\rN\rV\cong\rV'\rV,$$
where $\rV'=\rV\cap\rM$. We choose one Haar measures $\mu$ of $\rX$ (the existence see \S$\mathrm{I}$, 2.8, \cite{V1}). Let $W^+$ denote the quotient $W\slash W(\rV',\psi)$ and $p$ the canonical projection $p: W\rightarrow W^+$. Let $i(W)$ be the space of $k-$representation $\tau= i_{\{e\},1}(\rho)$.

Define $\overline{A}$ a morphism of $k$-vector spaces from $i(W)$ to $W^+$ by:
$$\overline{A}f=\int_{\rV'\backslash\rV}\psi^{-1}(v)p(f(v))\mathrm{d}\mu(v).$$
This is well defined since the function $\psi^{-1}f$ is locally constant with compact support of $\rV'\slash\rV$, and the integral is in fact a finite sum. Since $\mu$ is stable by right translation, we have for any $v\in\rV$:
$$\overline{A}(\tau(v,f))=\psi(v)\overline(A)(f).$$
Hence $\overline{A}$ induces a morphism of $k$-vector spaces:
$$A:i(W)\slash i(W)(V,\psi)\rightarrow W^+.$$
Now we justify that $A\in\mathrm{Hom}_{k[\rN]}(\pi,\tau)$, where $k$-representations $\pi=r_{\rV,\psi}(\tau)$ equals $\rF(\rho)$, and $\tau=\varepsilon_2\cdot r_{\rV',\psi}(\rho)$ equals $\Phi(\rho)$. For any $n\in \rN$:
\begin{eqnarray}
A(\pi(n)f) &=&\mathrm{mod}_{\rV}^{-\frac{1}{2}}(n)\int_{\rV'\slash\rV}\psi^{-1}(v)p(f(vn))\mathrm{d}\mu(v)\\
&=&  \mathrm{mod}_{\rV}^{-\frac{1}{2}}(n)\sigma(n)\mathrm{mod}_{\rV'}^{\frac{1}{2}}(n)\varepsilon_2^{-1}\cdot\int_{\rV'\slash\rV}\psi^{-1}(v)p(f(n^{-1}vn))\mathrm{d}\mu(v)
\end{eqnarray}
By replacing $v'=n^{-1}vn$, the equation above equals to:
$$\sigma(n)\int_{\rV'\slash\rV}\psi^{-1}(v')p(f(v'))\mathrm{d}\mu(v')=\sigma(n)A(f).$$
Therefore $A$ belongs to $\Hom_{k[N]}(\pi,\tau)$, and hence a morphism from functor $\rF$ to $\Phi$. Now we prove that $A$ is an isomorphism.

Let $\rho'$ be the trivial representations of $\{e\}$ on $W$, then $i(W)'$ the space of $k$-representation $\ind_{\rV'}^{\rV}\rho'$ is isomorphic to $i(W)$ the space of $k$-representation $i_{\{e\},1}\rho$. Meanwhile, the diagram \ref{fig 6} $\mathrm{IV}_2$ is commutative, where $A$ indicates the morphism of $k$-vector spaces associated to the functor $A$. Hence it is sufficient to suppose that $\rN=\{e\},\rM=\rV'$. Replacing $\rho$ by $\psi^{-1}\rho$, we can suppose that $\psi=1$.

\begin{figure}
\label{fig 6}
\centering
\begin{tikzpicture}
\node at (-2,0) {$i(W)'\slash i(W)'(\rV,\psi)$};
\node at (1,0) {$W^+$};
\node at (-2,-1.5) {$i(W)\slash i(W)(\rV,\psi)$};
\node at (1,-1.5) {$W^+$};

\draw[->](-0.4,0)--(0.5,0);
\draw[->](-2,-0.2)--(-2,-1.2);
\draw[->](1,-0.2)--(1,-1.2);
\draw[->](-0.4,-1.5)--(0.5,-1.5);

\node at (-2.3,-0.7) {$\cong$};
\node at (1.3,-0.7) {$\cong$};
\node at (0.05,0.2) {$A$};
\node at (0.05,-1.3) {$A$};

\end{tikzpicture}
\caption{$\mathrm{IV}_2$}
\end{figure}

First of all, we consider $\rho=i_{\{e\},1}\mathds{1}=\ind_{e}^{\rV'}\mathds{1}$ the regular $k$-representation on $S(\rV')$, which is the space of locally constant functions with compact support on $\rV'$. Then $\tau=i_{\{e\},1}\rho$ is the regular $k$-representation of $\rV$ on $S(\rV)$ by the transitivity of induction functor. Any $k$-linear form on $r_{\rV',1}(S(\rV'))$ gives a Haar measure on $\rV'$, and conversely any Haar measure gives a $k$-linear form on $S(\rV')$, whose kernel is $S(\rV')(\rV',1)$, hence the two spaces is isomorphic, and the uniqueness of Haar measures implies that the dimension of $r_{\rV',1}(S(\rV'))$ equals one. We obtain the same result to $r_{\rV,1}(S(\rV'))$. Since in this case the morphism $A$ is non-trivial, then it is an isomorphism. The functors $i_{\{e\},1},r_{\rV,\psi},r_{\rV',\psi}$ commute with direct sum (as in \ref{prop 5.2.2}), and the morphism $A$ between $k$-vector spaces also commutes with direct sum, hence $A:\pi\rightarrow\tau$ is an isomorphism when $\rho$ is free, which means $\rho$ is a direct sum of regular $k$-representations of $\rV'$. Notice that any $\rho$ can be viewed as a module over Heck algebra, then $\rho$ is a quotient of some free $k$-representation. Hence $\rho$ has a free resolution. The exactness of $\rF$ and $\Phi$ implies that $A:\rF(\rho)\rightarrow\Phi(\rho)$ is an isomorphism for any $\rho$.


\pagestyle{empty}



\begin{thebibliography}{99}

\bibitem[AKMSS]{AKMSS} U. K. Anandavardhanan, R. Kurinczuk, N. Matringe, V. Sécherre et S. Stevens, {\em Galois self-dual cuspidal types and Asai local factors.}  J. Eur. Math. Soc. 23 (2021), No. 9, 3129-3191.

\bibitem[AMS]{AMS} A.-M. Aubert; A. Moussaoui; M. Solleveld, {\em Generalizations of the Springer correspondence and cuspidal Langlands parameters}. Manuscripta Math. 157 (2018), no. 1-2, 121-192.

\bibitem[BeZe]{BeZe} I.N. Bernstein; A.V. Zelevinsky, {\em Induced representations of reductive $p$-adic groups. I.} Ann. Sci. \'Ecole Norm. Sup. (4) 10 (1977).

\bibitem[Bon]{Bonn} C. Bonnaf\'e, {\em Sur les caract\`eres des groupes r\'eductifs finis \`a centre non connexe: applications aux groupes sp\'eciaux lin\'eaires et unitaires}. Ast\'erisque No. 306, 2006.

\bibitem[BonII]{BonII}  C. Bonnafé, {\em Representations of $\mathrm{SL}_2(\mathbb{F}_q)$}. Algebra and Applications, 13. Springer-Verlag London, (2011).

\bibitem[BrMi]{BrMi} M. Brou\'e; J. Michel, {\em Blocs et s\'eries de Lusztig dans un groupe r\'eductif fini}. J. Reine Angew. Math. 395 (1989), 56-67. 

\bibitem[BuHe]{BuHe} C.J. Bushnell; G. Henniart, {\em Modular local Langlands correspondence for $\GL_n$}. Int. Math. Res. Not. 15 (2014), 4124-4145.

\bibitem[BuKu]{BuKu}  C.J. Bushnell, P.C. Kutzko,  {\em The admissible dual of $\GL(N)$ via compact open subgroups.} Annals of Mathematics Studies, 129. Princeton University Press (1993).

\bibitem[BuKuI]{BuKuI}  C.J. Bushnell; P.C.  Kutzko,  {\em The admissible dual of $\SL(N)$. I.} Ann. Sci. \'Ecole Norm. Sup. (4) 26 (1993), no. 2, 261-280. 

\bibitem[BuKuII]{BuKuII}  C.J. Bushnell; P.C. Kutzko, {\em The admissible dual of $\SL(N)$. II.} Proc. London Math. Soc. (3) 68 (1994), no. 2, 317-379.
 
\bibitem[BuKuIII]{BuKuIII} C.J. Bushnell; P.C. Kutzko, {\em Smooth representations of reductive $p$-adic groups: structure theory via types.} Proc. London Math. Soc. (3) 68 (1994), no. 2, 317-379.
 
\bibitem[BSS]{BSS} P. Broussous; V. S\'echerre; S. Stevens,  {\em Smooth representations of $\GL_m(D)$ V: Endo-classes.} Doc. Math. 17 (2012), 23-77.

\bibitem[C]{C} P. Cui, {\em Category decomposition of $\mathrm{Rep}_k(\mathrm{SL}_n(F))$.} Journal of Algebra, Vol. 602 (2022), 130-153.

\bibitem[CLH]{CLH} P. Cui; T. Lanard; H. Lu, {\em Modulo $\ell$ distinction problems for $\mathrm{GL}_2$ and $\mathrm{SL}_2$.} arXiv:2203.14788.

\bibitem[Da]{Da} J.-F. Dat, {\em Simple subquotients of big parabolically induced representations of $p$-adic groups}, J. Algebra \textbf{510} (2018), 499-507, with an 
Appendix: {\em Non-uniqueness of supercuspidal support for finite reductive groups} by O. Dudas

\bibitem[DaII]{DaII} J.-F. Dat, {\em A functoriality principle for blocks of linear $p$-adic groups.} Contemporary
Mathematics, 691 (2017), 103-131.

\bibitem[DaIII]{Dat} J.-F. Dat, {\em $\nu$-tempered representations of $p$-adic groups. I. $\ell$-adic case. }, Duke Math. J. 126 (2005), no. 3, 397-469.


\bibitem[DeLu]{DeLu} P. Deligne; G. Lusztig, {\em Representations of reductive groups over finite fields.} Ann. of Math. (2) 103 (1976), no. 1, 103-161. 

\bibitem[DiFl]{DiFl} R. Dipper; P. Fleischmann, {\em Modular Harish-Chandra theory. II} Arch. Math. 62 (1994).

\bibitem[DiMi]{DiMi} F. Digne; J. Michel,  {\em Representations of finite groups of Lie type.} London Mathematical Society Student Texts, 21. Cambridge University Press, Cambridge (1991). 

\bibitem[Fin]{Fin} J.Fintzen, {\em Tame cuspidal representations in non-defining
characteristics.} arXiv:1905.06374.

\bibitem[Geck]{Geck} M. Geck, {\em Modular Harish-Chandra series, Hecke algebras and (generalized) $q$-Schur algebras.} Modular representation theory of finite groups (Charlottesville, VA, 1998), 1966, de Gruyter, Berlin, 2001.

\bibitem[GoRo]{GoRo} D. Goldberg; A. Roche,  {\em Types in $\mathrm{SL}_n$}. Proc. London Math. Soc. (3) 85 (2002), no. 1, 119-138. 

\bibitem[GrHi]{GrHi} J. Gruber; G. Hiss, {\em Decomposition numbers of finite classical groups for linear primes.} J. Reine Angew. Math. 485 (1997), 55-91. 

\bibitem[HT]{Ha} M. Harris and R. Taylor. {\em The geometry and cohomology of some simple Shimura varieties.}
Number 151 in Ann. of Math. studies. Princeton Univ. Press (2001).

\bibitem[Helm]{Helm} D. Helm, {\em The Bernstein center of the category of smooth $W(k)[\GL_n(F)]$-modules.} Forum Math. Sigma 4 (2016).

\bibitem[Hen]{Hen} G. Henniart. {\em Une preuve simple des conjectures de Langlands pour $\mathrm{GL}(n)$ sur un corps $p$-adique.} Invent. Math., 139 (2000): 439-455.

\bibitem[Hiss]{Hiss} G. Hiss, {\em Supercuspidal representations of finite reductive groups.} J. Algebra, 184 (1996), no. 3, 839-851.

\bibitem[KuMa]{KuMa} R. Kurinczuk; N. Matringe, {\em The $\ell$-modular local Langlands correspondence and local factors}, arXiv:1805.05888, 2018.

\bibitem[KS]{KS} R. Kurinczuk; S. Stevens, {\em Cuspidal $\ell$-modular representations of $p$-adic classical groups.} Journal für die reine und angewandte Mathematik (Crelles Journal), vol. 2020, no. 764 (2020), 23-69.

\bibitem[LRS]{LRS} G. Laumon; M. Rapoport; U. Stuhler,   {\em $\cD$-elliptic sheaves and the Langlands correspondence.} Invent. math. 113 (1993), 217-238.

\bibitem[Ma]{Ma} A. Mayeux, {\em Comparison of Bushnell-Kutzko and Yu's constructions of supercuspidal representations.} arXiv preprint, arXiv:2001.06259.

\bibitem[MS]{MS} A. M$\acute{\i}$nguez; V. S\'echerre,  {\em Types modulo $\ell$ pour les formes int\'erieures de $\GL_n$ sur un corps local non archim\'edien.}  With an appendix by Vincent S\'echerre et Shaun Stevens. Proc. Lond. Math. Soc. (3) 109 (2014), no. 4, 823-891.

\bibitem[Pa]{Pa} V. Paskunas, {\em Unicity of types for supercuspidal representations of $\GL_n$.} Proc. London Math. Soc. (3) 91 (2005), no. 3, 623-654.

\bibitem[Re]{Re} D.Renard, {\em Repr\'esentations des groupes r\'eductifs $p$-adiques.} Cours Sp\'ecialit\'es, 17. Soci\'et\'e Math\'ematique de France (2010).

\bibitem[Sch]{Sch} P. Scholze, {\em The local Langlands correspondence for $\GL_n$ over $p$-adic fields}. Invent. Math. 192 (2013), no. 3, 663-715. 

\bibitem[Ser]{Serre} J.-P. Serre, {\em Linear representations of finite groups}. Translated from the second French edition by Leonard L. Scott. Graduate Texts in Mathematics, Vol. 42. Springer-Verlag, New York-Heidelberg (1977).

\bibitem[Si]{Si} A. Silberger, {\em Isogeny Restrictions of Irreducible Admissible Representations are Finite Direct Sums of Irreducible Admissible Representations.} Proc. Amer. Math. Soc. 73 (1979), no. 2, 263-264.

\bibitem[Se1]{Se1} V. S\'echerre, {\em Supercuspidal representations of $\mathrm{GL}(n,F)$ distinguished by a Galois involution.} Algebra and Number Theory, 13 (2019), no. 7, 1677-1733.

\bibitem[SeSt1]{SeSt1} V. S\'echerre; S.Stevens, {\em Repr\'esentations lisses de $\mathrm{GL}_m(D)$ IV: repr\'esentations supercuspidales.} J. Inst. Math. Jussieu 7 (2008), no. 3, 527-574.

\bibitem[SeSt2]{SeSt2} V. S\'echerre; S. Stevens, {\em Block decomposition of the category of $\ell$-modular smooth representations of $\mathrm{GL}(n,F)$ and its inner forms} Ann. Sci. \'Ec. Norm. Sup\'er. (4) 49 (2016), no. 3, 669-709.


\bibitem[Ta]{TA} M. Tadi$\acute{\mathrm{c}}$, {\em Notes on representations of non-Archimedean $\SL(n)$.} Pacific J. Math. 152 (1992), no. 2, 375-396. 

\bibitem[V1]{V1} M.-F. Vign\'eras, {\em Repr\'esentations $\ell$-modulaires d'un groupe r\'eductif $p$-adique avec $\ell\neq p$.} Progress in Mathematics, 137. Birkh\"auser Boston, Inc. (1996).

\bibitem[V2]{V2} M.-F. Vign\'eras, {\em Induced $R$-representations of $p$-adic reductive groups.} Selecta Math. (N.S.) 4 (1998), 549-623, with an appendix by A. Arabia.

\bibitem[V3]{V3} M.-F.  Vign\'eras,  {\em Irreducible modular representations of a reductive $p$-adic group and simple modules for Hecke algebras.} European Congress of Mathematics, Vol. I (Barcelona, 2000), 117-133, Progr. Math., 201, Birkh\"auser, (2001).

\bibitem[V4]{V4} M.-F. Vign\'eras, {\em Correspondance de Langlands semi-simple pour $\mathrm{GL}(n,F)$ modulo $\ell\neq p$.} Invent. Math. 144 (2001), no. 1, 177-223. 

\bibitem[Yu]{Yu} J.-K. Yu, {\em Construction of tame supercuspidal representations.}  J. Amer. Math. Soc. 14 (2001), no. 3, 579-622.

\bibitem[Ze]{ZeII} A.V. Zelevinsky, {\em Induced representations of reductive $p$-adic groups. II. On irreducible representations of $\mathrm{GL}(n)$.} Ann. Sci. \'Ecole Norm. Sup. (4) 13 (1980), no. 2, 165-210.


\end{thebibliography}
\end{document}